\documentclass[11pt]{amsart}
\usepackage[margin=1in]{geometry}
\usepackage[T1]{fontenc}
\usepackage{lmodern}
\usepackage{amsmath,amssymb,amsthm,mathtools}
\usepackage{enumitem}
\usepackage{microtype}
\usepackage[hidelinks]{hyperref}
\usepackage{xcolor}

\newtheorem{theorem}{Theorem}[section]
\newtheorem{proposition}[theorem]{Proposition}
\newtheorem{lemma}[theorem]{Lemma}
\newtheorem{corollary}[theorem]{Corollary}
\newtheorem{claim}[theorem]{Claim}
\theoremstyle{definition}
\newtheorem{definition}[theorem]{Definition}
\newtheorem{remark}[theorem]{Remark}
\theoremstyle{plain}

\newcommand{\F}{\mathbb F}
\newcommand{\Aut}{\operatorname{Aut}}
\newcommand{\emb}{\operatorname{emb}}
\newcommand{\ind}{\operatorname{ind}}
\newcommand{\E}{\mathrm E}
\newcommand{\eps}{\varepsilon}

\newcommand{\alternativeheading}[2]{%
  \par\medskip
  \noindent{\bfseries Alternative \textnormal{(#1)}: #2.}\par
  \smallskip
}

\title{Explicit fractalizers: prime-order Paley graphs and other Cayley graphs}
\date{\today}
\subjclass[2020]{05C35, 05C60, 11T21}
\keywords{Paley graph, inducibility, fractalizer, iterated blow-up, directions}
\author{Aldo Kiem}
\address{Zuse Institute Berlin, Technische Universität Berlin}
\email{kiem@campus.tu-berlin.de}
\thanks{Research is partially funded by the Deutsche Forschungsgemeinschaft
(DFG, German Research Foundation) under Germany’s Excellence Strategy—The
Berlin Mathematics Research Center MATH+
(EXC-2046/1, project ID: 390685689).}

\author{Fan Wei}
\address{Department of Mathematics, Duke University,
120 Science Drive, Durham, NC 27710, USA}
\email{fan.wei@duke.edu}
\thanks{Research supported by NSF grant DMS-2401414.}

\begin{document}

\begin{abstract}
The inducibility problem asks for the maximum number of induced
copies of a fixed graph among all graphs with a prescribed number
of vertices. Inducibility has been an active area of research in
extremal combinatorics, but determining all extremal graphs for
explicitly defined patterns remains challenging, particularly
when the description is required to hold at every host order.
A graph $H$ is called a \emph{fractalizer} if, for every positive
integer $n$, every $n$-vertex graph maximizing the number of
induced copies of $H$ is a balanced iterated blow-up of $H$,
obtained by recursively repeating the same pattern in parts
whose sizes differ by at most one. Previous probabilistic
results show that large random graphs and random abelian Cayley
graphs are fractalizers with probability tending to one,
establishing their abundance without directly providing
explicit families.

We prove that every sufficiently large prime-order Paley graph
is a fractalizer. Thus these classical arithmetic patterns
determine the exact recursive structure of every extremal host,
at every host order and without any algebraic assumptions on
the host. We also construct a second explicit infinite family
of nontrivial Cayley fractalizers, for which the fractalizer
property admits a simpler proof than in the Paley case.
Together, these results resolve the explicit-construction
question discussed at the 2025 American Institute of Mathematics
workshop ``Flag Algebras and Extremal Combinatorics.''
\end{abstract}

\maketitle

\setcounter{tocdepth}{1}
\tableofcontents

\section{Introduction}

Counting subgraphs in a host graph is one of the oldest and most studied problems in extremal combinatorics. While the classical theorems of Tur\'an and Erd\H{o}s-Stone govern the maximum edge density of graphs forbidding certain subgraphs, the question becomes notoriously intricate when counting induced subgraphs. For graphs, an induced copy must preserve both edges and nonedges, so adding edges to a host can destroy some copies while creating others. In 1975, Pippenger and Golumbic \cite{PG} introduced the inducibility problem: Given a $k$-vertex graph $H$ and an integer $n$, what is the maximum number of $k$-vertex subsets in an $n$-vertex graph that induce a copy of $H$? 

We study the stronger structural question of when every maximizer must be built by
repeating the same pattern recursively. Our main result establishes this phenomenon for sufficiently large prime-order Paley graphs:
a pattern defined by quadratic residues forces an exact recursive structure in extremal hosts, even though no algebraic structure is assumed of those hosts.

An embedding of a graph $H$ into a graph $G$ is an injective map $\vartheta:V(H)\to V(G)$ preserving both adjacency and nonadjacency.  We write $\emb(H,G)$ for the number of embeddings.  If $H$ has $k$ vertices, set
\begin{equation*}
 \emb(H,n):=\max_{|V(G)|=n}\emb(H,G),
 \qquad
 \emb(H):=\lim_{n\to\infty}\frac{\emb(H,n)}{n^k}.
\end{equation*}
Let $\ind(H,n)$ be the maximum number of $k$-subsets inducing a copy of $H$ in an $n$-vertex graph, and set
\begin{equation*}
 \ind(H):=\lim_{n\to\infty}\frac{\ind(H,n)}{\binom{n}{k}}.
\end{equation*}
Then $\emb(H)=|\Aut(H)|\ind(H)/k!$.  In particular, maximizing labeled embeddings and maximizing unlabeled induced copies are equivalent.

A fundamental source of lower bounds in inducibility is the {\it blow-up}
construction.
A blow-up of a graph $H$ is obtained by replacing every $x\in V(H)$ with a nonempty set $W_x$, placing all edges between $W_x$ and $W_y$ when $xy\in E(H)$ and no edges between them otherwise, while allowing arbitrary graphs inside the parts.  It is balanced if the part sizes differ by at most one.  

Pippenger and Golumbic~\cite{PG} observed that
this blow-up construction can be made self-similar. After forming the top-level
parts, one repeats the same pattern inside every part, and continues
recursively until a part contains fewer than $|V(H)|$ vertices. The
result is a \emph{balanced iterated blow-up} of $H$. 
More formally, a graph $\Gamma$ is a {balanced iterated blow-up} of $H$ if either $|V(\Gamma)|<|V(H)|$, or there is a partition $V(\Gamma)=\bigcup_{v\in V(H)}V_v$ such that the part sizes differ by at most one, the bipartite graph between $V_u$ and $V_v$ is complete or empty according as $uv\in E(H)$ or $uv\notin E(H)$, and each induced graph
$\Gamma[V_v]$ is itself a balanced iterated blow-up of $H$. Pippenger and Golumbic~\cite{PG} showed that, for every graph $H$ on $k \geq 2$ vertices, these graphs give the universal lower bound
\[
\operatorname{ind}(H)\geq \frac{k!}{k^k-k}.
\]

The lower bound raises a stronger structural question: when must every
extremal graph be a balanced iterated blow-up of the pattern itself?

Following~\cite{FSW}, we call $H$ a \emph{fractalizer} if, for every
positive integer $n$, every graph attaining $\emb(H,n)$ is a balanced
iterated blow-up of $H$.

\subsection{The scarcity of exact results}

Despite extensive work, inducibility is known exactly for only a
limited collection of classes of graphs; even the inducibility $\operatorname{ind}(P_4)$ of the four-vertex path
remains unknown. Many classical solved cases are complete multipartite
graphs or their complements; see, for example,
\cite{brown1994inducibility,schelp1998remark,hirst2014inducibility,
liu2023feasible}. Even in these structured cases, determining the optimal part sizes can require delicate analysis.

One general result concerns large blow-ups. Hatami, Hirst, and
Norine~\cite{hatami2014inducibility} proved that blow-ups of $H_0$ asymptotically optimize every sufficiently large balanced blow-up of $H_0$; Chen and Liu~\cite{CL26} recently proved that, for all sufficiently large host orders, every extremal host is a blow-up of $H_0$. 

The target extremal graphs also vary. Bodn\'ar and Pikhurko~\cite{bodnar2025some} computed the inducibility of the $4$-cycle with a pendant edge and showed that every almost extremal graph admits an almost balanced bipartition with $o(n^2)$ internal edges and a $5/6$-quasirandom cross-pair. In a complementary direction, Jain, Michelen, and Wei~\cite{jain2026binomial} proved that, for every graph $F$ on at least three vertices and every prescribed edge density $p\in(0,1)$, the binomial random graph $G(n,p)$ is not asymptotically optimal for the induced density of $F$. These results illustrate both the difficulty of determining the limiting inducibility and the variety of possible extremal structures. A fractalizer theorem asks for more: at every finite host order, every maximizer must be a balanced iterated blow-up of the pattern itself. Thus the conclusion is an exact recursive description at all orders, rather than merely a limiting density or a description valid for sufficiently large hosts.

\subsection{Fractalizers: from flag algebra and probabilistic existence to explicit structure}

The distinction between limiting density and exact finite-order
structure is particularly important in the fractalizer problem.
Previous work has approached this problem in two different ways: by analyzing individual small patterns using flag algebras, stability, and finite verification, and by proving that large random patterns satisfy the fractalizer property with high probability. The first route reveals the possible obstruction from exceptional finite hosts; the second establishes abundance without directly specifying an explicit family.

For the first route, Lidick\'y, Mattes, and
Pfender~\cite{lidicky2023c} determined the extremal graphs for $C_5$ for every $n$. Balanced iterated blow-ups are extremal, but at $n=8$ the M\"obius ladder and its complement give additional extremal constructions. Thus $C_5$ is only an ``almost fractalizer.''  Blumenthal and Phillips~\cite{blumenthal2021inducibility} found an
analogous obstruction for the net graph on six vertices. On eight vertices, the graph obtained from $K_4$ by attaching one pendant vertex to each vertex of $K_4$ has the same number of induced nets as a balanced iterated blow-up of the net. Hence the net is not a fractalizer. Nevertheless, they proved that for an infinite family of $n$, the balanced iterated blow-up
is the unique extremal graph on $n$ vertices. Their argument combines flag-algebra estimates with stability, detailed counting, and numerical optimization. These examples demonstrate how the all-$n$ fractalizer property can be
destroyed by exceptional configurations at small orders, even when the extremal graphs have the expected recursive structure in substantial ranges of larger orders.

A different picture emerges for large random graphs. Fox, Huang, and Lee~\cite{FHL} proved that asymptotically almost surely a random graph $H$ is a fractalizer: simultaneously for every positive integer $n$, every $n$-vertex extremal graph is a balanced iterated blow-up of $H$. Independently, Yuster~\cite{yuster2019exact} proved the same extremal description throughout the range $n\le2^{\sqrt{|V(H)|}}$. Fox, Sauermann, and Wei~\cite{FSW} subsequently proved an analogous all-$n$ theorem for random abelian Cayley graphs and for graphs obtained from them by deleting a small number of vertices.

These results establish that fractalizers are abundant, but their probabilistic verification does not directly yield an explicit family. For a graph given by a deterministic rule, estimates for degrees and subgraph counts provide only part of the required information. The reconstruction arguments also require uniform control of every induced embedding of every sufficiently large vertex-deleted subgraph. The challenge is therefore to establish deterministic structural properties strong enough to force the recursive extremal description at every host order.

\subsection{Main results}

The problem of finding a nontrivial explicit fractalizer was discussed
at the 2025 American Institute of Mathematics workshop ``Flag Algebras
and Extremal Combinatorics''; see~\cite[p.~4]{AIM25}. Here nontrivial
means neither complete nor empty. We resolve this problem with the
following two theorems.

Our first main theorem establishes the fractalizer property for a classical explicit family. For a prime $p\equiv1\pmod4$, the
\emph{Paley graph} $P_p$ has as its vertex set the field $\F_p$ of integers modulo $p$; distinct $x,y$ are adjacent precisely when $x-y$
is a nonzero square in $\F_p$.

\begin{theorem}\label{thm:main}
There exists $p_0$ such that, for every prime $p\ge p_0$ with
$p\equiv1\pmod4$, the Paley graph $P_p$ is a fractalizer. That is, for
every positive integer $n$, every $n$-vertex graph maximizing the number
of induced copies of $P_p$ is a balanced iterated blow-up of $P_p$.

Moreover, its embedding-inducibility and inducibility are
\begin{equation}\label{eq:inducibility-formulas}
 \emb(P_p)=\frac{p(p-1)/2}{p^p-p}
 \text{ and }
 \ind(P_p)=\frac{p!}{p^p-p}.
\end{equation}
\end{theorem}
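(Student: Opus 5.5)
We will follow the reduction framework of Fox, Huang, and Lee~\cite{FHL} as refined by Fox, Sauermann, and Wei~\cite{FSW}. That framework isolates a short list of deterministic properties of a $k$-vertex pattern $H$ which together force the fractalizer property at every host order. The main work is to verify these properties for $H=P_p$ when $p$ is large.

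\emph{Properties to verify.} Roughly, the list consists of:
\begin{enumerate}[label=(\roman*)]
\item $H$ has only trivial ``twin-like'' structure. Concretely, for every sufficiently large vertex subset $S\subseteq V(H)$, with $|S|\ge k-\delta k$, every induced embedding of $H[S]$ into $H$ is the restriction of an automorphism. It also has to be essentially unique given a few anchor vertices.
\item Any two distinct vertices have neighbourhoods differing in many vertices, even after deleting a small set. This is what lets vertices of an extremal host be assigned unambiguously to parts.
\item $|\Aut(H)|$ is small compared with the number of embeddings produced by competing configurations, so that the recursive counting inequality closes.
\end{enumerate}

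Given these properties, the argument proceeds in four steps. First, one shows that an extremal $n$-vertex host $G$ contains a near-perfect copy of $H$ at the top level. Second, vertices of $G$ are classified by their adjacency pattern to this copy into parts $V_v$. Third, using (ii), one proves that all cross pairs are homogeneous according to $H$ and that the parts are balanced; the balancing comes from a convexity/local-move argument. Fourth, one recurses inside each part. The inducibility formulas~\eqref{eq:inducibility-formulas} then follow from the recursion $\emb(H,n)\approx k\cdot\emb(H,n/k)+|\Aut(H)|(n/k)^k$. Here $|\Aut(P_p)|=p(p-1)/2$, because the automorphisms of $P_p$ are exactly the maps $x\mapsto ax^{\sigma}+b$ with $a$ a square, and $\sigma$ trivial since $p$ is prime. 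Summing the geometric series gives the limit $\frac{p(p-1)/2}{p^p-p}$, and multiplying by $p!/|\Aut(P_p)|$ gives $\ind(P_p)$.

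\emph{Verifying (ii) and (iii).} Property (ii) follows from character-sum estimates. By Weil's bound, for distinct $x,y$ the symmetric difference of the neighbourhoods has size $p/2+O(\sqrt p)$. More generally, Weil-type bounds show that all adjacency patterns to $t$ fixed vertices occur about $p/2^t$ times, for $t\le c\log p$. Property (iii) is immediate from the automorphism count.

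\emph{Main obstacle: property (i).} The hard step is the rigidity statement (i): every injective map $\phi:S\to\F_p$ with $|S|\ge(1-\delta)p$ that preserves both adjacency and nonadjacency must be affine, $\phi(x)=ax+b$ with $a$ a nonzero square. I would first use quasirandomness to show that $\phi$ preserves the ``common neighbourhood structure'', which pins down the images of many triples. Then I would exploit the prime field via a directions argument. Consider the graph of $\phi$ as a set $U=\{(x,\phi(x))\}\subseteq\F_p^2$ of size about $p$. Preservation of the quadratic residue character means that for $x,y\in S$ the quotient $(\phi(x)-\phi(y))/(x-y)$ is always a square. Hence $U$ determines at most $(p+1)/2$ directions. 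Rédei's theorem, in the stability form due to Szőnyi for sets of size $p-\epsilon$ determining few directions, would then force $U$ to be contained in a line. That makes $\phi$ affine with a square slope.

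Handling the deletion of up to $\delta p$ points is exactly where the Szőnyi-type extension result is needed: a set of $p-\epsilon$ points determining fewer than $(p+3)/2$ directions extends to a full set of $p$ points. Making the constants uniform in $p$ is, I expect, the most delicate part of the proof. The remaining bookkeeping (small $n$, where hosts have fewer than $p$ vertices, and the base cases of the induction) is routine once (i)–(iii) are in place.
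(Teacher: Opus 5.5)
Your rigidity step is essentially the paper's (square secant slopes, Sz\H{o}nyi--Sziklai completion, then R\'edei--Megyesi). The gap is in the step you treat as a black box: feeding that rigidity into the Fox--Huang--Lee/Fox--Sauermann--Wei machinery. The completion theorem tolerates only $\varepsilon<\alpha\sqrt p$ missing points, so you obtain property (i) only with $\delta\asymp p^{-1/2}$. (A small correction: it needs $|D|<(p+1)(1-\alpha)<(p+1)/2$, not ``fewer than $(p+3)/2$''. This holds because all directions are nonzero squares, so $|D|\le(p-1)/2$.) The existing criterion, Theorem~\ref{thm:FSW}, requires $\delta\ge10^3(\log k)^{1/5}k^{-1/5}$, i.e.\ robustness under roughly $k^{4/5}$ deletions. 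It also demands rigidity with respect to the maps $x\mapsto\pm x+b$ only, which is false for $P_p$ because of the square multipliers. This is a polynomial gap in the deletion scale, not a matter of making constants uniform in $p$. Your outline gives no mechanism by which the extremal argument survives with so little rigidity.

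The paper closes this gap with a new full-graph criterion, Proposition~\ref{prop:full-fsw}, valid for $\delta\ge c_0k^{-1/2}$ and any $\Phi\subseteq\Aut(H)$ with $2\le|\Phi|\le k^2$. Requiring $\epsilon_2<\delta$, together with $\epsilon_3\log(1/\epsilon_3)\ll\epsilon_2$ for the entropy--energy step, forces the thresholds $\epsilon_3,\epsilon_4$ down to order $1/(\sqrt k\log k)$. At that scale the estimates of \cite{FSW} for embeddings sending $\beta k$ pattern vertices into a set $U$ of density $\gamma\asymp(\log k)^2/k$ give no decay. Here $U$ is either $V_{\rm rest}$ or the set of bad partners of a single image vertex. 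Union-bounding over the choice of those $\beta k$ vertices produces the base $e^6\gamma/\beta^2\asymp(\log k)^4$, which does not tend to zero.

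The missing idea is Lemma~\ref{lem:weighted-hitting}. After fixing the image of a logarithmic signature, the candidate sets $C_v$ are disjoint, so $\sum_v|C_v\cap U|\le\gamma n$. Maclaurin's inequality then bounds $\sum_{|J|=r}\prod_{v\in J}|C_v\cap U|\le(e\gamma n/r)^r$, giving base $\asymp\gamma/\beta\to0$. Three further pieces must also be redone:
\begin{itemize}
\item the fixed-symmetry defect estimates, now for arbitrary automorphisms summed over $|\Phi|\le k^2$;
\item the correction step, using a single dominant automorphism;
\item the balancing of the parts, via the Pippenger--Golumbic difference inequality.
\end{itemize}
Finally, your list omits primeness. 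Primeness is what guarantees that every copy of $H$ in an exact blow-up either lies inside one part or meets all parts according to an automorphism (Lemma~\ref{lem:embedding-classification}), and your recursion for $\emb$ uses this tacitly. For $P_p$ it follows quickly from exact pair separation and connectivity.
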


The threshold in Theorem~\ref{thm:main} concerns the order of the pattern, not the host: once $p$ is sufficiently large, the conclusion holds for every $n$. The optimization is over all $n$-vertex graphs,
with no symmetry or algebraic assumptions. Nevertheless, every maximizer admits the recursive decomposition prescribed by $P_p$.
The theorem therefore determines exact finite extremal structure, not only a limiting density.

We also give a second answer to the explicit-construction problem. The family below is less canonical than the Paley graphs, but its fractalizer property admits a more direct proof.

\begin{theorem}\label{thm:main_part2}
For every sufficiently large prime $k\equiv1\pmod4$, the construction in Definition~\ref{def:Hconstruction} gives a Cayley graph $H_k$ on $\mathbb Z_k$ that is neither complete nor empty and is a fractalizer. In particular, for every positive integer $n$, every $n$-vertex graph maximizing the number of induced copies of $H_k$ is a balanced iterated blow-up of $H_k$.
\end{theorem}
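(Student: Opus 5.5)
The plan is to deduce Theorem~\ref{thm:main_part2} from a deterministic version of the Fox--Sauermann--Wei reconstruction criterion~\cite{FSW}. Their argument for random abelian Cayley graphs uses randomness only to verify a short list of pseudorandom and rigidity properties of the pattern $H$. Once those properties hold, the all-$n$ induction goes through: an extremal host $G$ on $n$ vertices contains many copies of $H$; a ``typical'' copy forces a partition of almost all of $V(G)$ into $k$ classes; cleaning and symmetrization make the partition exact with complete/empty bipartite graphs between classes; balancing and induction inside the parts finish the argument. I would first isolate these properties as a checklist. Let $H$ be a $k$-vertex graph.
\begin{enumerate}[label=(\roman*)]
\item Every pair of distinct vertices of $H$ is distinguished by at least $ck$ other vertices, and the same holds for every induced subgraph on at least $(1-\delta)k$ vertices.
\item Every induced subgraph of $H$ on at least $(1-\delta)k$ vertices has only the ``obvious'' embeddings into $H$. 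Concretely, any embedding of $H-S$ into $H$ with $|S|\le\delta k$ is the restriction of an automorphism, and $|\Aut(H)|$ is small, say polynomial in $k$.
\item Counts of embeddings of $H$ into $H$ with a few coordinates merged into one vertex are exponentially smaller than $|\Aut(H)|$. This controls the contribution of non-transversal copies.
\end{enumerate}

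Next I would construct $H_k$ (Definition~\ref{def:Hconstruction}) to make these checks easy. I take a connection set $S\subset\mathbb Z_k\setminus\{0\}$ with $S=-S$, namely the quadratic residues modified on a small, deliberately asymmetric set of elements. For $k\equiv1\pmod4$ we have $-1$ a square, so the Cayley graph is undirected.

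Property (i) then follows from Weil/character-sum bounds. The number of $z$ with $z-x\in S$ but $z-y\notin S$ is $k/2+O(\sqrt k)$, and deleting $\delta k$ vertices changes this by at most $\delta k$.

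Property (iii) is an exponential-moment count. A map with a merged pair forces $\Omega(k)$ adjacency coincidences, each of which fails for some $c$-fraction of vertices by (i). So the total count is at most $k^{k}2^{-\Omega(k)}$ against a main term that is attained by automorphisms.

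The main obstacle is property (ii), the rigidity of near-complete induced subgraphs. For Paley graphs the automorphism group is $\{x\mapsto ax^{\sigma}+b\}$ with $a$ a square, and showing that a large induced subgraph admits no exotic embedding needs genuinely algebraic input, presumably a direction-type result. That is why the modified construction is chosen instead. Breaking the multiplicative symmetry reduces $\Aut(H_k)$ to the translations, and the argument then runs in three steps.
\begin{enumerate}[label=(\alph*)]
\item Given an embedding $\phi$ of $H_k-T$ with $|T|\le\delta k$, the distinguishing property (i) forces $\phi$ to preserve ``common-neighbourhood sizes''. These sizes encode differences via $|N(x)\cap N(y)|=f(x-y)$, where $f$ takes distinct values on the special modified differences.
\item This forces $\phi(x)-\phi(y)=\pm(x-y)$ on the special differences, and hence, by connectivity of the graph of special differences, on all pairs.
\item So $\phi$ is a translation or a reflection. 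The chosen asymmetry of $S$ excludes reflections unless $S=-S$ in the relevant sense, which I would handle by requiring $f$ to distinguish $d$ from $-d$ via a directed gadget.
\end{enumerate}
I expect making step (a) hold uniformly over all deletions $T$ to be the delicate point. I would attempt it first by making the special differences have common-neighbourhood counts separated by $\gg\delta k$ from all others, so that deleting $T$ cannot blur them. With (i)--(iii) verified, the criterion from~\cite{FSW} yields the fractalizer conclusion for every $n$.
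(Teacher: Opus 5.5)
\textbf{Main gap: step (a) fails for your construction.} Your connection set is the quadratic residues modified on a small set, and with that choice codegrees cannot recognize anything. The Paley graph is strongly regular: $|\mathrm{QR}\cap(\mathrm{QR}+d)|$ equals $(k-5)/4$ or $(k-1)/4$ for every $d\neq0$. Changing $S$ on $m$ elements moves each codegree by at most $2m$. So every difference has codegree within $O(m)$ of one of two values. A separation $\gg\delta k\asymp(\log k)^{1/5}k^{4/5}$ would require a carefully designed modification on $\gg\delta k$ elements, and the proposal supplies none.

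The missing idea is a mechanism that produces codegree levels differing at linear scale. The paper takes $S$ essentially equal to $Q(A_0)=\{x:\chi(x+a)=1\text{ for all }a\in A_0\}$. Here $A_0$ is a fixed additive template: a Sidon set scaled by $L$, doubled into $\{0,1\}$-pairs, and placed at $0$ and at the multiscale positions $\pm T_i$, $T_i=\lfloor k^{i/7}\rfloor$. Weil's bound gives $c(d)=k\,2^{-(2|A_0|-|A_d|)}+O(\sqrt k)$ with $A_d=A_0\cap(A_0+d)$. So differences with distinct overlaps $|A_d|$ have codegrees differing by a constant multiple of $k$. The template makes every marked overlap exceed $16$, with marked values pairwise distinct except that $T_i+1$ and $T_i-1$ share a value, and every unmarked overlap is at most $16$. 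A bounded modification ($D^+\subseteq S$, $D^-\cap S=\varnothing$) is used only at the end, to separate $T_i+1$ from $T_i-1$ by adjacency.

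\textbf{Step (b) is too quick.} Knowing $\phi(x+d)-\phi(x)\in\{\pm d\}$ for the special $d$, plus connectivity, does not give one global isometry. A single edge of difference $T$ between two unit-difference intervals is compatible with different affine maps on the two sides. Moreover, after $\delta k$ deletions the special-difference graph on $X$ is badly disconnected; difference $1$ alone cuts $X$ into up to $\delta k$ intervals. The paper proceeds in four steps.
\begin{enumerate}
\item Injectivity gives an affine map $\sigma_Ix+b_I$ on each unit-difference interval.
\item Intervals are synchronized through $T_i$-squares $\{x,x+1,x+T_i,x+T_i+1\}$. These force $\sigma_A=\sigma_B$ and $b_B-b_A\in\{0,-2\sigma T_i\}$, and the diagonal difference $T_i+1$ excludes the second option.
\item A diameter--boundary estimate for $\operatorname{Cay}(\mathbb Z_k,\Upsilon)$ with diameter $D\le14k^{1/7}$, hence $D\delta\to0$, yields a giant affine class. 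The multiscale bridges are needed precisely for this.
\item A codegree-gap argument extends the affine rule to every vertex of $X$.
\end{enumerate}

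\textbf{Step (c) cannot work as stated.} An undirected Cayley graph has $S=-S$, so every reflection $x\mapsto -x+b$ is an automorphism and $c(d)=c(-d)$. No gadget can distinguish $d$ from $-d$, and none is needed, since the Fox--Sauermann--Wei criterion already permits reflections.

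\textbf{Your checklist does not match the criterion.} Item (iii) is not a hypothesis of the Fox--Sauermann--Wei criterion; primeness is, and you never verify it. The paper derives primeness from the codegree gap and the connectivity of a prime-order Cayley graph.
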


\subsection{Proof ideas and main ingredients}

The two main theorems require rather different ideas.  The work of Fox, Sauermann, and Wei~\cite{FSW} gives a general route from certain structural properties of an abelian Cayley graph to the fractalizer conclusion.  In the full-graph case used here, these include primeness,
neighborhood separation, and a strong reconstruction property (Definition \ref{def:reasonable_part_two}). A graph is \emph{prime} if it has no proper vertex set of size at least two to which every outside vertex is adjacent either everywhere or nowhere.
Neighborhood separation requires every pair of vertices to be
distinguished by a positive proportion of the other vertices, each adjacent to exactly one of the pair. 
The principal requirement is a strong
\emph{deletion-robust reconstruction property}: every induced
embedding of a sufficiently large induced subgraph into the
original graph must extend to a translation or reflection
$x\mapsto\pm x+b$. In the full-graph specialization used here,
the required vertex deletion tolerance has order
\begin{equation}
    \Theta((\log k)^{1/5}k^{4/5} )\label{eq:threshold}.
\end{equation}
where $k$ is the number of vertices in the graph.

For the family in Theorem~\ref{thm:main_part2}, the main difficulty is to realize these structural properties in a completely explicit graph.
Fox, Sauermann, and Wei verify these properties probabilistically for random abelian Cayley graphs. Here we need to develop an explicit non-random  construction. Thus the difficulty is that we need to develop an explicit construction whose cyclic coordinates remain recoverable after vertex deletion. Two requirements must be met simultaneously. Selected cyclic differences must be recognizable from the graph itself, and the relations they encode must force a partial embedding to follow a
single global symmetry. We combine simultaneous quadratic-residue conditions with a multiscale additive design to meet both requirements. The additive design prescribes translate overlaps, which character-sum estimates convert into codegree values, where the codegree of a pair is its number of common neighbors. Distinct prescribed values are separated by a positive multiple of $k$. A bounded modification of the connection set resolves the remaining equal-codegree ambiguities through adjacency, while changing every
codegree by only a bounded amount. Since the permitted number of deletions is $o(k)$, the resulting recognition is uniform over all allowed deleted sets.

Recognizing the marked differences is not yet enough to reconstruct the embedding. The unit difference determines a translation or reflection on each surviving interval, but different intervals may initially carry different affine maps. The larger bridge differences and their diagonal companions force these maps to agree wherever the corresponding configurations survive. A small-diameter auxiliary
Cayley graph and a diameter--boundary argument control the
configurations destroyed by deletion and show that one affine rule holds on almost all vertices. Neighborhood separation then extends that rule to the entire domain. This proves
Proposition~\ref{prop:main}, giving reconstruction at the deletion scale required by~\cite{FSW}; the accompanying primeness and neighborhood-separation estimates complete the proof of Theorem~\ref{thm:main_part2}.

The Paley graph presents an additional difficulty. Its algebraic structure is already given, so there is no freedom to build additional rigidity into the graph. The required input is not merely a classification of its automorphisms, but an extension theorem for partial embeddings: every induced embedding defined on an arbitrary sufficiently large subset must be the restriction of a global automorphism. The classical description of the full automorphism group does not by itself supply this extension property. 

A key observation to achieve some partial rigidity is by encoding an induced embedding $f:P_p[X]\to P_p$ as the point set $U_f=\{(x,f(x)):x\in X\}\subseteq\F_p^2$. Preservation of adjacency and nonadjacency forces all its secant slopes to be nonzero squares. This connects the partial-embedding problem to finite-geometric completion and direction theorems~\cite{Szonyi,Szi,FST,Redei,Somlai}. Using this connection, we prove that every such $f$ has the form
$f(x)=ax+b$ on $X$, with $a$ a nonzero square, whenever
$|X|\ge p-\sqrt p/3$; see Lemma~\ref{lem:partial-affine-rigidity}. The conclusion holds uniformly over the choice of the missing vertices and the partial embedding. Taking $X=\F_p$ also recovers the classical automorphism-group description following from Carlitz's theorem~\cite{Carlitz}.

Two obstacles prevent a direct application of~\cite{FSW}. First, the allowed symmetries must be enlarged: Paley graphs admit square multipliers other than $\pm1$ when $p>5$, so the translations-and-reflections condition fails even on the full graph.
Second, our partial-extension theorem allows at most $\sqrt p/3$ vertex deletions, substantially fewer than the deletion tolerance \eqref{eq:threshold} required by~\cite{FSW}. We do not establish extension to the larger affine group at that stronger scale. We therefore prove a new fractalizer criterion, Proposition~\ref{prop:full-fsw}, accommodating both the enlarged automorphism group and the weaker quantitative rigidity condition. It applies without a Cayley assumption and permits $2\le|\Aut(H)|\le k^2$, where $k=|V(H)|$.

The main new extremal ingredient is a sharper estimate for embeddings that send many pattern vertices into a small exceptional subset of the host.
A \emph{signature} is a set whose adjacency patterns distinguish all remaining pattern vertices. Once its image is fixed, those vertices have pairwise disjoint candidate sets.  Their intersections with the exceptional region therefore have
total size at most the size of that region. The aggregate estimate in~\cite{FSW} bounds each choice of exceptional pattern vertices by the same worst-case estimate. We instead retain the individual sizes of the intersections of the candidate sets with the exceptional region and sum their weighted contributions first. Maclaurin's inequality controls this sum without the additional loss from estimating each choice separately. At the parameters required by square-root-scale rigidity, our estimate gives exponential decay whereas the earlier aggregate bound does not; see Lemma~\ref{lem:weighted-hitting}.

The structural argument retains the signature and cleaning
architecture of~\cite{FHL,FSW}. The new estimate replaces the two aggregate-counting steps whose earlier bounds are ineffective at the new rigidity scale. The fixed-symmetry estimates are formulated for arbitrary automorphisms and summed over a family of size at most $k^2$, and the remaining quantitative conditions are verified at the new parameter scales. These ingredients show that almost every embedding approximately follows an automorphism.
Extremality then forces the prospective classes to partition the host and every interpart adjacency to agree exactly with the pattern.

The full-pattern setting simplifies the final step. In an exact
blow-up of a prime graph $H$, every embedding of $H$ either lies entirely inside one part or uses all top-level parts once, according to an automorphism. For parts $W_1,\dots,W_k$, this gives
\[
\emb(H,G) = |\Aut(H)|\prod_{i=1}^k |W_i|
+
\sum_{i=1}^k\emb(H,G[W_i]).
\]
A transfer argument controlling both the product and the internal terms forces the parts to be balanced. Induction inside the parts then gives the iterated blow-up structure and completes the proof of Proposition~\ref{prop:full-fsw}. Applying the criterion with the finite-geometric rigidity, neighborhood separation, and primeness of the Paley graph proves Theorem~\ref{thm:main}.

\subsection{Organization}

Part~I proves Theorem~\ref{thm:main_part2}, developing the robust reconstruction argument and the explicit Cayley graph construction.
Part~II states the new full-graph criterion, verifies its hypotheses for Paley graphs, and proves it using the new embedding estimate, local stability, and balancing. The appendices contain the parameter verification and the counting and stability statements inherited from~\cite{FSW}.

\part{Explicit construction: proof of Theorem~\ref{thm:main_part2}}

\section{Structural reduction and robust reconstruction}
\label{sec:reconstruction}

\subsection{The Fox-Sauermann-Wei criterion}

We begin by stating the structural result of Fox, Sauermann,
and Wei that connects the properties of a fixed abelian Cayley graph to
the inducibility problem. This is the external extremal input to our
proof of Theorem \ref{thm:main_part2}. It reduces the extremal problem to three structural
properties. Verifying them for random Cayley graphs is probabilistic \cite{FSW};
the central task here is to realize them deterministically. In
particular, the near-spanning reconstruction property requires a new
mechanism that occupies the main part of the proof.

For an abelian group $G$ and a set $S\subseteq G$ satisfying
$S=-S$ and $0\notin S$, the Cayley graph
$\operatorname{Cay}(G,S)$ has vertex set $G$, with distinct vertices
$x,y$ adjacent precisely when
\[
y-x\in S.
\]
For distinct vertices $x,y$ of a graph $H$, write
$a_H(x,y)=1$ if $xy\in E(H)$ and $a_H(x,y)=0$ otherwise.
An \emph{induced embedding} $f:H[X]\to H$ is an injective map satisfying
\[
a_H(x,y)=a_H(f(x),f(y))
\]
for all distinct $x,y\in X$.

\begin{definition}[Fox--Sauermann--Wei~\cite{FSW}]
\label{def:reasonable_part_two}
Let $H$ be a Cayley graph of an abelian group $G$ of order $k$.
For $0<q<1/2$ and $0<\delta<1$, we call $H$
\emph{$(q,\delta)$-reasonable} if the following hold.
\begin{enumerate}
    \item The graph $H$ is prime: there is no set
    $U\subseteq V(H)$ with $2\le |U|<|V(H)|$ such that every vertex
    outside $U$ is either adjacent to every vertex of $U$ or adjacent
    to no vertex of $U$.

    \item For every two distinct vertices $v,w\in V(H)$, at least $qk$ vertices in $V(H)\setminus\{v,w\}$ are adjacent to
exactly one of $v$ and $w$.

    \item If $X\subseteq V(H)$ satisfies $|X|\ge(1-\delta)k$ and
    $f:H[X]\to H$ is an induced embedding, then there are
    $\sigma\in\{\pm1\}$ and $b\in G$ such that
    \[
    f(x)=\sigma x+b
    \qquad\text{for every }x\in X.
    \]
\end{enumerate}
\end{definition}
Here $x\mapsto x+b$ is a translation and $x\mapsto -x+b$ is a
reflection. Since $H$ is an undirected Cayley graph, all these maps are
automorphisms. In particular, condition~(3), applied with $X=V(H)$,
implies that $H$ has no automorphisms other than its translations and
reflections.

The following specialization of the theorem of Fox, Sauermann, and
Wei is obtained by taking the target graph in their general result to be
the entire ambient Cayley graph. It converts the three properties above
into the exact all-$n$ extremal conclusion.
Throughout the paper, all logarithms are base $e$.

\begin{theorem}\cite[Theorem~2.7]{FSW}\label{thm:FSW}
Let $k \ge 10^{200}$ and let $ {H}$ be a Cayley graph of an abelian group of size $ {k}$. Let $H$ be $(q, \delta)$-reasonable and $q \ge 10^4 (\ln k)^{6/5} k^{-1/5}$ and $\delta \ge 10^3 (\ln k)^{1/5} k^{-1/5}$. Then for all $n$, every $n$-vertex graph $\Gamma$
satisfying $\operatorname{emb}(H,\Gamma)=\operatorname{emb}(H,n)$
is a balanced iterated blow-up of $H$.
\end{theorem}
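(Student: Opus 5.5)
Theorem~\ref{thm:FSW} is an external result, quoted from \cite[Theorem~2.7]{FSW}, so the plan is not to reprove its extremal machinery. Instead I will check that the stated version is exactly the specialization of their general theorem in which the target graph is the whole ambient Cayley graph. Their general statement concerns a graph $H$ obtained from an abelian Cayley graph $\operatorname{Cay}(G,S)$ on $k$ vertices by deleting a small set of vertices. Its hypotheses are: primeness of $H$; a neighborhood-separation bound with parameter $q$; and a reconstruction property asserting that every induced embedding of a large induced subgraph of $H$ into $H$ agrees with a map $x\mapsto\sigma x+b$. The first step is to set the deleted set to be empty. Then the vertex count of $H$ is $k$, and their parameter constraints reduce to $k\ge10^{200}$, $q\ge10^4(\ln k)^{6/5}k^{-1/5}$ and $\delta\ge10^3(\ln k)^{1/5}k^{-1/5}$.

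The second step is to match hypotheses one by one with Definition~\ref{def:reasonable_part_two}. Condition~(1) is their primeness assumption verbatim. Condition~(2) is their separation condition with the count taken over $V(H)\setminus\{v,w\}$. When nothing is deleted, the ambient group and the vertex set coincide, so no loss arises from vertices outside $H$.

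Condition~(3) is the point that needs care, and I expect it to be the only real obstacle. FSW phrase robust reconstruction for embeddings of $H[X]$ with $|X|\ge(1-\delta)|V(H)|$, possibly into the ambient Cayley graph rather than into $H$. In the full-graph case these targets agree. Their required conclusion, that $f$ is the restriction of a translation or reflection, is exactly the conclusion of (3). I will check that the quantifier over $X$ and the normalization of $\delta$ (relative to $k$ rather than to $|V(H)|$) coincide when the deletion set is empty. If FSW normalize $\delta$ differently, for example by $|V(H)|-|D|$ for a deleted set $D$, I will absorb the discrepancy into the constants $10^3$ and $10^4$. This works because $|D|=0$ makes the normalizations identical.

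With these identifications the conclusion of their theorem is verbatim the stated one: for every $n$, every $n$-vertex maximizer $\Gamma$ of $\emb(H,\cdot)$ is a balanced iterated blow-up of $H$. I will also note that $\emb(H,n)$ and $\ind(H,n)$ have the same maximizers, since the two counts differ by the factor $|\Aut(H)|$. This keeps the statement consistent with the paper's convention of phrasing extremality through $\emb$.
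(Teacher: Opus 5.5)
Your plan matches the paper: Theorem~\ref{thm:FSW} is not reproved there but quoted as the specialization of \cite[Theorem~2.7]{FSW} in which the target graph is the entire ambient Cayley graph (empty deleted set), so the hypotheses are exactly conditions (1)--(3) of Definition~\ref{def:reasonable_part_two} together with the displayed bounds on $k$, $q$, $\delta$. One caution: you could not legitimately ``absorb'' a normalization discrepancy into the explicit constants $10^3$ and $10^4$ of a cited theorem, but, as you note yourself, an empty deleted set makes the two normalizations identical, so that remark is unnecessary rather than harmful.
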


We apply Theorem~\ref{thm:FSW} with
\[
\delta=10^3(\log k)^{1/5}k^{-1/5}.
\]
For the graph constructed in Section~\ref{sec:H_construction},
neighborhood separation, condition~(2), will follow from
Lemma~\ref{lem:q}, while primeness, condition~(1), will follow from
Lemma~\ref{lem:prime}. The most challenging task is condition~(3), which is the
content of the following proposition.

\begin{proposition}[Robust reconstruction]\label{prop:main}
Let $k\equiv1\pmod4$ be a sufficiently large prime, and let $H$ be the Cayley
graph constructed in Definition~\ref{def:Hconstruction}. If
$X\subseteq\mathbb Z_k$ satisfies $|X|\ge(1-\delta)k$
and $f:H[X]\to H$ is an induced embedding, then there are
$\sigma\in\{\pm1\}$ and $b\in\mathbb Z_k$ such that
\[
f(x)=\sigma x+b
\qquad\text{for every }x\in X.
\]
Thus $f$ is the restriction to $X$ of a translation or reflection of
$\mathbb Z_k$.
\end{proposition}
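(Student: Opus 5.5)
We prove Proposition~\ref{prop:main} in three stages that follow the design of Definition~\ref{def:Hconstruction}: recognize marked differences, fix an affine map on intervals, then glue and extend.

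\textbf{Stage 1: recognition of marked differences.} Write $D\subseteq\mathbb Z_k$ for the set of designated differences: the unit difference $1$, the bridge differences, and their diagonal companions. For a pair $x,y$ with $y-x=d$, define the surviving codegree $\operatorname{codeg}_X(x,y)$ as the number of common neighbours of $x$ and $y$ inside $X$. Since $|\mathbb Z_k\setminus X|\le\delta k=o(k)$, this differs from the full codegree $\operatorname{codeg}_H(d)$ by $o(k)$, and this holds uniformly over all $X$.

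We will use the character-sum estimates underlying the construction. Together with the bounded modification of the connection set, they show that the values $\operatorname{codeg}_H(d)$ for $d\in D$ sit at prescribed levels $c_d k+O(\sqrt k\log k)$. These levels are separated from one another, and from all non-marked differences, by a positive multiple of $k$. Any remaining equal-codegree ambiguity is resolved by adjacency.

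Because $f$ is an induced embedding, common neighbours of $x,y$ in $X$ map injectively to common neighbours of $f(x),f(y)$. So $\operatorname{codeg}_H(f(y)-f(x))\ge \operatorname{codeg}_X(x,y)$. The upper bound is the delicate point. It comes from a counting argument: summing codegrees over all pairs of a given class and comparing the totals forces near-equality for all but $o(k)$ pairs. Pairs can be discarded freely, because the later stages only need the recognition on almost all pairs.

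The conclusion of Stage~1 is that, for all but $o(k)$ pairs $x,x+d$ in $X$ with $d\in D$, we have $f(x+d)-f(x)\in\{\pm d\}$. The equal-level classes are exactly $\{\pm d\}$.

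\textbf{Stage 2: an affine map on each interval.} Consider the surviving runs of consecutive elements of $X$ joined by good unit steps. Along a run, $f(x+1)-f(x)=\pm1$. Injectivity prevents the sign from flipping, since a flip would force $f(x+2)=f(x)$. Hence on each maximal interval $I$ we have $f(x)=\sigma_I x+b_I$.

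\textbf{Stage 3: gluing and extension.} A bridge difference $d$ connecting $x\in I$ to $x+d\in J$ gives $\sigma_J(x+d)+b_J-\sigma_I x-b_I=\pm d$. Using the diagonal companions $d\pm1$, which act as a second equation, this forces $\sigma_I=\sigma_J$ and $b_I=b_J$.

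Next, form the auxiliary Cayley graph on $\mathbb Z_k$ whose generators are $1$ and the bridge differences. By the multiscale design it has small diameter. A diameter--boundary (isoperimetric) argument then shows that deleting $o(k)$ vertices and $o(k)$ bad pairs leaves one component containing all but $o(k)$ vertices. So a single rule $x\mapsto\sigma x+b$ holds on a set $Y\subseteq X$ with $|X\setminus Y|=o(k)$.

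Finally, for $z\in X\setminus Y$, compose with the automorphism $g(x)=\sigma x+b$ and set $h=g^{-1}\circ f$. Then $h$ fixes $Y$, and $z$ and $h(z)$ have the same adjacencies to all of $Y$. If $h(z)\neq z$, condition~(2) of neighbourhood separation (Lemma~\ref{lem:q}) supplies at least $qk$ vertices adjacent to exactly one of $z,h(z)$. At least $qk-o(k)>0$ of these lie in $Y$, a contradiction. Hence $h(z)=z$, and the single affine rule holds on all of $X$.

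\textbf{Main obstacle.} The hardest step is Stage~1. It needs a uniform upper bound on codegrees of image pairs that does not lose control over adversarial deletions. It also needs the multiscale design to keep $\{\pm d\}$ as the only differences at each codegree level after the $O(1)$ modification. I would first try the global double-counting of total codegree mass described above, and fall back on adjacency tests with the modification set if that is not sharp enough.
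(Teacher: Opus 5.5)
Your outline follows the paper's route: codegree recognition as in Lemma~\ref{lem:localdist}, affine maps on maximal intervals, gluing with the diagonals $T_i\pm1$ as in Claim~\ref{claim:merge}, the diameter--boundary step of Lemma~\ref{lem:g-rigid}, and an extension step that is Lemma~\ref{lem:outlier} in different clothing. However, Stage~1 has a genuine gap, exactly where you locate the main obstacle. The ``global double count'' is not an argument as written. You never say what the class totals are compared with. Averaging the total excess $\sum_{\{u,v\}\subseteq X}\bigl(c(f(v)-f(u))-c_X(u,v)\bigr)\le\delta k\binom{|S|}{2}$ over all pairs controls only all but $O(\delta k^2)$ pairs, which says nothing about the $O(k)$ pairs in the marked classes.

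The missing observation is that the upper bound is as easy as the lower one. The map $f$ is a bijection $X\to f(X)$ preserving adjacency \emph{and} nonadjacency. Hence the common neighbours of $f(x),f(x+d)$ lying in $f(X)$ are exactly the images of the common neighbours of $x,x+d$ lying in $X$. Since $f(X)$, like $X$, misses at most $\delta k$ vertices, both $c(d)$ and $c(d')$, with $d'=f(x+d)-f(x)$, lie in $[c_X(x,x+d),\,c_X(x,x+d)+\delta k]$. So $|c(d)-c(d')|\le\delta k$ for \emph{every} pair. Lemmas~\ref{lem:codegree} and~\ref{lem:distinctcod}, plus adjacency to separate $T_i+1$ from $T_i-1$, then give $f(x+d)-f(x)\in\{\pm d\}$ with no exceptional pairs. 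Your counting idea could be rescued by a top-down pigeonhole over the codegree levels, using that $f$ is injective on pairs, at the cost of $O(\delta k)$ exceptions, but this is unnecessary.

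Stage~3 contains a quantitative error the proof cannot afford. The claim ``deleting $o(k)$ vertices and $o(k)$ bad pairs leaves one component containing all but $o(k)$ vertices'' is false for $\Theta$. Every edge of $\Theta$ has cyclic length at most $T_6$. So deleting the roughly $2T_6\approx 2k^{6/7}=o(k)$ vertices just inside the endpoints of an arc of length about $k/2$ splits $\Theta$ into two halves.

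Claim~\ref{claim:iso} bounds the total size of the minor affine classes only by $4D|E_{\mathrm{bad}}|$, so you need $D\,|E_{\mathrm{bad}}|=o(k)$. In the paper $|E_{\mathrm{bad}}|\le 26|M_0|\le 52\delta k$ and $D\le 14k^{1/7}$, so $D\delta=O((\log k)^{1/5}k^{-2/35})\to0$. This is why the bridges sit at scales $k^{i/7}$ with $1/7<1/5$. It is also why an unquantified $o(k)$ exceptional set from Stage~1 would not suffice, and it is what guarantees $k-|Y|<qk$ in your final step.

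Finally, make the gluing configuration explicit. It must be the full square $\{x,x+1,x+T_i,x+T_i+1\}$. Consider the map equal to $\sigma y+b$ on $I$ and to $y\mapsto f(x)-\sigma(y-x)$ on $J$: it preserves every difference measured from the single point $x$. So a bridge and diagonals issuing from $x$ alone cannot exclude $\sigma_J=-\sigma_I$.
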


\subsection{Robust reconstruction from marked differences}

The proof of Proposition~\ref{prop:main} has a graph-specific part and a
graph-independent part. The graph-specific part, proved in
Sections~\ref{sec:H_construction}--\ref{sec:final}, shows that the
adjacency and codegree of a pair identify a finite collection of marked
cyclic differences. It follows that every near-spanning induced
embedding preserves those differences, up to sign.

In this section we prove the graph-independent local-to-global step:
once the marked differences are preserved, the embedding must agree
with a single translation or reflection on almost all vertices. A final
neighborhood-separation argument then extends the same formula to every
vertex in its domain.

For $d\in\mathbb Z_k$, we call $\{\pm d\}$ the \emph{unoriented
difference class} of $d$. A map $g:X\to\mathbb Z_k$ is said to
\emph{preserve the difference $d$} if, whenever $x,x+d\in X$,
\[
g(x+d)-g(x)\in\{\pm d\}.
\]

For $z\in\mathbb Z_k$, define its cyclic norm by
$\|z\|_k
=
\min_{m\in\mathbb Z}|\widetilde z-mk|$,
where $\widetilde z$ is any integer representative of $z$. For
$x,y\in\mathbb Z_k$, define their \emph{cyclic distance} by
$d_{\rm cyc}(x,y)=\|x-y\|_k.$
Equivalently, $d_{\rm cyc}$ is the graph distance in the standard cycle
$\operatorname{Cay}(\mathbb Z_k,\{\pm1\})$.
It should not be confused with graph distance in the constructed graph
$H$ or in the auxiliary Cayley graph $\Theta$.

Because $k$ is odd, specifying a nonzero cyclic distance represented
by $d\in\mathbb Z_k$ is equivalent to specifying the difference class
$\{\pm d\}$. Accordingly, for
a symmetric set $\Delta=-\Delta\subseteq\mathbb Z_k$, we say that a map
$g:X\to\mathbb Z_k$ \emph{preserves the cyclic distances represented
by $\Delta$} if
\[
g(x+d)-g(x)\in\{\pm d\}
\]
whenever $d\in\Delta$ and $x,x+d\in X$.

\begin{lemma}\label{lem:g-rigid}
Let $T_1,\dots,T_6$ be distinct integers satisfying $3<T_i<k/4.$
Define
$\Upsilon=\{\pm1\}\cup\bigcup_{i=1}^6\{\pm T_i\},$
and 
$\Delta
 =
\Upsilon\cup\bigcup_{i=1}^6
\{\pm(T_i+1),\pm(T_i-1)\}.
$
Let
$\Theta=\operatorname{Cay}(\mathbb Z_k,\Upsilon)$
have diameter $D$, and assume that
$D\delta<1/210.$

Let $X\subseteq\mathbb Z_k$ satisfy $|X|\ge(1-\delta)k,$
and let $g:X\to\mathbb Z_k$ be injective. Suppose that $g$ preserves the cyclic distances represented by $\Delta$, i.e., whenever
$x,x+d\in X$ and $d\in\Delta$,
\[
g(x+d)-g(x)\in\{\pm d\}.
\]
Then there exist a giant set
$C_{\rm giant}\subseteq X$ with 
$|C_{\rm giant}|\ge(1-210D\delta)k
$ on which $g$ acts as an isometry, i.e., there exists $\sigma\in\{\pm1\}$ and  $b\in\mathbb Z_k$ such that 
$g(x)=\sigma x+b$
for every $x\in C_{\rm giant}.$
\end{lemma}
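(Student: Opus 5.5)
Writing $\sigma(x)$ for the local sign of $g$ at $x$, the plan is a local-to-global propagation of this sign along the auxiliary Cayley graph $\Theta$. The correction process is $\Theta$-local, and deletions only destroy a small boundary.

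\textbf{Step 1: a local sign at each good vertex.} Call $x\in X$ \emph{good} if all points $x+d$ with $d\in\{0,\pm1\}\cup\bigcup_i\{\pm T_i,\pm T_i\pm1\}$ lie in $X$. There are $O(1)$ such shifts (at most $1+2\cdot 6\cdot 3$ besides $x$), so at most $37\delta k$ vertices are not good.

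For a good $x$, preservation of the difference $1$ gives $g(x+1)-g(x)=\epsilon\in\{\pm1\}$. Injectivity excludes $g(x+1)=g(x-1)$, so $g(x-1)=g(x)-\epsilon$ and the sign is consistent through $x$; call it $\sigma(x)$.

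Now fix $i$ and set $y=x+T_i$. We have $g(y)-g(x)=\tau T_i$ with $\tau\in\{\pm1\}$. Also $g(x+T_i+1)-g(x)\in\{\pm(T_i+1)\}$ and $g(x+T_i+1)-g(y)=\pm1$. Since $3<T_i<k/4$, the equation $\tau T_i\pm1=\pm(T_i+1)$ forces the $\pm1$ step at $y$ to equal $\tau$. The same argument with $T_i-1$, using $x+T_i-1$ and the step at $y$ backwards, shows that the local sign at $y$ equals $\tau$.

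Next apply the analogous equation starting from $x$'s own unit step, using $x+1$, $x+1+T_i-1$, and the difference $T_i-1$ or $T_i+1$ from $x+1$. This should force $\tau=\sigma(x)$. Consequently, whenever $x$ and $x+T_i$ are both good, we get $\sigma(x+T_i)=\sigma(x)$ and $g(x+T_i)=g(x)+\sigma(x)T_i$. The same holds for $T_i$ replaced by $1$.

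Checking that the companion differences really pin down $\tau=\sigma(x)$ is the main delicate computation. I would first use the triangle $x,\ x+1,\ x+T_i+1$ with differences $1$, $T_i$, $T_i+1$. Then $\sigma(x)+\tau T_i=\pm(T_i+1)$ modulo $k$, which for $3<T_i<k/4$ forces $\tau=\sigma(x)$ and the $+$ sign.

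\textbf{Step 2: propagation within a component.} Consequently the affine rule $g(z)=\sigma z+b$ with constant $(\sigma,b)$ propagates along every edge of $\Theta$ between good vertices. So $g$ is a single isometry on each connected component of $\Theta[\mathrm{Good}]$.

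\textbf{Step 3: a giant component, the main obstacle.} It remains to show that $\Theta[\mathrm{Good}]$ has a component of size at least $(1-210D\delta)k$. Here $B=\mathbb Z_k\setminus\mathrm{Good}$ satisfies $|B|\le 37\delta k$.

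I would use a diameter--boundary (edge-isoperimetric) inequality for the vertex-transitive graph $\Theta$ of diameter $D$. For any $S$ with $|S|\le k/2$, the vertex boundary satisfies $|\partial S|\ge |S|/(2D)$, or a comparable bound. This is proved by a standard routing argument: fix a geodesic for each generator word, and observe that translates of paths from $S$ to its complement must each cross the boundary, with each boundary vertex used $O(D)$ times, using the $13$ generators.

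Suppose the components of $\Theta[\mathrm{Good}]$ are grouped into a union $S$ with $|S|\le k/2$ and $\partial S\subseteq B$. Then $|S|\le 2D|B|\cdot c$. Choosing the grouping so that either a single component exceeds $k/2$ or some union lies in $[k/4,k/2]$ yields a contradiction when $D\delta<1/210$. Hence a component of size at least $k-|B|-2cD|B|\ge(1-210D\delta)k$ exists, with the constants tuned to give exactly $210$. That component is $C_{\rm giant}$.
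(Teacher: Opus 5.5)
Your Steps 1--2 are correct, and they give a pointwise alternative to the paper's argument, which instead makes $g$ affine on each maximal interval of $X$ and merges intervals through $T_i$-squares. One slip: the triangle $x,x+1,x+T_i+1$ determines the sign of the $T_i$-step at $x+1$, not at $x$. The check you actually need is the triangle $x,x+1,x+T_i$ with companion difference $T_i-1$, which gives $\tau=\sigma(x)$ directly.

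The gap is in Step 3, which is where the constant $210$ has to be earned.
\begin{itemize}
\item The good condition involves $2+36=38$ nonzero shifts plus $x$ itself, so the correct bound is $|B|\le 39\delta k$, not $37\delta k$.
\item More importantly, the inequality you state, $|\partial S|\ge|S|/(2D)$, is too weak. For a union $S$ of components with $k/4\le|S|\le k/2$, it only yields $k/4\le 2D|B|\le 78D\delta k$. This is perfectly consistent whenever $1/312\le D\delta<1/210$, and any extra factor $c$ from a routing argument only makes it worse.
\end{itemize}
So the claimed contradiction does not follow from $D\delta<1/210$, and ``the constants tuned to give exactly $210$'' is unjustified.

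The repair is to keep the full strength of the translation argument rather than routing. Write each $z\in\mathbb Z_k$ as a word of length at most $D$ in $\Upsilon$. The map $x\mapsto x-s$ injects $S\setminus(S+s)$ into the outer vertex boundary $\partial^{\mathrm{out}}S$, and $\partial^{\mathrm{out}}S\subseteq B$ since $S$ is a union of components of $\Theta[\mathrm{Good}]$. Summing over $z$ then gives
$|S|(k-|S|)\le kD\,|\partial^{\mathrm{out}}S|\le 39D\delta k^2<\tfrac{39}{210}k^2$.
For $|S|\in[k/4,k/2]$ the left side is at least $3k^2/16$. Since $39\cdot16=624<630=3\cdot210$, this is a contradiction, though only just. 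Hence some component $C^*$ has $|C^*|>k/2$. Applying the same inequality to the union of the remaining components, whose size is below $k/2$, bounds that union by $2D|B|$. This gives $|C^*|\ge k-39\delta k-78D\delta k\ge(1-117D\delta)k$.

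The paper avoids wasting constants at the outset:
\begin{itemize}
\item it keeps whole maximal intervals and discards only singleton intervals, so $|M_0|\le2\delta k$;
\item it counts bad $\Theta$-edges, at most $26|M_0|$, rather than bad vertices;
\item it sums the edge-isoperimetric bound over all minor affine classes, which is affordable because each bad edge lies on at most two class boundaries.
\end{itemize}
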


\begin{proof}

Consider the connected components of the subgraph of the cycle
$\operatorname{Cay}(\mathbb Z_k,\{\pm1\})$
induced by $X$. If $X\ne\mathbb Z_k$, each component is a path (maximal interval) and can
be written uniquely in the form $I=\{x_0,x_0+1,\dots,x_0+\ell-1\}$,
where $x_0-1\notin X$. If $X=\mathbb Z_k$, there is a single component, namely the whole
cycle; in this case choose any $x_0\in\mathbb Z_k$.

We claim that on every component containing at least two vertices,
there exist $\sigma_I\in\{\pm1\}$ and $b_I\in\mathbb Z_k$ such that
\[
g(x)=\sigma_Ix+b_I
\qquad\text{for every }x\in I.
\]
In other words, vertices in the same component share the same  affine map.  
Indeed, as $1 \in \Delta$, preservation of the marked difference $1$ gives
$g(x_0+1)-g(x_0)=:\sigma_I\in\{\pm1\}.$
For some $j \geq 1$, 
suppose that
$g(x_0+j)-g(x_0+j-1)=\sigma_I.$
Preservation of the difference $1$ gives
$g(x_0+j+1)-g(x_0+j)\in\{\pm1\}.$
The value $-\sigma_I$ is impossible, since it would give
$g(x_0+j+1)=g(x_0+j-1)$,
contrary to the injectivity of $g$. Hence every successive difference
equals $\sigma_I$, and therefore by induction,
\[
g(x_0+j)=g(x_0)+j\sigma_I.
\]
Thus $g(x)=\sigma_Ix+b_I$ throughout $I$, where
$b_I=g(x_0)-\sigma_Ix_0$.

If $X=\mathbb Z_k$, this affine formula holds on all of $X$, and the lemma follows. Henceforth assume that $X\ne\mathbb Z_k$.

Let 
$I_1,\dots,I_m$ be the components.  
Discard every component consisting of one element only, and let $X_0$ be
the union of the remaining components. Let
$M_0=\mathbb Z_k\setminus X_0$ be the missing set. 
There are at most $|\mathbb Z_k\setminus X|$ singleton intervals as this number bounds $m$. Therefore,
\begin{equation}\label{eq:M0bound}
|M_0|\le |\mathbb Z_k\setminus X| + |X \setminus X_0| \leq 2\delta k.
\end{equation}

We just showed that on each interval $I_i$ the function $g$ acts strictly as an isometry $g(x) = \sigma_i x + b_i$ where $\sigma_i \in \{\pm 1\}$. 
Now, we try to synchronize these local isometries into a global isometry, at least on a large component $C_{\textrm{giant}}$.
To this end, we define a “strong link" with respect to a bridge $T_i$ as a {\it $T_i$-square} of 4-vertices in $X$: \[\{x, x+1, x+T_i, x+T_i+1\}.\] 

\begin{claim}\label{claim:merge}
    If $I_A$ and $I_B$ are connected by a strong link, i.e., there is a $T_i$-square intersecting both $I_A$ and $I_B$, then $\sigma_A=\sigma_B$ and $b_A=b_B$.
\end{claim}
\begin{proof}[Proof of Claim:]
Let $I_A$ and $I_B$ be connected by a strong link. We have already shown the claim when $A = B$, so we assume $A\ne B$.
Then necessarily we have without loss of generality that $x, x+1$ are in $I_A$ and $x+T_i, x+T_i+1$ in $I_B$, as $I_A, I_B$ are the components and thus maximal intervals. 

Recall that $g(x) = \sigma_A x + b_A$, $g(x+1) = \sigma_A (x+1) + b_A$, and 
$g(x + T_i) = \sigma_B (x+T_i) + b_B$, $g(x+T_i + 1) = \sigma_B(x+T_i+1) + b_B$.
From the definition of the distance preservation property of $g$, we have 
$\{\pm T_i\} \ni g(x+T_i) - g(x) = (\sigma_B (x+T_i) + b_B) - (\sigma_A x + b_A)$ and so
\begin{equation}
    (\sigma_B - \sigma_A)x + \sigma_B T_i + b_B - b_A \in \{\pm T_i\}. \label{eq:difference}
\end{equation}
Similarly, 
$\{\pm T_i\} \ni g(x+T_i+1) - g(x+1) = (\sigma_B (x+T_i+1) + b_B) - (\sigma_A (x+1) + b_A)$ and therefore
\[(\sigma_B - \sigma_A)x + (\sigma_B - \sigma_A) + \sigma_B T_i + b_B - b_A \in \{\pm T_i\}.\] 
Subtracting these two inequalities gives us
\[ \sigma_B - \sigma_A \in \{-2T_i, 0, 2T_i\}.\]
Because $\sigma_A,\sigma_B\in\{\pm1\}$ and $3<T_i<k/4$, the congruence 
$\sigma_B-\sigma_A\in\{-2T_i,0,2T_i\}$
forces $\sigma_B=\sigma_A$.
 This proves the first point in the claim.

Now we will prove $b_A = b_B$.
Plugging in $\sigma_A = \sigma_B$ into (\ref{eq:difference}), 
we have 
\begin{equation}\sigma_B T_i + b_B - b_A \in \{\pm T_i\}. \label{eq:b_shift}
\end{equation}
We already proved that $\sigma_A=\sigma_B=:\sigma$ and $\sigma \in \{\pm 1\}$. 
Substituting this into (\ref{eq:b_shift}) gives $
b_B-b_A\in\{0,-2\sigma T_i\}.$
If $b_B - b_A = 0$ then we are done.

Suppose instead that
\[
b_B-b_A=-2\sigma T_i.
\]
Since a diagonal in the $T_i$-square has difference $T_i+1\in\Delta$, its image difference
must belong to $\{\pm(T_i+1)\}$. However,
\begin{align*}
g(x+T_i+1)-g(x)
=\sigma(x+T_i+1)+b_B-(\sigma x+b_A)
=\sigma(T_i+1)+b_B-b_A
=\sigma(1-T_i),
\end{align*}
and $\sigma(1-T_i)\notin\{\pm(T_i+1)\}.$
This contradiction shows that
$b_B-b_A=0$.
\end{proof}

We showed that any two intervals $I_A,I_B$ connected by a strong link forces $g$ to be the exact same affine map on both $I_A$ and $I_B$. Now we show that there is a large component of intervals $I_A$ that is connected by strong links.

Call an edge of the auxiliary graph $\Theta = \operatorname{Cay}(\mathbb{Z}_k, \Upsilon)$ ``bad" if either

\begin{enumerate}
    \item it is an edge of difference $1$ incident with $M_0$; or
    \item it is an edge $\{u,u+T_i\}$ for which at least one of
    $\{u, u+1, u+T_i, u+T_i+1\}$
    belongs to the missing set $M_0$.
\end{enumerate}

It remains to bound the number of bad edges. Every vertex $z\in M_0$ is
incident with at most two unit edges. For a fixed $i$, the vertex $z$ can occur in
at most four $T_i$-squares, corresponding to
$u=z,u=z-1,u=z-T_i,
u=z-T_i-1.$
Since there are six choices of indices $i$, 
\begin{equation}\label{eq:Ebad}
|E_{\rm bad}|
 \le (2+4\cdot6)|M_0|
 =26|M_0|
 \le52\delta k = o(k).
\end{equation}

Group the intervals according to their affine parameters
$(\sigma_A,b_A)$. For each pair $(\sigma,b)$, let $C_{\sigma,b} \subset X_0$ be the
union of the vertices in all intervals carrying the affine map
$x\mapsto\sigma x+b$, and we call  $C_{\sigma,b} \subset X_0$ an {\it affine class}.  We claim that every edge of $\Theta$ leaving an affine class is bad. Indeed,
if a unit edge has both endpoints in $X_0$, then its endpoints belong to
the same maximal interval and hence to the same affine class. Similarly,
if an edge $\{u,u+T_i\}$ is not bad, then all four vertices
$\{u, u+1, u+T_i, u+T_i+1\}$
belong to $X_0$. They form a strong link, so by Claim \ref{claim:merge} the intervals containing
$u$ and $u+T_i$ have the same affine parameters. Therefore the edge
cannot leave its affine class. We will combine this observation with
the later isoperimetric estimate for $\Theta$ (Claim \ref{claim:iso}).

We call an affine class $C_{\sigma,b}$ {\it minor} if
$|C_{\sigma,b}|\le k/2$. 
We will now show that there is a giant component. We prove this by utilizing a standard diameter-boundary estimate, whose short proof is
included for completeness. For $C\subseteq V(\Theta)$, define
\[
\partial_\Theta C
 =
\bigl\{\{x,y\}\in E(\Theta):|\{x,y\}\cap C|=1\bigr\}.
\]
Thus $\partial_\Theta C$ is the set of edges of $\Theta$ with exactly one
endpoint in $C$.

\begin{claim}\label{claim:iso}
For every $C\subseteq V(\Theta)$ with $|C|\le k/2$,
    \begin{equation}
|\partial_\Theta C|
\ge
\frac{|C|}{2D}.
 \label{eq:edgeoundary}
\end{equation}
\end{claim}

\begin{proof}[Proof of Claim]

For $z\in\mathbb Z_k$, we can write $
z=s_1+\cdots+s_t$ for some $t\le D$ and  $s_j\in\Upsilon,$ as $D$ is the diameter of the auxilliary graph $\Theta = \operatorname{Cay}(\mathbb{Z}_k, \Upsilon)$.  Write $r_j=s_1+\cdots+s_j$, with $r_0=0$. Then
$
|C\setminus(C+z)|
\le
\sum_{j=1}^t
|(C+r_{j-1})\setminus(C+r_j)|
=
\sum_{j=1}^t|C\setminus(C+s_j)|
\le D|\partial_\Theta  C|.
$
To see why the last inequality holds, notice for every $s\in\Upsilon$, each $x\in C\setminus(C+s)$ gives the boundary
edge $\{x-s,x\}$, and these edges are distinct as $x$ varies. Hence 
$|C\setminus (C+s)|\le|\partial_\Theta C|$. 

Then, we sum up this inequality over all $z \in \mathbb{Z}_k$. The left hand side is $|C|(k - |C|)$ because it is summing over $c \in C$, the number of elements $z$ not in $c - C$ which is $k - |C|$. Thus \[|C|(k - |C|) \leq k \cdot D \cdot |\partial_\Theta  C|.\] Since $|C| \leq k/2$, it follows that $|\partial_\Theta  C| \geq {|C|}/({2D})$ which proves (\ref{eq:edgeoundary}). 
\end{proof}

Applying Claim~\ref{claim:iso} to each minor affine class and using
\eqref{eq:Ebad}, and the observation that every edge leaving an affine class is bad, we obtain
\begin{equation}
\sum_{C_{\sigma,b}\ {\rm minor}}|C_{\sigma,b}|
\le
2D\sum_{C_{\sigma,b}\ {\rm minor}}|\partial_\Theta C_{\sigma,b}|
\le
4D|E_{\rm bad}|
\le
208D\delta k.
\label{eq:minorsum}
\end{equation}
Here an extra factor $2$ in the second inequality appears because a bad edge may
belong to the boundaries of two different affine classes.

There can be at most one affine class of size greater than $k/2$. If no
such class existed, then \eqref{eq:minorsum} would give
$|X_0|\le208D\delta k.$
But
$
|X_0|\ge k-|M_0|\ge(1-2\delta)k,
$
contradicting $208D\delta < (1-2\delta)$ by our assumption on $D, \delta$.
Therefore there is a unique giant affine class $C_{\rm giant}$. Thus by 
(\ref{eq:M0bound}) and (\ref{eq:minorsum}), we have
$|C_{\rm giant}| \ge k-2\delta k-208D\delta k
\ge (1-210D\delta)k$, and 
a single affine map
$g(x)=\sigma x+b$ holds on $C_{\rm giant}$.
\end{proof}

We next show that under assumptions on the graph $H$, the
affine formula valid on $C_{\rm giant}$ extends to every vertex of $X$.

\begin{lemma}\label{lem:outlier}
Let $H=\operatorname{Cay}(\mathbb Z_k, S)$ be an abelian Cayley graph on $\mathbb Z_k$. Let $X\subseteq \mathbb Z_k$ with $|X| \geq (1-\delta)k$ and let $f:H[X]\to H$ be an induced embedding. Let $C_{\textrm{giant}} \subseteq X$ be a subset such that restricted to $C_{\textrm{giant}}$, the embedding is $f(x) = \sigma x + b$ for some $\sigma \in \{ \pm 1\}$ and some $b \in \mathbb{Z}_k$. 

If the common neighborhood in $H$ of any two distinct $u,v\in V(H)$ is strictly smaller than $|S| - \frac{1}{2}(k - |C_{\textrm{giant}}|)$, then $f = \sigma x + b$ on all of $X$.  
\end{lemma}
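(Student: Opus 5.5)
The plan is to argue by contradiction, one vertex at a time, comparing $f$ with the global automorphism $\varphi(y)=\sigma y+b$ of $H$. Since $H$ is an undirected Cayley graph on $\mathbb Z_k$, both translations and the reflection $y\mapsto -y$ preserve adjacency, so $\varphi$ is an automorphism of $H$. Write $C=C_{\rm giant}$. Suppose some $x_0\in X$ has $f(x_0)\neq\varphi(x_0)$; in particular $x_0\notin C$. Set $u'=\varphi^{-1}(f(x_0))$, so that $u'\neq x_0$ and $\varphi(u')=f(x_0)$. The aim is to show that $x_0$ and $u'$ have too many common neighbours, contradicting the codegree hypothesis.

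First I show that $u'\notin C$. If $u'\in C$, then $f(u')=\varphi(u')=f(x_0)$ with $u'\neq x_0$, which contradicts the injectivity of $f$.

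Next, $x_0$ and $u'$ are indistinguishable from $C$. Take any $c\in C$. Then $c\neq x_0$ and $c\neq u'$, and we compute
\[
a_H(x_0,c)=a_H(f(x_0),f(c))=a_H(\varphi(u'),\varphi(c))=a_H(u',c).
\]
The first equality holds because $f$ is an induced embedding on $X\ni x_0,c$. The second uses $f=\varphi$ on $C$ and $f(x_0)=\varphi(u')$. The third holds because $\varphi$ is an automorphism. Hence no vertex of $C$ lies in the symmetric difference $N(x_0)\triangle N(u')$, so this symmetric difference is contained in $\mathbb Z_k\setminus C$.

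Now I count. Every vertex of $H$ has degree $|S|$, so
\[
|N(x_0)\triangle N(u')|=2\bigl(|S|-|N(x_0)\cap N(u')|\bigr).
\]
It follows that
\[
|N(x_0)\cap N(u')|\ge |S|-\tfrac12(k-|C|).
\]
This contradicts the hypothesis that every pair of distinct vertices has common neighbourhood strictly smaller than $|S|-\tfrac12(k-|C_{\rm giant}|)$. Therefore $f(x)=\sigma x+b$ for every $x\in X$.

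The one delicate point is the bookkeeping of the two special vertices $x_0$ and $u'$. If they are adjacent, each lies in the symmetric difference, so they must not be counted inside $C$. The injectivity step, which places $u'$ outside $C$ (and $x_0$ is outside $C$ by choice), is exactly what makes the factor $\tfrac12$ and the strict inequality in the hypothesis come out with no additive error.
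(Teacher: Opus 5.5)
Your proof is correct and is essentially the paper's argument. The paper works with the shift $\tau=\sigma w-\sigma b-z$ (where $z=x_0$, $w=f(z)$) and shows that $(C_{\rm giant}-z)\cap\bigl(S\triangle(S+\tau)\bigr)=\varnothing$. This is exactly your statement that $C_{\rm giant}$ misses $N(x_0)\triangle N(u')$, since your $u'=\varphi^{-1}(f(x_0))$ equals $z+\tau$. Your automorphism phrasing makes explicit the use of injectivity to keep $u'$ out of $C_{\rm giant}$, which the paper uses only implicitly.
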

\begin{proof}
It suffices to prove that the affine formula valid on
$C_{\rm giant}$ also holds on $X\setminus C_{\rm giant}$.

Fix $z\in X\setminus C_{\rm giant}$ and write $w=f(z)$. Since
$f$ is an induced embedding, every $x\in C_{\rm giant}$ has the
same adjacency to $z$ as $f(x)$ has to $w$. Hence
\[
x-z\in S
\iff
(\sigma x+b)-w\in S
\iff
(x-z)-\tau\in S,
\]
where  $\tau=\sigma w-\sigma b-z.$
Here we used $S=-S$ and $\sigma\in\{\pm1\}$.

If $\tau=0$, then $w=\sigma z+b$, as required. Suppose therefore
that $\tau\ne0$. Put
$Y=C_{\rm giant}-z.$
For every $y\in Y$, the preceding equivalence gives
$y\in S
\iff
y-\tau\in S
\iff
y\in S+\tau$.
Thus
$Y\cap\bigl(S\triangle(S+\tau)\bigr)=\varnothing,$
and therefore
\begin{equation}
 |S\triangle(S+\tau)|
\le
k-|Y|
=
k-|C_{\rm giant}|. \label{eq:S}   
\end{equation}

On the other hand, since $\tau\ne0$, the vertices $0$ and $\tau$
are distinct. Their common neighborhood has size
$|S\cap(S+\tau)|$.
By the hypothesis of the lemma,
$|S\cap(S+\tau)|
<
|S|-\frac12\bigl(k-|C_{\rm giant}|\bigr)$.
Consequently,
$|S\triangle(S+\tau)|
=2|S|-2|S\cap(S+\tau)|>
k-|C_{\rm giant}|$,
contradicting (\ref{eq:S}). Hence $\tau=0$, and so
$f(z)=\sigma z+b$. Since $z$ was arbitrary, the same formula holds
on all of $X$.

\end{proof}

\section{Construction of the Cayley graph $H$}
\label{sec:H_construction}

We construct $H$ by working backwards from the results of
Section~\ref{sec:reconstruction}. The heavy lifting in that section
reduces the robust reconstruction requirement in
Definition~\ref{def:reasonable_part_two}\textup{(3)} to a concrete target. We
must construct a cyclic Cayley graph for which every near-spanning
induced embedding preserves a suitable finite set $\Delta$ of cyclic
differences. Once this preservation is known, the small-diameter and
consistency arguments of Section~\ref{sec:reconstruction} force the
embedding to agree with a single translation or reflection. A final
neighborhood-separation argument then recovers any remaining exceptional
vertices.

Section~\ref{sec:reconstruction} does not yet imply how to make
an induced embedding preserve the differences in $\Delta$.
The first
main idea of the construction is to encode these differences by
graph-theoretic statistics. For a cyclic Cayley graph
$H=\operatorname{Cay}(\mathbb Z_k,S)$, let
\[
c(d)=|N_H(0)\cap N_H(d)|
\]
denote the codegree associated with two vertices in $H$ of cyclic difference $d$. We design $H$
so that every marked difference class $\{\pm d\}$ can be identified
from the pair
$\bigl(\mathbf 1_S(d),c(d)\bigr)$,
consisting of whether two vertices at difference $d$ are adjacent and
how many common neighbors they have. Thus the labels of the marked
differences are recorded intrinsically in the graph and they are also separated from unmarked differences. Therefore, for any induced embedding  $f:H[X]\to H$ where $X$ is large, any two vertices $x,y$ in $X$ of cyclic distance $d \in \Delta$, whether $x,y$ are adjacent and their common neighbors in $X$ is close to  $\bigl(\mathbf 1_S(d),c(d)\bigr)$, and the same applies to their images. Therefore if $c(d)$ for $d \in \Delta$ are sufficiently separated, they have to be mapped to two vertices of cyclic distance $d \in \Delta$ as well.

Given this idea, the next challenge is how to construct such
well-separated levels $(1_S(d), c(d))$ simultaneously for all marked differences $d\in \Delta$, while
preventing any of the other $k-O(1)$ differences from acquiring the
same signature.

 Our idea is to use a Paley-type connection set defined by
several simultaneous quadratic-residue conditions. The resulting
character-sum estimates relate $c(d)$ to the overlap $|A_0\cap(A_0-d)|$ of a finite auxiliary set $A_0$ with its translate. Thus the required
graph construction is reduced to a finite additive problem: build
$A_0$ so that the marked differences have controlled large overlaps,
whereas every unmarked difference has only a small overlap.

We now formalize this construction in two stages. First, we define a
Cayley graph from an arbitrary additive template. We then give an
explicit template and prove all of its properties that will be used in
the remainder of part I.

\begin{definition}[The Cayley graph associated with an additive template]
\label{def:additivetemplate}
Let $k\equiv1\pmod4$ be prime, and let
$G_1,\dots,G_6\subseteq G_0\subseteq\mathbb Z_k$
and 
$T_1,\dots,T_6\in\mathbb Z_k$.
For $1\le i\le6$, define
\[
B_0
 =
(G_0+\{0,1\})\cup(-G_0-\{0,1\}),
\qquad
B_i=G_i+\{0,1\},
\]
and
\[
A_0
 =
B_0\cup
\bigcup_{i=1}^{6}
\bigl((T_i+B_i)\cup(-T_i-B_i)\bigr).
\]

For every finite set $A\subseteq\mathbb Z_k$, put
\[
Q(A)
 =
\{x\in\mathbb Z_k:
  \chi(x+a)=1\text{ for every }a\in A\},
\]
where $\chi$ denotes the Legendre symbol modulo $k$. Define
\[
D^+
 =
\bigcup_{i=1}^{6}\{\pm(T_i+1)\},
\qquad
D^-
 =
\bigcup_{i=1}^{6}\{\pm(T_i-1)\},
\]
and
\[
\widetilde S=Q(A_0)\setminus\{0\},
\qquad
S=(\widetilde S\cup D^+)\setminus D^-.
\]
Assume that $0\notin D^+\setminus D^-$, so that $0\notin S$.
The Cayley graph associated with this additive template is
$H=\operatorname{Cay}(\mathbb Z_k,S).$
\end{definition}
We shall use the two symmetric sets of differences
\[
\Upsilon
 =
\{\pm1,\pm T_1,\dots,\pm T_6\}
\]
and
\[
\Delta
 =
\Upsilon
\cup
\bigcup_{i=1}^{6}
\{\pm(T_i+1),\pm(T_i-1)\}.
\]

We now construct a concrete $H$ by specifying the different sets above. Throughout this section,
$k\equiv1\pmod4$ is prime and
$\chi:\mathbb Z_k\to\{-1,0,1\}$ denotes the Legendre symbol. Since
$\chi(-1)=1$, the set of nonzero quadratic residues is invariant under
negation. A set $A\subseteq\mathbb Z$ is {\it Sidon} if every equality
$a+b=c+d$ with $a,b,c,d\in A$ implies
$\{a,b\}=\{c,d\}$.
Given two sets $A, B$, define $A + B = \{a+b: a \in A, b \in B\}$, and $A - B = \{a-b: a \in A, b \in B\}$. The \emph{greedy Sidon
sequence} $(a_j)_{j\ge1}$ is defined recursively by $a_1=1$ and, for
$j\ge2$, by taking $a_j$ to be the smallest integer larger than
$a_{j-1}$ for which $\{a_1,\dots,a_j\}$
is a Sidon set.

\begin{definition}[Construction of $H$]\label{def:Hconstruction}
Fix six distinct odd primes $11\le M_1<\cdots<M_6$ and an integer
$L>\sum_{i=1}^{6}M_i$. Let $a_1,\dots,a_L$ be the first $L$ terms of
the greedy Sidon sequence, put $g_j=La_j$ and
$G_0=\{g_1,\dots,g_L\}$. Writing $m_0=0$ and
$m_i=\sum_{h=1}^{i}M_h$, define
$G_i=\{g_{m_{i-1}+1},\dots,g_{m_i}\}$.

For every sufficiently large prime $k\equiv1\pmod4$, let
$T_i=\lfloor k^{i/7}\rfloor$ for $1\le i\le6$. 
Let $H=\operatorname{Cay}(\mathbb Z_k,S)$ be the Cayley graph with the additive
template in Definition~\ref{def:additivetemplate}. 
    
\end{definition}

\begin{lemma}[Explicit realization of the additive template]
\label{lem:distinctcod}
Let $H=\operatorname{Cay}(\mathbb Z_k,S)$ be the Cayley graph constructed as in Definition \ref{def:Hconstruction}. 
For every sufficiently large prime $k\equiv1\pmod4$, it  has the following
properties.

\begin{enumerate}
    \item The auxiliary graph
    $\Theta=\operatorname{Cay}(\mathbb Z_k,\Upsilon)$ has diameter at
    most $14k^{1/7}$.

    \item For $A_d=A_0\cap(A_0+d)$, one has
    \begin{align*}
    |A_0|&=4L+4\sum_{i=1}^{6}M_i,\\
    |A_{\pm1}|&=2L+2\sum_{i=1}^{6}M_i,\\
    |A_{\pm T_i}|&=4M_i &&(1\le i\le6),\\
    |A_{\pm(T_i+1)}|
      =|A_{\pm(T_i-1)}|&=2M_i &&(1\le i\le6).
    \end{align*}
    Moreover, $|A_d|\le16$ whenever
    $d\notin\Delta\cup\{0\}$. The displayed marked values are pairwise
    distinct except for
    $|A_{\pm(T_i+1)}|=|A_{\pm(T_i-1)}|$ for the same $i$, and every
    marked value is greater than $16$.

    \item If $O_{\max}=2L+2\sum_{i=1}^{6}M_i$, then
    $\max_{d\ne0}|A_d|=O_{\max}=|A_0|/2$.

    \item The connection set satisfies $D^+\subseteq S$,
    $D^-\cap S=\varnothing$, $S=-S$, $0\notin S$, and
    $|S\mathbin{\triangle}Q(A_0)|\le25$. In particular, $H$ is
    undirected and loopless.
\end{enumerate}
\end{lemma}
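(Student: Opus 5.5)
The plan is to prove the four items separately. Everything rests on one observation: for $k$ large, $A_0$ is a union of $13$ \emph{clusters}, namely $B_0$ and $\pm(T_i+B_i)$ for $1\le i\le 6$. Each cluster lies within distance $K:=La_L+1$ of its centre $0$ or $\pm T_i$, and $K$ is a constant independent of $k$. Since $T_i=\lfloor k^{i/7}\rfloor$, the centres and all their sums and differences $\pm T_i\pm T_j$ are pairwise separated by far more than $4K$, and they are distinct from $0$ unless they trivially cancel. So every difference $a-a'$ with $a,a'\in A_0$ determines the ordered pair of clusters, and hence the ``centre difference'' $c\in\{0,\pm T_i,\pm T_i\pm T_j,\pm2T_i\}$. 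The remainder $a-a'-c$ is a bounded integer of the form $\pm g\pm g'+e$ with $e\in\{-2,\dots,2\}$.

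\textbf{Item (1).} Given $z\in\mathbb Z_k$, pick a representative with $|z|<k/2$ and expand greedily in the ``mixed radix'' $T_6>T_5>\dots>T_1>1$. The coefficients are $c_6=\lfloor |z|/T_6\rfloor\le k/(2T_6)+1$, then $c_5\le T_6/T_5+1$, and so on, with the final unit coefficient below $T_1$. Each ratio $T_{i+1}/T_i$ (with $T_7:=k$, $T_0:=1$) is at most $k^{1/7}+1$. Hence the word length is at most $7(k^{1/7}+2)\le14k^{1/7}$.

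\textbf{Items (2) and (3).} First, $|A_0|$: the $13$ clusters are disjoint. Inside $B_0$, the $4L$ elements $\pm g_j,\pm(g_j+1)$ are distinct, because the $g_j$ are distinct positive multiples of $L\ge 3$. Each $\pm(T_i+B_i)$ has $2M_i$ elements. This gives $4L+4\sum M_i$.

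Next, compute $|A_d|$ by counting representations $d=a-a'$, split by the ordered pair of clusters.
\begin{itemize}[leftmargin=*]
\item Same cluster: the remainder is $g-g'+e$ (or $g+g'+e$ between the two halves of $B_0$). Since all $g$ are multiples of $L>2$, the value $\pm1$ arises only from $g=g'$. This yields $L$ pairs from each half of $B_0$ and $M_i$ pairs from each of $\pm(T_i+B_i)$, so $|A_{\pm1}|=2L+2\sum M_i$.
\item Centre difference $\pm T_i$: this comes from the cluster pairs $(T_i+B_i,\,B_0^+)$ and $(B_0^-,\,-T_i-B_i)$. Since $B_i\subseteq G_0+\{0,1\}$, the remainder $0$ is attained by $2M_i$ pairs on each side, giving $4M_i$. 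The remainders $\pm1$ force $g=g'$ with $\epsilon-\epsilon'=\pm1$, giving $M_i$ pairs per side, hence $|A_{\pm(T_i\pm1)}|=2M_i$.
\item Every other $d\ne0$: its remainder is either a nonzero multiple of $L$ plus $e$, or a sum $g+g'+e$. By the Sidon property, each value of $a_j-a_l\neq0$ and each value of $a_j+a_l$ fixes $\{j,l\}$. This leaves $O(1)$ choices of $(\epsilon,\epsilon')$ and at most two admissible orientations or cluster pairs. A careful tally should give $|A_d|\le16$.
\end{itemize}

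For the claims about marked values: all of them exceed $16$ because $M_i\ge11$. They are pairwise distinct apart from the stated coincidence, because the $M_i$ are distinct odd primes (so $2M_i\ne4M_j$ and $4M_i\ne4M_j$), and because $2L+2\sum M_i>4M_6$, which follows from $L>\sum M_i$. The same inequality gives item (3).

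\textbf{Item (4).} Because $T_{i+1}-T_i\gg2$, the sets $D^+$ and $D^-$ are disjoint and avoid $0$. This gives $D^+\subseteq S$ and $D^-\cap S=\varnothing$. Since $A_0=-A_0$ and $\chi(-1)=1$, we have $\chi(-x+a)=\chi(x-a)$, so $Q(A_0)=-Q(A_0)$; together with $D^\pm=-D^\pm$ this yields $S=-S$. Finally, $S$ differs from $Q(A_0)$ by at most the point $0$ and the $24$ elements of $D^+\cup D^-$, so $|S\triangle Q(A_0)|\le25$.

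\textbf{Main obstacle.} I expect the hardest step to be the uniform bound $|A_d|\le16$ for unmarked $d$. It requires a careful case check over all cluster pairs, cross-cluster as well as inside $B_0$. In each case we must confirm that the Sidon property, combined with the scale separation of the $T_i$, limits the multiplicity to a small constant. We must also confirm that no unmarked $d$ accidentally collides with a marked one.
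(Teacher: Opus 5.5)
Your treatment of items (1) and (4), the value of $|A_0|$, the exact marked counts, and the comparison of the marked values all follow the paper's proof: scale separation of the thirteen blocks, Euclidean division by $T_6,\dots,T_1$, and $\chi(-1)=1$. One small slip there is that $T_{i+1}/T_i$ can exceed $k^{1/7}+1$ by $O(k^{-1/7})$ because of the floors, but your slack absorbs this.

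The genuine gap is the bound $|A_d|\le16$ for $d\notin\Delta\cup\{0\}$, which you leave as ``a careful tally should give''. It is not optional: item (3), and the claim that every marked value exceeds every unmarked one, both rest on it. Moreover, the premise you set up for that tally is false. A difference $a-a'$ determines only the centre difference $c$, not the ordered pair of clusters. You already see this for $d=T_i$, and it also happens for unmarked $d$. Since $a_4-a_2=6=a_2+a_3$ in the greedy sequence $1,2,4,8,\dots$, one checks that $|A_{6L}|=10$. The contributions come from inside each half of $B_0$, from $B_0^-$ to $B_0^+$, and from inside $T_1+B_1$ and $-T_1-B_1$. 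A count that assumes a unique cluster pair would therefore undercount.

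To close the gap, argue as the paper does. Write each $v\in A_0$ uniquely as $v=t_v+s_v+\epsilon_v$, with $t_v\in\{0,\pm T_i\}$, $s_v\in\pm G_0$, and $\epsilon_v\in\{0,1\}$ or $\{0,-1\}$ according to the sign of $s_v$. Take two representations $y-x=y'-x'=d$. Scale separation gives $t_y-t_x=t_{y'}-t_{x'}$. Then $(s_y-s_x)-(s_{y'}-s_{x'})\in L\mathbb Z$, while the $\epsilon$-differences differ by at most $4<L$. So $\sigma:=s_y-s_x$ and $e:=\epsilon_y-\epsilon_x$ are each determined by $d$.

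If $\sigma=0$, then $x$ and $y$ carry the same signed generator, so $c\in\{0,\pm T_i\}$ and $|e|\le1$, which forces $d\in\Delta\cup\{0\}$. Hence $\sigma\ne0$ for unmarked $d$. Now count in three steps:
\begin{enumerate}
\item By the Sidon property there are at most four ordered pairs $(s_x,s_y)\in(\pm G_0)^2$ with $s_y-s_x=\sigma$: one in each same-sign pattern, and two in the opposite-sign pattern permitted by the sign of $\sigma$. All four do occur for $d=6L$.
\item For each such pair, at most two ordered block pairs satisfy $t_y-t_x=c$. Each signed generator lies in one half of $B_0$ and in at most one satellite. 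The four candidate values $0,\,-t^*_x,\,t^*_y,\,t^*_y-t^*_x$ can coincide only in the pairs $0=t^*_y-t^*_x$ and $-t^*_x=t^*_y$.
\item For each block pair, at most two choices of $(\epsilon_x,\epsilon_y)$ have difference $e$.
\end{enumerate}
This gives $|A_d|\le4\cdot2\cdot2=16$.
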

\begin{proof}
Let $\mathcal T=\{0,\pm T_1,\dots,\pm T_6\}$. We first establish the
separation properties needed for the overlap calculation. For all
sufficiently large $k$, we have $T_1>10g_L$,
$4T_6+4g_L+4<k$, and
\[
\min_{\substack{u,v\in\mathcal T-\mathcal T\\u\ne v}}
\|u-v\|_k>4g_L+4.
\tag{11}\label{eq:scale-separation}
\]
Indeed, $T_1\to\infty$ as $k$ goes to infinity and
$T_6=O(k^{6/7})=o(k)$. If
$u,v\in\mathcal T-\mathcal T$ are distinct, then $u-v$ is a nonzero
integer linear combination $\sum_{i=1}^{6}c_iT_i$ with uniformly
bounded coefficients. If $j$ is the largest index for which
$c_j\ne0$, then
$u-v=c_jk^{j/7}+O(k^{(j-1)/7})$. Hence $|u-v|\to\infty$, while
$|u-v|\le4T_6=o(k)$, and consequently $\|u-v\|_k\to\infty$. Since
only finitely many coefficient vectors can occur,
\eqref{eq:scale-separation} holds uniformly.

We first prove the diameter bound. Put $Q=k^{1/7}$ and $T_0=1$. Given
$x\in\{0,\dots,k-1\}$, perform successive Euclidean divisions by
$T_6,T_5,\dots,T_1$. This gives
$x=a_0+\sum_{i=1}^{6}a_iT_i$, where $0\le a_0<T_1$,
$0\le a_i<T_{i+1}/T_i$ for $1\le i\le5$, and
$0\le a_6<k/T_6$. For sufficiently large $k$, we have $a_0<Q$ and
$a_i<2Q$ for $1\le i\le6$. Therefore
$a_0+\sum_{i=1}^{6}a_i<13Q<14k^{1/7}$. Since the displayed
representation gives a walk from $0$ to $x$ using differences in
$\Upsilon$, the diameter of $\Theta$ is at most $14k^{1/7}$.

We next calculate the overlaps. Every element of one of the thirteen
blocks $B_0,T_i+B_i,-T_i-B_i$ lies within cyclic distance at most
$g_L+1$ of its corresponding macro-location in $\mathcal T$. Hence
\eqref{eq:scale-separation} implies that these blocks are pairwise
disjoint. Their constituent two-point intervals are also disjoint:
differences between distinct elements of $G_0$ are nonzero multiples
of $L>4$, and the positive and negative parts of $B_0$ cannot meet
for sufficiently large $k$. In particular, every $v\in A_0$ has a
unique representation $v=t_v+s_v+\epsilon_v$, where the admissible
triples are
\begin{align*}
&t_v=0,\quad s_v=g\in G_0,\quad
  \epsilon_v\in\{0,1\},\\
&t_v=0,\quad s_v=-g,\ g\in G_0,\quad
  \epsilon_v\in\{0,-1\},\\
&t_v=T_i,\quad s_v=g\in G_i,\quad
  \epsilon_v\in\{0,1\},\\
&t_v=-T_i,\quad s_v=-g,\ g\in G_i,\quad
  \epsilon_v\in\{0,-1\}.
\end{align*}
The definition also gives $A_0=-A_0$, and hence
$|A_d|=|A_{-d}|$.

We shall use the following consequence of
\eqref{eq:scale-separation}. Suppose two ordered pairs
$(x,y),(x',y')\in A_0^2$ determine the same difference:
$y-x=y'-x'$. If
$t_y-t_x\ne t_{y'}-t_{x'}$, then
\eqref{eq:scale-separation} shows that the cyclic distance between
these two macro-differences is greater than $4g_L+4$. Equality of the
two full differences, however, shows that this distance is at most
$4g_L+4$, a contradiction. Thus
$t_y-t_x=t_{y'}-t_{x'}$.

It follows that
\[
(s_y-s_x)+(\epsilon_y-\epsilon_x)
 =
(s_{y'}-s_{x'})+(\epsilon_{y'}-\epsilon_{x'}).
\]
The difference between the two $s$-terms is divisible by $L$, while
the difference between the two $\epsilon$-terms has absolute value at
most $4$. Since $L>4$, the $s$-differences and the
$\epsilon$-differences must agree separately.

The set $A_0$ consists of
$2L+2\sum_{i=1}^{6}M_i$ disjoint two-point intervals. Therefore
$|A_0|=4L+4\sum_iM_i$. Each such interval contributes one ordered pair
at difference $1$, and the preceding uniqueness argument rules out
any other contribution. Hence
$|A_{\pm1}|=2L+2\sum_iM_i$.

Fix $i$. At macro-difference $T_i$ and generator difference zero,
there are precisely two ordered macro-block pairs: from the positive
central block to the positive $T_i$-block, and from the negative
$-T_i$-block to the negative central block. For every $g\in G_i$,
these two macro-block pairs together give four ordered pairs at
difference $T_i$, two at difference $T_i+1$, and two at difference
$T_i-1$. Consequently,
$|A_{\pm T_i}|=4M_i$ and
$|A_{\pm(T_i+1)}|=|A_{\pm(T_i-1)}|=2M_i$.

Now suppose $d\notin\Delta\cup\{0\}$. In any representation
$d=y-x$, we must have $s_y-s_x\ne0$, since $s_y-s_x=0$ would force
$d\in\Delta\cup\{0\}$. Because $G_0$ is Sidon, a fixed nonzero signed
difference has at most four ordered representations in $\pm G_0$: at
most one in each same-sign pattern and at most two in the applicable
opposite-sign pattern.

For each fixed ordered pair $(s_x,s_y)$, there are at most two
compatible ordered macro-block pairs with the prescribed
macro-difference. Indeed, each signed generator occurs in the central
block and in at most one satellite block. Finally, once the
macro-block pair has been fixed, a prescribed value of
$\epsilon_y-\epsilon_x$ has at most two representations. Therefore
$|A_d|\le4\cdot2\cdot2=16$.

It remains to compare the marked overlap values. Since $M_1\ge11$,
every $2M_i$ and $4M_i$ is greater than $16$. The values $2M_i$ are
pairwise distinct, as are the values $4M_i$. An equality
$2M_i=4M_j$ is impossible because all the $M_i$ are odd. Finally,
\[
2L+2\sum_{i=1}^{6}M_i
 >
4\sum_{i=1}^{6}M_i
 >
4M_j
\]
for every $j$. This proves all the assertions about the overlaps and,
in particular, gives
$\max_{d\ne0}|A_d|=O_{\max}=|A_0|/2$.

We finally verify the assertions about $S$. The separation conditions
imply $D^+\cap D^-=\varnothing$ and
$0\notin D^+\cup D^-$. Since $A_0=-A_0$ and $\chi(-1)=1$, we have
$Q(A_0)=-Q(A_0)$. The sets $D^+$ and $D^-$ are also invariant under
negation, so $S=-S$. Its definition gives
$D^+\subseteq S$, $D^-\cap S=\varnothing$, and $0\notin S$.
Therefore $H$ is undirected and loopless.

Passing from $Q(A_0)$ to $S$ can delete $0$, add at most the twelve
elements of $D^+$, and delete at most the twelve elements of $D^-$.
Thus $|S\mathbin{\triangle}Q(A_0)|\le25$.
\end{proof}

\subsection{From additive overlaps to codegrees}

Lemma~\ref{lem:distinctcod} Items 2 and 3 gives an exact description of the translate
overlaps of $A_0$. We now use character-sum estimates to convert these
overlaps into degree and codegree estimates for $H$. The resulting
codegree levels, together with the bounded correction distinguishing
$T_i+1$ from $T_i-1$, will make the differences in $\Delta$
recognizable from the graph.
\begin{claim}\label{claim:expreNA}
For every finite $A\subseteq\mathbb Z_k$,
\[
\left|
 |Q(A)|-\frac{k}{2^{|A|}}
\right|
\le
\frac{|A|}{2}\sqrt{k}+|A|.
\]
\end{claim}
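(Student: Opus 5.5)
Here $A$ is a finite subset of $\mathbb Z_k$, so its elements are distinct residues. The plan is the standard indicator-function expansion combined with the Weil bound.

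For $x\notin -A$ every $\chi(x+a)$ is $\pm1$, so $\frac{1+\chi(x+a)}{2}$ is the indicator of the event $\chi(x+a)=1$. Hence
\[
|Q(A)|=\sum_{x\notin -A}\prod_{a\in A}\frac{1+\chi(x+a)}{2},
\]
because every $x\in -A$ has $\chi(x+a)=0$ for some $a$ and so lies outside $Q(A)$. Next I replace the restricted sum by the full sum over $\mathbb Z_k$. For $x\in -A$ the product is nonnegative; it vanishes unless $\chi(x+a')=1$ for all $a'\ne a$, where $a$ is the unique element with $x=-a$, and then it equals $1/2$. So the full sum exceeds $|Q(A)|$ by an error in $[0,|A|/2]$.

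Now expand the product over the full range:
\[
\sum_{x\in\mathbb Z_k}\prod_{a\in A}\frac{1+\chi(x+a)}{2}
=2^{-|A|}\sum_{B\subseteq A}\sum_{x}\chi\Bigl(\prod_{b\in B}(x+b)\Bigr).
\]
The term $B=\varnothing$ contributes exactly $k/2^{|A|}$. For $B\ne\varnothing$, the polynomial $\prod_{b\in B}(x+b)$ has distinct roots, so it is not a constant times a square. The Weil bound therefore gives
\[
\Bigl|\sum_x\chi\Bigl(\prod_{b\in B}(x+b)\Bigr)\Bigr|\le(|B|-1)\sqrt k .
\]

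It remains to sum these estimates over $B$:
\[
2^{-|A|}\sum_{B\ne\varnothing}(|B|-1)\sqrt k\le 2^{-|A|}\sqrt k\sum_{B}|B|=\frac{|A|}{2}\sqrt k ,
\]
using $\sum_B|B|=|A|2^{|A|-1}$. Adding the boundary error of at most $|A|/2\le |A|$ gives the claimed bound, with room to spare. If $A=\varnothing$ the statement is trivial.

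The only substantive input is the Weil bound for multiplicative character sums of squarefree polynomials. The main point to get right is the boundary terms $x\in -A$, where $\chi$ vanishes; these are where the additive $|A|$ in the bound comes from.
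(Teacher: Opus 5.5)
Your proposal is correct and takes essentially the same route as the paper: expand the indicator product over all of $\mathbb Z_k$, separate out the boundary terms $x\in -A$, and apply Weil's bound $(|B|-1)\sqrt{k}$ to each nonempty $B\subseteq A$. The only differences are minor and harmless. You bound the boundary error by $|A|/2$, since each such term is $0$ or $1/2$, where the paper uses $|A|$. You also drop the $-1$ in the sum over $B$, where the paper keeps it. Both versions land within the stated bound.
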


\begin{proof}
We express $|Q(A)|$ analytically over the entire field $\mathbb{Z}_k$ as
\[|Q(A)| = \sum_{x \in \mathbb{Z}_k} \prod_{a \in A} \frac{1 + \chi(x+a)}{2} - E_A.\]
where $E_A =
\sum_{x\in-A}
\prod_{a\in A}\frac{1+\chi(x+a)}2.$
Indeed, outside $-A$ the product is the indicator that $\chi(x+a)=1$ for every $a\in A$, whereas the points in $-A$ may
contribute nonzero fractional terms to the product. Clearly
$|E_A|\le|A|$.

We can expand the expression as 
\begin{align*}
    |Q(A)| &= \frac{1}{2^{|A|}} \sum_{x \in \mathbb{Z}_k} \left( 1 + \sum_{\emptyset \neq B \subseteq A} \prod_{b \in B} \chi(x+b) \right) - E_A
    \\
    &= \frac{k}{2^{|A|}} + \frac{1}{2^{|A|}} \sum_{\emptyset \neq B \subseteq A} \sum_{x \in \mathbb{Z}_k} \chi \left( \prod_{b \in B} (x+b) \right) - E_A.
\end{align*}
For any non-empty $B \subseteq A$, the polynomial $f(x) = \prod_{b \in B}(x+b)$ has distinct roots, so it is not a perfect square in $\mathbb{Z}_k[x]$. 
By Weil's bound, \[\left| \sum_{x} \chi(f(x)) \right| \leq (|B|-1)\sqrt{k}.\] Thus the total deviation is bounded above by
\begin{align*}
\left||Q(A)|-\frac{k}{2^{|A|}}\right|
&\le
\frac{1}{2^{|A|}}
\sum_{j=1}^{|A|}\binom{|A|}{j}(j-1)\sqrt{k}
+|E_A|\\
&\le
\frac{|A|2^{|A|-1}-2^{|A|}+1}{2^{|A|}}\sqrt{k}
+|A| \le
\frac{|A|}{2}\sqrt{k}+|A|.
\end{align*}
as desired.
\end{proof}

 \begin{lemma}\label{lem:codegree}
For $d\in\mathbb Z_k$, let 
$A_d=A_0\cap(A_0+d)$ and let $c(d)$ be the codegree of two vertices at difference $d$ in $H$,
and put $c(0)=|S|$. Then for every  $d$,
\[
c(d)
 =
\frac{k}{2^{\,2|A_0|-|A_d|}}+\epsilon_d,
\]
where
\[
|\epsilon_d|
\le
|A_0|\sqrt{k}+2|A_0|+50.
\]
\end{lemma}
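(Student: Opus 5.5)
Compare $S$ with $\widetilde Q:=Q(A_0)$ and use the overlap identity for $Q$. The codegree at difference $d$ is $c(d)=|S\cap(S+d)|$, since $N_H(0)=S$ and $N_H(d)=d+S$. For $d=0$ this is $|S|$, matching the convention $c(0)=|S|$. First I replace $S$ by $\widetilde Q$. Each element of $S\triangle\widetilde Q$ changes $|S\cap(S+d)|$ by at most $2$: once as an element of $S$, and once as an element of $S+d$. By Lemma~\ref{lem:distinctcod}(4), $|S\triangle\widetilde Q|\le25$, so
\[
\bigl|c(d)-|\widetilde Q\cap(\widetilde Q+d)|\bigr|\le 50.
\]
This accounts for the additive constant $50$ in the error.

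Next I identify $\widetilde Q\cap(\widetilde Q+d)$ as a set of the form $Q(\cdot)$. We have $x\in\widetilde Q+d$ iff $\chi(x-d+a)=1$ for all $a\in A_0$, i.e.\ iff $\chi(x+a')=1$ for all $a'\in A_0-d$. Hence
\[
\widetilde Q\cap(\widetilde Q+d)=Q\bigl(A_0\cup(A_0-d)\bigr).
\]
The set $A_0\cup(A_0-d)$ has size $2|A_0|-|A_0\cap(A_0-d)|$. Because $A_0=-A_0$, we get $|A_0\cap(A_0-d)|=|A_{-d}|=|A_d|$, as noted in the proof of Lemma~\ref{lem:distinctcod}. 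So the union has size $N:=2|A_0|-|A_d|$. For $d=0$ this reads $N=|A_0|$, consistent with $|A_0|=|A_d|$ there.

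Now I apply Claim~\ref{claim:expreNA} to this union of size $N$:
\[
\Bigl|\,|Q(A_0\cup(A_0-d))|-\frac{k}{2^{N}}\Bigr|\le \frac N2\sqrt k+N.
\]
Since $N\le 2|A_0|$, the right side is at most $|A_0|\sqrt k+2|A_0|$. Adding the error $50$ from the first step gives exactly the bound claimed for $\epsilon_d$.

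There is no real obstacle here. The only points needing care are the bookkeeping that each symmetric-difference element shifts the intersection by at most $2$, and the correct sign in $A_0-d$ versus $A_0+d$, which is settled by the symmetry $A_0=-A_0$. The case $d=0$ fits the same formula with no separate argument.
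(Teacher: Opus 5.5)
Your proof is correct and follows the paper's argument: identify $Q(A_0)\cap(Q(A_0)+d)=Q\bigl(A_0\cup(A_0-d)\bigr)$, apply Claim~\ref{claim:expreNA} to a set of size $2|A_0|-|A_d|\le 2|A_0|$, and absorb the passage from $Q(A_0)$ to $S$ via $|S\triangle Q(A_0)|\le 25$ at cost $50$. The only difference is cosmetic. You do not need $A_0=-A_0$ to get $|A_0\cap(A_0-d)|=|A_d|$: translating by $d$ gives $\bigl(A_0\cap(A_0-d)\bigr)+d=A_d$ for any set, which is how the paper argues.
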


\begin{proof}
First consider the unmodified set $S_0=Q(A_0).$
Then $|S_0\cap(S_0+d)| =
|Q(A_0\cup(A_0-d))|.$ By inclusion-exclusion,
\[
|A_0\cup(A_0-d)|=2|A_0|-|A_0\cap(A_0-d)| =2|A_0| - |A_d|.
\]
where the second equality follows by translating $A_0\cap(A_0-d)$ by $d$. 
The claim now follows for $S_0$ from Claim~\ref{claim:expreNA}, with error at most $|A_0|\sqrt{k}+2|A_0|.$

By Lemma \ref{lem:distinctcod}, the actual generator set $S$ satisfies
$|S\triangle S_0|\le 25$
as we may delete $0$, add the twelve elements of $D^+$, and delete the
twelve elements of $D^-$. 
For every $d$,
\[
\bigl(S\cap(S+d)\bigr)
\triangle
\bigl(S_0\cap(S_0+d)\bigr)
\subseteq (S\triangle S_0) \cup((S\triangle S_0)+d).
\]
Thus the  modification from $S_0$ to $S$ changes every codegree by at most
$2|S\triangle S_0|\le50.$
This proves the lemma.
\end{proof}

\section{Verification of the Structural Criterion}\label{sec:final}

Now we can prove the local distance preserving lemma. 

\begin{lemma}\label{lem:localdist}
Let $X\subseteq V(H)$ satisfy
$|X|\ge(1-\delta)k,$
and let  $f:H[X]\to H$
be an induced embedding. If $x,x+d\in X$ and $d\in\Delta$, then
\[
f(x+d)-f(x)\in\{\pm d\}.
\]
\end{lemma}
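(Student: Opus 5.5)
The plan is to identify the difference class of a pair from two intrinsic invariants: adjacency, and the number of common neighbours inside $X$. Adjacency is preserved exactly by $f$. Common neighbours are nearly preserved, because $X$ and $f(X)$ both miss at most $\delta k=o(k)$ vertices.

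Fix $x,x+d\in X$ with $d\in\Delta$, and put $e=f(x+d)-f(x)$, which is nonzero by injectivity. Vertices $u\in X$ adjacent to both $x$ and $x+d$ map to vertices adjacent to both $f(x)$ and $f(x+d)$, and $f$ is injective. Hence
\[
c(e)\ \ge\ |N_H(x)\cap N_H(x+d)\cap X|\ \ge\ c(d)-\delta k .
\]
For the reverse inequality, every common neighbour $w\in f(X)$ of $f(x),f(x+d)$ is $f(u)$ for some $u\in X$. Since $f$ is induced, this $u$ is a common neighbour of $x$ and $x+d$. Since $|f(X)|\ge(1-\delta)k$, we get $c(e)\le c(d)+\delta k$, so $|c(e)-c(d)|\le\delta k$. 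We also have $\mathbf 1_S(e)=\mathbf 1_S(d)$.

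Next we compare levels via Lemma~\ref{lem:codegree}. Write $c(d')\approx k2^{|A_{d'}|-2|A_0|}$ with error $O(\sqrt k)$. The levels corresponding to distinct overlap sizes $m$ differ by at least $k2^{-2|A_0|}$, a positive constant multiple of $k$, because $|A_0|$ is a fixed constant. This beats $\delta k+O(\sqrt k)=o(k)$, so $|A_e|=|A_d|$ for large $k$. Lemma~\ref{lem:distinctcod}(2),(3) then pins down $e$ as follows. All marked values exceed $16$, while unmarked nonzero $e$ have $|A_e|\le16$, so $e$ is not unmarked. The case $e=0$ is excluded since $e\neq0$; in any event $c(0)=|S|$ corresponds to $|A_0|>O_{\max}$. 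Because marked values are distinct apart from the pair $\pm(T_i\pm1)$ for the same $i$, we get $e\in\{\pm d\}$ whenever $d\in\{\pm1\}\cup\{\pm T_i\}$. If $d\in\{\pm(T_i+1),\pm(T_i-1)\}$, then $e\in\{\pm(T_i+1),\pm(T_i-1)\}$. Adjacency resolves this last case, since $D^+\subseteq S$ and $D^-\cap S=\varnothing$ by Lemma~\ref{lem:distinctcod}(4), so $\mathbf 1_S(e)=\mathbf 1_S(d)$ forces $e\in\{\pm d\}$.

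The main obstacle is the quantitative level separation. I would verify it explicitly: the gap between the levels for overlaps $m$ and $m-1$ is $k2^{m-1-2|A_0|}\ge k2^{-2|A_0|}$. This must dominate $2\delta k+2(|A_0|\sqrt k+2|A_0|+50)$, which holds for large $k$ because $\delta=10^3(\log k)^{1/5}k^{-1/5}\to0$ and $|A_0|$ depends only on the fixed template. One also has to check that the common-neighbour counting uses $N_H$ over all of $\mathbb Z_k$, so that it matches $c(\cdot)$ exactly. This works because $H$ is a Cayley graph, so $c$ depends only on the difference.
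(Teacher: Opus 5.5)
Your proposal is correct and follows essentially the same route as the paper: a two-sided comparison of common-neighbour counts inside $X$ and $f(X)$ giving $|c(e)-c(d)|\le\delta k$, level separation from Lemma~\ref{lem:codegree} forcing $|A_e|=|A_d|$, and then Lemma~\ref{lem:distinctcod} together with $D^+\subseteq S$ and $D^-\cap S=\varnothing$ to break the tie between $\pm(T_i+1)$ and $\pm(T_i-1)$. Your explicit gap computation $k2^{m-1-2|A_0|}\ge k2^{-2|A_0|}$ simply makes quantitative the paper's remark that distinct overlap values give main terms separated by a fixed positive multiple of $k$.
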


\begin{proof}
Let $c_X(x,x+d)$ be the number of common neighbors of $x$ and $x+d$
belonging to $X$. Since at most $\delta k$ vertices are missing,
\[
c(d)-\delta k
\le c_X(x,x+d)
\le c(d).
\]
Let
$d'=f(x+d)-f(x).$
Because $f$ is an induced embedding, the common neighbors of the two
image vertices lying in $f(X)$ are exactly the images of the common
neighbors in $X$. Therefore
\[
c(d')-\delta k
\le c_X(x,x+d)
\le c(d').
\]
Consequently,
\begin{equation}\label{eq:differencec}
|c(d')-c(d)|\le\delta k.
\end{equation}

Lemma~\ref{lem:codegree} gives
\[
\left|
\frac{k}{2^{\,2|A_0|-|A_{d'}|}}
-
\frac{k}{2^{\,2|A_0|-|A_d|}}
\right|
\le
\delta k+O(\sqrt{k}).
\]
Since $\delta=o(1)$ and $|A_0|$ is fixed, distinct values of $|A_d|$
give main terms separated by a fixed positive multiple of $k$. Hence
$|A_{d'}|=|A_d|$
for sufficiently large $k$.

Injectivity gives $d'\ne0$. If $d\in\Upsilon$, then
Lemma~\ref{lem:distinctcod} implies
$d'\in\{\pm d\}.$

If $d\in\{\pm(T_i+1),\pm(T_i-1)\},$
the same lemma first gives
\[
d'\in\{\pm(T_i+1),\pm(T_i-1)\}.
\]
By construction,
$\{\pm(T_i+1)\}\subseteq S$ and $\{\pm(T_i-1)\}\cap S=\emptyset.$
Since $f$ preserves both adjacency and nonadjacency, it cannot interchange
these two orbits. Therefore again
$d'\in\{\pm d\}.$
\end{proof}

We now synthesize the preceding lemmas to  establish the local-to-global rigidity of the explicitly constructed Cayley graph $H$.

\begin{proof}[Proof of Proposition \ref{prop:main}]
Let
$f:H[X]\to H$
be an induced embedding, where $|X|\ge(1-\delta)k$. By
Lemma~\ref{lem:localdist}, $f$ preserves every distance in $\Delta$ up
to sign.

By Lemma \ref{lem:distinctcod}, $D\le14k^{1/7}.$
Since $\delta=10^3(\log k)^{1/5}k^{-1/5},$
we have $D\delta
 = O\left((\log k)^{1/5}k^{-2/35}\right) = o(1).$
Thus $210D\delta<1$ for sufficiently large $k$, and
Lemma~\ref{lem:g-rigid} gives a set $C_{\rm giant}$ satisfying $|C_{\rm giant}|\ge(1-210D\delta)k$
on which
$f(x)=\sigma x+b$
for fixed $\sigma\in\{\pm1\}$ and $b\in\mathbb Z_k$.

    To extend this affine isometry from almost-everywhere rigidity to absolute global rigidity, we will utilize the codegree threshold required by Lemma \ref{lem:outlier}.

    Let
\[
O_{\max}=2L+2\sum_{i=1}^6M_i=\frac{|A_0|}{2}
\]
and
\[
\alpha
 =
\frac{1}{2^{|A_0|}}
\left(
1-\frac{1}{2^{|A_0|-O_{\max}}}
\right)>0.
\]
Lemma~\ref{lem:codegree} gives, uniformly for $d\ne0$,
\[
|S|-c(d)\ge\alpha k-O(\sqrt{k}) \geq \alpha k / 2.
\]
for sufficiently large $k$. Since $D\delta=o(1)$, we may also assume  $105D\delta<\frac{\alpha}{2}.$
It follows that
\[
\frac12(k-|C_{\rm giant}|)
\le105D\delta k
<
\frac{\alpha k}{2}
\le|S|-c(d).
\]
Therefore Lemma~\ref{lem:outlier} applies and gives $f(x)=\sigma x+b$ for every $x\in X$. 

Define $\phi:\mathbb Z_k\to\mathbb Z_k$ as 
$\phi(x)=\sigma x+b.$
Since $S=-S$, $\phi$ is an automorphism of $H$, and the earlier
identity says precisely that $
f=\phi|_X.$
This proves Proposition~\ref{prop:main}.
\end{proof}

We now prove two additional structural properties of the Cayley graph $H = \operatorname{Cay}(\mathbb{Z}_k, S)$.

\begin{lemma}[Neighborhood Separation]\label{lem:q}
    There exists a strictly positive constant $q > 0$ such that for every sufficiently large prime \(k\equiv1\pmod4\), and for any two distinct vertices $v, w \in V(H)$, there are at least $q k$ vertices in $V(H) \setminus \{v, w\}$ that are adjacent to exactly one of the vertices $v$ and $w$.
\end{lemma}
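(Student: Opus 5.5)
By translation invariance of the Cayley graph $H=\operatorname{Cay}(\mathbb Z_k,S)$, it suffices to take $v=0$ and $w=d$ with $d\neq0$. A vertex $u$ is adjacent to exactly one of $0,d$ precisely when $u\in S\triangle(S+d)$. Hence the number of vertices outside $\{0,d\}$ adjacent to exactly one of them is at least
\[
|S\triangle(S+d)|-2 = 2|S|-2|S\cap(S+d)|-2 = 2\bigl(|S|-c(d)\bigr)-2 .
\]
So the task reduces to a uniform lower bound of order $k$ on $|S|-c(d)$ over all $d\ne0$.

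This lower bound is essentially the estimate already used in the proof of Proposition~\ref{prop:main}, and I would obtain it from Lemma~\ref{lem:codegree}. Taking $d=0$ there gives
\[
|S|=c(0)=\frac{k}{2^{|A_0|}}+\epsilon_0,
\]
since $|A_{0}|$ in the overlap notation equals $|A_0|$. For $d\ne0$, Lemma~\ref{lem:distinctcod}(3) gives $|A_d|\le O_{\max}=|A_0|/2$, and therefore
\[
c(d)\le \frac{k}{2^{2|A_0|-O_{\max}}}+|\epsilon_d| .
\]
Subtracting, and bounding both error terms by $|A_0|\sqrt k+2|A_0|+50=O(\sqrt k)$ (here $|A_0|$ is a fixed constant independent of $k$), we get
\[
|S|-c(d)\ge \alpha k-O(\sqrt k),
\qquad
\alpha=2^{-|A_0|}\bigl(1-2^{-(|A_0|-O_{\max})}\bigr)>0 .
\]
The constant $\alpha$ is positive because $|A_0|-O_{\max}=|A_0|/2\ge1$.

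For $k$ sufficiently large this yields $|S|-c(d)\ge \alpha k/2$, and hence at least $\alpha k-2\ge \alpha k/2$ separating vertices. So $q=\alpha/2$ works, and it depends only on the fixed parameters $L,M_1,\dots,M_6$.

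There is no real obstacle here. The only points to check are that the error terms in Lemma~\ref{lem:codegree} are uniform in $d$, which holds because the bound on $\epsilon_d$ does not depend on $d$, and that the two vertices $0$ and $d$ themselves are discarded correctly. That accounts for the $-2$ in the first display, and it is absorbed once $k$ is large.
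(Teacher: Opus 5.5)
Your proof is correct and is essentially the paper's argument: reduce to $2|S|-2c(d)$ minus the two endpoints, then lower-bound $|S|$ and upper-bound $c(d)$ via Lemma~\ref{lem:codegree}, using $|A_d|\le O_{\max}=|A_0|/2$ from Lemma~\ref{lem:distinctcod}. The only cosmetic difference is that the paper takes $q=\alpha$, absorbing the $O(\sqrt k)+2$ error into the leading factor $2\alpha k$, whereas you take $q=\alpha/2$.
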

\begin{proof}
    The set of vertices adjacent to exactly one of $v$ or $w$ is the symmetric difference of their neighborhoods, denoted $N(v) \triangle N(w)$. Since $H$ is a Cayley graph, $N(v) = v + S$ and $N(w) = w + S$. Factoring out the translation, the size of this symmetric difference is the same as $|S \triangle (S - c)|$, where $c = v - w \neq 0$.
    
    We compute the symmetric difference of the neighborhood of two vertices. 
    \[|N(v) \triangle N(w)| = |S \triangle (S - c)| = 2|S| - 2|S \cap (S - c)| = 2c(0) - 2c(c).\]
    
    By Lemma \ref{lem:distinctcod}, the maximum possible overlap for any non-zero distance is strictly bounded by $O_{\max} = 
2L+2\sum_{i=1}^6M_i=
\frac{|A_0|}{2}$. Thus, the codegree $c(c)$ is bounded from above by substituting $O_{\max}$ into the Weil bound formula from Lemma \ref{lem:codegree}:
    \[|N(v) \triangle N(w)| \ge 2\left( \frac{k}{2^{|A_0|}} \right) - 2\left( \frac{k}{2^{2|A_0| - O_{\max}}} \right) - O(\sqrt{k})\]
    \[= \frac{2k}{2^{|A_0|}} \left( 1 - \frac{1}{2^{|A_0| - O_{\max}}} \right) - O(\sqrt{k}).\]
    
    Recall from the construction of $A_0$ that $|A_0| = 4L + 4\sum_{i=1}^6 M_i$, and $O_{\max} = 2L + 2\sum_{i=1}^6 M_i$. This strictly implies that $|A_0| = 2 O_{\max}$, and therefore $|A_0| > O_{\max}$. Because $|A_0|$ and $O_{\max}$ are fixed structural parameters independent of $k$, we can define the absolute constant:
    \[q = \frac{1}{2^{|A_0|}} \left( 1 - \frac{1}{2^{|A_0| - O_{\max}}} \right).\]
    Since $|A_0|>O_{\max}$, we have $0<q<1/2$.
 Substituting this back yields:
    \[|N(v) \triangle N(w)| \ge 2q k - O(\sqrt{k}).\]
    
    The lemma  asks for the number of such vertices excluding $v$ and $w$ themselves. This requires subtracting at most 2 from our bound. For sufficiently large $k$, the dominant linear term $2qk$ is much larger than the sublinear error term $O(\sqrt{k}) + 2$. Therefore, we  bound the size from below by $q k$:
    \[|N(v) \triangle N(w) \setminus \{v, w\}| \ge 2q k - O(\sqrt{k}) - 2 \ge q k.\]
    This confirms that every pair of vertices in $H$ has highly distinct neighborhoods. 
\end{proof}

We next show that $H$ is a prime graph. 
\begin{lemma}\label{lem:prime}
    The graph $H$ is a prime graph. That is, there is no subset $U \subseteq V(H)$ with $2 \leq |U| < |V(H)|$ such that every vertex outside $U$ is either complete to $U$ or anti-complete to $U$.
\end{lemma}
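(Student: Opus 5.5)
We argue by contradiction: suppose $U$ is a module (homogeneous set) with $2\le|U|<k$. Two facts about $H$ will drive the argument: neighborhood separation (Lemma~\ref{lem:q}) and the near-regular codegree structure from Lemma~\ref{lem:codegree}. The basic observation is that for any two distinct $u,v\in U$, every vertex outside $U$ is adjacent to both or to neither. Hence
\[
N(u)\triangle N(v)\subseteq U.
\]
Lemma~\ref{lem:q} gives $|N(u)\triangle N(v)\setminus\{u,v\}|\ge qk$, so $|U|\ge qk+2$. Thus any module is linear-sized, and it remains to rule out large modules.

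Next we use the outside vertices. Set $W=V(H)\setminus U\neq\varnothing$ and split it as $W=W_1\cup W_0$, where $W_1$ consists of the vertices complete to $U$ and $W_0$ of those anticomplete to $U$. We consider two cases.

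\emph{Case $W_1\neq\varnothing$.} Take $w\in W_1$. Then $U\subseteq N(w)$, so $|U|\le|S|\approx k/2^{|A_0|}$. Moreover, for any $u\in U$, every other vertex of $U$ is a common neighbour-type witness: more precisely, $N(u)\supseteq W_1$ and $U\setminus\{u\}\subseteq N(w)$, so $|N(w)\cap N(u)|\ge |N(u)\cap U|$. I would instead pick two vertices $w,w'$ with opposite behaviour, or, if $W_0=\varnothing$, note that every outside vertex is adjacent to every vertex of $U$. In the latter situation each $u\in U$ has degree at least $|W|$, and each $w\in W$ has $U\subseteq N(w)$. Since $|N(w)|=|S|\approx k/2^{|A_0|}<k/2$, we get $|U|\le|S|$ and hence $|W|\ge k-|S|>|S|$. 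But a vertex $u\in U$ satisfies $W\subseteq N(u)$, so $|W|\le|S|$, a contradiction.

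\emph{Case $W_0=W$.} Then $U$ is a union of components of $H$, which is impossible when $H$ is connected. Connectivity follows because every pair $0,d$ has codegree $c(d)\ge k/2^{2|A_0|}-O(\sqrt k)>0$ by Lemma~\ref{lem:codegree}, so $H$ has diameter at most $2$.

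\emph{Mixed case: both $W_0$ and $W_1$ nonempty.} Choose $w_1\in W_1$ and $w_0\in W_0$. Since $U\subseteq N(w_1)$, we have $|U|\le|S|$. Now take $u\ne u'$ in $U$. Every vertex of $W_1$ lies in $N(u)\cap N(u')$, so $|W_1|\le c(u-u')$. Every vertex of $W_0$ lies outside $N(u)\cup N(u')$. Consequently
\[
|N(u)\cup N(u')|\le |U|+|W_1|,
\]
which gives
\[
k-|U|=|W_0|+|W_1|\le \bigl(k-|N(u)\cup N(u')|\bigr)+c(u-u').
\]
All of $|U|,|S|,c(\cdot)$ are at most $k/2^{|A_0|}+O(\sqrt k)$. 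Since $|A_0|\ge 4\cdot 11\cdot 7$ is large, these quantities are tiny fractions of $k$. Hence $|W_1|\le c(u-u')$ and $|U|\le|S|$ are small.

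To finish, I would count edges between $U$ and $W_1$ more carefully. Each $u\in U$ satisfies $N(u)\supseteq W_1$ together with $N(u)\setminus W_1\subseteq U$. This gives $|S|\le |W_1|+|U|$. Similarly, for $w_1\in W_1$ we have $|S|\ge|U|$. The key step, and the main obstacle, is to force a contradiction from these relations, namely that $U\cup W_1$ nearly contains $N(u)$ while $W_1$ is tiny. I plan to compare $|S|$ against $c(u-u')+|U|$. Since $W_1\subseteq N(u)\cap N(u')$, we have $|W_1|\le \max_{d\ne0}c(d)\le k/2^{|A_0|+|A_0|/2}+O(\sqrt k)$. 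Then $|U|\ge|S|-|W_1|\ge (1-2^{-|A_0|/2})k/2^{|A_0|}-O(\sqrt k)$.

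Now pick $u\in U$ and $w_1\in W_1$. Every vertex of $U$ other than possibly $u$ itself is adjacent to $w_1$, and $N(u)\cap U$ has size at least $|S|-|W_1|$ because $N(u)\subseteq U\cup W_1$. Therefore
\[
c(u-w_1)\ge |N(u)\cap U|\ge |S|-|W_1|,
\]
which is close to $k/2^{|A_0|}$. This contradicts $c(d)\le k/2^{3|A_0|/2}+O(\sqrt k)$ for $d\neq0$ when $k$ is large. The same contradiction argument also covers the case $W_0=\varnothing$, with the bound $|S|\le|W|$ handled as above.
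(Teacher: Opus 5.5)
Your proof is correct and has the same two-case structure as the paper's. If some outside vertex $w_1$ is complete to $U$, then $N(u)\subseteq U\cup W_1$ and $U\subseteq N(w_1)$ give $|S|\le c(u-w_1)+|W_1|\le c(u-w_1)+c(u-u')$. This contradicts Lemma~\ref{lem:codegree} in the same way as the paper's inequality $|S|\ge|U|\ge 2|S|-2c(v-w)$. If every outside vertex is anticomplete, you contradict connectivity, which you obtain from positive codegrees (diameter $2$) rather than from $k$ being prime.

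The exploratory remarks, the bound $|U|\ge qk+2$, and the separate treatment of $W_0=\varnothing$ can all be dropped, since your final computation never uses $W_0\neq\varnothing$.
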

\begin{proof}
    Suppose for the sake of contradiction that $H$ contains a non-trivial such $U$ where $2 \leq |U| < k$. 
    
    Because $|U| \ge 2$, we can select two distinct vertices $v, w \in U$. By the definition of $U$, any vertex $x \notin U$ has identical adjacency to all vertices in $U$. Therefore, $x$ is either adjacent to both $v$ and $w$, or adjacent to neither. 
    
    This implies that no vertex outside of $U$ can be adjacent to exactly one of $\{v, w\}$. Consequently, the symmetric difference of their neighborhoods must  satisfy
    \[N(v) \triangle N(w) \subseteq U.\]
    This imposes a strict lower bound on the size of the $U$ as 
    \[|U| \ge |N(v) \triangle N(w)|.\]
    
    From our proof of Lemma \ref{lem:q}, we know analytically that for any distinct $v, w$, the size of the symmetric difference is $|N(v) \triangle N(w)| = 2|S| - 2c(v-w)$. Substituting this into our bound yields:
    \begin{equation}
        |U| \ge 2|S| - 2c(v-w). \label{eq:module_size}
    \end{equation}
    
    Now, because $|U| < k$, there are vertices outside of $U$. By definition, there are only two possibilities for a vertex outside of $U$: it is either complete to $U$ (adjacent to every vertex in $U$) or anti-complete to $U$ (no edges to $U$).
    
Case 1: There exists an $x$ that is complete to $U$.
    If $x$ is connected to every vertex in $U$, then its degree must be at least the size of the module: $\text{deg}(x) \ge |U|$. 
    Since $H$ is a regular graph, $\text{deg}(x) = |S|$. Substituting this into inequality (\ref{eq:module_size}) yields:
    \[|S| \ge 2|S| - 2c(v-w) \implies |S| \leq 2c(v-w).\]
    However, by Lemma \ref{lem:codegree} and Lemma \ref{lem:distinctcod}, the degree is $|S| = \frac{k}{2^{|A_0|}} \pm O(\sqrt{k})$, while the maximum non-zero codegree is bounded by $c(v-w) \leq \frac{k}{2^{2|A_0| - O_{\max}}} + O(\sqrt{k})$.  Consequently,
\[
|S|-2c(v-w)
\geq 
\left(
\frac1{2^{|A_0|}}
-
\frac2{2^{\,2|A_0|-O_{\max}}}
\right)k
+
O(\sqrt{k}).
\]
The coefficient of $k$ is positive because
$O_{\max}=|A_0|/2$ and $|A_0|/2>1$. Hence
$|S|>2c(v-w)$ for all sufficiently large $k$. 
Thus, no vertex $x \notin U$ can be complete to $U$.
    
Case 2: Every $x$ is anti-complete to $U$.
    If every vertex $x \notin U$ has zero edges connecting to $U$, then there are  no edges between $U$ and $V(H) \setminus U$. Since both $U$ and its complement are nonempty, this implies that $H$ is disconnected.
    
    However, $H = \operatorname{Cay}(\mathbb{Z}_k, S)$ is a Cayley graph over a group of prime order. Because $k$ is prime, $\mathbb{Z}_k$ contains no non-trivial subgroups. This means that if the generator set $S$ is non-empty, choosing any single element $s \in S$ generates the entire group (i.e., the edges corresponding to just $s$ form a spanning cycle of length $k$). 
    
Lemma \ref{lem:codegree} shows that the degree of the graph is $|S| = \frac{k}{2^{|A_0|}} \pm O(\sqrt{k})$. For sufficiently large $k$, $|S| > 0$, guaranteeing the generator set is non-empty. Therefore, $H$ is always a connected graph. This leads to a  contradiction.
\end{proof}

To prove Theorem~\ref{thm:main_part2}, choose $k$ sufficiently large.
Proposition~\ref{prop:main}, Lemma~\ref{lem:q}, and
Lemma~\ref{lem:prime} show that $H$ is $(q,\delta)$-reasonable. The
constant $q>0$ is independent of $k$, and hence
$q\ge10^4(\log k)^{6/5}k^{-1/5}$
for sufficiently large $k$. Moreover, $
\delta=10^3(\log k)^{1/5}k^{-1/5}.$
Thus all the hypotheses of Theorem~\ref{thm:FSW} are satisfied, and
$H$ is a fractalizer. Lastly,
$|S|=\frac{k}{2^{|A_0|}}+O(\sqrt{k})$,
so for sufficiently large $k$ we have
$0<|S|<k-1$. Hence $H$ is neither empty nor complete. This proves Theorem~\ref{thm:main_part2}.

\part{Prime-order Paley graphs: proof of Theorem~\ref{thm:main}}
\section{A full-graph fractalizer criterion}

This second part is devoted to proving theorem \ref{thm:main}. 
For Paley graphs, the automorphism group $\operatorname{Aut}(P_p)$ contains all affine linear maps $\mathbb F_p\to \mathbb F_p$ of the form $ax+b$ with $a$ a quadratic residue and $b$ arbitrary. Since $a$ is not restricted to $\pm 1$, the original $(q,\delta)$-reasonableness condition of Definition \ref{def:reasonable_part_two} does not directly apply. Secondly, Paley graphs are deletion tolerant at a different scale: The Paley rigidity theorem proven below allows only $\sqrt{p}/3$ deleted vertices. This is less than the scale used in part I. The new fractalizer condition in Proposition~\ref{prop:full-fsw} accommodates both aspects.

We begin by introducing the notion of a $(q,\delta;\Phi)$-reasonable graph.

\begin{definition}\label{def:reasonable}
Let $H$ be a graph on $k$ vertices, let
$\Phi\subseteq\Aut(H)$ be a specified family of automorphisms, and let
$0<q<1/2$ and $0<\delta<1$.  We say that $H$ is
\emph{$(q,\delta;\Phi)$-reasonable} if the following conditions hold.

\begin{enumerate}[label=\textnormal{(\alph*)},leftmargin=2.5em]
\item $H$ is prime.
\item For every two distinct vertices $x,y\in V(H)$, at least $qk$ vertices of $V(H)\setminus\{x,y\}$ are adjacent to exactly one of $x,y$.
\item If $X\subseteq V(H)$ has $|X|\ge (1-\delta)k$ and $f:X\to V(H)$ is injective with
$a_H(f(x),f(y))=a_H(x,y)$ for all distinct $x,y\in X$, then $f=\phi|_X$ for some $\phi\in\Phi$.
\end{enumerate}
\end{definition}
For comparison, in Definition \ref{def:reasonable_part_two} the corresponding
rigidity condition requires every sufficiently large partial embedding
to agree with a translation or reflection of the Cayley graph.
In the new formulation, we use the specified family
$\Phi\subseteq\Aut(H)$ in place of those translations and reflections. Both the condition (a) of being prime and condition (b) regarding neighborhood separation are unchanged.

Condition~\textnormal{(c)} automatically identifies this specified
family with the full automorphism group.  Indeed, if
$\psi\in\Aut(H)$, then applying condition~\textnormal{(c)} with
$X=V(H)$ and $f=\psi$ gives $\psi\in\Phi$.  Since
$\Phi\subseteq\Aut(H)$ by assumption, it follows that
$\Phi=\Aut(H)$.  We retain $\Phi$ in the notation because, in
applications, the partial-rigidity argument naturally identifies an
explicit family of symmetries before thereby proving that this family
is the full automorphism group.

Proposition~\ref{prop:full-fsw} below is not a formal specialization
of Theorem \ref{thm:FSW}.  It works for a weaker rigidity scale
$\delta=\Theta(k^{-1/2})$ and allows an arbitrary automorphism group
of order at most $k^2$. The new estimate of
Section~\ref{sec:weighted-hitting} makes the first change possible,
while the full-graph symmetry argument makes the second possible.

\begin{proposition}[Fractalizer criterion]\label{prop:full-fsw}
For any $q_0\in(0,1/2)$ and $c_0>0$, there exists $k_0=k_0(q_0,c_0)$ with the following property.  Let $H$ be a graph on $k\ge k_0$ vertices, let $\Phi\subseteq \Aut(H)$ satisfy
$2\le |\Phi|\le k^2$, and suppose that $H$ is $(q_0,\delta;\Phi)$-reasonable for some $\delta\ge c_0k^{-1/2}$.  Then $H$ is a fractalizer.
\end{proposition}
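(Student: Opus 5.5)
We follow the Fox--Huang--Lee / Fox--Sauermann--Wei architecture and argue by strong induction on the host order $n$. Fix an extremal $n$-vertex graph $\Gamma$ with $\emb(H,\Gamma)=\emb(H,n)$. For $n<k$ there is nothing to prove, and the balanced iterated blow-up gives the lower bound $\emb(H,n)\ge |\Phi|\cdot(n/k)^k(1-O(k^{-k+1}))$-type estimates. With $|\Phi|=|\Aut(H)|\ge 2$, this yields $\emb(H,n)\ge c\,|\Aut(H)|\,n^k/k^k$. This lower bound is the benchmark every stability step must beat.

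\textbf{Step 1: almost every embedding follows an automorphism.} Partition the embeddings $\vartheta:V(H)\to V(\Gamma)$ according to the images of a signature. By condition (b) and a greedy or random choice, there is a signature $R\subseteq V(H)$ of size $O(q_0^{-1}\log k)$ whose adjacency patterns distinguish all remaining vertices. Hence, once $\vartheta(R)$ is fixed, the remaining pattern vertices have pairwise disjoint candidate sets in $\Gamma$. Next, call a host vertex \emph{exceptional} for a fixed $\vartheta(R)$ if it lies in a small candidate set or in overlaps. The key is then the weighted hitting estimate (Lemma~\ref{lem:weighted-hitting}). Embeddings that place more than $\delta k$ pattern vertices onto exceptional or ``wrong'' positions are counted by keeping the individual intersection sizes $m_v$ of the candidate sets with the exceptional region, where $\sum_v m_v\le$ region size. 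The contributions are then bounded by the elementary symmetric polynomial $e_t(m_v/|W_v|)$, and Maclaurin's inequality gives $e_t\le\binom{k}{t}(\bar m)^t$. At $t\ge c_0\sqrt k$ this is exponentially small relative to the benchmark. The consequence is that all but a negligible fraction of embeddings agree, on a set $X$ with $|X|\ge(1-\delta)k$, with an induced partial embedding of $H[X]$ into the ``type pattern'' of the candidate classes. Condition (c) then forces that partial map to be $\phi|_X$ for some $\phi\in\Phi$. Summing fixed-$\phi$ estimates over the at most $k^2$ choices costs only a polynomial factor, which the exponential decay absorbs.

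\textbf{Step 2: cleaning to an exact blow-up.} From Step 1, define prospective classes $W_1,\dots,W_k$ of host vertices that play the role of vertex $i$ in most typical embeddings, up to the automorphism. A vertex-deletion/duplication argument, using that in an extremal graph every vertex lies in about $k\,\emb(H,n)/n$ embeddings (otherwise replace it by a clone of a better vertex), shows three things. First, every host vertex belongs to some class. Second, the classes partition $V(\Gamma)$. Third, each interclass pair $W_i,W_j$ is complete or empty according to $H$: a single wrong adjacency destroys $\Omega(n^{k-2}/k^{k})$ embeddings while creating fewer, by primeness (a) and the separation (b). Primeness guarantees that in an exact blow-up every copy either lies inside one part or is transversal via an automorphism, so the displayed formula $\emb(H,\Gamma)=|\Aut(H)|\prod|W_i|+\sum\emb(H,\Gamma[W_i])$ holds.

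\textbf{Step 3: balancing and induction.} If $|W_i|\ge|W_j|+2$, move a vertex from $W_i$ to $W_j$, choosing the internal structures as extremal graphs of the new sizes. The product term increases by a factor of about $1+\Omega(1/n_i^2)$ times $|\Aut(H)|\prod|W_\ell|$. By the induction hypothesis and the asymptotic $\emb(H,m)\approx \emb(H)m^k$ with $\emb(H)\le |\Aut(H)|/(k^k-k)$, the internal terms change by only a $k^{-k}$-smaller amount. Hence extremality forces balance, each $\Gamma[W_i]$ is extremal of its order, and induction gives the iterated blow-up. The main obstacle is Step 1: verifying that the weighted Maclaurin bound gives decay $e^{-\Omega(\sqrt k)}$ strong enough to beat both the $k^2$ automorphism union bound and the signature enumeration at rigidity scale $\delta=c_0k^{-1/2}$. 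I would first try to fix the threshold $t=\delta k/2$ and check the binomial-entropy calculation there.
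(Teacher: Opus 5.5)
Steps 2 and 3 follow the paper's outline, but Step 1 has a genuine gap at its central sentence: the claim that almost every embedding agrees, on a set of size $(1-\delta)k$, with an induced partial embedding to which (c) applies. Let $f(u)$ be the index of the candidate class containing $\vartheta(u)$. Inducedness of $\vartheta$ gives only $a_\Gamma(\vartheta(u),\vartheta(v))=a_H(u,v)$. Condition (c), however, needs $a_H(f(u),f(v))=a_H(u,v)$, i.e.\ that the pair is $f$-consistent, and no hitting estimate controls this.

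The paper first cleans the classes, discarding vertices with at least $\epsilon_5n$ bad partners. It then proves (Lemma~\ref{lem:embedding-decomposition}) that every non-loyal embedding has one of four defects: (I) many vertices in $V_{\mathrm{rest}}$; (II) two vertices in one class; (III) one vertex with more than $\epsilon_4k$ inconsistent partners; (IV) a fancy label map with at least $\epsilon_2k/4$ vertex-disjoint inconsistent pairs.

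Lemma~\ref{lem:weighted-hitting} handles only (I) and (III). In those cases many pattern vertices are forced into one fixed small set: $V_{\mathrm{rest}}$, or the bad-partner set of the single vertex $\vartheta(y)$. It gives nothing for (IV). There the inconsistent partners of different vertices land in different bad-partner sets, each of size up to $\epsilon_5n$. That is of order $(\log k)^2n/k$, larger than a typical class, so there is no common small target.

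Case (IV) needs the entropy--energy argument (Lemma~\ref{lem:fancy-inconsistency}). Only $O((\log k)^2)$ disjoint class pairs have the wrong majority adjacency (Lemma~\ref{lem:cleaning-package}\textnormal{(iv)}). So each remaining inconsistent pair must be chosen from at most half of a product of two classes. This gives a factor $2^{-\epsilon_2k/8}$, which must beat the number $\exp(4\epsilon_3k\log(1/\epsilon_3))$ of fancy maps. Collisions (II) also need their own argument (Lemma~\ref{lem:candidate-collision}).

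This is also why your single threshold of order $\delta k$ cannot work. If the label map's domain may omit $\Theta(\delta k)$ vertices, the entropy of choosing that domain is already $\exp(\Theta(\delta k\log(1/\delta)))$. That exceeds any $2^{-\Theta(\delta k)}$ saving. The paper therefore separates the scales, taking the domain defect $\epsilon_3\asymp\epsilon_2/\log k$ so that $\epsilon_3\log(1/\epsilon_3)\ll\epsilon_2<\delta$. The binomial entropy that matters is this count of fancy maps, not the choice of $J$ in the Maclaurin step, which that step already absorbs.

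Two smaller points. In Step 2, correcting the interpart adjacencies does not follow from primeness and separation. On the loss side it uses the bounds on disloyal embeddings through a fixed pair and on loyal embeddings through a wrong pair. On the gain side it uses one dominant automorphism $\phi_*$ with $N_{\phi_*}\ge\frac12\E_k(n)$, and all wrong pairs are corrected at once. In Step 3, the asymptotic $\emb(H,m)\approx\emb(H)m^k$ does not control the second difference $F(n_1)-F(n_1-1)-F(n_2+1)+F(n_2)$. For that the paper uses the Pippenger--Golumbic inequalities together with the a priori bound $n_1\le k^{-4/5}n$.
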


All asymptotic thresholds in this proposition concern the order
$k=|V(H)|$ of the pattern graph, not the order of the host graph.
Thus, once $H$ satisfies the hypotheses, the conclusion holds
simultaneously for every $n$ and every $n$-vertex graph maximizing
$\emb(H,\cdot)$; no lower bound on $n$ is imposed.

For integers \(k\ge1\) and \(n\ge0\), let
\begin{equation}\label{eq:Ek-def}
 \E_k(n):=\max\left\{\prod_{i=1}^k n_i:\ n_i\in\mathbb Z_{\ge 0},\ \sum_{i=1}^k n_i=n\right\}.
\end{equation}
The balanced choice, in which the $n_i$ differ by at most one, attains
$\E_k(n)$.  When $n\ge k$, these are precisely the maximizing
$k$-tuples. We also set $\E_0(n):=1$.

The next lemma is the bridge from primeness to the recursive
structure of extremizers.  It says that if an induced copy of $H$ in
a blow-up $G$ meets more than one top-level part, then it meets each
top-level part in at most one vertex.  The list of parts it visits
therefore defines an induced self-embedding of $H$, and partial
rigidity identifies this map with a global automorphism.
\begin{lemma}\label{lem:embedding-classification}
Let $H$ be a $(q,\delta;\Phi)$-reasonable graph on $k\ge2$
vertices, and let $G$ be a blow-up of $H$ with parts $(W_j)_{j\in V(H)}$.  Every embedding $\vartheta:H\hookrightarrow G$ is of exactly one of the following two types:
\begin{enumerate}[label=\textnormal{(\roman*)},leftmargin=2.5em]
\item $\vartheta(V(H))\subseteq W_j$ for some $j\in V(H)$;
\item there exists $\phi\in\Phi$ such that $\vartheta(x)\in W_{\phi(x)}$ for every $x\in V(H)$.
\end{enumerate}
\end{lemma}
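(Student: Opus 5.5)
Let $\vartheta:H\hookrightarrow G$ be an embedding, and let $\pi:V(G)\to V(H)$ send each vertex of $W_j$ to $j$. The plan is to study the multiset of parts visited, $\pi\circ\vartheta$, and to use primeness to force it to be either constant or injective. For $j\in V(H)$ put $U_j=\vartheta^{-1}(W_j)$. Suppose first that some $U_j$ satisfies $2\le|U_j|<k$. Take a vertex $y\notin U_j$. Then $\vartheta(y)\in W_{j'}$ for some $j'\ne j$, and in a blow-up $\vartheta(y)$ is adjacent either to every vertex of $W_j$ or to none of them, according as $jj'\in E(H)$ or not. Since $\vartheta$ preserves both adjacency and nonadjacency, $y$ is then complete or anticomplete to $U_j$ in $H$. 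So $U_j$ would be a nontrivial module of $H$, contradicting condition~(a) of Definition~\ref{def:reasonable}. Consequently every $U_j$ has size $0$, $1$ or $k$.

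If some $|U_j|=k$, the embedding is of type~(i). Otherwise every $|U_j|\le1$. Since $|V(H)|=|V(H)|=k$, the map $f:=\pi\circ\vartheta:V(H)\to V(H)$ is then injective, hence a bijection. We next check that $f$ preserves adjacency. For distinct $x,y$ the vertices $\vartheta(x),\vartheta(y)$ lie in the distinct parts $W_{f(x)},W_{f(y)}$. By the definition of a blow-up they are adjacent if and only if $f(x)f(y)\in E(H)$. Because $\vartheta$ is an embedding, this happens if and only if $xy\in E(H)$. Thus $a_H(f(x),f(y))=a_H(x,y)$ for all distinct $x,y$, and $f$ is an injective map on $X=V(H)$, which certainly has size at least $(1-\delta)k$. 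Condition~(c) then gives $\phi\in\Phi$ with $f=\phi$, that is, $\vartheta(x)\in W_{\phi(x)}$ for all $x$. This is type~(ii).

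For exclusivity, type~(i) puts all $k\ge2$ image vertices in a single part. Type~(ii) puts them in $k$ distinct parts, because $\phi$ is a bijection. The two types therefore cannot both hold. There is no real obstacle here; the only point needing care is the module step, namely that adjacency between different parts of a blow-up is uniform. This gives exactly the homogeneity that primeness forbids. The rigidity condition is needed only at full scale ($X=V(H)$), to identify the induced self-embedding with an element of $\Phi$.
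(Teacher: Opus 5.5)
Your proposal is correct and follows the paper's proof essentially step for step. The preimages $\vartheta^{-1}(W_j)$ are forced by uniform interpart adjacency and primeness to have size $0$, $1$ or $k$. The part-index map is then a bijective induced self-embedding, which condition~(c) at $X=V(H)$ identifies with some $\phi\in\Phi$. Exclusivity follows by counting the parts used (the ``$|V(H)|=|V(H)|=k$'' is just a harmless typo).
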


\begin{proof}
For each $j\in V(H)$, let
$X_j:=\{x\in V(H):\vartheta(x)\in W_j\}$.
Thus $X_j$ is the set of vertices of $H$ whose images lie in the part $W_j$.
Suppose first that $X_j=V(H)$ for some $j$.  Then
$\vartheta(V(H))\subseteq W_j$, and we are in case~\textnormal{(i)}.

We may therefore assume that the image of $\vartheta$ is not contained in
a single part.  We claim that $|X_j|\le 1$ for every $j\in V(H)$.
Suppose otherwise that, for some $j$, we have
$2\le |X_j|\le k-1$.  Let $z\in V(H)\setminus X_j$.  Then
$\vartheta(z)$ lies in a part $W_\ell$ with $\ell\neq j$.  By the
definition of a blow-up, the adjacency between $W_j$ and $W_\ell$ is
constant: either every vertex of $W_\ell$ is adjacent to every vertex of
$W_j$, or there are no edges between the two parts.  It follows that
$\vartheta(z)$ has the same adjacency to every vertex in
$\vartheta(X_j)$.  Since $\vartheta$ preserves both adjacency and
nonadjacency, the vertex $z$ has the same adjacency to every vertex of
$X_j$.
This holds for every $z\in V(H)\setminus X_j$.  Thus $X_j$ is a proper
subset of $V(H)$, containing at least two vertices, such that every
vertex outside $X_j$ is adjacent either to all vertices of $X_j$ or to
none of them.  This contradicts the assumption that $H$ is prime.
Therefore $|X_j|\le 1$ for every $j$.

Consequently, distinct vertices of $H$ are mapped by $\vartheta$ into
distinct parts of the blow-up.  We may therefore define a map
$f:V(H)\to V(H)$ by declaring $f(x)$ to be the unique index such that
$\vartheta(x)\in W_{f(x)}$.  The preceding argument shows that $f$ is injective.  Since its domain
and codomain both have $k$ elements, $f$ is bijective.
Let $x,y\in V(H)$ be distinct.  Since $f(x)\neq f(y)$, the vertices
$\vartheta(x)$ and $\vartheta(y)$ lie in two distinct parts,
$W_{f(x)}$ and $W_{f(y)}$.  Hence
\[
a_H(f(x),f(y))
=
a_G(\vartheta(x),\vartheta(y))
=
a_H(x,y).
\]
The first equality follows from the definition of a blow-up, and the
second follows because $\vartheta$ is an induced embedding.

We now apply Definition~\ref{def:reasonable}\textnormal{(c)} with
$X=V(H)$.  It follows that there exists $\phi\in\Phi$ such that
$f(x)=\phi(x)$ for every $x\in V(H)$.  Therefore
we have case~\textnormal{(ii)}.

Finally, the two cases are mutually exclusive.  Indeed, in
case~\textnormal{(ii)}, the map $\phi$ is a permutation of $V(H)$, so
the images of the $k$ vertices of $H$ lie in $k$ distinct parts, whereas
in case~\textnormal{(i)} they all lie in one part.
\end{proof}

\section{The Paley graph satisfies the rigidity hypotheses}

Let $p\equiv 1\pmod 4$ be prime, and let $\chi$ denote the quadratic character of $\F_p$, extended by $\chi(0)=0$.  Thus $x$ and $y$ are adjacent in $P_p$ if and only if $\chi(x-y)=1$.  Define
\begin{equation}\label{eq:paley-affine-group}
 \Phi_p:=\{x\mapsto ax+b:\ a\in(\F_p^\times)^2,\ b\in\F_p\}.
\end{equation}
Every element of $\Phi_p$ is an automorphism of $P_p$, and
\begin{equation}\label{eq:paley-group-size}
 |\Phi_p|=\frac{p(p-1)}2<p^2.
\end{equation}
The converse inclusion $\Aut(P_p)\subseteq\Phi_p$ will follow from the partial affine rigidity lemma below.
Also note a multiplier $a$ that is a nonsquare is not an automorphism: it interchanges edges and nonedges.

We verify Definition~\ref{def:reasonable} one condition at a time.

\subsection{Pair separation}
The pair separation below is the pseudorandomness input in the abstract criterion.  The  neighborhood separation condition allows a logarithmic-size signature to be selected; see Lemma~\ref{lem:small-signature}.  For the Paley graph, the number of vertices distinguishing a given pair can be computed exactly by a quadratic-character identity.
\begin{lemma}\label{lem:paley-separation}
For any distinct $u,v\in\F_p$, exactly $(p-1)/2$ vertices
$z\in\F_p\setminus\{u,v\}$ are adjacent to exactly one of $u,v$.
\end{lemma}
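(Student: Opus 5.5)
The plan is to express the count exactly as a quadratic character sum and evaluate that sum in closed form. No Weil-type estimate is needed, because the polynomial involved is quadratic.

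Fix distinct $u,v\in\F_p$ and let $z\in\F_p\setminus\{u,v\}$. Then $z-u$ and $z-v$ are both nonzero, so $\chi(z-u),\chi(z-v)\in\{\pm1\}$. The vertex $z$ is adjacent to exactly one of $u,v$ precisely when $\chi(z-u)\ne\chi(z-v)$, that is, when $\chi\bigl((z-u)(z-v)\bigr)=-1$. So the indicator of this event is $\tfrac12\bigl(1-\chi((z-u)(z-v))\bigr)$. Summing over the $p-2$ admissible $z$, the count equals
\[
\frac{1}{2}\Bigl((p-2)-\sum_{z\ne u,v}\chi\bigl((z-u)(z-v)\bigr)\Bigr).
\]
Since $\chi(0)=0$, the terms $z=u$ and $z=v$ contribute nothing, so the restriction $z\ne u,v$ can be dropped from the character sum.

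It remains to show that $\sum_{z\in\F_p}\chi\bigl((z-u)(z-v)\bigr)=-1$. I will substitute $z\mapsto z+u$ and set $c=v-u\ne0$, which reduces this to $\sum_{z}\chi(z(z-c))$. The term $z=0$ vanishes. For $z\ne0$, multiplicativity and $\chi(z^2)=1$ give $\chi(z(z-c))=\chi(1-cz^{-1})$. As $z$ runs over $\F_p^\times$, the variable $w=z^{-1}$ also runs over $\F_p^\times$. Hence the sum equals $\sum_{w\ne0}\chi(1-cw)$. Over all $w\in\F_p$, the map $w\mapsto 1-cw$ is a bijection of $\F_p$, so $\sum_{w\in\F_p}\chi(1-cw)=\sum_{t\in\F_p}\chi(t)=0$. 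Removing the $w=0$ term, which equals $\chi(1)=1$, leaves $-1$.

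Substituting back, the count is $\tfrac12\bigl((p-2)+1\bigr)=(p-1)/2$, as claimed. The only step with any content is the evaluation of the quadratic character sum, and the change of variables above handles it. Note that this argument does not use $p\equiv1\pmod4$, which is needed only so that adjacency in $P_p$ is symmetric.
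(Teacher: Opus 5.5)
Your proof is correct and follows the same outline as the paper: the indicator $\tfrac12\bigl(1-\chi((z-u)(z-v))\bigr)$ reduces the count to showing that the complete quadratic character sum equals $-1$. The only difference is how that sum is evaluated. The paper counts the $p-1$ solutions of $y^2=t(t-1)$ via the factorization $(r-s)(r+s)=1$. Your substitution $w=z^{-1}$ instead turns it into a complete linear character sum minus the $w=0$ term, which is equally elementary and slightly shorter.
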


\begin{proof}
For $z\notin\{u,v\}$, the indicator that $z$ is adjacent to exactly
one of $u,v$ is
\[
\frac{1-\chi((z-u)(z-v))}{2}.
\]

We parametrize $z$ as $z=u+(v-u)t$, then $(z-u)(z-v)=(v-u)^2t(t-1)$. 
Since $(v-u)^2$ is a nonzero square, it follows that
\[
\sum_{z\in\F_p}\chi((z-u)(z-v))
=
\sum_{t\in\F_p}\chi(t(t-1)).
\]

Let $N$ be the number of pairs $(t,y)\in\F_p^2$ satisfying
$y^2=t(t-1)$.  On the one hand, we have 
\[
N=\sum_{t\in\F_p}\bigl(1+\chi(t(t-1))\bigr)
=p+\sum_{t\in\F_p}\chi(t(t-1)).
\]
On the other hand, the equation $y^2=t(t-1)$ can be written as $(2y)^2 = (2t-1)^2 - 1$; by setting
$r=2t-1$ and $s=2y$, it becomes
$r^2-s^2=1,$ or equivalently, 
$(r-s)(r+s)=1.$
For every $\lambda\in\F_p^\times$, there is a unique solution that 
$r-s=\lambda$ and $r+s=\lambda^{-1}$. 
Hence $N=p-1$.  It follows that
\[
\sum_{t\in\F_p}\chi(t(t-1))=-1.
\]

Therefore $\#\{z\in\F_p\setminus\{u,v\}:z\text{ distinguishes }u,v\}$ is exactly 
\begin{align*}
\sum_{z\ne u,v} \frac{1-\chi((z-u)(z-v))}{2}=\sum_{z} \frac{1-\chi((z-u)(z-v))}{2} - 1= \frac{p-2}{2} - \frac{\sum_{t \in \mathbb{F}_p} \chi(t(t-1))}{2} =  
\frac{p-1}{2}.
\end{align*}
\end{proof}

In particular, Definition~\ref{def:reasonable}(b) holds with any fixed $q<1/2$ once $p$ is sufficiently large.

\subsection{Primeness}
The same separation phenomenon also rules out a nontrivial class of
vertices that looks identical from the outside.  This is precisely
the primeness condition needed in
Lemma~\ref{lem:embedding-classification}.
\begin{lemma}\label{lem:paley-prime}
The graph $P_p$ is prime.
\end{lemma}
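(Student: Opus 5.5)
We follow the template of Lemma~\ref{lem:prime}, with the exact count of Lemma~\ref{lem:paley-separation} in place of the asymptotic codegree estimates. Suppose for contradiction that $U\subseteq\F_p$ is a module with $2\le|U|<p$: every vertex outside $U$ is complete or anticomplete to $U$.

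\emph{Step 1: $U$ is large.} Fix distinct $u,v\in U$. Any vertex outside $U$ has the same adjacency to $u$ and to $v$. Hence every vertex $z\notin\{u,v\}$ adjacent to exactly one of $u,v$ lies in $U$. Lemma~\ref{lem:paley-separation} gives $(p-1)/2$ such vertices, so
\[
|U|\ge \frac{p-1}{2}+2=\frac{p+3}{2}.
\]

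\emph{Step 2: rule out a complete outside vertex.} Suppose some $x\notin U$ is complete to $U$. Then $\deg(x)\ge|U|$. But $P_p$ is $(p-1)/2$-regular, so
\[
\frac{p-1}{2}\ge\frac{p+3}{2},
\]
which is a contradiction.

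\emph{Step 3: rule out the all-anticomplete case.} Otherwise every vertex outside $U$ is anticomplete to $U$. Since $U\neq\F_p$, this disconnects $P_p$. However, $P_p=\operatorname{Cay}(\F_p,S)$ with $S$ the nonzero squares, and $S\ni1$. Already the edges $\{x,x+1\}$ form a Hamiltonian cycle, so $P_p$ is connected, a contradiction.

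There is also a shorter route for Step 3 that uses only the complement. $P_p$ is self-complementary: multiplication by a nonsquare maps it to its complement. Since modules of a graph and of its complement coincide, the anticomplete case reduces to the complete case, applied in the complement. Connectivity is simpler, however, so I will use it.

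No step is a real obstacle. The only care needed is the bookkeeping in Step 1: the vertices $u,v$ themselves lie in $U$ and are excluded from the distinguishing count, which is why the $+2$ appears. This margin is what makes the strict inequality in Step 2 work for every prime $p\equiv1\pmod4$, with no largeness assumption on $p$.
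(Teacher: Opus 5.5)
Your proof is correct and follows essentially the same route as the paper's: the exact separation count of Lemma~\ref{lem:paley-separation} forces $|U|\ge(p+3)/2$, $(p-1)/2$-regularity rules out a complete outside vertex, and the Hamiltonian cycle $\{x,x+1\}$ rules out the anticomplete case. The only cosmetic difference is that the paper justifies that cycle via $-1$ being a square, while you use $1\in S$ together with $S=-S$; both are valid.
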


\begin{proof}
Suppose, to the contrary, that there is a set
$M\subsetneq\F_p$ with $2\le |M|\le p-1$ such that every vertex
outside $M$ is either complete or anticomplete to $M$.  Choose
distinct $u,v\in M$.

No vertex outside $M$ can distinguish $u$ and $v$.  Hence all
$(p-1)/2$ vertices that distinguish them belong to $M$.  Since
neither $u$ nor $v$ is counted among these vertices,
\[
|M|-2\ge\frac{p-1}{2},
\]
and therefore $|M|\ge(p+3)/2$.

Every vertex of $P_p$ has degree $(p-1)/2$.  Thus a vertex outside
$M$ cannot be complete to $M$, since $|M|>(p-1)/2$.  It must
therefore be anticomplete to $M$.  Hence there are no edges between
$M$ and its nonempty complement, so $P_p$ is disconnected. Since $p\equiv1\pmod4$, the element $-1$ is a quadratic residue.
Thus $x-(x+1)=-1$ is a square for every $x\in\F_p$, so
$x$ is adjacent to $x+1$.  These edges form a spanning cycle, and
hence $P_p$ is connected, a contradiction.

\end{proof}

\subsection{Partial affine rigidity}

The partial-rigidity condition is the Paley-specific part of the
argument and one of the core arguments in this paper.  We encode a partial induced embedding $f$ by the point set
\[
\{(x,f(x)):x\in X\}
\]
in the affine plane.  Preservation of Paley adjacency and nonadjacency implies that every secant slope is a nonzero square, so the point set determines at most half of all directions.  The completion theorem (Theorem \ref{thm:szonyi-sziklai-completion}) below fills in the missing first coordinates without introducing new directions, and the R\'edei--Megyesi theorem then forces the completed point set to be a line.

For a point set $U\subseteq AG(2,p)$ in the affine plane, write $D(U)\subseteq\F_p\cup\{\infty\}$ for the set of directions determined by pairs of distinct points of $U$.

\begin{lemma}[Partial affine rigidity]\label{lem:partial-affine-rigidity}
Let $X\subseteq\F_p$ satisfy $|X|\ge p-\sqrt p/3$.  Suppose that $f:X\to\F_p$ is injective and
$\chi(f(x)-f(y))=\chi(x-y)$ for all distinct $x,y\in X$.  Then there exist $a,b\in\F_p$, with $a$ a nonzero square, such that $f(x)=ax+b$ for every $x\in X$.
\end{lemma}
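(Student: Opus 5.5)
Encode $f$ by the point set $U_f=\{(x,f(x)):x\in X\}\subseteq AG(2,p)$, which has $|X|\ge p-\sqrt p/3$ points. For distinct $x,y\in X$ the secant slope is $(f(x)-f(y))/(x-y)$. It is nonzero and finite by injectivity. Since $\chi(f(x)-f(y))=\chi(x-y)$ with both values nonzero, $\chi$ of the slope is $1$. Hence $D(U_f)\subseteq(\F_p^\times)^2$, so $U_f$ determines at most $(p-1)/2$ directions. In particular $\infty\notin D(U_f)$ and $0\notin D(U_f)$, so $U_f$ is the graph of a partial function whose complement in $\F_p$ (the missing abscissae) has size $p-|X|\le\sqrt p/3$.

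The first main step is to complete $U_f$ to the graph of a full function $\F_p\to\F_p$ without enlarging the direction set. I will invoke the Sz\H{o}nyi--Sziklai completion theorem (Theorem~\ref{thm:szonyi-sziklai-completion}). It states that a set of $p-\varepsilon$ points in $AG(2,p)$, with $\varepsilon$ small compared to $\sqrt p$, which determines fewer than $(p+1)/2$ directions, can be extended by $\varepsilon$ points to a set of $p$ points determining exactly the same directions. The hypothesis $|X|\ge p-\sqrt p/3$ is designed to meet the quantitative threshold of that theorem. The direction count $|D(U_f)|\le (p-1)/2<(p+1)/2$ is exactly what it needs. Call the completed set $\bar U$. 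Since $\infty\notin D(\bar U)$, $\bar U$ meets every vertical line in exactly one point, so $\bar U$ is the graph of a function $\bar f:\F_p\to\F_p$ extending $f$. All its secant slopes lie in $D(U_f)\subseteq(\F_p^\times)^2$.

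The second step applies R\'edei's theorem, in the prime-order form due to R\'edei and Megyesi. A set of $p$ points in $AG(2,p)$, $p$ prime, is either a line or determines at least $(p+3)/2$ directions. Since $|D(\bar U)|\le(p-1)/2$, $\bar U$ is a line. It is not vertical, so $\bar f(x)=ax+b$. The slope $a$ is a determined direction, hence a nonzero square. Restricting to $X$ gives $f(x)=ax+b$.

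I expect the main obstacle to be matching the exact hypotheses of the completion theorem as stated in the paper. One must check its admissible deficiency (some versions need $\varepsilon<\sqrt p/2$ together with a bound on $|D|$ of the form $(p+1)/2$ or $(p-1)/2$), and confirm that the completion adds no new directions rather than only controlling their number. If the stated version merely bounds the new direction set by something below $(p+3)/2$, R\'edei still applies directly, so either form suffices. A fallback, if needed, is to bound the number of directions introduced by the at most $\sqrt p/3$ added points and use the slack between $(p-1)/2$ and $(p+3)/2$. That slack is only $2$, though, so I would rely on the version where directions are preserved exactly.
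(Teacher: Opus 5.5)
Your proposal is correct and follows the paper's proof step for step. Squared secant slopes give $\infty\notin D(U_f)$ and $|D(U_f)|\le(p-1)/2$; the Sz\H{o}nyi--Sziklai completion then yields a $p$-point set with the same directions, hence the graph of a function on $\F_p$ extending $f$; and R\'edei--Megyesi forces that set to be a non-vertical line whose slope is a nonzero square. The hypothesis check you flagged is immediate with the paper's choice $\alpha=(p+2)/(2(p+1))$, since then $\varepsilon\le\sqrt p/3<\alpha\sqrt p$ and $|D(U_f)|\le(p-1)/2<p/2=(p+1)(1-\alpha)$.
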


We use the following completion theorem of Sz\H{o}nyi, in the slightly
generalized form of Sziklai, as recorded in
\cite[Theorem~18]{FST}.

\begin{theorem}[Sz\H{o}nyi \cite{Szonyi}, Sziklai \cite{Szi}]
\label{thm:szonyi-sziklai-completion}
Let $Q$ be a prime power, and let
$U\subseteq\operatorname{AG}(2,Q)$ have $Q-\varepsilon$ points.
Write $D=D(U)$.  Suppose that there is a real number $\alpha$ with
$1/2<\alpha<1$ such that
\[
\varepsilon<\alpha\sqrt Q, \text{ and }
|D|<(Q+1)(1-\alpha).
\]
Then there exists a set
$U'\subseteq\operatorname{AG}(2,Q)$ such that
\begin{enumerate}
    \item $U\subseteq U'$;
    \item $|U'|=Q$; and 
    \item $D(U')=D(U)$.
\end{enumerate}
\end{theorem}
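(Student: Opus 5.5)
I would follow the classical Rédei-polynomial proof of Szőnyi, as refined by Sziklai. First I would normalize coordinates. Since $|D|<(Q+1)(1-\alpha)<Q+1$, some direction is not determined by $U$. After an affine change of coordinates I may assume $\infty\notin D$. Then the points of $U=\{(a_i,b_i)\}$ have pairwise distinct first coordinates, and every line through a point of $U$ with undetermined slope contains no other point of $U$. Introduce the Rédei polynomial
\[
R(X,Y)=\prod_{i=1}^{Q-\varepsilon}(X+a_iY-b_i)\in\F_Q[X,Y].
\]
If $y\in\F_Q$ is a non-determined direction (in the appropriate parametrization of slopes), the values $b_i-a_iy$ are pairwise distinct. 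Hence $R(X,y)$ divides $X^Q-X$, and we can write
\[
X^Q-X=R(X,y)\,g_y(X),\qquad \deg g_y=\varepsilon ,
\]
where $g_y$ splits into distinct linear factors over $\F_Q$. The number of such $y$ is at least $Q+1-|D|>\alpha(Q+1)$.

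The second step is to glue the $g_y$ into one bivariate polynomial. Write $R(X,Y)=\sum_j(-1)^j\sigma_j(Y)X^{Q-\varepsilon-j}$, where $\deg\sigma_j\le j$. Formally dividing $X^Q-X$ by $R$ shows that the coefficients of the quotient, namely the power sums and complete symmetric functions in the roots, are polynomials $\tau_j(Y)$ with $\deg\tau_j\le j$. These do not depend on $y$ as long as the division is exact. So there is a polynomial
\[
G(X,Y)=\sum_{j\le\varepsilon}\tau_j(Y)X^{\varepsilon-j}
\]
of total degree $\varepsilon$ with $G(X,y)=g_y(X)$ for every non-determined $y$. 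One must check that the low-order coefficients of the division vanish identically. They are polynomials in $Y$ of degree at most about $2\varepsilon<Q$ that vanish at more than $\alpha(Q+1)>\varepsilon$ points, so they are identically zero.

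The main obstacle is the third step: showing that $G$ factors completely into linear factors $X+a'Y-b'$ over $\F_Q$. For each of the more than $\alpha(Q+1)$ good values $y$, the specialization $G(X,y)$ splits into $\varepsilon$ distinct linear factors. Thus the curve $G=0$ of degree $\varepsilon$ has many $\F_Q$-rational points on many parallel ``lines'' $Y=y$. I would argue by contradiction. Suppose $G$ had a component $C$ that is not a line through $\F_Q$-points of the required form. Compare the number of $\F_Q$-points that $C$ contributes on the good lines with the Hasse–Weil bound $Q+1+(d-1)(d-2)\sqrt Q$ for its absolutely irreducible part, and with Bézout for the part that is not absolutely irreducible. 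A degree-$d$ component with $d\ge2$ would need about $d\alpha Q$ rational points, which is too many as soon as $\varepsilon<\alpha\sqrt Q$ and $\alpha>1/2$. This calculation is where the precise hypotheses enter, and I expect it to need the most care, in particular the treatment of non-absolutely-irreducible components in Sziklai's version.

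Finally, write $G=\prod_{i=1}^{\varepsilon}(X+a_i'Y-b_i')$ and set $U'=U\cup\{(a_i',b_i')\}$. Then $|U'|=Q$, since $G$ has distinct factors on good lines and they are distinct from those of $R$. For every previously non-determined $y$ we have $R(X,y)G(X,y)=X^Q-X$, so the values $b-ay$ are pairwise distinct over $U'$. Thus no new direction appears, which gives $D(U')\subseteq D(U)$. The reverse inclusion is trivial because $U\subseteq U'$.
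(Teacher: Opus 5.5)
The paper does not prove this theorem. It imports it from Sz\H{o}nyi and Sziklai via \cite[Theorem~18]{FST}, so the relevant comparison is with the classical R\'edei-polynomial argument, which is what you outline. As written, however, your outline mishandles the $(Q+1)$-st direction, and this is exactly where the hypotheses $\alpha>1/2$ and $|D|<(Q+1)(1-\alpha)$ are sharp.

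After normalizing $\infty\notin D$, your parameter $y$ ranges over $\F_Q$. So the number of good slopes is $Q-|D|$, not $Q+1-|D|$, and it is only guaranteed to exceed $\alpha(Q+1)-1$. This loss is not cosmetic. For $Q$ odd and $\alpha$ just above $1/2$ one may have $|D|=(Q-1)/2$. An absolutely irreducible conic component then receives only $2(Q-|D|)=Q+1$ affine points from the good lines. A smooth conic can have exactly that many points, so the Hasse--Weil comparison you describe yields no contradiction.

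The same omission affects your last paragraph. Checking finite slopes $y$ does not prevent two completed points, or a completed point and an old one, from sharing a first coordinate. That would create the direction $\infty$ that you normalized away.

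Both defects are repaired by computing the top homogeneous part of $G$. Let $A$ be the set of first coordinates of $U$, so $|A|=Q-\varepsilon$. The degree-$j$ parts of the $\tau_j$ satisfy the same recursion with $R$ replaced by its top form $\prod_{a\in A}(X+aY)$. Moreover, $\prod_{a\in A}(X+aY)\prod_{a\notin A}(X+aY)=X^Q-XY^{Q-1}$ has no terms $X^{Q-j}Y^j$ for $1\le j\le Q-2$. Hence $G_{\mathrm{top}}=\prod_{a\in\F_Q\setminus A}(X+aY)$.

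Consequently every $\F_Q$-irreducible factor of $G$ of degree $d$ has $X$-degree $d$. You need this anyway, so that the factor has $d$ roots on each good line. Each such factor also meets the line $Z=0$ in $d$ distinct rational points. That line is the $(Q+1)$-st member of the pencil through $(1:0:0)$ and plays the role of the vertical direction.

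A component of degree $d\ge2$ therefore has at least $d(Q+1-|D|)>d\alpha(Q+1)$ projective rational points. On the other hand, $d-1<\alpha\sqrt Q$ gives $d\alpha(Q+1)-\bigl(Q+1+(d-1)(d-2)\sqrt Q\bigr)\ge(2\alpha-1)Q+d\alpha-1>0$, which contradicts the Hasse--Weil bound. Once $G=\prod_i(X+a_i'Y-b_i')$, comparing top forms gives $\{a_i'\}=\F_Q\setminus A$, so $\infty\notin D(U')$.

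Finally, drop the claim in your second step that the remaining coefficients of the division vanish identically. If it means $RG=X^Q-X$ in $\F_Q[X,Y]$, it is false whenever $D\ne\varnothing$, since $R(X,y)$ has a repeated root at a determined slope $y$. Those coefficients can also have degree up to $Q$, not about $2\varepsilon$.

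The claim is also unnecessary. The identity $G(X,y)=g_y(X)$ for good $y$ follows from the first $\varepsilon$ coefficient equations alone, because $X^Q-X$ has no terms $X^{Q-1},\dots,X^{Q-\varepsilon}$ when $Q-\varepsilon\ge2$. This condition fails only for $Q=2$, $|U|=1$, where the statement as written is itself false.
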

We will also use the following classical  theorem of R\'edei and
Megyesi; see \cite{Redei} and, for a modern proof,
\cite[Theorem~1.1]{Somlai}.

\begin{theorem}[R\'edei--Megyesi,  see \cite{Somlai}]
\label{thm:redei-direction}
Let $p$ be a prime, and let
$U\subseteq\operatorname{AG}(2,p)=\mathbb F_p^2$ be a set of $p$
points.  If $U$ is not contained in an affine line, then
\[
|D(U)|\ge \frac{p+3}{2}.
\]
\end{theorem}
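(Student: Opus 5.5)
We plan to follow the Lovász--Schrijver power-sum approach to R\'edei's lacunary-polynomial method. The argument is by contradiction: suppose $U$ is not contained in a line and $|D(U)|\le (p+1)/2$.

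\textbf{Normalization.} Since $|D(U)|<p+1$, some direction is not determined. After an affine change of coordinates (which maps lines to lines and preserves $|D(U)|$), we may take this direction to be the vertical one, $\infty$. Then the $p$ points of $U$ have distinct first coordinates, so $U=\{(x,f(x)):x\in\F_p\}$ for a function $f:\F_p\to\F_p$. We identify $f$ with its interpolation polynomial of degree at most $p-1$. A finite direction $c$ is not determined exactly when $x\mapsto f(x)-cx$ is a bijection of $\F_p$. Since $\infty\notin D(U)$, there are at least $p-|D(U)|\ge (p-1)/2$ such values $c\in\F_p$.

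\textbf{Power sums.} For a bijection, $\sum_{x}(f(x)-cx)^k=\sum_{y\in\F_p}y^k=0$ for $1\le k\le p-2$. Expanding in $c$ gives a polynomial
\[
\sum_{j=0}^{k}\binom{k}{j}(-c)^{k-j}\sum_x f(x)^j x^{k-j},
\]
of degree at most $k$ in $c$, which vanishes at at least $(p-1)/2$ points. Hence for $k\le (p-3)/2$ all its coefficients vanish. The binomial coefficients are nonzero mod $p$, so
\[
\sum_x f(x)^j x^{m}=0 \qquad\text{whenever } j+m\le (p-3)/2 .
\]
We then use the identity $\sum_x x^m g(x)=-[x^{p-1-m}]\,\bar g$, where $\bar g$ is the reduction of $g$ modulo $x^p-x$. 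This turns the vanishing into a degree bound: $\deg \overline{f^j}\le (p-3)/2+j$ for every $j\le (p-3)/2$. In particular ($j=1$) we get $\deg f\le (p-1)/2$.

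\textbf{Degree bootstrap (main obstacle).} Let $d=\deg f$ and suppose $d\ge2$. Taking $j$ with $jd\le p-1$, no reduction occurs, so $\deg\overline{f^j}=jd$. The bound then forces $j(d-1)\le (p-3)/2$ for all admissible $j$. I would choose the least $j$ with $j(d-1)>(p-3)/2$ and try to verify both $jd\le p-1$ and $j\le (p-3)/2$.
\begin{itemize}
\item For $d=2$, $j=(p-1)/2$ gives $jd=p-1$, but the range $j\le(p-3)/2$ falls just short.
\item For general $d$ the window is similarly tight at the boundary.
\end{itemize}
If the direct choice fails, I would strengthen the input. One option is to use all $k\le p-2$ together with only the coefficients of $c$ that are forced to vanish; with $(p-1)/2$ roots, the top $(p-1)/2$ coefficients of a degree-$k$ polynomial are not forced, but those of low $c$-degree can be extracted by considering the polynomial $\prod_{c\notin D}(C-c)$ dividing the relevant expression. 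The other option is the R\'edei polynomial $\prod_x(T+xY-f(x))$, whose lacunary structure $T^p-T+g(T,Y)$ yields the sharp count of $(p+3)/2$ rather than $(p+1)/2$.

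\textbf{Conclusion.} Once the bootstrap gives $\deg f\le 1$, the set $U$ is contained in a line, contradicting the assumption. Hence $|D(U)|\ge (p+3)/2$. I expect essentially all the difficulty to lie in the bootstrap step and in gaining the final $+1$ in the count.
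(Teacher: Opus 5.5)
\paragraph{Verdict.} Your normalization and the power-sum setup are correct. The argument does not close, though, and the gap is larger than the boundary case you flag.

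\paragraph{The gap.} First, the passage from vanishing to a degree bound is off by one. From $\sum_x f(x)^j x^m=0$ for $j+m\le (p-3)/2$, the identity $\sum_x x^m g(x)=-[x^{p-1-m}]\,\bar g$ kills only the coefficients of $x^e$ with $e\ge (p+1)/2+j$. So you get $\deg\overline{f^j}\le (p-1)/2+j$ and $\deg f\le (p+1)/2$, not the bounds you state.

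Second, no bootstrap can succeed from the constraints you actually derive. Take $f(x)=x^{(p+1)/2}$. Then $\sum_x f(x)^jx^m=\sum_x x^{m+j(p+1)/2}=0$ for all $1\le j+m\le (p-3)/2$. Indeed, the exponent is congruent modulo $p-1$ to $m+j$ (for $j$ even) or to $m+j+(p-1)/2$ (for $j$ odd), and neither is divisible by $p-1$. Yet this $f$ is not linear; it is R\'edei's extremal example, determining exactly $(p+3)/2$ directions. That is why you run out of room at $d=2$.

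Neither fallback you list is carried out, and the lacunary-polynomial route essentially presupposes R\'edei's theorem. For comparison, the paper gives no proof of this statement; it imports it from R\'edei and Somlai.

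\paragraph{The missing observation.} The coefficient of $c^k$ in $P_k(c)=\sum_x (f(x)-cx)^k$ is $(-1)^k\sum_x x^k$, which vanishes for $1\le k\le p-2$. Hence $\deg_c P_k\le k-1$. Having at least $(p-1)/2$ roots then forces $P_k\equiv 0$ for every $k\le (p-1)/2$, one step further than you used.

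This gives $\sum_x f(x)^jx^m=0$ for $1\le j+m\le (p-1)/2$. Hence $\deg\overline{f^j}\le (p-3)/2+j$ for all $1\le j\le (p-1)/2$. This is your stated bound, now justified, and on a longer range. In particular $d:=\deg f\le (p-1)/2$.

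\paragraph{Closing the bootstrap.} Suppose $d\ge 2$ and put $j=\lceil (p-1)/(2(d-1))\rceil$. Then $1\le j\le (p-1)/2$. Moreover $jd\le p-1$, because $j\le \bigl(\tfrac{p-1}{2}+d-2\bigr)/(d-1)$ and $d(d-2)\le \tfrac{p-1}{2}(d-2)$. So $\overline{f^j}=f^j$ has degree exactly $jd$. But $j(d-1)\ge (p-1)/2$ contradicts $jd\le (p-3)/2+j$. For $d=2$ this is $j=(p-1)/2$ with $\deg f^j=p-1>p-2$.

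Therefore $d\le 1$ and $U$ lies on a line. With this single fix, your power-sum plan becomes a complete proof.
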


\begin{proof}[Proof of Lemma \ref{lem:partial-affine-rigidity}]
Consider the affine point set $U_f:=\{(x,f(x)):x\in X\}$.  For distinct $x,y\in X$, the direction determined by $(x,f(x))$ and $(y,f(y))$ is
$\bigl(f(x)-f(y)\bigr)/(x-y)$, and
\begin{equation}\label{eq:square-slopes}
 \chi\left(\frac{f(x)-f(y)}{x-y}\right)=1.
\end{equation}
Thus every finite direction determined by $U_f$ is a nonzero square.  Moreover, $\infty\notin D(U_f)$ because $U_f$ is the graph of a function.  Therefore, the set $D(U_f)$ has the following properties simultaneously: 
\begin{equation}\label{eq:direction-upper-bound}
 D(U_f)\subseteq(\F_p^\times)^2, \ 
 |D(U_f)|\le\frac{p-1}{2}, \ 
\text{ and } \infty\notin D(U_f).
\end{equation}
where the second property is a consequence of the first property. 

Set $\alpha:=(p+2)/(2(p+1))$.  Then $1/2<\alpha<1$ and
\[(p+1)(1-\alpha)=p/2>(p-1)/2\ge |D(U_f)|.\]  Writing $\eps=p-|X|$, we also have $\eps\le\sqrt p/3<\alpha\sqrt p$.  The assumptions in the Sz\H{o}nyi--Sziklai theorem (Theorem \ref{thm:szonyi-sziklai-completion}) therefore are satisfied. Thus it guarantees a set $U'\supseteq U_f$ such that the following two properties hold simultaneously:
\begin{equation}\label{eq:completion-conclusion}
 |U'|=p
\text{ and }
 D(U')=D(U_f).
\end{equation}

Since $\infty\notin D(U')$, no two points of $U'$ have the same first coordinate.  The first-coordinate projection from $U'$ to $\F_p$ is therefore injective, and hence bijective because both sets have $p$ elements.  The extension $U'$ uniquely determines a mapping $f':\F_p\to\F_p$ by 
$U'=\{(x,f'(x)):x\in\F_p\}$.  The containment $U_f\subseteq U'$ gives  us $f'|_X=f$.

If $U'$ were not contained in an affine line,
Theorem~\ref{thm:redei-direction} would give
$|D(U')|\ge(p+3)/2$. However by\eqref{eq:completion-conclusion} and 
\eqref{eq:direction-upper-bound}, \[|D(U')| = |D(U_f)| \leq \frac{p-1}{2}.\]  
A contradiction is reached. Hence $U'$ is contained in an
affine line.

Since $\infty\notin D(U')$, this line is nonvertical and has an
equation $y=ax+b$ for some $a,b\in\F_p$.  Every affine line in
$\operatorname{AG}(2,p)$ has exactly $p$ points, and $|U'|=p$, so
\[
U'=\{(x,ax+b):x\in\F_p\}.
\]
The containment $U_f\subseteq U'$ gives
$f(x)=ax+b$ for every $x\in X$.  Finally, the unique direction
determined by $U'$ is $a$, and
$a\in D(U')=D(U_f)\subseteq(\F_p^\times)^2.$
Thus $a$ is a nonzero square.
\end{proof}

\begin{corollary}
\label{cor:paley-automorphisms}
For every sufficiently large prime $p\equiv1\pmod4$,
\begin{equation}\label{eq:full-aut-group}
\Aut(P_p)=\Phi_p.
\end{equation}
Moreover, Definition~\ref{def:reasonable}\textnormal{(c)} holds for
$P_p$ with $\delta=1/(3\sqrt p)$.
\end{corollary}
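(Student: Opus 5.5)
The corollary follows directly from Lemma~\ref{lem:partial-affine-rigidity}, applied in two ways.

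\emph{Reducing condition~(c) to the lemma.} Set $\delta=1/(3\sqrt p)$ and $k=p$. Suppose $X\subseteq\F_p$ satisfies $|X|\ge(1-\delta)p$. Then
\[
|X|\ge p-\frac{p}{3\sqrt p}=p-\frac{\sqrt p}{3},
\]
which is exactly the size hypothesis of Lemma~\ref{lem:partial-affine-rigidity}.

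Now let $f:X\to\F_p$ be injective with $a_{P_p}(f(x),f(y))=a_{P_p}(x,y)$ for all distinct $x,y\in X$. We must convert this into the character identity the lemma requires. For distinct $x,y$ we have $x-y\neq0$, and injectivity gives $f(x)-f(y)\neq0$. So both $\chi(x-y)$ and $\chi(f(x)-f(y))$ lie in $\{\pm1\}$. Adjacency in $P_p$ is the event that $\chi=1$. Hence preserving adjacency and nonadjacency is the same as the identity $\chi(f(x)-f(y))=\chi(x-y)$.

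The lemma then gives $f(x)=ax+b$ on $X$, with $a$ a nonzero square. This is the restriction of an element of $\Phi_p$, so condition~(c) holds with $\Phi=\Phi_p$.

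\emph{The automorphism group.} The inclusion $\Phi_p\subseteq\Aut(P_p)$ holds because
\[
\chi\bigl((ax+b)-(ay+b)\bigr)=\chi(a)\chi(x-y)=\chi(x-y)
\]
whenever $a$ is a nonzero square. For the reverse inclusion, take $\psi\in\Aut(P_p)$ and apply the previous step with $X=\F_p$, which has $p-|X|=0\le\sqrt p/3$. This gives $\psi\in\Phi_p$. Equivalently, one can use the remark after Definition~\ref{def:reasonable}: once condition~(c) holds, $\Phi=\Aut(H)$.

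\emph{Where the difficulty lies.} There is no genuine obstacle here. All the real work is in Lemma~\ref{lem:partial-affine-rigidity}, and the phrase ``sufficiently large'' is harmless since the lemma applies for every prime $p\equiv1\pmod4$. The one point needing care is translating preservation of adjacency and nonadjacency into equality of characters, which uses injectivity to rule out $\chi=0$.
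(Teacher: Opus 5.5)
Your proposal is correct and follows the paper's proof: you apply Lemma~\ref{lem:partial-affine-rigidity} with $X=\F_p$ to obtain $\Aut(P_p)=\Phi_p$, and you note that $(1-\delta)p=p-\sqrt p/3$ when $\delta=1/(3\sqrt p)$, which gives condition~\textnormal{(c)}. You also fill in two details the paper leaves implicit: the use of injectivity to turn preservation of adjacency and nonadjacency into $\chi(f(x)-f(y))=\chi(x-y)$, and the inclusion $\Phi_p\subseteq\Aut(P_p)$.
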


\begin{proof}
Taking $X=\F_p$ in
Lemma~\ref{lem:partial-affine-rigidity} gives
\eqref{eq:full-aut-group}.  The same lemma applies whenever
$|X|\ge p-\sqrt p/3$, which is precisely the required extension
property with $\delta=1/(3\sqrt p)$.
\end{proof}

\begin{proof}[Proof of Theorem~\ref{thm:main}, assuming Proposition~\ref{prop:full-fsw}]
Fix $q_0:=1/3$  and $c_0:=1/3$.  Lemmas~\ref{lem:paley-separation}, \ref{lem:paley-prime}, and \ref{lem:partial-affine-rigidity} show that, for every sufficiently large prime $p\equiv1\pmod4$, the graph $P_p$ is $(q_0,(3\sqrt p)^{-1};\Phi_p)$-reasonable.  Equations \eqref{eq:paley-group-size} and \eqref{eq:full-aut-group} verify the remaining hypotheses of Proposition~\ref{prop:full-fsw}.  Hence $P_p$ is a fractalizer.

Let $g:=|\Aut(P_p)|=p(p-1)/2$.  On $p^m$ vertices, the balanced
iterated blow-up is unique up to isomorphism.  If $F_m$ denotes its
number of labeled embeddings of $P_p$ in such an iterated blow-up, then by Lemma \ref{lem:embedding-classification}
\[
F_m=g(p^{m-1})^p+pF_{m-1}.
\]
Let $a_m:=F_m/p^{mp}$.  Dividing the recurrence by $p^{mp}$ gives
\[
a_m=\frac{g}{p^p}+p^{1-p}a_{m-1}.
\]
Since $p^{1-p}<1$, it follows that
\[
a_m\to
\frac{g/p^p}{1-p^{1-p}}
=
\frac{g}{p^p-p}.
\]
Hence $\emb(P_p)=g/(p^p-p)$.  Since every unlabeled induced copy
accounts for exactly $g$ labeled embeddings,
$\ind(P_p)=p!\emb(P_p)/g$, proving
\eqref{eq:inducibility-formulas}.

\end{proof}

It remains to prove Proposition~\ref{prop:full-fsw}.  The principal
difficulty is that the Paley rigidity theorem is available only at
the scale $\delta=\Theta(k^{-1/2})$, whereas the criterion of
\cite{FSW} is designed for
$\delta=\Theta((\log k)^{1/5}k^{-1/5})$.  Equivalently, the latter
permits roughly $(\log k)^{1/5}k^{4/5}$ exceptional vertices, while
our rigidity theorem permits only $O(\sqrt{k})$.

\section{Signatures and small-set embedding estimates}
\label{sec:weighted-hitting}

\subsection{Signatures and disjoint candidate sets}

A signature is a small set of vertices whose adjacency patterns
distinguish any two vertices outside it.  Once the image of a
signature is fixed in the host graph, different remaining vertices
have disjoint sets of possible images.  This is the basic reason that
embedding counts can be bounded by products of candidate-set sizes;
see \cite{FHL} and \cite[Section~3]{FSW}.

Throughout this subsection, $H$ is a fixed deterministic graph on $k$
vertices satisfying Definition~\ref{def:reasonable}\textnormal{(b)}
with parameter $q$.  For distinct vertices $u,v\in V(H)$, let
\[
\Delta_H(u,v):=
\left\{
z\in V(H)\setminus\{u,v\}:
a_H(z,u)\neq a_H(z,v)
\right\}.
\]
Thus $\Delta_H(u,v)$ is the set of vertices that distinguish $u$ and
$v$.  Definition~\ref{def:reasonable}\textnormal{(b)} states that
\[
|\Delta_H(u,v)|\ge qk
\]
for every pair of distinct vertices $u,v$.

\begin{definition}\label{def:signature}
A set $S\subseteq V(H)$ is a \emph{signature} of $H$ if, for every
two distinct vertices $u,v\in V(H)\setminus S$, there exists
$s\in S$ such that
\[
a_H(s,u)\neq a_H(s,v).
\]
Equivalently, $S\cap\Delta_H(u,v)\neq\varnothing$ for all distinct $u,v\in V(H)\setminus S$.
\end{definition}

\begin{lemma}\cite[Lemma 3.2]{FSW}
\label{lem:candidate-disjointness}
Let $S$ be a signature of $H$, let $H'$ be an induced subgraph of $H$
with $S\subseteq V(H')$, and let $G$ be any graph.  Fix a map
\[\psi:S\to V(G).\]  For each $i\in V(H')\setminus S$, let $C_i$ be
the set of vertices $w\in V(G)$ for which there exists an embedding
$\vartheta:H'\hookrightarrow G$ satisfying $\vartheta|_S=\psi$
and  $\vartheta(i)=w.$
Then the sets $C_i$, $i\in V(H')\setminus S$, are pairwise disjoint.
\end{lemma}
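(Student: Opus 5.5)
The plan is a direct contradiction argument: suppose two distinct indices $i\neq j$ in $V(H')\setminus S$ share a vertex $w\in C_i\cap C_j$, and derive a violation of injectivity. By definition of the candidate sets there are embeddings $\vartheta,\vartheta':H'\hookrightarrow G$ with $\vartheta|_S=\vartheta'|_S=\psi$, $\vartheta(i)=w$ and $\vartheta'(j)=w$. Since $i,j\in V(H)\setminus S$ are distinct and $S$ is a signature (Definition~\ref{def:signature}), we can choose $s\in S$ with $a_H(s,i)\neq a_H(s,j)$.

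The key step is to compare adjacencies at the common image $w$. Because $H'$ is an induced subgraph of $H$ containing $s,i,j$, adjacency in $H'$ among these vertices agrees with adjacency in $H$. Because $\vartheta$ is an induced embedding and $s\neq i$, we get $a_G(\psi(s),w)=a_G(\vartheta(s),\vartheta(i))=a_H(s,i)$. Applying the same reasoning to $\vartheta'$ gives $a_G(\psi(s),w)=a_G(\vartheta'(s),\vartheta'(j))=a_H(s,j)$. These two equalities together force $a_H(s,i)=a_H(s,j)$, which contradicts the choice of $s$.

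One technical point needs care. The adjacency function $a_G(\psi(s),w)$ is only meaningful for distinct vertices, so we must check that $w\neq\psi(s)$. This follows from injectivity of $\vartheta$: we have $\vartheta(i)=w$, $\vartheta(s)=\psi(s)$, and $i\neq s$ because $i\notin S$. Apart from this, the argument is immediate and has no real obstacle. It uses neither the primeness of $H$ nor any hypothesis on $G$, only the signature property and the fact that induced embeddings preserve both adjacency and nonadjacency. Hence the sets $C_i$ are pairwise disjoint.
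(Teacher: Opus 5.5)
Your proof is correct and follows essentially the same argument as the paper: two witness embeddings agreeing with $\psi$ on $S$ force $a_H(i,s)=a_G(w,\psi(s))=a_H(j,s)$, contradicting the signature property (the paper phrases this for every $s\in S$, you for one distinguishing $s$). One small correction: your opening sentence says the contradiction is a violation of injectivity, but injectivity only serves to guarantee $w\neq\psi(s)$, and the actual contradiction is with the signature property.
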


\begin{proof}
Suppose, to the contrary, that $w\in C_i\cap C_j$ for two distinct
vertices $i,j\in V(H')\setminus S$.  There may be two different
embeddings witnessing the two memberships, but both embeddings agree
with $\psi$ on $S$.

For every $s\in S$, the embedding witnessing $w\in C_i$ gives
\[
a_G(w,\psi(s))=a_H(i,s),
\]
whereas the embedding witnessing $w\in C_j$ gives
\[
a_G(w,\psi(s))=a_H(j,s).
\]
Hence $a_H(i,s)=a_H(j,s)$ for every $s\in S$.  Thus no vertex of
$S$ distinguishes $i$ and $j$, contradicting the definition of a
signature.
\end{proof}

The next observation shows that every sufficiently large subset of
$V(H)$ is a signature. 
\begin{lemma}\cite[Lemma 3.3]{FSW}
\label{lem:large-signature}
Suppose  $|V(H)| = k$ and 
$H$ satisfies Definition~\ref{def:reasonable}\textnormal{(b)}. Then every set $S\subseteq V(H)$ with $|S|\ge(1-q)k$ is a signature of $H$.
\end{lemma}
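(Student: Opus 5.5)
The plan is to check Definition~\ref{def:signature} directly for $S$. Fix a set $S\subseteq V(H)$ with $|S|\ge(1-q)k$, and take two distinct vertices $u,v\in V(H)\setminus S$. We must produce some $s\in S$ with $a_H(s,u)\neq a_H(s,v)$, which is the same as showing $S\cap\Delta_H(u,v)\neq\varnothing$.

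The argument is a counting step. By Definition~\ref{def:reasonable}\textnormal{(b)}, $|\Delta_H(u,v)|\ge qk$. Suppose, for contradiction, that $\Delta_H(u,v)$ misses $S$. Then $\Delta_H(u,v)\subseteq V(H)\setminus(S\cup\{u,v\})$, since $\Delta_H(u,v)$ excludes $u$ and $v$ by definition. As $u,v\notin S$, the set $S\cup\{u,v\}$ has exactly $|S|+2$ elements. This gives
\[
qk\le|\Delta_H(u,v)|\le k-|S|-2\le k-(1-q)k-2=qk-2,
\]
which is a contradiction. So some $s\in S$ distinguishes $u$ and $v$, and $S$ is a signature.

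The one step needing care is the bookkeeping: we must use that $u,v$ lie outside $S$ and that $\Delta_H(u,v)$ excludes $u,v$. Together these give the slack of $2$, so the inequality is strict. There is no real obstacle beyond this, and the threshold $(1-q)k$ is used with room to spare.
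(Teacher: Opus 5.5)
Your proof is correct and follows essentially the same argument as the paper. You assume two distinct vertices $u,v\notin S$ are not distinguished by $S$, deduce $\Delta_H(u,v)\subseteq V(H)\setminus(S\cup\{u,v\})$, and obtain $qk\le|\Delta_H(u,v)|\le k-|S|-2\le qk-2$, which contradicts Definition~\ref{def:reasonable}\textnormal{(b)}.
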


\begin{proof}
Suppose that $S$ is not a signature.  Then there are distinct
$u,v\in V(H)\setminus S$ such that no vertex of $S$ distinguishes
$u$ and $v$.  Hence $\Delta_H(u,v)
\subseteq
V(H)\setminus(S\cup\{u,v\}).$
It follows that $|\Delta_H(u,v)| \le k-|S|-2 \le qk-2 <qk,$
contradicting Definition~\ref{def:reasonable}\textnormal{(b)}.
\end{proof}

The next lemma shows that this signature can be logarithmically
small.  The graph $H$ is fixed throughout; randomness is used only as
a probabilistic method for selecting a deterministic signature.  This
is the same observation as \cite[Lemma~3.4]{FSW}, and we include the
proof to emphasize that no random-graph hypothesis is involved.

\begin{lemma}\cite[Lemma~3.4]{FSW}
\label{lem:small-signature}
Assume $H$ satisfies Definition~\ref{def:reasonable}\textnormal{(b)} with $q\in(0,1/2)$.  For all sufficiently large $k=k(q)$, the following
holds.  If $X\subseteq V(H)$ satisfies
$|X|\ge\left(1-\frac q2\right)k,$
then $X$ contains a signature $S$ of $H$ with
$|S|\le\frac5q\log k.$
\end{lemma}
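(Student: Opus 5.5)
We select the signature by the probabilistic method inside $X$. Set $m:=\lceil \frac5q\log k\rceil$ and choose $s_1,\dots,s_m$ independently and uniformly at random from $X$ (with repetition allowed). Let $S$ be the set of distinct chosen vertices, so $S\subseteq X$ and $|S|\le m$. (If the constant forces rounding, we use $\frac{5}{q}\log k$ rounded down; the slack below absorbs this.) We show that $S$ is a signature with positive probability.

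Fix an unordered pair of distinct vertices $u,v\in V(H)$. By Definition~\ref{def:reasonable}\textnormal{(b)}, $|\Delta_H(u,v)|\ge qk$. Since $|V(H)\setminus X|\le qk/2$, we get
\[
|\Delta_H(u,v)\cap X|\ge qk-\tfrac q2 k=\tfrac q2 k .
\]
Hence a single uniform draw from $X$ lands in $\Delta_H(u,v)$ with probability at least $\frac{qk/2}{|X|}\ge q/2$. The draws are independent, so
\[
\Pr\bigl[S\cap\Delta_H(u,v)=\varnothing\bigr]\le (1-q/2)^m\le e^{-qm/2}\le k^{-5/2}.
\]
The signature condition only concerns pairs $u,v\notin S$. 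Still, it suffices that every pair $\{u,v\}$ meets $S$ through $\Delta_H(u,v)$: if this holds for all pairs, it holds in particular for those outside $S$.

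A union bound over the at most $\binom k2<k^2/2$ pairs gives failure probability at most $\tfrac12 k^{2}\cdot k^{-5/2}=\tfrac12 k^{-1/2}<1$. Therefore some choice yields $S\subseteq X$, $|S|\le m$, with $S\cap\Delta_H(u,v)\ne\varnothing$ for all distinct $u,v$. In particular $S$ is a signature of $H$.

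The only step needing care is the bookkeeping of constants. We must check that $|S|\le\frac5q\log k$ exactly, rather than its ceiling, still gives $e^{-qm/2}k^2<1$. Taking $m=\lfloor\frac5q\log k\rfloor\ge\frac5q\log k-1$ gives $e^{-qm/2}\le e^{q/2}k^{-5/2}$, and $e^{q/2}k^{-1/2}/2<1$ for $k$ large depending on $q$. This is where the hypothesis ``$k$ sufficiently large'' enters. It also guarantees $m\ge1$, and $m\le|X|$ is trivial because $S$ is a set of distinct elements of $X$.
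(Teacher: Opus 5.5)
Your proof is correct and is essentially the paper's argument: sample independently and uniformly from $X$, use $|\Delta_H(u,v)\cap X|\ge qk/2$ to bound the per-pair failure probability by $(1-q/2)^m$, and take a union bound over fewer than $k^2/2$ pairs. The only difference is bookkeeping: the paper samples $\lceil (4/q)\log k\rceil\le(5/q)\log k$ points, which gives per-pair failure at most $k^{-2}$, whereas your choice $m=\lfloor(5/q)\log k\rfloor$ gives $e^{q/2}k^{-5/2}$, and this works equally well.
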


\begin{proof}
Let $t:=\left\lceil\frac4q\log k\right\rceil.$
For all sufficiently large $k$,
$t\le(5/q)\log k$. 
Choose $s_1,\dots,s_t$ independently and uniformly from $X$, allowing
repetitions, and let $S:=\{s_1,\dots,s_t\}.$

Fix distinct vertices $u,v\in V(H)$.  Since
$|\Delta_H(u,v)|\ge qk$ and
$|V(H)\setminus X|\le qk/2$, we have
\[
|\Delta_H(u,v)\cap X|
\ge
|\Delta_H(u,v)|-|V(H)\setminus X|
\ge\frac{qk}{2}.
\]
As $|X|\le k$, a uniformly chosen element of $X$ belongs to
$\Delta_H(u,v)$ with probability at least $q/2$.  Therefore
\[
\begin{aligned}
\mathbb P\bigl(S\cap\Delta_H(u,v)=\varnothing\bigr)
&=
\mathbb P\bigl(s_i\notin\Delta_H(u,v)
               \text{ for every }i\bigr) 
 \le
\left(1-\frac q2\right)^t
\le
\exp\left(-\frac{qt}{2}\right)
\le k^{-2}.
\end{aligned}
\]

There are fewer than $k^2/2$ unordered pairs $\{u,v\}$.  By the union
bound, the probability that some pair is not distinguished by the
sampled set is less than $1/2$.  Hence there exists a choice of
$s_1,\dots,s_t$ for which $S\cap\Delta_H(u,v)\neq\varnothing$ for every pair of distinct vertices $u,v$.  In particular, $S$ is a
signature, and
$|S|\le t\le\frac5q\log k.$
\end{proof}

\begin{definition}\label{def:super-signature}
Let $\rho>0$.  A nonempty set $S\subseteq V(H)$ is a
\emph{$\rho$-super-signature} if
\[
|S\cap\Delta_H(u,v)|\ge\rho|S|
\]
for every two distinct vertices $u,v\in V(H)\setminus S$.
\end{definition}

Later we need a robust fingerprint: every pair should be separated
not merely once, but by a positive proportion of the sampled
vertices.  The same sampling argument, now combined with a Chernoff
bound, produces such a super-signature.  This is the deterministic
statement of \cite[Lemma~3.6]{FSW}. We include the short proof for completeness. 

\begin{lemma}\cite[Lemma~3.6]{FSW}
\label{lem:small-super-signature}
Assume $H$ satisfies Definition~\ref{def:reasonable}\textnormal{(b)} with $q\in(0,1/2)$.  For all sufficiently large $k=k(q)$, the following
holds.  If $X\subseteq V(H)$ satisfies
$|X|\ge\left(1-\frac {q}{2}\right)k$,
then $X$ contains a $(q/4)$-super-signature $S$ satisfying
$|S|\le\frac{33}{q}\log k.$
\end{lemma}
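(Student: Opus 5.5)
We adapt the sampling argument of Lemma~\ref{lem:small-signature}, replacing the single-hit estimate by a Chernoff lower-tail bound. Set
\[
t:=\left\lceil\frac{32}{q}\log k\right\rceil,
\]
so that $t\le(33/q)\log k$ for all sufficiently large $k$. We draw $s_1,\dots,s_t$ independently and uniformly from $X$, allowing repetitions, and work with the multiset $(s_1,\dots,s_t)$. For a fixed pair of distinct vertices $u,v$, the argument in the proof of Lemma~\ref{lem:small-signature} gives $|\Delta_H(u,v)\cap X|\ge qk/2$. Hence each $s_i$ lands in $\Delta_H(u,v)$ independently with probability at least $q/2$. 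Consequently the count $Z_{uv}:=\#\{i:s_i\in\Delta_H(u,v)\}$ stochastically dominates $\mathrm{Bin}(t,q/2)$, whose mean is at least $qt/2$.

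By the multiplicative Chernoff bound with deviation $1/2$,
\[
\mathbb P\bigl(Z_{uv}<qt/4\bigr)\le\exp\bigl(-\tfrac18\cdot\tfrac{qt}{2}\bigr)=\exp(-qt/16)\le k^{-2}.
\]
There are fewer than $k^2/2$ pairs, so a union bound shows that with positive probability $Z_{uv}\ge qt/4$ holds for every pair.

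The main obstacle is converting this multiset statement into the set statement required by Definition~\ref{def:super-signature}. Repetitions could make $|S\cap\Delta_H(u,v)|$ smaller than $Z_{uv}$ even though $|S|\le t$. We handle this by also requiring that the $s_i$ be distinct. The expected number of colliding pairs is at most $\binom t2/|X|\le t^2/k=o(1)$, so with probability $1-o(1)$ there are no repetitions. Intersecting this event with the Chernoff event, which has probability greater than $1/2$, still leaves an event of positive probability.

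On that event we have $|S|=t$ and $|S\cap\Delta_H(u,v)|=Z_{uv}\ge qt/4=(q/4)|S|$ for all distinct $u,v$, in particular for those outside $S$. Thus $S\subseteq X$ is a $(q/4)$-super-signature with $|S|\le(33/q)\log k$, as claimed. It is nonempty because $t\ge1$.
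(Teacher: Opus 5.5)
Your proposal is correct and follows the paper's proof: you use the same $t=\lceil (32/q)\log k\rceil$, the same Chernoff lower-tail bound $\exp(-qt/16)\le k^{-2}$ with a union bound over pairs, and the same collision estimate $\binom{t}{2}/|X|=o(1)$ to guarantee distinct samples, so that $|S|=t$ and $|S\cap\Delta_H(u,v)|\ge (q/4)|S|$.
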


\begin{proof}
Let 
$t:=\left\lceil\frac{32}{q}\log k\right\rceil.$
For all sufficiently large $k$,
$t\le(33/q)\log k$.
Pick $s_1,\dots,s_t$ independently and uniformly from $X$.
Fix distinct vertices $u,v\in V(H)$, and define 
$Z_i:=
\mathbf 1_{\{s_i\in\Delta_H(u,v)\}}.$

All probabilities and expectations below are taken over the
independent uniform choices $s_1,\dots,s_t\in X$; the set $X$ itself
is fixed.
As in the proof of Lemma~\ref{lem:small-signature},
\[
\mathbb{E}[Z_i]
=
\frac{|\Delta_H(u,v)\cap X|}{|X|}
\ge\frac q2.
\]
Thus $Z_1+\cdots+Z_t$ is binomial with mean
$\mathbb{E}[Z_i]t\ge qt/2$.  By the Chernoff bound,
\[
\begin{aligned}
\mathbb P\left(
\sum_{i=1}^t Z_i<\frac{qt}{4}
\right)
&\le
\mathbb P\left(
\sum_{i=1}^t Z_i<\frac{\mathbb{E}[Z_i]t}{2}
\right)
\le
\exp\left(-\frac{\mathbb{E}[Z_i]t}{8}\right)
\le
\exp\left(-\frac{qt}{16}\right)
\le k^{-2}.
\end{aligned}
\]
A union bound over all unordered pairs shows that, with probability
greater than $1/2$, we have, simultaneously for all distinct $u,v\in V(H)$, the bound
$\left|
\left\{i\in[t]:s_i\in\Delta_H(u,v)\right\}
\right|
\ge\frac{qt}{4}$
.

We also claim that the sampled vertices are all distinct with
probability greater than $3/4$.  Indeed, by the union bound,
\[
\mathbb P\bigl(s_i=s_j\text{ for some }i<j\bigr)
\le
\frac{\binom{t}{2}}{|X|}.
\]
For fixed $q$, the right-hand side is
$O_q((\log k)^2/k)$ and is therefore smaller than $1/4$ for
sufficiently large $k$.

It follows that, with positive probability, the vertices
$s_1,\dots,s_t$ are distinct and every pair $u,v$ is distinguished by
at least $qt/4$ of them.  Fix such a choice and set $S:=\{s_1,\dots,s_t\}.$
Then $|S|=t$, and for every two distinct
$u,v\in V(H)\setminus S$, we have $|S\cap\Delta_H(u,v)|
\ge\frac{qt}{4} = \frac q4|S|.$
Thus $S$ is a $(q/4)$-super-signature of the required size.
\end{proof}

\subsection{Counting inequalities}

To keep the proof self-contained at the level of statements, we record
the deterministic counting results from \cite{FSW} that are used
below. Several of these estimates originate from \cite{FHL}.  The
first two are elementary consequences of the disjointness of
candidate sets.

\begin{lemma}\cite[Lemma~3.8]{FSW}
\label{lem:balanced-product-tools}
Let $\ell,\ell'\ge1$, and let $m,m'\ge0$ be integers.
\begin{enumerate}[label=\textnormal{(\roman*)},leftmargin=2.5em]
\item If $m_1,\dots,m_\ell$ are nonnegative integers satisfying
$\sum_{i=1}^{\ell}m_i\le m,$
then
$\prod_{i=1}^{\ell}m_i\le\E_\ell(m).$

\item 
$\E_\ell(m)\E_{\ell'}(m')
\le
\E_{\ell+\ell'}(m+m').$

\item If $m\ge\ell$, $\ell'\le\ell$, and
$m'\le(1-\mu)m$ for some $\mu\ge0$, then
\[
\E_{\ell'}(m')
\le
\exp\bigl(3(\ell-\ell')-\mu\ell/2\bigr)
\left(\frac{\ell}{m}\right)^{\ell-\ell'}
\E_\ell(m).
\]
\end{enumerate}
\end{lemma}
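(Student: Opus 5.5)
Parts (i) and (ii) follow directly from the definition \eqref{eq:Ek-def}. For (i), increase $m_\ell$ by $m-\sum_i m_i$. This does not decrease the product, since all entries are nonnegative. The resulting tuple sums to exactly $m$, so its product is at most $\E_\ell(m)$. For (ii), take maximizing tuples $(n_1,\dots,n_\ell)$ for $\E_\ell(m)$ and $(n'_1,\dots,n'_{\ell'})$ for $\E_{\ell'}(m')$ and concatenate them. The result is an admissible $(\ell+\ell')$-tuple with sum $m+m'$, so its product $\E_\ell(m)\E_{\ell'}(m')$ is at most $\E_{\ell+\ell'}(m+m')$.

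For (iii) I would split the comparison into two steps, since a crude bound $\E_\ell(m)\ge e^{-c\ell}(m/\ell)^\ell$ loses an $e^{-c\ell}$ factor that cannot be afforded when $\ell'=\ell$ and $\mu$ is small.

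\emph{Step A: shrink the total from $m$ to $m'$ with $\ell$ fixed.} Raise the total from $m'$ to $m$ one unit at a time by incrementing a smallest part of a balanced $\ell$-tuple. When the current total is $j$, that smallest part is $r=\lfloor j/\ell\rfloor$, and the product is multiplied by $(r+1)/r$, or is made positive if $r=0$. Using $\log\frac{r+1}{r}\ge \frac1{r+1}\ge \frac{\ell}{j+\ell}$, we get
\[
\log\frac{\E_\ell(m)}{\E_\ell(m')}\ \ge\ \sum_{j=m'}^{m-1}\frac{\ell}{j+\ell}\ \ge\ \ell\log\frac{m+\ell}{m'+\ell}.
\]
The hypothesis $m\ge\ell$ gives $\frac{m+\ell}{m'+\ell}\ge \frac{2m}{(2-\mu)m}$ when $m'\le(1-\mu)m$. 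Hence this is at least $\ell\cdot\mu/2$. So $\E_\ell(m')\le e^{-\mu\ell/2}\E_\ell(m)$, and Step A supplies the full $e^{-\mu\ell/2}$ factor.

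\emph{Step B: reduce the number of parts from $\ell$ to $\ell'$ with the total $m'$ fixed.} Take a balanced $\ell'$-tuple achieving $\E_{\ell'}(m')$ and append $\ell-\ell'$ parts. Splitting does not shrink the total product too much if we move units appropriately. Concretely, I would show
\[
\E_{\ell'}(m')\le e^{3(\ell-\ell')}(\ell/m)^{\ell-\ell'}\E_\ell(m')
\]
as follows. Set $t=\ell-\ell'$ and $x=m'/\ell'$. By AM--GM, $\E_{\ell'}(m')\le x^{\ell'}$. For the lower bound, the balanced $\ell$-tuple for $m'$ has product at least $(m'/\ell)^{\ell}e^{-c\ell}$; to avoid that $e^{-c\ell}$ loss, instead compare balanced tuples directly. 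Remove $t$ parts of size about $m'/\ell$ from the balanced $\ell$-tuple, and redistribute their mass over the remaining $\ell'$ parts. Each of the $\ell'$ parts grows by a factor at most $(1+t/\ell')$, and $(1+t/\ell')^{\ell'}\le e^{t}$. Each removed part has size at least $\lfloor m'/\ell\rfloor$, which is at least $m'/(2\ell)$ when $m'\ge\ell$. The case $m'<\ell$, including $m'=0$, is handled separately using $m/\ell\ge1$ and the factor $e^{3t}$. These combine to the factor $e^{3t}(\ell/m)^{t}$, using $m'\le m$ where the direction of the inequality is favorable, together with the spare factors of $e$.

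Combining Steps A and B gives (iii). The main obstacle is Step B: the floors and the small-$m'$ regime must be handled cleanly enough that the loss is $e^{O(\ell-\ell')}$ rather than $e^{O(\ell)}$. I would first try the explicit redistribution argument above, working throughout with balanced tuples rather than real-valued AM--GM.
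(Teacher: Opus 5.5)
Parts (i), (ii) and your Step A are correct. In particular, $\E_\ell(m')\le e^{-\mu\ell/2}\E_\ell(m)$ already proves (iii) when $\ell'=\ell$.

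The gap is Step B, and it is not a matter of floors. As stated, $\E_{\ell'}(m')\le e^{3t}(\ell/m)^t\E_\ell(m')$ is false. For $\ell=2$, $\ell'=1$, $m'=2$, $m=1000$, the left side is $\E_1(2)=2$, while the right side is $e^3\cdot 0.002\cdot\E_2(2)<0.05$. Comparing balanced tuples with the same total $m'$ can only give a factor $(\ell/m')^t$. Since $m'\le m$, we have $(\ell/m')^t\ge(\ell/m)^t$, so replacing $m'$ by $m$ goes in the unfavorable direction, not the favorable one. The missing factor $(m/m')^t$ is unbounded as $\mu\to1$. It would have to be paid for by the reduction $m\to m'$, but you have already spent all of Step A on $e^{-\mu\ell/2}$.

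Worse, the pivot $\E_\ell(m')$ is unusable when $\ell'\le m'<\ell$. There $\E_\ell(m')=0<\E_{\ell'}(m')$ (e.g.\ $\ell=3$, $\ell'=1$, $m'=2$), so no inequality $\E_{\ell'}(m')\le C\,\E_\ell(m')$ can hold. This range therefore cannot be ``handled separately'' inside Step B. Separately, with integer parts the per-part growth factor is not bounded by $1+t/\ell'$, since a part equal to $1$ can double. The product of the growth factors is still $e^{O(t)}$, so this point is minor.

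A way to repair the argument is to split the balanced tuple for $m$ rather than for $m'$. Let $n_1\le\dots\le n_\ell$ be balanced with sum $m$, and put $a:=n_1=\lfloor m/\ell\rfloor\ge1$, $M:=n_{t+1}+\dots+n_\ell$, and $D:=m-M\le t(a+1)$. The tail is balanced, so $\E_\ell(m)=\E_{\ell'}(M)\prod_{i\le t}n_i\ge a^t\E_{\ell'}(M)$. Also $a^{-t}\le\bigl(\tfrac{a+1}{a}\bigr)^t(\ell/m)^t$, and this is where $(\ell/m)^t$ genuinely comes from. Now compare $\E_{\ell'}(m')$ with $\E_{\ell'}(M)$ using your own unit-increment argument.

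\emph{Case $m'\le M$.} Apply Step A with $\ell'$ parts, together with $M\ge\ell'm/\ell$, $m'\le(1-\mu)m$ and $m/\ell\ge1$. This gives $\log\frac{\E_{\ell'}(m')}{\E_{\ell'}(M)}\le\ell'\log\frac{m'+\ell'}{M+\ell'}\le t-\mu\ell'/2\le\tfrac32t-\mu\ell/2$.

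\emph{Case $m'>M$.} There are at most $m'-M\le D-\mu m$ increments, and each multiplies by at most $e^{1/a}$ because all parts stay at least $a$. Since $\mu m/a\ge\mu\ell$, the ratio is at most $\exp\bigl(t\tfrac{a+1}{a}-\mu\ell\bigr)$.

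In both cases the total exponent is at most $(\log2+2)t-\mu\ell/2\le3t-\mu\ell/2$, and the range $m'<\ell$ needs no special treatment.
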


The following is \cite[Lemma~3.9]{FSW}.  It bounds the number of
embeddings extending a fixed image of a signature. It is a simple corollary of Lemma \ref{lem:candidate-disjointness}.

\begin{lemma}\cite[Lemma~3.9]{FSW}
\label{lem:fixed-signature-extension}
Let $G$ be a graph, let $U\subseteq V(G)$, and let
$S\subseteq V(H)$ be a signature.  Fix a map
$f:S\to V(G)$.  Then the number of embeddings
$\vartheta:H\hookrightarrow G$ satisfying
$\vartheta|_S=f $ and 
$\vartheta(v)\in U$
for every $v\in V(H)\setminus S$
is at most $\E_{k-|S|}(|U|).$
\end{lemma}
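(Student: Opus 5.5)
The plan is to combine Lemma~\ref{lem:candidate-disjointness} with part~(i) of Lemma~\ref{lem:balanced-product-tools}.

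First, fix $f:S\to V(G)$. If no embedding extends $f$ with the non-signature vertices mapped into $U$, the count is zero and there is nothing to prove. Otherwise, apply Lemma~\ref{lem:candidate-disjointness} with $H'=H$ and $\psi=f$. For each $i\in V(H)\setminus S$, let $C_i$ be the candidate set of images of $i$ over all embeddings extending $f$. The lemma says these sets are pairwise disjoint. Now restrict to the embeddings we are counting, those sending every $v\in V(H)\setminus S$ into $U$, and set $C_i':=C_i\cap U$. Each counted embedding sends $i$ into $C_i'$, and the $C_i'$ remain pairwise disjoint subsets of $U$, so
\[
\sum_{i\in V(H)\setminus S}|C_i'|\le|U|.
\]

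Next, a counted embedding is determined by $f$ together with the tuple $(\vartheta(i))_{i\notin S}$, and this tuple lies in $\prod_i C_i'$. Hence the number of counted embeddings is at most
\[
\prod_{i\in V(H)\setminus S}|C_i'|.
\]
There are $k-|S|$ factors, each a nonnegative integer, and their sum is at most $|U|$. Lemma~\ref{lem:balanced-product-tools}(i), applied with $\ell=k-|S|$ and $m=|U|$, bounds this product by $\E_{k-|S|}(|U|)$.

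The only degenerate case is $S=V(H)$. Then $\ell=0$, there is at most one embedding (namely $f$ itself), and $\E_0(|U|)=1$ by convention, so the bound holds directly. The main point requiring care is that disjointness is asserted for candidate sets over all extensions of $f$. Intersecting with $U$ preserves disjointness, so no real obstacle arises.
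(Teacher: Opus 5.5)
Your proposal is correct and takes the same route as the paper, which presents this lemma as a direct corollary of Lemma~\ref{lem:candidate-disjointness}: you bound the count by $\prod_{i}|C_i\cap U|$ using the disjoint candidate sets, then apply Lemma~\ref{lem:balanced-product-tools}(i). You also handle $S=V(H)$ separately via the convention $\E_0(n)=1$, which is needed because part~(i) is stated only for $\ell\ge1$.
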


The next lemma bounds embeddings sending many pattern vertices
into a small host set. It improves
\cite[Corollary~3.11]{FSW} in the parameter range needed here;
the quantitative comparison is given in
Remark~\ref{rem:aggregate-small-set-comparison}. 
 For the parameters arising below, $\gamma/\beta^2$ tends to infinity, so \cite[Corollary~3.11]{FSW} gives no useful decay, whereas $\gamma/\beta$ tends to zero.  The estimate below therefore gives exponential decay in a range not covered by the corresponding bound in \cite{FSW}.

\begin{lemma}
\label{lem:weighted-hitting}
Let $H$ be a $k$-vertex graph satisfying
Definition~\ref{def:reasonable}\textnormal{(b)} with parameter
$0<q<1/2$, and let $H'$ be an induced subgraph of $H$ on
$k'\ge k-4$ vertices.  Let $G$ be an $n$-vertex graph with $n\ge k$,
and let $U\subseteq V(G)$ satisfy $|U|\le\gamma n$, where
$0<\gamma<1$.  Suppose that $0<\beta\le q/3$.

For all sufficiently large $k=k(q)$, there is a signature
$T\subseteq V(H')$ with $t:=|T|\le(5/q)\log k$.  Write
$r:=\lceil\beta k\rceil-t$ and $d_0:=k-k'+t$.  Then
$k'-t-r\ge1$ for all sufficiently large $k$.  

If $r\ge1$, the number
$N$ of embeddings $\vartheta:H'\hookrightarrow G$ satisfying
$|\vartheta^{-1}(U)|\ge\beta k$ is at most
\begin{equation}
\label{eq:weighted-hitting-exact}
N\le
 e^{3d_0}k^{d_0}
 \left(\frac{e^4\gamma k}{r}\right)^r
 \frac{\E_k(n)}{n^{k-k'}}.
\end{equation}

If, in addition, $\beta k\ge2t+2$ and
$2e^4\gamma/\beta<1$, then
\begin{equation}
\label{eq:weighted-hitting-simple}
N\le
 \exp\left(\frac{40}{q}(\log k)^2\right)
 \left(\frac{2e^4\gamma}{\beta}\right)^{\beta k/2}
 \frac{\E_k(n)}{n^{k-k'}}.
\end{equation}
\end{lemma}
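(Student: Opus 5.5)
We start by fixing the signature. Since $H'$ has $k'\ge k-4$ vertices, $V(H')$ is a set of size at least $(1-q/2)k$ once $k$ is large. Lemma~\ref{lem:small-signature} then gives a signature $T\subseteq V(H')$ of $H$ with $t\le(5/q)\log k$. A signature of $H$ contained in $V(H')$ is also a signature for $H'$. Because $t=O(\log k)$ and $\beta k\le qk/3<k/6$, we get $k'-t-r\ge k-4-\beta k-1\ge1$.

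The count proceeds in two stages. First we fix the image $\psi=\vartheta|_T$; there are at most $n^t$ choices. By Lemma~\ref{lem:candidate-disjointness} the candidate sets $C_i$, $i\in V(H')\setminus T$, are then pairwise disjoint. Write $u_i:=|C_i\cap U|$ and $w_i:=|C_i\setminus U|$. Disjointness gives $\sum_i u_i\le|U|\le\gamma n$ and $\sum_i w_i\le n$.

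Any embedding with $|\vartheta^{-1}(U)|\ge\beta k$ sends at least $r=\lceil\beta k\rceil-t$ non-signature vertices into $U$. We choose such a set $R$ of size exactly $r$ and bound
\[
N_\psi\le\sum_{|R|=r}\prod_{i\in R}u_i\prod_{i\notin R}(u_i+w_i).
\]
This is the weighted step: the individual $u_i$ are kept inside the sum instead of each being replaced by a worst case. Bounding $\prod_{i\notin R}(u_i+w_i)\le\E_{k'-t-r}(n)$ by Lemma~\ref{lem:balanced-product-tools}(i), the remaining factor is the elementary symmetric polynomial $e_r(u)$. Maclaurin's inequality gives
\[
e_r(u)\le\binom{m}{r}\Bigl(\tfrac{\sum u_i}{m}\Bigr)^r\le\Bigl(\tfrac{e\gamma n}{r}\Bigr)^r,
\qquad m:=k'-t .
\]

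It remains to combine the factors. We have
\[
N\le n^t\Bigl(\tfrac{e\gamma n}{r}\Bigr)^r\E_{k'-t-r}(n).
\]
Lemma~\ref{lem:balanced-product-tools}(iii), with $\mu=0$, $\ell=k$, $\ell'=k'-t-r$, gives
\[
\E_{k'-t-r}(n)\le e^{3(d_0+r)}(k/n)^{d_0+r}\E_k(n).
\]
Multiplying through, the powers of $n$ combine to $n^{t+r-d_0-r}=n^{-(k-k')}$. The $r$-dependent factor becomes $(e\gamma n/r)^r e^{3r}(k/n)^r=(e^4\gamma k/r)^r$. The remaining factor is $e^{3d_0}k^{d_0}$. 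Together these give \eqref{eq:weighted-hitting-exact}. I expect this bookkeeping of the exponents of $n$ and $k$ to be the main place to be careful. One also has to check that (iii) applies with $n\ge k$; if some $u_i+w_i$ factors are zero, the bound is only easier.

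For \eqref{eq:weighted-hitting-simple}, the hypothesis $\beta k\ge2t+2$ gives $r\ge\beta k/2$, so $e^4\gamma k/r\le2e^4\gamma/\beta<1$. Hence $(e^4\gamma k/r)^r\le(2e^4\gamma/\beta)^{\beta k/2}$. Finally $d_0\le 4+(5/q)\log k$, so
\[
e^{3d_0}k^{d_0}\le\exp\bigl((3+\log k)d_0\bigr)\le\exp\bigl((40/q)(\log k)^2\bigr)
\]
for large $k$.
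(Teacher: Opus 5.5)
Your proposal is correct and follows the paper's proof essentially step for step. Both take the signature from Lemma~\ref{lem:small-signature} and use the disjoint candidate sets. Both bound the count by $\E_{k'-t-r}(n)$ times the sum of $\prod_{v\in J}|C_v\cap U|$ over the $r$-sets $J$, control that sum by Maclaurin's inequality, and finish with Lemma~\ref{lem:balanced-product-tools}(iii) at $\mu=0$. The only cosmetic difference is that you sum uniformly over all $n^t$ signature images, whereas the paper fixes a maximizing image $\xi$ and multiplies by $n^t$.
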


\begin{proof}
Since $k'\ge k-4$, we have $k'\ge(1-q/2)k$ for all sufficiently
large $k=k(q)$.  Lemma~\ref{lem:small-signature}, applied with
$X=V(H')$, gives a signature $T\subseteq V(H')$ with
$t\le(5/q)\log k$.  Since $t+r=\lceil\beta k\rceil$ and
$\beta\le q/3$, we have
$k'-t-r=k'-\lceil\beta k\rceil\ge(1-q/3)k-5\ge1$ for all
sufficiently large $k$.

Assume first that $r\ge1$. Choose a map $\xi:T\to V(G)$ for
which the number of embeddings counted by $N$ extending $\xi$
is maximal.  For each $v\in V(H')\setminus T$, let
$C_v$ be the set of possible images of $v$ among embeddings of $H'$
extending $\xi$.  By Lemma~\ref{lem:candidate-disjointness}, the
sets $C_v$ are pairwise disjoint.  Write  $a_v:=|C_v\cap U|$.  Then
\begin{equation}
    \sum_{v\in V(H')\setminus T}a_v\le |U|\le\gamma n. \label{eq:sumav}
\end{equation}

Let $N_\xi$ denote the number of embeddings counted by $N$ that
extend the fixed map $\xi$.  Every such embedding maps at least
$\lceil\beta k\rceil$ vertices into $U$, at most $t$ of which lie in
$T$.  It therefore maps at least $r$ vertices of
$V(H')\setminus T$ into $U$.

For each set $J\subseteq V(H')\setminus T$ with $|J|=r$, consider
the embeddings extending $\xi$ for which
$\vartheta(v)\in U$ for every $v\in J$.  Every embedding counted by
$N_\xi$ belongs to at least one of these families.  Notice that the
vertices outside $J$ are unrestricted and may include additional
vertices mapped into $U$; thus embeddings with more than $r$ such
vertices may be counted more than once, which is harmless for an
upper bound.

For a fixed $J$, there are at most
$\prod_{v\in J}a_v$ choices for the images of the vertices in $J$.
After those images have been fixed, the admissible image sets of the
remaining vertices are pairwise disjoint and have total size at most
$n$.  Hence there are at most $\E_{k'-t-r}(n)$ choices for the
remaining images.  Therefore
\[
N_\xi
\le
\E_{k'-t-r}(n)
\sum_{\substack{J\subseteq V(H')\setminus T\\ |J|=r}}
\prod_{v\in J}a_v.
\]
By the choice of $\xi$, we have $N\le n^tN_\xi$, and hence \begin{equation}
\label{eq:fixed-xi-union}
N\le n^t\E_{k'-t-r}(n)
 \sum_{\substack{J\subseteq V(H')\setminus T\\ |J|=r}}
 \prod_{v\in J}a_v \leq n^t e^{3(d_0+r)}\left(\frac{k}{n}\right)^{d_0+r}\E_k(n)\sum_{\substack{J\subseteq V(H')\setminus T\\ |J|=r}}
 \prod_{v\in J}a_v,
\end{equation}
where the last inequality follows from
Lemma~\ref{lem:balanced-product-tools}\textnormal{(iii)},  applied
with $\ell=k$, $\ell'=k'-t-r$, $m=m'=n$, and $\mu=0$,

 Since $1\le r< |V(H')\setminus T|$,
Maclaurin's inequality, $\binom{|V(H')\setminus T|}{r} \le(e |V(H')\setminus T|/r)^r$, and (\ref{eq:sumav}) give
\begin{equation}
\label{eq:elementary-symmetric-bound}
 \sum_{\substack{J\subseteq V(H')\setminus T\\ |J|=r}}
 \prod_{v\in J}a_v
 \le \binom{|V(H')\setminus T|}{r}\left(\frac{1}{|V(H')\setminus T|}\sum_{v\in V(H')\setminus T}a_v\right)^r
 \le\left(\frac{e\gamma n}{r}\right)^r.
\end{equation}

Substitution into \eqref{eq:fixed-xi-union} yields
\[
 N\le e^{3d_0}k^{d_0}
 \left(\frac{e^4\gamma k}{r}\right)^r
 n^{t-d_0}\E_k(n).
\]
Since $t-d_0=-(k-k')$, this proves
\eqref{eq:weighted-hitting-exact}.

Assume now that $\beta k\ge2t+2$.  Then
$r=\lceil\beta k\rceil-t>\beta k/2$, so
$e^4\gamma k/r\le2e^4\gamma/\beta$.  
Since $0<2e^4\gamma/\beta<1$, we obtain
$(e^4\gamma k/r)^r\le (2e^4\gamma/\beta)^{\beta k/2}$.  Finally,
$d_0\le4+(5/q)\log k$, and therefore
$e^{3d_0}k^{d_0}\le\exp((40/q)(\log k)^2)$ for all sufficiently
large $k$.  Combining these estimates with
\eqref{eq:weighted-hitting-exact} proves
\eqref{eq:weighted-hitting-simple}.
\end{proof}

The exact bound \eqref{eq:weighted-hitting-exact} is valid without
assuming any ordering between $\gamma$ and $\beta$, although it may
be vacuous when $\gamma$ is comparable to or larger than $\beta$.
The simplified bound \eqref{eq:weighted-hitting-simple} is a useful
rare-event estimate: its hypothesis
$\frac{2e^4\gamma}{\beta}<1$
forces $\gamma\ll\beta$.  In both applications below one has
$\gamma/\beta\to0$.

\begin{remark}[Comparison with the estimate of \cite{FSW}]
\label{rem:aggregate-small-set-comparison}

For a specified set $J$ of $\lceil\beta k\rceil$ pattern vertices,
\cite[Lemma~3.10]{FSW} gives a bound whose main exponential factor
is
$\bigl(e^4\gamma/\beta\bigr)^{\beta k}$.
To count embeddings for which at least $\beta k$ vertices are mapped
into $U$, without specifying these vertices in advance,
\cite[Corollary~3.11]{FSW} applies this bound to every possible
choice of $J$.  The number of choices is bounded by
$\binom{k'}{\lceil\beta k\rceil}
 \le(e^2/\beta)^{\beta k}$.
This produces the factor
$\bigl(e^6\gamma/\beta^2\bigr)^{\beta k}$.

The proof above still sums over the possible sets $J$, but it does
so before replacing their contributions by a common worst-case
bound.  For a fixed image of the signature, the contribution of
$J$ is bounded by $\prod_{v\in J}a_v$, where
$a_v=|C_v\cap U|$.  Maclaurin's inequality estimates the aggregate
sum as $\sum_{|J|=r}\prod_{v\in J}a_v
\le \left(\frac{e\gamma n}{r}\right)^r.$
Because $r$ is of order $\beta k$, this leaves only one power of
$\beta^{-1}$ in the base.  The factor $1/2$ lost in the exponent of
\eqref{eq:weighted-hitting-simple} comes from the possibility that
some vertices mapped into $U$ belong to the signature.

This improvement is essential at the rigidity scale available for
the Paley graph.  In both applications below, the relevant quantities
satisfy
$\gamma\asymp_q(\log k)^2/k$ and
$\beta\asymp_{q,c}1/(\sqrt{k}\log k)$.  Hence
$\gamma/\beta^2\asymp_{q,c}(\log k)^4$, which tends to infinity,
whereas
$\gamma/\beta\asymp_{q,c}(\log k)^3/\sqrt{k}$, which tends to zero.
Thus the estimate of \cite[Corollary~3.11]{FSW} gives no decay at the
parameter scale forced by our partial-rigidity theorem, while
Lemma~\ref{lem:weighted-hitting} gives the required exponential
saving.  The precise parameter choices are made in the next section.
\end{remark}

\section{Choosing the error parameters at the square-root scale}

The stability argument in the next section uses five error
parameters.  Roughly speaking, $\epsilon_1$ is the tolerance used
when assigning a host vertex to a prospective blow-up part;
$\epsilon_2$ is the proportion of vertices on which an embedding is
allowed to fail to follow an automorphism; $\epsilon_3$ and
$\epsilon_4$ are the two thresholds used in the decomposition of
embeddings; and $\epsilon_5$ is the threshold in the cleaning step.
Their precise roles will be given when they first appear.

Partial rigidity requires $\epsilon_2<\delta$, so we choose
$\epsilon_2$ of order $k^{-1/2}$. The parameters $\epsilon_3,\epsilon_4,\epsilon_5$ must be smaller still, while
remaining large enough for the relevant counting and cleaning
estimates. In contrast, $\epsilon_1=q/3$ is fixed. Lemma~\ref{lem:weighted-hitting} makes these requirements
compatible.

We use the same five levels of error as in
\cite[Sections~3--7]{FSW}, but at different scales.  The two
conditions from \cite{FSW} that are incompatible with the
square-root rigidity scale are precisely the conditions replaced by
Lemma~\ref{lem:weighted-hitting}, as explained in the preceding
remark.  The next lemma records one choice that satisfies all the
conditions needed later.

\begin{lemma}[Parameter choice]
\label{lem:parameters}
Fix $0<q\le10^{-20}$ and $c>0$.  Define
\begin{equation}
\label{eq:parameter-constants}
 a_2:=\min\left\{\frac c4,\frac q{10^4}\right\},
 \qquad
 a_3:=\frac{a_2}{10^4},
 \qquad
 a_4:=\frac{qa_3}{100}.
\end{equation}
Let
\begin{equation}
\label{eq:parameter-definitions}
 \epsilon_1:=\frac q3,
 \epsilon_2:=\frac{a_2}{\sqrt k},
 \epsilon_3:=\frac{a_3}{\sqrt k\,\log k},
 \epsilon_4:=\frac{a_4}{\sqrt k\,\log k},
 \epsilon_5:=\frac{100(\log k)^2}{qk}.
\end{equation}
For all sufficiently large $k=k(q,c)$, the following assertions
hold.

\begin{enumerate}[label=\textnormal{(P\arabic*)},leftmargin=3em]
\item
$0<\epsilon_5<\epsilon_4<\epsilon_3\le\epsilon_2/2$ and
$\epsilon_2<\epsilon_1<q/2$.  Moreover,
$\epsilon_i<1/100$ for $2\le i\le5$ and  $\epsilon_3\le10^{-20}$. 

\item
$\epsilon_2\log(1/\epsilon_2)
 <(\log2)\epsilon_1/8$ and
$\epsilon_3\log(1/\epsilon_3)<\epsilon_2/100$.

\item
If $\delta\ge c/\sqrt k$, then $\epsilon_2<\delta$.

\item
$\epsilon_4<(q/20)\epsilon_3$.

\item
$\epsilon_5<q/10^4$ and
$
\label{eq:growth-hierarchy}
 \frac{\epsilon_i k}{(\log k)^2}\to\infty$ for $i=1, 2,3,4$ as  $k\to\infty$, with $q,c$ fixed.
In particular,
$ \epsilon_5>
 \frac{40}{q}\frac{(\log k)^2}{k}
 >
 \frac{10^3}{q}\frac{\log k}{k}
 >
 \frac{10^3}{k}.$

\item
Let
\begin{equation}
\label{eq:two-tail-parameters}
 \gamma_1:=\frac{24}{q}\frac{(\log k)^2}{k},
 \qquad
 \beta_1:=\frac{\epsilon_3}{3},
 \qquad
 \gamma_2:=\epsilon_5,
 \qquad
 \beta_2:=\frac{\epsilon_4}{2}.
\end{equation}
For $i=1,2$, one has
$\beta_i k\ge(10/q)\log k+2$ and
$2e^4\gamma_i/\beta_i<1$.  Moreover,
\begin{align}
 \exp\left(\frac{40}{q}(\log k)^2\right)
 \left(\frac{2e^4\gamma_1}{\beta_1}\right)^{\beta_1k/2}
 &\le k^{-7},
 \label{eq:first-tail}\\
 \exp\left(\frac{40}{q}(\log k)^2\right)
 \left(\frac{2e^4\gamma_2}{\beta_2}\right)^{\beta_2k/2}
 &\le k^{-8}.
 \label{eq:second-tail}
\end{align}
For $i=1,2$, one has
$0<\gamma_i<1$ and $0<\beta_i\le q/3$.
\item
The parameters used in the collision estimate satisfy
$qk\ge3$, $3\epsilon_5<q/3$, and
$qk/3\ge(20/q)(\log k)^2$.  They also satisfy $\left({9e^4\epsilon_5}/{q}\right)^{qk/3}
 \le k^{-9}.$

\end{enumerate}
\end{lemma}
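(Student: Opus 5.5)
The lemma is pure bookkeeping: each assertion is an inequality between explicit functions of $k$ with $q,c$ fixed. The plan is to compare orders of growth in $k$ and then take $k$ large. I would first record the scales:
\[
\epsilon_1=\Theta(1),\qquad \epsilon_2\asymp k^{-1/2},\qquad \epsilon_3,\epsilon_4\asymp k^{-1/2}(\log k)^{-1},\qquad \epsilon_5\asymp (\log k)^2/k.
\]
All constants $a_2,a_3,a_4$ are positive and depend only on $q,c$.

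\textbf{(P1)--(P5).} These follow by direct comparison.
\begin{itemize}
\item For (P1), $\epsilon_3\le \epsilon_2/2$ reduces to $a_3/\log k\le a_2/2$, and $\epsilon_4<\epsilon_3$ holds since $a_4<a_3$. Also $\epsilon_5<\epsilon_4$ because $(\log k)^2/k=o(k^{-1/2}/\log k)$. Finally $\epsilon_2<q/3$, and $\epsilon_3\le 10^{-20}$ for large $k$.
\item For (P2), $\epsilon_2\log(1/\epsilon_2)\asymp k^{-1/2}\log k\to0$, while $\epsilon_1$ is a constant. Also $\epsilon_3\log(1/\epsilon_3)\approx (a_3/2)k^{-1/2}$, and since $a_3/2<a_2/100$ this is below $\epsilon_2/100$. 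This is why $a_3=a_2/10^4$ and the extra $\log k$ in $\epsilon_3$ were chosen.
\item For (P3), $a_2\le c/4<c$.
\item For (P4), $a_4=qa_3/100<(q/20)a_3$.
\item For (P5), check that $\epsilon_5\to0$ and that $\epsilon_ik/(\log k)^2\to\infty$ for $i\le4$, since even $\epsilon_4 k\asymp\sqrt k/\log k$. The displayed chain is immediate from $100>40$.
\end{itemize}

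\textbf{(P6).} This is the main obstacle, because it must match the tail estimates of Lemma~\ref{lem:weighted-hitting} with a polynomial target.

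For $i=1$: $\gamma_1\asymp(\log k)^2/k$ and $\beta_1\asymp k^{-1/2}/\log k$. Hence
\[
2e^4\gamma_1/\beta_1\asymp(\log k)^3k^{-1/2}\to0,\qquad \beta_1k/2\asymp\sqrt k/\log k .
\]
The decay factor is therefore $\exp\bigl(-\Theta(\sqrt k/\log k)\cdot\tfrac12\log k\,(1+o(1))\bigr)=\exp(-\Theta(\sqrt k))$. This dominates both $\exp((40/q)(\log k)^2)$ and the target $k^{7}$.

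For $i=2$: $\gamma_2=\epsilon_5$ and $\beta_2=\epsilon_4/2$, so the same computation applies and $\gamma_2/\beta_2\asymp(\log k)^3/\sqrt k$.

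The remaining conditions follow from the same scales:
\begin{itemize}
\item $\beta_ik\asymp\sqrt k/\log k\ge(10/q)\log k+2$;
\item $\gamma_i<1$;
\item $\beta_i\le q/3$.
\end{itemize}
The only care needed is to write $\log(\beta_i/(2e^4\gamma_i))\ge\tfrac12\log k-4\log\log k-O_{q,c}(1)$ explicitly, so that the exponent is at least $\tfrac{1}{5}\beta_ik\log k$ for large $k$.

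\textbf{(P7).} Here $qk\ge3$ holds for large $k$, and $3\epsilon_5<q/3$ since $\epsilon_5\to0$. The bound $qk/3\ge(20/q)(\log k)^2$ is trivially true for large $k$. Finally,
\[
\left(\frac{9e^4\epsilon_5}{q}\right)^{qk/3}=\exp\bigl(-\Theta(k\log k)\bigr)\le k^{-9}.
\]

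Since each item needs only finitely many "sufficiently large $k$" conditions, taking the maximum of the resulting thresholds gives $k(q,c)$.
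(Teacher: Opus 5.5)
Your proposal is correct and follows the paper's own verification. The paper likewise compares consecutive ratios of the $\epsilon_i$, shows $\epsilon_3\log(1/\epsilon_3)/\epsilon_2\to 1/(2\cdot 10^4)$, writes both (P6) bases as constants times $(\log k)^3/\sqrt k$ with exponents of order $\sqrt k/\log k$, and bounds the (P7) base by $k^{-1/2}$. One small correction: in (P5), $100>40$ gives only the first inequality of the displayed chain; the second needs $\log k>25$ and the third needs $\log k>q$, both of which hold for large $k$.
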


The verification is given in
Appendix~\ref{app:parameter-choice}.

\section{Local stability at the top level}

The purpose of this section is to turn the abundance of induced copies
in an extremal host into an approximate top-level blow-up.  We first
fix the image of a small signature and obtain disjoint candidate
classes for the remaining labels.  We then clean these classes and
show that almost every embedding approximately follows an
automorphism.  Finally, extremality makes the resulting global
classes be a partition of the host.

This local stability argument follows the architecture of
\cite[Sections~4--7]{FSW}.  We record in Appendix~\ref{app:inherited-local-stability} the exact
lemmas that carry over without a new
argument, and verify their hypotheses at the points where they are
used.  Two counting steps require a different proof in the present
parameter regime and we include full proofs of these.

Before beginning the stability argument, we record the one global
extremal principle from \cite{FSW}.  It says that no vertex of an
extremal host can participate in far fewer copies than average:
deleting such a vertex and cloning a more productive one would improve
the graph.
The following is \cite[Lemma~3.14]{FSW}, originating in \cite[Lemma~4.4]{FHL}. The proof is very short and we omit the proof here. 

\begin{lemma}\cite[Lemma~4.4]{FHL}
\label{lem:extremal-vertex-coverage}
Let $H$ have $k$ vertices, let $n\ge2$, and let $\Gamma$ be an
$n$-vertex graph satisfying
$\emb(H,\Gamma)=\emb(H,n).$
Every vertex of $\Gamma$ belongs to the image of at least
$\frac{k}{n+k}\emb(H,n)$
embeddings $H\hookrightarrow \Gamma$.
\end{lemma}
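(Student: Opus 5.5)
The plan is a vertex-replacement (symmetrization) argument. Fix a vertex $v\in V(\Gamma)$ and let $e(v)$ denote the number of embeddings $H\hookrightarrow\Gamma$ whose image contains $v$. Let $u$ be a vertex maximizing $e(u)$. We compare $\Gamma$ with the graph $\Gamma'$ obtained by deleting $v$ and adding a twin $u'$ of $u$, nonadjacent to $u$ (any choice works). Extremality of $\Gamma$ gives $\emb(H,\Gamma')\le\emb(H,\Gamma)$, and we turn this into a lower bound on $e(v)$ in terms of $\emb(H,n)$.

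The counting goes as follows. Embeddings of $\Gamma$ avoiding $v$ survive in $\Gamma'$, which accounts for $\emb(H,\Gamma)-e(v)$ of them. In addition, every embedding of $\Gamma-v$ using $u$ but not $v$ yields a new embedding using $u'$ in place of $u$, because $u'$ is a twin of $u$ on $V(\Gamma)\setminus\{u,v\}$. The number of such embeddings is at least $e(u)-e_{uv}$, where $e_{uv}$ counts embeddings using both $u$ and $v$, and $e_{uv}\le e(v)$. Hence
\[
\emb(H,\Gamma)\ge \emb(H,\Gamma')\ge \emb(H,\Gamma)-e(v)+e(u)-e(v),
\]
which gives $e(v)\ge e(u)/2$.

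This factor $1/2$ is too weak. The key step, and the main obstacle, is to get the sharp constant $k/(n+k)$. For this I would avoid comparing with a single vertex $u$ and instead average over $u$. We have $\sum_u e(u)=k\,\emb(H,n)$, since each embedding contains $k$ vertices, so the average of $e(u)$ is $k\,\emb(H,n)/n$.

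For each $u\ne v$, let $e_{uv}$ be as above. Then $\sum_{u\ne v}e_{uv}=(k-1)e(v)$, since each embedding through $v$ contains $k-1$ other vertices. The replacement inequality for each $u$ reads $e(u)-e_{uv}\le e(v)$. Summing this over $u\ne v$ gives
\[
k\,\emb(H,n)-e(v)-(k-1)e(v)\le (n-1)e(v).
\]
Therefore $e(v)\ge k\,\emb(H,n)/(n+k-1)\ge k\,\emb(H,n)/(n+k)$.

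Finally, I would double-check the twin construction. The new vertex $u'$ must copy the adjacencies of $u$ to all remaining vertices. Embeddings containing both $u$ and $u'$ are only extra, so they help the lower bound on $\emb(H,\Gamma')$. The hypothesis $n\ge2$ guarantees that $u\ne v$ exists.
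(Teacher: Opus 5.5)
Your proof is correct. It is essentially the standard delete-and-clone argument behind \cite[Lemma~4.4]{FHL}, whose proof the paper omits: summing your inequality $e(u)-e_{uv}\le e(v)$ over all $u\ne v$ is the same as cloning a vertex of $\Gamma-v$ that lies in at least the average number $k\bigl(\emb(H,n)-e(v)\bigr)/(n-1)$ of embeddings of $\Gamma-v$, and it even yields the marginally stronger denominator $n+k-1$.
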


We now begin the proof of Proposition~\ref{prop:full-fsw}.  Fix $q_0,c_0$, and set 
$q:=\min\{q_0,10^{-20}\}$. Since $q\le q_0$, condition~\textnormal{(b)} in
Definition~\ref{def:reasonable} with parameter $q_0$ implies the same
condition with parameter $q$, while conditions~\textnormal{(a)} and
\textnormal{(c)} are unchanged.  Hence $H$ is also
$(q,\delta;\Phi)$-reasonable.

From now on, all auxiliary parameters are chosen using this smaller
separation parameter $q$.  Let $k$ be sufficiently large in terms of
$q$ and $c_0$, and choose
$\epsilon_1,\dots,\epsilon_5$ from
Lemma~\ref{lem:parameters} with $c=c_0$.  Write $g:=|\Phi|$.

By the hypotheses of Proposition~\ref{prop:full-fsw},
\begin{equation}\label{eq:abstract-group-size}
2\le g\le k^2.
\end{equation}
Moreover, taking $X=V(H)$ in
Definition~\ref{def:reasonable}\textnormal{(c)} shows that
$\Phi=\Aut(H)$.  In the proof below, however, we use only that every
$\phi\in\Phi$ is an automorphism of $H$ and the two cardinality bounds
in \eqref{eq:abstract-group-size}; we do not assume that $g=k^2$. 

Let $n\ge k$, and let $\Gamma$ be an $n$-vertex graph with
$\emb(H,\Gamma)=\emb(H,n)$. We first lower bound $\emb(H,\Gamma)$. A balanced one-level blow-up of $H$ gives, for each $\phi\in\Phi$, exactly $\E_k(n)$ embeddings prescribed by $\phi$.  These classes are disjoint, and hence
\begin{equation}\label{eq:abstract-lower-bound}
 \emb(H,n)\ge g\E_k(n).
\end{equation}
Together with Lemma~\ref{lem:extremal-vertex-coverage} \cite[Lemma~3.14]{FSW}, every vertex of $\Gamma$ belongs to at least 
\begin{align}
    k\emb(H,n)/(n+k) \geq kg\E_k(n)/(n+k)\ge\E_k(n)/n \label{eq:everyvertex}
\end{align} where the first inequality is by  \eqref{eq:abstract-lower-bound}, and the last inequality is by $n\ge k$, and $g\ge2$. 

We use a different notation for actual adjacency and for majority adjacency.
For distinct vertices $u,v\in V(\Gamma)$, let
\[
a_\Gamma(u,v):=
\begin{cases}
1,&uv\in E(\Gamma),\\
0,&uv\notin E(\Gamma).
\end{cases}
\]
For a graph $G$ and nonempty disjoint sets $A,B\subseteq V(G)$, denote
\[
d_G(A,B)
:=
\frac{|\{(a,b)\in A\times B:ab\in E(G)\}|}{|A||B|}
\]
and define
\[
m_G(A,B)
:=
\begin{cases}
1,&d_G(A,B)\ge1/2,\\
0,&d_G(A,B)<1/2.
\end{cases}
\]
We use this notation below with $G=\Gamma$.

\subsection{Cleaning and candidate sets $V_i$}
For a vertex $w\notin B$, write
$m_\Gamma(w,B):=m_\Gamma(\{w\},B)$. And clearly for two vertices $u, v$, $m_\Gamma(u, v) = a_\Gamma(u,v)$. 

We now anchor the local structure by fixing the image of a small
signature.  Once the image of the signature has been fixed, each remaining label
$i$ has a set $V_i'$ of possible images.  Because the signature
distinguishes every two labels, a host vertex can belong to at most one
such candidate set.  Thus the sets $V_i'$ are the first, deliberately
rough, approximation to the top-level parts of a blow-up.  The rest of
the section cleans this approximation and then makes it global.

By Lemma \ref{lem:small-signature}, choose a signature $S\subseteq V(H)$ with
\[
s:=|S|\le\frac5q\log k.
\]
By averaging, there is a map
\[
\psi:S\to V(\Gamma)
\]
extended by at least $\emb(H,\Gamma)/n^s$ embeddings.  For
$i\in V(H)\setminus S$, let $V_i'$ be the set of possible images of
$i$ among embeddings extending $\psi$.  By
Lemma~\ref{lem:candidate-disjointness}, the sets $V_i'$ are pairwise
disjoint.

The choice of $\psi$ and
\eqref{eq:abstract-lower-bound} give together with $g\ge 2$
\begin{equation}\label{eq:local-extension-lower-bound}
\#\{\vartheta:H\hookrightarrow\Gamma:\vartheta|_S=\psi\}
\ge
2\frac{\E_k(n)}{n^s}.
\end{equation}

The candidate classes $V_i'$ are disjoint, but their mutual
adjacencies may still be noisy.  We use the following cleaning
operation, stated in a general form that will also be used in the appendix.

\begin{definition}[Cleaning candidate sets $V_i$]
\label{def:bad-pairs}
Let $F$ and $G$ be graphs, let $T\subseteq V(F)$, let
$(C_i')_{i\in V(F)\setminus T}$ be a family of pairwise disjoint
subsets of $V(G)$, and let $\eta>0$.  For distinct
$i,j\in V(F)\setminus T$, a pair
$(u,v)\in C_i'\times C_j'$ is \emph{bad} if
$a_G(u,v)\ne a_F(i,j)$.  A vertex $u\in C_i'$ is \emph{bad} if it
has at least $\eta|V(G)|$ bad partners.  Let $C_i$ be obtained from
$C_i'$ by deleting all bad vertices, and let
\[
 C_{\mathrm{rest}}
 :=V(G)\setminus\bigcup_{i\in V(F)\setminus T}C_i.
\]
\end{definition}

We apply Definition~\ref{def:bad-pairs} with
$F=H$, $G=\Gamma$, $T=S$, $C_i'=V_i'$, and
$\eta=\epsilon_5$.  We write $V_i$ and $V_{\mathrm{rest}}$ for the
resulting cleaned classes and exceptional set.  From this point on,
we fix the pair $(S,\psi)$ used to construct these sets.  The
embeddings considered later are not required to extend $\psi$; the
pair $(S,\psi)$ serves only to produce the disjoint test classes
$V_i'$ and their cleaned subsets $V_i$.

The following four consequences of the cleaning step are the only
ones needed later.  They control, respectively, the cost of deleting
a bad vertex, the number of embeddings that survive, the size of the
exceptional region, and the coherence of the majority-adjacency
pattern.

\begin{lemma}[Properties of the cleaned candidate sets]
\label{lem:cleaning-package}
For all sufficiently large $k=k(q,c_0)$, the following statements
hold.
\begin{enumerate}[label=\textnormal{(\roman*)},leftmargin=2.5em]
\item
If $i\in V(H)\setminus S$ and $v\in V_i'\setminus V_i$, then at most
$\E_k(n)/n^{s+1}$ embeddings $\vartheta:H\hookrightarrow\Gamma$
extending $\psi$ satisfy $\vartheta(i)=v$.

\item
At least $\E_k(n)/n^s$ embeddings extending $\psi$ map every
$i\in V(H)\setminus S$ into $V_i$.  Consequently, every $V_i$ is
nonempty and
\begin{equation}
\label{eq:cleaned-product-lower}
 \prod_{i\in V(H)\setminus S}|V_i|
 \ge\frac{\E_k(n)}{n^s}.
\end{equation}

\item
For every $i\in V(H)\setminus S$,
\begin{equation}
\label{eq:Vi-Vrest-bound}
 |V_i|+|V_{\mathrm{rest}}|
 \le\frac{24}{q}\frac{(\log k)^2}{k}\,n.
\end{equation}

\item
If $i_1,\ldots,i_r,j_1,\ldots,j_r$ are distinct elements of
$V(H)\setminus S$ and
$m_\Gamma(V_{i_\ell},V_{j_\ell})\ne a_H(i_\ell,j_\ell)$ for every
$1\le\ell\le r$, then
\begin{equation}
\label{eq:few-wrong-part-pairs}
 r\le\frac{20}{q}(\log k)^2.
\end{equation}
\end{enumerate}
\end{lemma}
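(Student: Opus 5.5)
The four items follow from comparing the lower bound \eqref{eq:local-extension-lower-bound} with upper bounds built from candidate disjointness (Lemma~\ref{lem:candidate-disjointness}) and Lemma~\ref{lem:balanced-product-tools}. Throughout, write $K:=k-s$ for the number of unsigned labels. Fixing $\psi$, the number of embeddings extending $\psi$ is at most $\E_K(n)$. By Lemma~\ref{lem:balanced-product-tools}(iii) with $\ell=k$, $\ell'=K$, $\mu=0$, this is at most $e^{3s}(k/n)^s\E_k(n)$, which is a factor $e^{3s}k^s$ above the lower bound $2\E_k(n)/n^s$. Since $s\le(5/q)\log k$, this slack factor is $\exp(O((\log k)^2/q))$. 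Every item will be a statement that some deviation costs more than this slack.

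\textbf{Item (i).} Fix $v\in V_i'\setminus V_i$. Then $v$ has at least $\epsilon_5 n$ bad partners $u\in V_j'$, that is, partners with $a_\Gamma(u,v)\ne a_H(i,j)$. For every embedding extending $\psi$ with $\vartheta(i)=v$, each label $j\neq i$ must avoid the bad partners of $v$ inside $V_j'$. Hence the candidate sets for the other $K-1$ labels are disjoint and have total size at most $n-\epsilon_5 n$. Lemma~\ref{lem:balanced-product-tools}(iii) with $\mu=\epsilon_5$ then gives the count
\[
\E_{K-1}((1-\epsilon_5)n)\le e^{3(s+1)-\epsilon_5 k/2}(k/n)^{s+1}\E_k(n).
\]
By (P5), $\epsilon_5 k/2=50(\log k)^2/q$, which exceeds $3(s+1)+(s+1)\log k$. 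So the count is at most $\E_k(n)/n^{s+1}$.

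\textbf{Item (ii).} Sum item (i) over all $i$ and over all deleted $v$. At most $kn\cdot\E_k(n)/n^{s+1}\le \E_k(n)/n^s$ embeddings use some deleted vertex, using $k\le n$. Subtracting this from \eqref{eq:local-extension-lower-bound} leaves at least $\E_k(n)/n^s$ embeddings with $\vartheta(i)\in V_i$ for every $i$. Each such embedding is determined by its values on the sets $V_i$, so the product bound \eqref{eq:cleaned-product-lower} follows, and in particular every $V_i$ is nonempty.

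\textbf{Item (iii).} Write $x=|V_i|+|V_{\rm rest}|$. Use \eqref{eq:cleaned-product-lower} together with the constraint $\sum_{j\ne i}|V_j|\le n-x$. By Lemma~\ref{lem:balanced-product-tools}(i),(ii) we get $\prod_j|V_j|\le |V_i|\,\E_{K-1}(n-x)$. Now apply (iii) with $\mu=x/n$, bounding the factor $|V_i|\le x$. A large $x$ gives an $e^{-xk/(2n)}$ loss, which must be beaten by the slack $\exp(O((\log k)^2/q))$ times $x$. This forces $x\le (24/q)(\log k)^2 n/k$. I expect the main obstacle to be this bookkeeping: the gain from a larger $|V_i|$ must be balanced against the total-size loss. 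I would handle it by treating $|V_i|$ together with $V_{\rm rest}$ as a single merged budget, then optimizing $y\,\E_{K-1}(n-y)$ over $y\ge x$.

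\textbf{Item (iv).} Suppose $m_\Gamma(V_{i_\ell},V_{j_\ell})$ is wrong for $r$ disjoint pairs. Then for each such pair at least half of $V_{i_\ell}\times V_{j_\ell}$ consists of bad pairs. So among the product embeddings counted in (ii), the fraction that are consistent on pair $\ell$ is at most $1/2$. Because the pairs are disjoint, these constraints act on independent coordinates, and the consistent count is at most $2^{-r}\prod|V_i|$. Comparing with the upper bound $e^{3s}k^s\E_k(n)/n^s$ versus the lower bound $\E_k(n)/n^s$ from (ii) gives $2^{r}\le e^{3s}k^s$. Hence $r\le (20/q)(\log k)^2$.
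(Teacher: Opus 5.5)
Your proof takes the same route as the paper. The paper proves this lemma by invoking Lemma~\ref{lem:deterministic-cleaning} (that is, \cite[Lemmas~6.1--6.4]{FSW}) after checking (C1)--(C3). Your arguments for (i), (iii) and (iv) are exactly the mechanisms that lemma packages, and your numerics are the ones in (C3): $(3+\log k)(s+1)\le(12/q)(\log k)^2\le\epsilon_5k/2$ for (i) and (iii), and $(3+\log k)s/\log2\le(20/q)(\log k)^2$ for (iv). In (iii) you do not need the optimization you anticipate. Bounding $|V_i|\le n$ in $\E_k(n)/n^s\le|V_i|\,\E_{K-1}(n-x)$ already gives $xk/(2n)\le(3+\log k)(s+1)$ directly.

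There is, however, an arithmetic error in (ii). Your union bound over pairs $(i,v)$ with $v\in V_i'\setminus V_i$ uses $kn$ terms and gives $kn\cdot\E_k(n)/n^{s+1}=k\,\E_k(n)/n^s$. This is not at most $\E_k(n)/n^s$: the claimed inequality is equivalent to $k\le1$, and $k\le n$ does not help. Subtracting $k\,\E_k(n)/n^s$ from $2\E_k(n)/n^s$ leaves nothing.

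The missing observation is the disjointness of the $V_i'$, which is condition (C1) and the ingredient the paper cites for this part. The deleted vertices over all $i$ are then distinct, so there are at most $n$ pairs $(i,v)$. The union bound becomes $n\cdot\E_k(n)/n^{s+1}=\E_k(n)/n^s$, and subtracting it from $2\E_k(n)/n^s$ leaves exactly the claimed $\E_k(n)/n^s$. Alternatively, keep the slack from (i): the bound there is really $\exp\bigl((3+\log k)(s+1)-\epsilon_5k/2\bigr)\E_k(n)/n^{s+1}\le k^{-1}\E_k(n)/n^{s+1}$, which absorbs the factor $k$. With either repair your proof is complete.
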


\begin{proof}
We apply Lemma~\ref{lem:deterministic-cleaning} with
$F=H$, $G=\Gamma$, $m=k$, $N=n$, $\rho=q$, $T=S$, $t=s$,
$\xi=\psi$, $C_i'=V_i'$, and $\eta_5=\epsilon_5$.  The resulting
sets $C_i$ and $C_{\mathrm{rest}}$ are precisely $V_i$ and
$V_{\mathrm{rest}}$.  Condition~\textnormal{(C1)} is
Lemma~\ref{lem:candidate-disjointness}, and
condition~\textnormal{(C2)} is
\eqref{eq:local-extension-lower-bound}.

For condition~\textnormal{(C3)}, let $L:=\log k$.  Since
$s\le(5/q)L$ and $\epsilon_5k=100L^2/q$, for all sufficiently large
$k$ we have
$(3+L)(s+1)\le(12/q)L^2\le\epsilon_5k/2$ and
$(3+L)s/\log2\le(20/q)L^2$.  Finally,
$0<\epsilon_5<1$ by
Lemma~\ref{lem:parameters}\textnormal{(P1)}.  Thus every hypothesis
of Lemma~\ref{lem:deterministic-cleaning} holds, and its conclusions
are exactly statements~\textnormal{(i)}--\textnormal{(iv)} above.
\end{proof}

 The cleaned candidate classes $V_i$ describe a tentative blow-up structure.
We call an embedding loyal if, for some automorphism $\phi$, all but
a small proportion of its vertices are mapped to the classes
prescribed by $\phi$.  Later, this approximate alignment will be
upgraded to an exact partition of the host graph $\Gamma$.

\begin{definition}[Loyal embedding relative to candidate classes]
\label{def:relative-loyalty}
Let $F$ and $G$ be graphs, let $T\subseteq V(F)$, and let
$\mathcal C=(C_i)_{i\in V(F)\setminus T}$ be a family of subsets of
$V(G)$.  Let $0\le\eta<1$ and let $\sigma\in\Aut(F)$.  An embedding
$\theta:F\hookrightarrow G$ is \emph{$\sigma$-loyal to
$\mathcal C$ with error $\eta$} if its image contains at most one
vertex from each $C_i$ and $\theta(x)\in C_{\sigma(x)}$ for at least
$(1-\eta)|V(F)|$ vertices $x$ satisfying $\sigma(x)\notin T$.

For $\Psi\subseteq\Aut(F)$, the embedding is
\emph{$\Psi$-loyal to $\mathcal C$ with error $\eta$} if it is
$\sigma$-loyal to $\mathcal C$ with error $\eta$ for some $\sigma\in\Psi$; otherwise it is
\emph{$\Psi$-disloyal}.
\end{definition}

For the remainder of the proof, loyalty is understood relative to
$F=H$, $G=\Gamma$, $T=S$, $\mathcal C=(V_i)_{i\in V(H)\setminus S}$,
$\eta=\epsilon_2$, and $\Psi=\Phi$.  We therefore write simply
\emph{$\phi$-loyal}, \emph{loyal}, and \emph{disloyal}.

The candidate classes $V_i$ arise from a single fixed map $\psi$, so
they need not cover $V(\Gamma)$.  We use them instead as local
reference classes to define global sets $W_j$: a vertex of $\Gamma$
is assigned to $W_j$ when its majority adjacency pattern to the
classes $V_i$ agrees with the adjacency pattern of $j$ in $H$.  The sets $W_j$ should therefore be viewed as the prospective
top-level parts of the extremal blow-up.  We will prove in
Lemma~\ref{lem:W-cover} that they cover $V(\Gamma)$; together with
Lemma~\ref{lem:W-disjoint}, this will show that they form a
partition.

\begin{definition}
  For $j\in V(H)$, let $W_j$ be the set of vertices $w\in V(\Gamma)$ for which at least $(1-\epsilon_1)k$ vertices
$i\in V(H)\setminus(S\cup\{j\})$ satisfy
$w\notin V_i$ and $m_\Gamma(w,V_i)=a_H(j,i)$. 
\end{definition}
\begin{lemma}\label{lem:W-disjoint}
The sets $W_j$, $j\in V(H)$, are pairwise disjoint.
\end{lemma}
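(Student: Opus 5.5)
Suppose $w\in W_j\cap W_{j'}$ with $j\neq j'$; we derive a contradiction by counting. The plan is a pigeonhole argument on the labels $i$ that witness membership, combined with pair separation (Definition~\ref{def:reasonable}(b)).

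By definition of $W_j$, there is a set $I_j\subseteq V(H)\setminus(S\cup\{j\})$ with $|I_j|\ge(1-\epsilon_1)k$ such that every $i\in I_j$ satisfies $w\notin V_i$ and $m_\Gamma(w,V_i)=a_H(j,i)$. Likewise there is $I_{j'}\subseteq V(H)\setminus(S\cup\{j'\})$ with $|I_{j'}|\ge(1-\epsilon_1)k$ and $m_\Gamma(w,V_i)=a_H(j',i)$ for all $i\in I_{j'}$.

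For $i\in I_j\cap I_{j'}$, the quantity $m_\Gamma(w,V_i)$ is a single number, well defined because $V_i\neq\varnothing$ by Lemma~\ref{lem:cleaning-package}(ii). Hence
\[
a_H(j,i)=m_\Gamma(w,V_i)=a_H(j',i)
\qquad\text{for every } i\in I_j\cap I_{j'}.
\]
It follows that $I_j\cap I_{j'}$ is disjoint from $\Delta_H(j,j')$. Note that $j,j'\notin I_j\cap I_{j'}$, since $j\notin I_j$ and $j'\notin I_{j'}$.

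Next we count. We have $|I_j\cap I_{j'}|\ge(1-2\epsilon_1)k$, and $I_j\cap I_{j'}$ avoids $\{j,j'\}$. Therefore
\[
|\Delta_H(j,j')|\le k-|I_j\cap I_{j'}|\le 2\epsilon_1 k=\tfrac{2q}{3}k.
\]

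This is the only delicate point. Since $\epsilon_1=q/3$, the bound $\tfrac{2q}{3}k$ is \emph{not} below $qk$, so it does not contradict Definition~\ref{def:reasonable}(b) as stated. I would sharpen the count by also excluding $S$: both $I_j$ and $I_{j'}$ lie in $V(H)\setminus S$, so every vertex of $S\cap\Delta_H(j,j')$ lies outside $I_j\cap I_{j'}$ automatically. The estimate above counts the complement within $V(H)\setminus S$, which still gives $|\Delta_H(j,j')\setminus S|\le 2\epsilon_1 k$, and $|S|=O(\log k)$. So the sharpening from $S$ is only of lower order and does not close the gap.

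The honest fix is therefore to use the correct reading of the threshold. If Definition~\ref{def:reasonable}(b) gives $|\Delta_H(j,j')|\ge qk$, we need $2\epsilon_1<q$, which holds because $2\epsilon_1=2q/3<q$. Spelled out, the disjointness from $\Delta_H(j,j')$ confines $\Delta_H(j,j')$ to the complement of $I_j\cap I_{j'}$, which has size at most
\[
k-(1-2\epsilon_1)k=\tfrac{2q}{3}k<qk.
\]
This contradicts Definition~\ref{def:reasonable}(b), which requires $|\Delta_H(j,j')|\ge qk$ for the distinct pair $j,j'$. Hence no such $w$ exists, and the sets $W_j$ are pairwise disjoint.

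The only care needed is the bookkeeping: the complement of $I_j\cap I_{j'}$ has size at most $2\epsilon_1 k$, and the condition $2\epsilon_1<q$ is exactly the parameter choice $\epsilon_1=q/3$ from (P1). No further input from the cleaning lemmas is required beyond the nonemptiness of the $V_i$.
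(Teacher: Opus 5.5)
Your proof is correct and is essentially the paper's argument: intersect the two witness sets to get at least $(1-2\epsilon_1)k$ labels $i\notin S\cup\{j,j'\}$ with $a_H(j,i)=m_\Gamma(w,V_i)=a_H(j',i)$, so at most $2\epsilon_1 k=\tfrac{2q}{3}k<qk$ vertices distinguish $j$ and $j'$, contradicting pair separation. Your ``delicate point'' paragraph is simply mistaken, since $\tfrac{2q}{3}k$ is below $qk$. The detour about excluding $S$ and a ``correct reading of the threshold'' is therefore unnecessary and should be deleted.
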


\begin{proof}
Suppose that $w\in W_j\cap W_{j'}$ for distinct $j,j'$.  By the
definition of the two sets, at least
$(1-2\epsilon_1)k$
vertices $i\in V(H)\setminus(S\cup\{j,j'\})$ satisfy
$a_H(j,i)
=
m_\Gamma(w,V_i)
=
a_H(j',i).$
Thus at most $2\epsilon_1k$ vertices distinguish $j$ and $j'$.
Since $2\epsilon_1<q$, this contradicts
Definition~\ref{def:reasonable}\textnormal{(b)}.
\end{proof}

Loyalty records approximate agreement with a symmetry using the local
candidate classes $V_i$.  We now introduce regulation, which records
exact agreement with the global classes $W_j$: every vertex of the
embedding must lie in the class prescribed by the symmetry.
Corollary~\ref{cor:loyal-unregulated} will show that, for each fixed
$\phi$, only a negligible number of $\phi$-loyal embeddings fail to
be $\phi$-regulated, i.e., the gap
between these two notions is negligible.
 
 \begin{definition}\label{def:regulated}
An embedding is \emph{$\phi$-regulated} if
$\vartheta(x)\in W_{\phi(x)}$ for every $x\in V(H)$, and is regulated if it is $\phi$-regulated for some $\phi\in\Phi$.  
\end{definition}

We now show that almost every embedding respects the global classes
$W_j$.  We first treat embeddings that already follow a fixed
automorphism on almost all vertices, and then show that embeddings
following no automorphism are rare.  Together, these estimates will
allow us to prove that the sets $W_j$ cover the host graph.

\subsection{Embeddings following a fixed automorphism}

Fix $\phi\in\Phi$.  A $\phi$-loyal embedding already follows the
classes $V_j$ predicted by $\phi$ on almost all vertices.  This subsection
shows that, with very few exceptions, this approximate agreement
forces every vertex to lie in the corresponding global class
$W_{\phi(x)}$.

\begin{lemma}
\label{lem:fixed-automorphism-defect}
Let $\phi\in\Phi$, let $x,x'\in V(H)$ be distinct, and let
$w,w'\in V(\Gamma)$.  If $w\notin W_{\phi(x)}$, then there are at
most $k^{-8}\E_k(n)/n^2$ $\phi$-loyal embeddings
$\vartheta:H\hookrightarrow\Gamma$ satisfying
$\vartheta(x)=w$ and $\vartheta(x')=w'$.
\end{lemma}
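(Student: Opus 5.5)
We prove the bound by isolating a large set of labels whose images land in the wrong majority class, and then applying Lemma~\ref{lem:weighted-hitting} with the first pair of tail parameters $(\gamma_1,\beta_1)$ of Lemma~\ref{lem:parameters}\textnormal{(P6)}.

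\textbf{Step 1: the set $Y$ of informative labels.} Let $j:=\phi(x)$. Since $w\notin W_j$, more than $\epsilon_1 k$ labels $i\in V(H)\setminus(S\cup\{j\})$ have $w\in V_i$ or $m_\Gamma(w,V_i)\neq a_H(j,i)$. At most one $i$ has $w\in V_i$, because the $V_i$ are disjoint. So at least $\epsilon_1k-1$ labels $i$ satisfy $m_\Gamma(w,V_i)\ne a_H(j,i)$. Discard the at most $\epsilon_2 k$ labels on which a $\phi$-loyal embedding may fail to follow $\phi$, together with the preimages of $x,x'$ and of $S$. Since $\epsilon_2\ll\epsilon_1$, this leaves about $\epsilon_1 k/2$ labels. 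The discarded labels depend on $\vartheta$, so we instead work with the fixed set
\[
I:=\{i: m_\Gamma(w,V_i)\ne a_H(\phi(x),i)\},
\]
and for each loyal $\vartheta$ consider $Y:=\{y:\phi(y)\in I,\ \vartheta(y)\in V_{\phi(y)}\}$. Loyalty gives $|Y|\ge \epsilon_1k/2$.

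\textbf{Step 2: the exceptional host set $U$.} For $y\in Y$ we have $a_\Gamma(w,\vartheta(y))=a_H(x,y)=a_H(\phi(x),\phi(y))$, since $\phi$ is an automorphism. This is the minority adjacency of $w$ towards $V_{\phi(y)}$. Hence $\vartheta(y)$ lies in
\[
U_w:=\bigcup_{i\in I}\{v\in V_i: a_\Gamma(w,v)\ne m_\Gamma(w,V_i)\}.
\]
This set has no a priori smallness; its relative size in each $V_i$ can be up to $1/2$. The naive estimate is then that, for each $i$, the image choice is restricted to at most half of $V_i$, losing a factor $2$ per label of $Y$. This gives a $2^{-\epsilon_1k/2}$ saving against the product $\prod|V_i|$.

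\textbf{Step 3: turning the saving into a count.} The count is carried out as in Lemma~\ref{lem:weighted-hitting}, with the $V_i$ as reference classes. Fix $\vartheta(x)=w$ and $\vartheta(x')=w'$, and count the remaining $k-2$ images. Use a signature of $H-\{x,x'\}$ so that candidate sets are disjoint. Split by the set $B$ of labels sent outside their prescribed class; by loyalty, $|B|\le\epsilon_2k$. Labels in $B$ contribute at most $\E_{|B|}(n)$ choices. The labels in $Y$ contribute a product of sizes $|V_i\cap U_w|\le |V_i|/2$, and the remaining labels contribute $|V_i|$.

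Using \eqref{eq:Vi-Vrest-bound}, each $|V_i|\le\gamma_1 n$. Lemma~\ref{lem:balanced-product-tools}\textnormal{(iii)} then bounds the total by
\[
\binom{k}{\epsilon_2k}\cdot e^{O(\epsilon_2 k\log k)}\cdot 2^{-\epsilon_1k/2}\cdot \frac{\E_k(n)}{n^2}.
\]
The factor $\binom{k}{\epsilon_2 k}$ accounts for the choices of $B$ and of the exceptional set. By \textnormal{(P2)} we have $\epsilon_2\log(1/\epsilon_2)<(\log2)\epsilon_1/8$, so the loss is absorbed and the total is at most $2^{-\epsilon_1k/4}\E_k(n)/n^2\le k^{-8}\E_k(n)/n^2$.

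\textbf{Main obstacle.} The delicate point is that loyalty controls membership only in the local classes $V_i$, while the product bound must compare against $\E_k(n)$ rather than $\prod|V_i|$. One has to check that restricting the $Y$-labels to $U_w$ yields a genuine factor-$2$ loss per label relative to the balanced bound. I would first try handling this via \eqref{eq:cleaned-product-lower} together with the disjointness of the $V_i$, which bounds $\sum|V_i|\le n$. If the accounting of the $\epsilon_2k$ misplaced labels at the $\E$-level is too lossy, I would fall back on the weighted Maclaurin summation of Lemma~\ref{lem:weighted-hitting}, with $U:=V_{\mathrm{rest}}\cup U_w$.
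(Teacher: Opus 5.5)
Your argument is correct and is essentially the paper's route. The paper invokes Lemma~\ref{lem:deterministic-fixed-defect} (\cite[Lemma~5.2]{FSW} with the rotation replaced by $\phi$), whose proof is exactly your Steps~1--3: the wrong-majority classes intersected with the loyal coordinates give a factor $1/2$ per label, the union over the at most $\epsilon_2 k$ non-loyal labels is absorbed via \textnormal{(P2)}, and the large loyal set is a signature, so Lemma~\ref{lem:balanced-product-tools}\textnormal{(iii)} compares the count with $\E_k(n)/n^2$.

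Lemma~\ref{lem:weighted-hitting}, the parameters $(\gamma_1,\beta_1)$ and the bound $|V_i|\le\gamma_1 n$ play no role here. Your fallback with $U=V_{\mathrm{rest}}\cup U_w$ would fail, because $U_w$ can have size of order $n$. The direct count via $\sum_i|V_i|\le n$ is what closes the proof.
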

This lemma is  \cite[Lemma~5.2]{FSW} (see Lemma~\ref{lem:deterministic-fixed-defect}). The proof of this lemma in \cite{FSW} is simple but important. Roughly speaking, if
$w\notin W_{\phi(x)}$, then $w$ has the wrong majority relation to
 linearly many candidate classes $V_j$ expected from the label $\phi(x)$.
A $\phi$-loyal embedding uses almost all of
these classes, and each remaining disagreement forces a choice from
the minority side of the corresponding class.  These losses multiply
to give the required exponential bound.

\begin{proof}[Proof of Lemma \ref{lem:fixed-automorphism-defect}]
We apply Lemma~\ref{lem:deterministic-fixed-defect} with
$F=H$, $G=\Gamma$, $m=k$, $N=n$, $\rho=q$,
$\eta_1=\epsilon_1$, $\eta_2=\epsilon_2$, $T=S$,
$C_i=V_i$, $D_j=W_j$, and $\sigma=\phi$.  We take
$a=x$, $a'=x'$, $b=w$, and $b'=w'$. The standing assumption $n\ge k$ gives $N\ge m$.

The sets $V_i$ are nonempty by
Lemma~\ref{lem:cleaning-package}\textnormal{(ii)} and pairwise
disjoint because $V_i\subseteq V_i'$.  Pair separation is
Definition~\ref{def:reasonable}\textnormal{(b)}, and
$\phi\in\Aut(H)$.  Lemma~\ref{lem:parameters}\textnormal{(P1)}
gives $\epsilon_2<q/2$ and
$\epsilon_2<1/100<e^{-4}$; part~\textnormal{(P2)} gives
$2\epsilon_2\log(1/\epsilon_2)<(\log2)\epsilon_1/4$; and
part~\textnormal{(P5)} gives $\epsilon_2k\ge16$ for all
sufficiently large $k$.

It remains to check the intersection and decay assumptions.  From
the definitions of the parameters and $s\le(5/q)\log k$, we have
$\epsilon_2\le\epsilon_1/4$ and
$s+5\le\epsilon_1k/4$ for all sufficiently large $k$.  Hence
$(\epsilon_1-\epsilon_2)k-s-4\ge\epsilon_1k/2$.  Finally,
$(\log2)\epsilon_1k/4\ge10\log k+6$ for all sufficiently large
$k$, and therefore
$e^6k^2\exp(- (\log2)\epsilon_1k/4)\le k^{-8}$.  All hypotheses of
Lemma~\ref{lem:deterministic-fixed-defect} are satisfied.
\end{proof}

\begin{corollary}\label{cor:one-fixed-image-defect}
Let $\phi\in\Phi$, $x\in V(H)$, and
$w\notin W_{\phi(x)}$.  Then at most
$k^{-8}\frac{\E_k(n)}{n}$
$\phi$-loyal embeddings satisfy $\vartheta(x)=w$.
\end{corollary}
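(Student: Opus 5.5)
The corollary should follow from Lemma~\ref{lem:fixed-automorphism-defect} by summing over the image of one additional vertex. Fix $\phi\in\Phi$, $x\in V(H)$ and $w\notin W_{\phi(x)}$. Since $k\ge2$, we can choose any vertex $x'\in V(H)\setminus\{x\}$. Every $\phi$-loyal embedding $\vartheta$ with $\vartheta(x)=w$ sends $x'$ to some vertex $w'=\vartheta(x')\in V(\Gamma)$. We therefore partition these embeddings according to the value of $w'$.

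For each fixed $w'\in V(\Gamma)$, Lemma~\ref{lem:fixed-automorphism-defect}, applied with the distinct vertices $x,x'$ and the host vertices $w,w'$, bounds the number of $\phi$-loyal embeddings with $\vartheta(x)=w$ and $\vartheta(x')=w'$ by $k^{-8}\E_k(n)/n^2$. The hypothesis $w\notin W_{\phi(x)}$ is exactly what that lemma requires, and no condition on $w'$ is needed. There are at most $n$ choices of $w'$; in fact there are at most $n-1$, since $w'\neq w$ by injectivity, but $n$ suffices. Summing the bound over $w'$ gives at most
\[
n\cdot k^{-8}\frac{\E_k(n)}{n^2}=k^{-8}\frac{\E_k(n)}{n}
\]
such embeddings, which is the claimed estimate.

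No step is a real obstacle. The only points to confirm are that some $x'\ne x$ exists, which holds because $k\ge k_0\ge2$, and that the bound in Lemma~\ref{lem:fixed-automorphism-defect} is uniform in $w'$, which it is as stated. All of the substantive work is contained in that lemma, that is, in the inherited estimate Lemma~\ref{lem:deterministic-fixed-defect}.
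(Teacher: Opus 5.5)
Your proposal is correct and matches the paper's proof: fix any $x'\ne x$ and sum the bound of Lemma~\ref{lem:fixed-automorphism-defect} over the at most $n$ possible values of $\vartheta(x')$, which gives $n\cdot k^{-8}\E_k(n)/n^2=k^{-8}\E_k(n)/n$.
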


\begin{proof}
Choose any $x'\ne x$ and sum
Lemma~\ref{lem:fixed-automorphism-defect} over the at most $n$
possible values of $\vartheta(x')$.
\end{proof}

The next corollary says there are only few loyal but unregulated embeddings. 
\begin{corollary}
\label{cor:loyal-unregulated}
For each fixed $\phi\in\Phi$, at most
$k^{-7}\E_k(n)$
embeddings are $\phi$-loyal but not $\phi$-regulated.
\end{corollary}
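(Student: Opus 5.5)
A $\phi$-loyal embedding that fails to be $\phi$-regulated has, by Definition~\ref{def:regulated}, at least one vertex $x\in V(H)$ with $\vartheta(x)\notin W_{\phi(x)}$. The plan is a union bound over this witnessing vertex and its image, with Corollary~\ref{cor:one-fixed-image-defect} controlling each term.

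Fix $\phi\in\Phi$. For each pair $(x,w)$ with $x\in V(H)$ and $w\in V(\Gamma)\setminus W_{\phi(x)}$, let $\mathcal B_{x,w}$ be the set of $\phi$-loyal embeddings with $\vartheta(x)=w$. Every $\phi$-loyal, non-$\phi$-regulated embedding lies in at least one $\mathcal B_{x,w}$; overcounting is harmless for an upper bound.

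By Corollary~\ref{cor:one-fixed-image-defect}, $|\mathcal B_{x,w}|\le k^{-8}\E_k(n)/n$. There are at most $k$ choices of $x$ and at most $n$ choices of $w$. Summing gives
\[
\#\{\vartheta:\ \phi\text{-loyal, not }\phi\text{-regulated}\}\le k\cdot n\cdot k^{-8}\frac{\E_k(n)}{n}=k^{-7}\E_k(n).
\]

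No step here is a real obstacle: the substantive input, the exponential decay in Lemma~\ref{lem:fixed-automorphism-defect} coming from Lemma~\ref{lem:deterministic-fixed-defect}, has already been established. The only care needed is that the sum runs over the image $w$ with the label $x$ fixed, so the factor $n$ cancels the $1/n$ in the corollary. The sum over $\phi\in\Phi$ is deliberately not taken here; the bound is per automorphism, and since $|\Phi|\le k^2$, summing later still leaves $k^{-5}\E_k(n)$.
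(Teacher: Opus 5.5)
Your proposal is correct and matches the paper's proof: it fixes the witnessing vertex $x$, sums Corollary~\ref{cor:one-fixed-image-defect} over the at most $n$ images $w\notin W_{\phi(x)}$ to get $k^{-8}\E_k(n)$, and then sums over the $k$ choices of $x$ to obtain $k^{-7}\E_k(n)$.
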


\begin{proof}
Such an embedding satisfies
$\vartheta(x)\notin W_{\phi(x)}$ for some $x\in V(H)$.  For fixed
$x$, summing Corollary~\ref{cor:one-fixed-image-defect} over the at
most $n$ possible values of $\vartheta(x)$ gives at most
$k^{-8}\E_k(n)$ embeddings.  Summing over $x$ gives the result.
\end{proof}

The next corollary says that very few loyal embeddings using vertices not covered by $\bigcup W_j$, if exists. 
\begin{corollary}
\label{cor:uncovered-loyal} Assume $|\Phi|\le k^2$. 
If $w\notin W_j$ for every $j\in V(H)$, then $w$ belongs to the
image of at most
$k^{-5}\frac{\E_k(n)}{n}$
loyal embeddings.
\end{corollary}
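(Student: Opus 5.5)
Fix a vertex $w$ lying in no $W_j$. Every loyal embedding $\vartheta$ whose image contains $w$ is $\phi$-loyal for some $\phi\in\Phi$ and satisfies $\vartheta(x)=w$ for some $x\in V(H)$. I will bound the number of such embeddings with a union bound over the pair $(\phi,x)$, using Corollary~\ref{cor:one-fixed-image-defect} for each pair.

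First fix $\phi\in\Phi$ and $x\in V(H)$. Since $w\notin W_j$ for every $j$, in particular $w\notin W_{\phi(x)}$. The hypotheses of Corollary~\ref{cor:one-fixed-image-defect} therefore hold for the triple $(\phi,x,w)$. Hence at most $k^{-8}\E_k(n)/n$ $\phi$-loyal embeddings satisfy $\vartheta(x)=w$.

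Next I sum this bound over all choices. There are $k$ choices of $x$, and by the assumption $|\Phi|\le k^2$ there are at most $k^2$ choices of $\phi$. An embedding that is loyal for several $\phi$, or that is counted through several labels, only appears more than once in the sum, which is harmless for an upper bound. The total is at most
\[
k\cdot k^2\cdot k^{-8}\frac{\E_k(n)}{n}=k^{-5}\frac{\E_k(n)}{n}.
\]

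The only substantive point is that the $k^{-8}$ saving in Corollary~\ref{cor:one-fixed-image-defect} (ultimately Lemma~\ref{lem:fixed-automorphism-defect}) must hold uniformly in $\phi$. This is the case, because that lemma is proved for an arbitrary fixed automorphism. The union bound over a family of size at most $k^2$ then costs only the polynomial factor $k^3$, which the $k^{-8}$ saving absorbs. No further obstacle is expected.
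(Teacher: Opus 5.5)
Your proposal is correct and is essentially the paper's own proof: for each fixed $\phi\in\Phi$ and $x\in V(H)$ you apply Corollary~\ref{cor:one-fixed-image-defect}, which is valid because $w\notin W_{\phi(x)}$. You then take a union bound over the $k$ labels and the at most $k^2$ automorphisms, giving $k^3\cdot k^{-8}\,\E_k(n)/n=k^{-5}\,\E_k(n)/n$, exactly as the paper does.
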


\begin{proof}
Fix $\phi\in\Phi$.  For every $x\in V(H)$, we have
$w\notin W_{\phi(x)}$, so
Corollary~\ref{cor:one-fixed-image-defect} gives at most
$k^{-8}\E_k(n)/n$ $\phi$-loyal embeddings with
$\vartheta(x)=w$.  Summing over $x$ gives
$k^{-7}\E_k(n)/n$ for this $\phi$.  Finally sum over
$|\Phi|\le k^2$.
\end{proof}

The next corollary says few loyal embeddings contain a wrong pair. 
\begin{corollary}
\label{cor:wrong-pair-loyal} Assume $|\Phi|\le k^2$. 
Let $j\ne j'$, let $w\in W_j$ and $w'\in W_{j'}$, and suppose that
$a_\Gamma(w,w')\ne a_H(j,j').$
Then at most
$k^{-4}\frac{\E_k(n)}{n^2}$
loyal embeddings contain both $w$ and $w'$.
\end{corollary}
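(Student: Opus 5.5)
We reduce the statement to Lemma~\ref{lem:fixed-automorphism-defect}, exactly as in the earlier corollaries. Fix $\phi\in\Phi$ and count the $\phi$-loyal embeddings containing both $w$ and $w'$. Any such embedding has $\vartheta(x)=w$ and $\vartheta(x')=w'$ for some ordered pair of distinct labels $(x,x')$. We split according to whether this pair is \emph{consistent} with $\phi$, meaning $\phi(x)=j$ and $\phi(x')=j'$.

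\emph{Inconsistent pairs.} Suppose $\phi(x)\ne j$. Since the sets $W_j$ are pairwise disjoint (Lemma~\ref{lem:W-disjoint}) and $w\in W_j$, we get $w\notin W_{\phi(x)}$. Lemma~\ref{lem:fixed-automorphism-defect}, applied to $(x,x',w,w')$, then bounds the number of such $\phi$-loyal embeddings by $k^{-8}\E_k(n)/n^2$. If instead $\phi(x')\ne j'$, then $w'\notin W_{\phi(x')}$, and we apply the same lemma with the roles of $(x,w)$ and $(x',w')$ interchanged. Summing over the at most $k^2$ ordered pairs $(x,x')$ gives at most $2k^{-6}\E_k(n)/n^2$ embeddings for this $\phi$.

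\emph{Consistent pairs.} Now suppose $\phi(x)=j$ and $\phi(x')=j'$. Because $\phi$ is an automorphism, $a_H(x,x')=a_H(j,j')$. Because $\vartheta$ is an induced embedding, $a_H(x,x')=a_\Gamma(w,w')$. Together these give $a_\Gamma(w,w')=a_H(j,j')$, which contradicts the hypothesis. So this case contributes no embeddings. This is the only place the wrong-pair hypothesis is used.

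Summing the first case over $|\Phi|\le k^2$ automorphisms gives at most $2k^{-4}\E_k(n)/n^2$ loyal embeddings. This is off by a factor of $2$ from the claimed bound $k^{-4}\E_k(n)/n^2$, and closing that gap is the only delicate point. For the pair $(x,x')$, at most one of $\phi(x)\ne j$ and $\phi(x')\ne j'$ needs to be charged, so the pair should be counted once rather than twice. A crude count of ordered pairs still allows $k(k-1)\le k^2$ pairs per $\phi$. To absorb the constant, I would use the slack in the proof of Lemma~\ref{lem:fixed-automorphism-defect}: there the decay $e^6k^2\exp(-(\log 2)\epsilon_1k/4)$ is far smaller than $k^{-8}$, since $\epsilon_1$ is a fixed constant. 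Rerunning Lemma~\ref{lem:deterministic-fixed-defect} with target $k^{-9}$ gives at most $k^{-9}\E_k(n)/n^2$ per pair, which comfortably yields $k^{-4}\E_k(n)/n^2$ overall.
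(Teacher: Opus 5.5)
Your proposal is correct and is essentially the paper's argument: for each fixed $\phi$ and ordered pair $(x,x')$, the paper also shows that $w\notin W_{\phi(x)}$ or $w'\notin W_{\phi(x')}$ (equivalently, your $\phi(x)\ne j$ or $\phi(x')\ne j'$), and then applies Lemma~\ref{lem:fixed-automorphism-defect} once, in the appropriate orientation. Because which alternative holds depends only on $(\phi,x,x')$ and not on the embedding, each pair is charged $k^{-8}\E_k(n)/n^2$ exactly once. This gives $k(k-1)k^{-8}<k^{-6}$ per $\phi$ and at most $k^{-4}\E_k(n)/n^2$ overall, so the factor $2$ never arises, and your fallback of rerunning the defect lemma with target $k^{-9}$ (which would require reopening the cited proof) is unnecessary.
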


\begin{proof}
Fix $\phi\in\Phi$.  Every $\phi$-loyal embedding $\vartheta$ containing both
$w$ and $w'$ determines a unique ordered pair of vertices
$(x,x')\in V(H)^2$ such that
$\vartheta(x)=w$ and $\vartheta(x')=w'$.  Since
$j\ne j'$ and the sets $W_j$ are pairwise disjoint, we have
$w\ne w'$, and hence $x\ne x'$.

Fix such an ordered pair $(x,x')$.  We claim that $w\notin W_{\phi(x)}$ or  $w'\notin W_{\phi(x')}.$

We prove by contradiction.  Since
$w\in W_j\cap W_{\phi(x)}$ and the sets $W_\ell$ are pairwise
disjoint, we must have $j=\phi(x)$.  Similarly,
$w'\in W_{j'}\cap W_{\phi(x')}$ implies
$j'=\phi(x')$.

Because $\vartheta$ is an induced embedding and
$\vartheta(x)=w$, $\vartheta(x')=w'$, we have
$a_\Gamma(w,w')=a_H(x,x').$
Because $\phi\in\Aut(H)$, we have 
$a_H(x,x')=a_H(\phi(x),\phi(x')).$
Substituting $\phi(x)=j$ and $\phi(x')=j'$ gives
$a_\Gamma(w,w')=a_H(j,j')$, contradicting the assumption
$a_\Gamma(w,w')\ne a_H(j,j')$.  This proves the claim.

If $w\notin W_{\phi(x)}$, then
Lemma~\ref{lem:fixed-automorphism-defect} gives at most
$k^{-8}\E_k(n)/n^2$ such embeddings with
$\vartheta(x)=w$ and $\vartheta(x')=w'$.  If instead
$w'\notin W_{\phi(x')}$, we apply the same lemma with
$x,x'$ and $w,w'$ interchanged, obtaining the same bound.
There are fewer than $k^2$ ordered pairs $(x,x')$, so for the fixed
automorphism $\phi$ there are at most
$k^{-6}\E_k(n)/n^2$ embeddings under consideration.  Finally,
summing over $|\Phi|\le k^2$ gives at most
$k^{-4}\E_k(n)/n^2$ loyal embeddings containing both $w$ and $w'$.
\end{proof}

\subsection{Few disloyal embeddings}
\label{subsec:few-disloyal}

We now show that embeddings which do not approximately follow any
automorphism in $\Phi$ are rare.  

Fix distinct
$x,x'\in V(H)$ and vertices $z,z'\in V(\Gamma)$.  Throughout this
subsection, all embedding counts are subject to
$\vartheta(x)=z$ and $\vartheta(x')=z'$.  If $z=z'$, there are no
such embeddings, so we assume that $z\ne z'$.

The main tool is based on
\cite[Lemma~7.4]{FSW}, which says that every embedding is either
loyal or fails to follow the candidate classes in one of four
specific ways.  We first verify that this decomposition applies in
our setting.

We apply Lemma~\ref{lem:embedding-decomposition}\cite[Lemma 7.4]{FSW} with
$F=H$, $G=\Gamma$, $m=k$, $\rho=q$,
$\eta_2=\epsilon_2$, $\eta_3=\epsilon_3$,
$\eta_4=\epsilon_4$, $T=S$, $C_i=V_i$,
$C_{\mathrm{rest}}=V_{\mathrm{rest}}$, and $\Psi=\Phi$.
The inequalities
$\epsilon_3\le\epsilon_2/2$ and
$\epsilon_4\le(q/20)\epsilon_3$ follow from
Lemma~\ref{lem:parameters}\textnormal{(P1), (P4)}.
We next verify the two structural assumptions in Lemma~\ref{lem:embedding-decomposition}.  Since $\epsilon_3<q$, every set of size at least
$(1-\epsilon_3/2)k$ has size at least $(1-q/2)k$.
Lemma~\ref{lem:small-super-signature} therefore verifies
condition~\textnormal{(D1)}.  For condition~\textnormal{(D2)}, let
$Y\subseteq V(H)$ have $|Y|\ge(1-\epsilon_2)k$, and let
$f:Y\to V(H)$ be an induced embedding.  Since
$\epsilon_2<\delta$ by
Lemma~\ref{lem:parameters}\textnormal{(P3)}, we have
$|Y|\ge(1-\delta)k$, and
Definition~\ref{def:reasonable}\textnormal{(c)} shows that
$f=\phi|_Y$ for some $\phi\in\Phi$.

Thus every disloyal embedding falls into at least one of the following
four cases.  In the list below,
$\vartheta$ is \emph{$f$-aligned} if
$\vartheta(v)\in V_{f(v)}$ for every vertex in the domain of $f$ and
the image of $\vartheta$ meets each $V_i$ in at most one vertex.  A
pair $\{u,v\}$ is \emph{$f$-inconsistent} if
$a_H(f(u),f(v))\ne a_H(u,v)$.  These are the specializations of
Definitions~\ref{def:aligned} and~\ref{def:consistent}; the term
\emph{fancy} is understood in the sense of
Definition~\ref{def:fancy} with $\rho=q$ and
$\eta_3=\epsilon_3$.

\begin{enumerate}[label=\textnormal{(\Roman*)},leftmargin=3em]
\item At least $\epsilon_3k/2$ vertices of $H$ are mapped into
$V_{\mathrm{rest}}$.

\item The image of $\vartheta$ contains two vertices from the same
cleaned candidate class $V_i$, for some
$i\in V(H)\setminus S$.

\item There is a set $A\subseteq V(H)$ and an injective map
$f:A\to V(H)\setminus S$ such that $\vartheta$ is $f$-aligned and
some vertex of $A$ belongs to more than $\epsilon_4k$
$f$-inconsistent pairs.

\item There are a set $B\subseteq V(H)$ with
$|B|\ge(1-\epsilon_3)k$ and a fancy injective map
$f:B\to V(H)\setminus S$ such that $\vartheta$ is $f$-aligned and
$B$ contains at least $\epsilon_2k/4$ mutually vertex-disjoint
$f$-inconsistent pairs.
\end{enumerate}

Lemma 
\ref{lem:embedding-decomposition} \cite[Lemma 7.4]{FSW} has a simple proof. Even though we omit the proof we provide its main proof idea here.  If
alternatives~\textnormal{(I)} and~\textnormal{(II)} fail, then almost
every vertex of $H$ is mapped into a distinct candidate class, and
the class labels define an injective map on a large subset of
$V(H)$.  If alternative~\textnormal{(III)} also fails, every vertex
has few inconsistent partners.  A small super-signature then allows
us to delete the few vertices having too many inconsistencies with
the signature, producing the fancy map in
alternative~\textnormal{(IV)}.  Finally, if
alternative~\textnormal{(IV)} fails, deleting the endpoints of a
maximal matching of inconsistent pairs leaves a large set on which
the map preserves both adjacency and nonadjacency.  Partial rigidity
then identifies this map with an automorphism, so the original
embedding is loyal.

We bound these four cases in turn.  The arguments inherited from
\cite{FSW} are stated in
Appendix~\ref{app:disloyal-embeddings}.  The two cases for which the
estimate of \cite{FSW} is not strong enough are proved here using
Lemma~\ref{lem:weighted-hitting}.

\alternativeheading{I}{many vertices in the exceptional set}

\begin{lemma}
\label{lem:rest-heavy}
At most $k^{-7}\E_k(n)/n^2$ embeddings have
alternative~\textnormal{(I)} of
Lemma~\ref{lem:embedding-decomposition}.
\end{lemma}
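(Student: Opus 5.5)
The plan is to apply Lemma~\ref{lem:weighted-hitting} with $U:=V_{\mathrm{rest}}$, $\gamma:=\gamma_1=\frac{24}{q}\frac{(\log k)^2}{k}$ and $\beta:=\beta_1=\epsilon_3/3$. We take $H'$ to be the induced subgraph $H-\{x,x'\}$, so that $k'=k-2\ge k-4$. The counts in this subsection are subject to $\vartheta(x)=z$ and $\vartheta(x')=z'$. Every such embedding $\vartheta$ therefore restricts to an embedding $\vartheta':=\vartheta|_{V(H')}$ of $H'$ into $\Gamma$, and the map $\vartheta\mapsto\vartheta'$ is injective. Hence it suffices to bound the number of embeddings of $H'$ that send many vertices into $V_{\mathrm{rest}}$.

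\emph{Step 1: reduce to $H'$.} Suppose $\vartheta$ has alternative~\textnormal{(I)}, so at least $\epsilon_3k/2$ vertices of $H$ are mapped into $V_{\mathrm{rest}}$. At most two of them are $x,x'$. Hence $|\vartheta'^{-1}(V_{\mathrm{rest}})|\ge\epsilon_3k/2-2\ge\epsilon_3k/3=\beta_1k$ once $k$ is large, since $\epsilon_3k\to\infty$ by (P5).

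\emph{Step 2: size of $U$.} We need $|V_{\mathrm{rest}}|\le\gamma_1 n$. This is exactly \eqref{eq:Vi-Vrest-bound} in Lemma~\ref{lem:cleaning-package}\textnormal{(iii)}, dropping the nonnegative term $|V_i|$. Nonemptiness of the index set $V(H)\setminus S$ is clear because $s\ll k$.

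\emph{Step 3: apply the estimate.} Lemma~\ref{lem:parameters}\textnormal{(P6)} supplies all remaining hypotheses of Lemma~\ref{lem:weighted-hitting}:
\begin{itemize}
\item $0<\gamma_1<1$ and $0<\beta_1\le q/3$;
\item $\beta_1k\ge(10/q)\log k+2\ge 2t+2$, since $t\le(5/q)\log k$;
\item $2e^4\gamma_1/\beta_1<1$.
\end{itemize}
Then \eqref{eq:weighted-hitting-simple} together with \eqref{eq:first-tail} bounds the number of such $\vartheta'$, and hence of such $\vartheta$, by
\[
k^{-7}\frac{\E_k(n)}{n^{k-k'}}=k^{-7}\frac{\E_k(n)}{n^{2}},
\]
which is the claim.

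The only delicate point is the bookkeeping in the reduction to $H'$. The factor $n^{-(k-k')}=n^{-2}$ must exactly absorb the two fixed images $z,z'$. The loss of the two pinned vertices must also not spoil the threshold $\beta_1k$, and choosing $\beta_1=\epsilon_3/3$ rather than $\epsilon_3/2$ leaves room for this. Everything else is a direct substitution into the previously established lemmas.
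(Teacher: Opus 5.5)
Your proposal is correct and follows the paper's proof essentially step for step. You restrict to $H-\{x,x'\}$ and absorb the loss of at most two hits by taking $\beta_1=\epsilon_3/3$; you bound $|V_{\mathrm{rest}}|\le\gamma_1 n$ via Lemma~\ref{lem:cleaning-package}\textnormal{(iii)}; and you apply \eqref{eq:weighted-hitting-simple} with \eqref{eq:first-tail}, using that restriction is injective once $z,z'$ are fixed.
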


\begin{proof}
Write $H_1:=H\setminus\{x,x'\}$.  An embedding with
alternative~\textnormal{(I)} maps at least
$\epsilon_3k/2-2$ vertices of $H_1$ into
$V_{\mathrm{rest}}$. By Lemma~\ref{lem:parameters}\textnormal{(P5)}, we have
$\epsilon_3k\to\infty$ as $k\to\infty$.  Thus, for all sufficiently
large $k$, $\epsilon_3k\ge12$, and consequently $\frac{\epsilon_3k}{2}-2
\ge \frac{\epsilon_3k}{3}.$ 

Set $\gamma_1:=(24/q)(\log k)^2/k$ and
$\beta_1:=\epsilon_3/3$.  Lemma~\ref{lem:cleaning-package}
\textnormal{(iii)} gives $|V_{\mathrm{rest}}|\le\gamma_1n$.
Moreover, $|V(H_1)|=k-2\ge k-4$ and
$0<\beta_1\le q/3$ by
Lemma~\ref{lem:parameters}\textnormal{(P1)}.

Apply Lemma~\ref{lem:weighted-hitting} to $H_1$ with
$G=\Gamma$, $U=V_{\mathrm{rest}}$, $\gamma=\gamma_1$, and
$\beta=\beta_1$.  In the notation of that lemma, its auxiliary
signature has size $t\le(5/q)\log k$.  By
Lemma~\ref{lem:parameters}\textnormal{(P6)},
$\beta_1k\ge(10/q)\log k+2\ge2t+2$ and
$2e^4\gamma_1/\beta_1<1$.  Thus the simplified bound
\eqref{eq:weighted-hitting-simple} applies, and
\eqref{eq:first-tail} gives at most
$k^{-7}\E_k(n)/n^2$ possible restrictions to $H_1$.  Since the
images of $x$ and $x'$ are fixed, restriction to $H_1$ is injective
on the family being counted.
\end{proof}

\alternativeheading{II}{two vertices use one cleaned candidate class}

This is the collision estimate \cite[Lemma~6.6]{FSW}, recorded in
our notation as Lemma~\ref{lem:candidate-collision}; we do not repeat
its proof.  The main idea is as follows. Suppose that $\vartheta(u)$ and $\vartheta(v)$ lie in the
same cleaned class $V_i$.  By
Definition~\ref{def:reasonable}\textnormal{(b)}, linearly many
vertices of $H$ are adjacent to exactly one of $u$ and $v$.
Inducedness forces their images to be adjacent to exactly one of
$\vartheta(u)$ and $\vartheta(v)$.
There are few such vertices in $\Gamma$.  Outside
$V_i\cup V_{\mathrm{rest}}$, any vertex adjacent to exactly one of
$\vartheta(u)$ and $\vartheta(v)$ is a bad partner of at least one of
them.  Since both $\vartheta(u)$ and $\vartheta(v)$ survived the
cleaning, each has few bad partners.  Thus a fixed large set of
vertices of $H$ is forced into a small subset of $\Gamma$.
Lemma~\ref{lem:candidate-collision} makes this observation
quantitative.

We apply Lemma~\ref{lem:candidate-collision} \cite[Lemma~6.6]{FSW} with
$F=H$, $G=\Gamma$, $m=k$, $N=n$, $\rho=q$,
$\eta_5=\epsilon_5$, $T=S$, $\xi=\psi$,
$C_i'=V_i'$, $C_i=V_i$, and
$C_{\mathrm{rest}}=V_{\mathrm{rest}}$.  Pair separation is
Definition~\ref{def:reasonable}\textnormal{(b)}, and the sets
$V_i'$ are pairwise disjoint by
Lemma~\ref{lem:candidate-disjointness}.  The cleaned classes are
obtained from Definition~\ref{def:bad-pairs} under this
identification.  By Lemma~\ref{lem:cleaning-package}
\textnormal{(iii)} and the definition of $\epsilon_5$,
\[
 |V_i|+|V_{\mathrm{rest}}|
 \le\frac{24}{q}\frac{(\log k)^2}{k}\,n
 <\epsilon_5n
\]
for every $i\in V(H)\setminus S$.  Finally,
$0<\epsilon_5<1$ follows from
Lemma~\ref{lem:parameters}\textnormal{(P1)}, and the remaining
numerical assumptions are exactly
Lemma~\ref{lem:parameters}\textnormal{(P7)}.  Hence at most
\[k^{-7}\E_k(n)/n^2\] embeddings have
alternative~\textnormal{(II)}.

\alternativeheading{III}{one vertex has many local inconsistencies}

\begin{lemma}
\label{lem:local-inconsistency}
Let $x,x'\in V(H)$ be distinct, and let
$z,z'\in V(\Gamma)$. 
At most $k^{-7}\E_k(n)/n^2$ embeddings $\vartheta:H\hookrightarrow\Gamma$ satisfying $\vartheta(x)=z$ and  $\vartheta(x')=z'$ have alternative~\textnormal{(III)} of
Lemma~\ref{lem:embedding-decomposition}.
\end{lemma}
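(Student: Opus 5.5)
The approach is to reduce alternative~(III) to a small-set hitting event and then apply Lemma~\ref{lem:weighted-hitting} with the second pair of tail parameters $\gamma_2=\epsilon_5$ and $\beta_2=\epsilon_4/2$ from Lemma~\ref{lem:parameters}\textnormal{(P6)}. The key observation is that an inconsistent pair at an aligned vertex produces a \emph{bad partner} in the sense of Definition~\ref{def:bad-pairs}. Suppose $\vartheta$ is $f$-aligned and $u\in A$ lies in more than $\epsilon_4k$ $f$-inconsistent pairs $\{u,v\}$. For each such $v$ we have $\vartheta(u)\in V_{f(u)}$ and $\vartheta(v)\in V_{f(v)}$, with $f(u)\ne f(v)$ by injectivity. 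Since $\vartheta$ is induced,
\[
a_\Gamma(\vartheta(u),\vartheta(v))=a_H(u,v)\ne a_H(f(u),f(v)).
\]
Because $V_i\subseteq V_i'$, the pair $(\vartheta(u),\vartheta(v))$ is therefore a bad pair for the classes $V'_{f(u)},V'_{f(v)}$.

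This suggests defining, for each host vertex $w$, the set $U_w$ of its bad partners. Since the $V_i'$ are disjoint, $w$ lies in at most one $V_i'$, so $U_w$ depends only on $w$ and not on $\vartheta$ or $f$. If $w\in V_i$ for some $i$, then $w$ survived the cleaning, so $|U_w|<\epsilon_5 n$. Consequently every embedding with alternative~(III) admits a vertex $u\in V(H)$ such that $w:=\vartheta(u)$ lies in a cleaned class and more than $\epsilon_4k$ vertices of $H$ are mapped into $U_w$, a set of size at most $\gamma_2 n$.

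The count then proceeds by fixing $u$ and $w$ and deleting the pinned vertices. If $u\notin\{x,x'\}$, let $H':=H\setminus\{x,x',u\}$, so $k'=k-3\ge k-4$. At least $\epsilon_4k-3\ge\epsilon_4k/2=\beta_2k$ vertices of $H'$ are mapped into $U_w$, using $\epsilon_4k\to\infty$ from (P5). Lemma~\ref{lem:parameters}\textnormal{(P6)} supplies the hypotheses $\beta_2\le q/3$, $\beta_2k\ge(10/q)\log k+2\ge 2t+2$, and $2e^4\gamma_2/\beta_2<1$. Then \eqref{eq:weighted-hitting-simple} together with \eqref{eq:second-tail} bounds the number of restrictions to $H'$ by $k^{-8}\E_k(n)/n^{3}$. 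Since $\vartheta(x),\vartheta(x'),\vartheta(u)$ are all fixed, restriction to $H'$ is injective on the family being counted. Summing over the at most $n$ choices of $w$ and the at most $k$ choices of $u$ gives $k^{-7}\E_k(n)/n^2$.

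When $u\in\{x,x'\}$, the image $w$ is already fixed. In that case we use $H'=H\setminus\{x,x'\}$ and obtain $k^{-8}\E_k(n)/n^2$ for each of the two possibilities, which is absorbed into the same total. I expect the main point requiring care to be the bookkeeping of the powers of $n$ and $k$. Each pinned vertex must be matched by a factor $n^{-1}$ from the term $n^{-(k-k')}$ in Lemma~\ref{lem:weighted-hitting}, and the exponent $k^{-8}$ in \eqref{eq:second-tail} must leave exactly one factor of $k$ to pay for the union over $u$. A second point to verify is that $U_w$ is genuinely independent of the map $f$, which holds because bad partners are defined through the fixed disjoint classes $V_i'$.
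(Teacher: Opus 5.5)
Your proposal is correct and follows the paper's proof essentially step for step. In both arguments, the inconsistent partners of the aligned vertex $u$ are sent into the set of bad partners of $\vartheta(u)$, which has fewer than $\epsilon_5 n$ elements because $\vartheta(u)$ lies in a cleaned class. Lemma~\ref{lem:weighted-hitting} is then applied with $(\gamma_2,\beta_2)$ to $H\setminus\{x,x',u\}$, and combined with \eqref{eq:second-tail} this gives $k^{-8}\E_k(n)/n^{3}$ or $k^{-8}\E_k(n)/n^{2}$ per fixed image, after which you sum over the image of $u$ and over $u$. Your threshold $\epsilon_4k-3$ is slightly more conservative than the paper's $\epsilon_4k-2$, since $u$ is not its own partner, but this makes no difference.
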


This case leads to the same type of rare event as
alternative~\textnormal{(II)}, but the relevant domain vertices in $H$ are not fixed in advance.  Once $y$ and $\vartheta(y)$ are fixed, every
$f$-inconsistent partner of $y$ must be mapped to a bad partner of
$\vartheta(y)$.  Since $\vartheta(y)$ lies in a cleaned class, it has
fewer than $\epsilon_5n$ bad partners.  We therefore apply
Lemma~\ref{lem:weighted-hitting}, which sums over all possible choices
of the inconsistent partners.

\begin{proof}
Let $A\subseteq V(H)$ and let
$f:A\to V(H)\setminus S$ be an injective map witnessing
alternative~\textnormal{(III)}.  There is a vertex $y\in A$ that
belongs to more than $\epsilon_4k$ $f$-inconsistent pairs
$\{y,v\}$.  For every such $v$, alignment gives
$\vartheta(y)\in V_{f(y)}$ and
$\vartheta(v)\in V_{f(v)}$.  Since $f$ is injective,
$f(y)\ne f(v)$.  Moreover, the fact that it is an induced embedding  and $f$-inconsistent gives
\[
a_\Gamma(\vartheta(y),\vartheta(v))
=
a_H(y,v)
\ne
a_H(f(y),f(v)).
\]
Thus $(\vartheta(y),\vartheta(v))$ is a bad pair in the sense of
Definition~\ref{def:bad-pairs}.

Write $D_y:=\{x,x',y\}$ and $H_y:=H\setminus D_y$. Among the $f$-inconsistent partners of $y$, the set $D_y$ contains at most $x$ and $x'$, since $y$ is not an inconsistent partner of itself. Thus passing from $H$ to $H_y$ removes at most two inconsistent partners of $y$.  
Therefore more than $\epsilon_4k-2$ inconsistent partners remain.
By Lemma~\ref{lem:parameters}\textnormal{(P5)}, we have
$\epsilon_4k\ge4$ for all sufficiently large $k$, and hence $\epsilon_4k-2
\ge \frac{\epsilon_4k}{2}.$
Thus at least $\epsilon_4k/2$ inconsistent partners remain.  We use
this weaker threshold in order to apply 
Lemma \ref{lem:weighted-hitting} with $\beta=\epsilon_4/2$.

Fix $y$ and, when
$y\notin\{x,x'\}$, also fix $\vartheta(y)$.  Let $U_y$ be the set of
bad partners of $\vartheta(y)$.  Since $\vartheta(y)\in V_{f(y)}$ and $V_{f(y)}$ is a cleaned class,
the vertex $\vartheta(y)$ has fewer than $\epsilon_5n$ bad partners.
Thus $|U_y|<\epsilon_5n$.

Set $\gamma_2:=\epsilon_5$ and $\beta_2:=\epsilon_4/2$.  Then
$|V(H_y)|\ge k-3$ and
$0<\beta_2\le q/3$ by
Lemma~\ref{lem:parameters}\textnormal{(P1)}.  Apply
Lemma~\ref{lem:weighted-hitting} to $H_y$ with
$G=\Gamma$, $U=U_y$, $\gamma=\gamma_2$, and $\beta=\beta_2$.
Its auxiliary signature has size $t\le(5/q)\log k$. 
Lemma~\ref{lem:parameters}\textnormal{(P6)} gives
$\beta_2k\ge(10/q)\log k+2\ge2t+2$ and
$2e^4\gamma_2/\beta_2<1$.  Hence the simplified bound applies, and
\eqref{eq:second-tail} gives at most
$k^{-8}\E_k(n)/n^{|D_y|}$ restrictions for each fixed value of
$\vartheta(y)$.

If $y\notin\{x,x'\}$, then $|D_y|=3$, summing over its at most $n$ possible images of $y$ 
gives $k^{-8}\E_k(n)/n^2$ embeddings.  If
$y\in\{x,x'\}$, then $|D_y|=2$ and the image of $y$ is already prescribed, so the denominator $n^2$ is already present.  Summing
over the at most $k$ choices of $y$ proves the lemma.
\end{proof}

\alternativeheading{IV}{a large matching of inconsistencies}

We use the standard entropy--energy argument of
\cite[Lemmas~7.8--7.9]{FSW}, recorded in
Appendix~\ref{app:disloyal-embeddings} as
Lemmas~\ref{lem:number-fancy-maps} and
\ref{lem:many-inconsistent-aligned}, and combined in
Lemma~\ref{lem:fancy-inconsistency}.  We omit the inherited proofs
and recall only the mechanism.  Fix a fancy
map $f$.  Alternative~\textnormal{(IV)} gives many vertex-disjoint
pairs $\{u,v\}$ for which the adjacency of $u$ and $v$ in $H$ is
opposite to the adjacency prescribed by the labels $f(u)$ and
$f(v)$.  For all but a small number of exceptional class pairs, the
majority adjacency between $V_{f(u)}$ and $V_{f(v)}$ agrees with the
latter prescription.  An $f$-aligned embedding must therefore choose,
for each of many disjoint pairs, two images from the at most half of
$V_{f(u)}\times V_{f(v)}$ having the required opposite adjacency.
This gives an exponential saving.  On the other hand, the values of a
fancy map on a logarithmic-size super-signature strongly restrict its
values elsewhere, so there are comparatively few fancy maps.  The
exponential saving for each fixed map dominates the number of
possible maps.

We apply Lemma~\ref{lem:fancy-inconsistency} with
$F=H$, $G=\Gamma$, $m=k$, $N=n$, $\rho=q$,
$\eta_2=\epsilon_2$, $\eta_3=\epsilon_3$, $T=S$,
$C_i=V_i$, $a=x$, $a'=x'$, $b=z$, and $b'=z'$.
The classes $V_i$ are nonempty by
Lemma~\ref{lem:cleaning-package}\textnormal{(ii)} and pairwise
disjoint because $V_i\subseteq V_i'$.  The assumptions give $n\ge k$, and
$\epsilon_3\le q$ follows from
Lemma~\ref{lem:parameters}\textnormal{(P1)}.
Moreover, by
Lemma~\ref{lem:parameters}\textnormal{(P5)}, for all sufficiently
large $k$, $6+2\log k
\le
\left(\frac{\log2}{8}-\frac1{20}\right)\epsilon_2k.$ Consequently,
$2^{-\epsilon_2k/8}e^6k^2
\le\exp(-\epsilon_2k/20)$.  The inequalities
$\epsilon_3\le10^{-20}$,
$(67/q)(\log k)^2\le\epsilon_3k$,
$2\le\epsilon_2k/16$, and
$(20/q)(\log k)^2\le\epsilon_2k/16$ follow from
Lemma~\ref{lem:parameters}\textnormal{(P1), (P5)}.  Every set of
size at least $(1-q)k$ is a signature by
Lemma~\ref{lem:large-signature}, and
Lemma~\ref{lem:cleaning-package}\textnormal{(iv)} gives the required
bound on vertex-disjoint majority-adjacency disagreements.  Finally,
$4\epsilon_3\log(1/\epsilon_3)<\epsilon_2/25$ follows from
Lemma~\ref{lem:parameters}\textnormal{(P2)}, while
$\exp(-\epsilon_2k/100)\le k^{-7}$ follows from
part~\textnormal{(P5)}.  Thus at most
\[k^{-7}\E_k(n)/n^2\] embeddings have
alternative~\textnormal{(IV)}.

\subsection{Summary of the local stability argument}
The first summary is that there are only few disloyal embeddings. 
\begin{proposition}
\label{prop:few-disloyal}
For distinct $x,x'\in V(H)$ and arbitrary $z,z'\in V(\Gamma)$,
there are at most $k^{-6}\E_k(n)/n^2$ disloyal embeddings
$\vartheta:H\hookrightarrow\Gamma$ satisfying
$\vartheta(x)=z$ and $\vartheta(x')=z'$.
\end{proposition}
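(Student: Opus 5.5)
The plan is to combine the embedding decomposition with the four alternative bounds just established and then apply a union bound. Fix distinct $x,x'\in V(H)$ and vertices $z,z'\in V(\Gamma)$. If $z=z'$, no embedding satisfies $\vartheta(x)=z$ and $\vartheta(x')=z'$, so the bound is trivial. Assume therefore that $z\ne z'$.

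First, I would recall that the hypotheses of Lemma~\ref{lem:embedding-decomposition} were verified at the start of Subsection~\ref{subsec:few-disloyal}. The parameter inequalities come from (P1) and (P4). Condition (D1) follows from Lemma~\ref{lem:small-super-signature}. Condition (D2) follows from Definition~\ref{def:reasonable}(c) together with $\epsilon_2<\delta$, which is (P3). The decomposition lemma then shows that every $\Phi$-disloyal embedding $\vartheta$ falls under at least one of the alternatives (I)--(IV). This holds in particular for those embeddings with $\vartheta(x)=z$ and $\vartheta(x')=z'$.

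Second, I would bound each alternative separately, using only embeddings with the prescribed images of $x$ and $x'$.

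\begin{itemize}
\item Alternative (I) contributes at most $k^{-7}\E_k(n)/n^2$ by Lemma~\ref{lem:rest-heavy}. That proof already fixes the images of $x$ and $x'$ and counts restrictions to $H\setminus\{x,x'\}$.
\item Alternative (II) contributes at most $k^{-7}\E_k(n)/n^2$ by the collision estimate Lemma~\ref{lem:candidate-collision}, applied as described above.
\item Alternative (III) contributes at most $k^{-7}\E_k(n)/n^2$ by Lemma~\ref{lem:local-inconsistency}.
\item Alternative (IV) contributes at most $k^{-7}\E_k(n)/n^2$ by Lemma~\ref{lem:fancy-inconsistency}, applied with $a=x$, $a'=x'$, $b=z$, and $b'=z'$.
\end{itemize}

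Summing the four contributions gives at most $4k^{-7}\E_k(n)/n^2\le k^{-6}\E_k(n)/n^2$ for $k\ge4$, which is the claimed bound.

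The main point requiring care is consistency of conventions. I must check that each of the four inherited or new bounds is stated for embeddings with both $\vartheta(x)=z$ and $\vartheta(x')=z'$ fixed, so that each carries the factor $n^{-2}$. I also need the notion of disloyalty in the decomposition lemma to match exactly: it is relative to $\Psi=\Phi$, with error $\epsilon_2$ and classes $(V_i)$. The collision lemma (II) is the case where I would double-check that the appendix statement includes the fixed pair $(x,x')$. If it does not, I would fix the images of $x$ and $x'$ by running the argument on $H\setminus\{x,x'\}$, exactly as in Lemma~\ref{lem:rest-heavy}. Beyond this check, no new estimate is needed.
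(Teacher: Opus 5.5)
Your proposal is correct and matches the paper's proof: every disloyal embedding falls under one of alternatives (I)--(IV) of Lemma~\ref{lem:embedding-decomposition}, each alternative is bounded by $k^{-7}\E_k(n)/n^2$ via Lemmas~\ref{lem:rest-heavy}, \ref{lem:candidate-collision}, \ref{lem:local-inconsistency}, and \ref{lem:fancy-inconsistency}, and a union bound gives $4k^{-7}\E_k(n)/n^2\le k^{-6}\E_k(n)/n^2$. The point you flag needs no fallback, since Lemma~\ref{lem:candidate-collision} is already stated with the two images $\theta(a)=b$ and $\theta(a')=b'$ prescribed.
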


\begin{proof}
Every disloyal embedding has at least one of alternatives
\textnormal{(I)}--\textnormal{(IV)}.  Lemmas
\ref{lem:rest-heavy}, \ref{lem:candidate-collision},
\ref{lem:local-inconsistency}, and
\ref{lem:fancy-inconsistency} bound the four alternatives,
respectively, by $k^{-7}\E_k(n)/n^2$.  Their union therefore has
size at most $4k^{-7}\E_k(n)/n^2\le k^{-6}\E_k(n)/n^2$.
\end{proof}

We now combine Proposition~\ref{prop:few-disloyal} with the
fixed-automorphism defect estimates to show that the sets $W_j$ cover
the entire host graph.

\begin{lemma}\label{lem:W-cover}
The sets $W_j$, $j\in V(H)$, cover $V(\Gamma)$.
\end{lemma}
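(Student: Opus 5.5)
We argue by contradiction. Suppose some $w\in V(\Gamma)$ lies in no $W_j$. We will show that $w$ lies in the image of too few embeddings, which contradicts the lower bound \eqref{eq:everyvertex} coming from Lemma~\ref{lem:extremal-vertex-coverage}. By \eqref{eq:everyvertex}, the vertex $w$ belongs to the image of at least $\E_k(n)/n$ embeddings $H\hookrightarrow\Gamma$. We split these embeddings into loyal and disloyal ones and bound each part separately.

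For the loyal embeddings we use Corollary~\ref{cor:uncovered-loyal}. Its hypothesis $|\Phi|\le k^2$ holds by \eqref{eq:abstract-group-size}. Since $w\notin W_j$ for every $j$, the corollary shows that $w$ lies in the image of at most $k^{-5}\E_k(n)/n$ loyal embeddings.

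For the disloyal embeddings we use Proposition~\ref{prop:few-disloyal}. Every embedding containing $w$ has $\vartheta(x)=w$ for a unique $x\in V(H)$, so there are at most $k$ choices of $x$. Fix any $x'\ne x$ and sum over the at most $n$ possible images $z'=\vartheta(x')$. For each pair $(x,z')$ the proposition gives at most $k^{-6}\E_k(n)/n^2$ disloyal embeddings with $\vartheta(x)=w$ and $\vartheta(x')=z'$. Summing over the $n$ values of $z'$ and the $k$ values of $x$, the number of disloyal embeddings containing $w$ is at most $k\cdot n\cdot k^{-6}\E_k(n)/n^2=k^{-5}\E_k(n)/n$.

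Adding the two bounds, $w$ lies in the image of at most $2k^{-5}\E_k(n)/n<\E_k(n)/n$ embeddings once $k\ge2$. This contradicts \eqref{eq:everyvertex}, so every vertex of $\Gamma$ lies in some $W_j$.

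The only point that needs care is that the earlier results apply directly. The standing assumption $n\ge k$ is needed for \eqref{eq:everyvertex}, and it is in force. The bounds in Proposition~\ref{prop:few-disloyal} and Corollary~\ref{cor:uncovered-loyal} hold uniformly in the prescribed images, and the counting above is a union bound, so overcounting does not matter. Beyond this the argument is bookkeeping, because all the substantive estimates were established earlier.
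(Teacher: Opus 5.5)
Your proposal is correct and matches the paper's proof: you bound the loyal embeddings through $w$ by Corollary~\ref{cor:uncovered-loyal} and the disloyal ones by summing Proposition~\ref{prop:few-disloyal} over the image of $x'$ and the preimage $x$. The total $2k^{-5}\E_k(n)/n$ then contradicts the vertex-coverage bound \eqref{eq:everyvertex}, exactly as in the paper.
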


\begin{proof}
Suppose that $w\notin W_j$ for every $j\in V(H)$.
By Corollary~\ref{cor:uncovered-loyal}, the vertex $w$ belongs to at most $k^{-5}\frac{\E_k(n)}{n}$ loyal embeddings.
Fix $x\in V(H)$ and choose $x'\ne x$.  Summing
Proposition~\ref{prop:few-disloyal} over the at most $n$ possible
values of $\vartheta(x')$ shows that at most $k^{-6}\frac{\E_k(n)}{n}$ disloyal embeddings satisfy $\vartheta(x)=w$.  Summing over the at most $k$ possible preimages $x$ shows that $w$ belongs to at most $k^{-5}\frac{\E_k(n)}{n}$ disloyal embeddings.
Thus $w$ belongs to fewer than
$2k^{-5}\frac{\E_k(n)}{n} < \frac{\E_k(n)}{n}$ embeddings, contradicting
Lemma~\ref{lem:extremal-vertex-coverage} and
\eqref{eq:abstract-lower-bound}.
\end{proof}

The local stability step is now complete.  The next proposition
records the four conclusions used in the global correction argument.
Properties~\textnormal{(S1)} and~\textnormal{(S4)} control the copies
that may be lost when interpart adjacencies are corrected;
property~\textnormal{(S2)} gives the prospective top-level partition;
and property~\textnormal{(S3)} shows that almost every loyal copy is
genuinely top-level.

\begin{proposition}
\label{prop:structural-package}
For all sufficiently large $k=k(q,c_0)$, the preceding construction
has the following properties.
\begin{enumerate}[label=\textnormal{(S\arabic*)},leftmargin=3em]
\item For distinct $x,x'\in V(H)$ and fixed
$z,z'\in V(\Gamma)$, at most $k^{-6}\frac{\E_k(n)}{n^2}$ disloyal embeddings satisfy $\vartheta(x)=z$ and $\vartheta(x')=z'$.

\item The sets $(W_j)_{j\in V(H)}$ form a partition of
$V(\Gamma)$.

\item For each fixed $\phi\in\Phi$, at most 
$k^{-7}\E_k(n)$ embeddings are $\phi$-loyal but not $\phi$-regulated.

\item If $j\ne j'$, $w\in W_j$, $w'\in W_{j'}$, and $a_\Gamma(w,w')\ne a_H(j,j'),$
then at most $k^{-4}\frac{\E_k(n)}{n^2}$ loyal embeddings contain both $w$ and $w'$.
\end{enumerate}
\end{proposition}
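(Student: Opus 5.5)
This proposition is a summary, so the proof will collect results already established in this section and check that the constants match. I will take the four items in order.

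For (S1), I will quote Proposition~\ref{prop:few-disloyal} directly. It rests on the embedding decomposition (Lemma~\ref{lem:embedding-decomposition}), whose hypotheses (D1) and (D2) were checked above via Lemma~\ref{lem:small-super-signature} and the rigidity condition (c) with $\epsilon_2<\delta$. The four alternatives are bounded by Lemma~\ref{lem:rest-heavy}, the collision Lemma~\ref{lem:candidate-collision}, Lemma~\ref{lem:local-inconsistency}, and Lemma~\ref{lem:fancy-inconsistency}. Each gives at most $k^{-7}\E_k(n)/n^2$, and $4k^{-7}\le k^{-6}$ once $k\ge4$.

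For (S2), disjointness is Lemma~\ref{lem:W-disjoint} and covering is Lemma~\ref{lem:W-cover}. Together they show the $W_j$ partition $V(\Gamma)$. Some $W_j$ may be empty, which the statement allows. In the covering argument, the total count $2k^{-5}\E_k(n)/n$ must be compared with the lower bound $\E_k(n)/n$ in \eqref{eq:everyvertex}. That lower bound uses $g\ge2$ and $n\ge k$, both standing assumptions.

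For (S3), I will cite Corollary~\ref{cor:loyal-unregulated}. For (S4), I will cite Corollary~\ref{cor:wrong-pair-loyal}, which needs the hypothesis $|\Phi|\le k^2$ from \eqref{eq:abstract-group-size}.

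The only place needing care is that every threshold is uniform in $n$. All the cited lemmas hold for $k\ge k_0(q,c_0)$, with the same fixed signature pair $(S,\psi)$ and parameters from Lemma~\ref{lem:parameters}. I will therefore take $k$ larger than the maximum of the finitely many thresholds used, none of which depends on $n$. The substantive work sits in alternatives (I) and (III), through Lemma~\ref{lem:weighted-hitting}, and that is already done. So I expect no real obstacle beyond this bookkeeping.
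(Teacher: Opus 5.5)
Your proposal is correct and matches the paper's proof: (S1)--(S4) are exactly Proposition~\ref{prop:few-disloyal}, Lemmas~\ref{lem:W-disjoint} and~\ref{lem:W-cover}, Corollary~\ref{cor:loyal-unregulated}, and Corollary~\ref{cor:wrong-pair-loyal}, respectively. Your remarks on the constants ($4k^{-7}\le k^{-6}$, $|\Phi|\le k^2$, and $g\ge2$, $n\ge k$ in \eqref{eq:everyvertex}) and on the thresholds being uniform in $n$ are accurate.
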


\begin{proof}
Property \textnormal{(S1)} is
Proposition~\ref{prop:few-disloyal}.
Property \textnormal{(S2)} follows from
Lemmas~\ref{lem:W-disjoint} and \ref{lem:W-cover}.
Property \textnormal{(S3)} is
Corollary~\ref{cor:loyal-unregulated}.
Property \textnormal{(S4)} is
Corollary~\ref{cor:wrong-pair-loyal}.
\end{proof}

\section{From the approximate partition to an exact balanced blow-up}
The preceding section gives a partition
$(W_j)_{j\in V(H)}$ of the extremal graph $\Gamma$ and shows that
almost every embedding approximately respects this partition.  Two
tasks remain.  We must first prove that the adjacency between every
two different parts agrees with $H$, and then prove that the part
sizes differ by at most one.

The argument has three steps.  We first find one automorphism
supporting a positive proportion of all top-level embeddings.  We
then correct every incorrect interpart adjacency and show that the
copies created by this correction outnumber those destroyed.  Once
the interpart structure is exact, the top-level contribution factors
as a single product, and a direct transfer argument forces the parts
to be balanced.

We assume $n\ge k$, since the conclusion is part of the definition of
a balanced iterated blow-up when $n<k$.  Write
$g:=|\Phi|$.

\subsection{A dominant automorphism}
In this subsection we show there is some automorphism which regulates many embeddings. (Lemma \ref{lem:significant-automorphism}). 
The calculation in this subsection is essentially the same as in
\cite[Lemma~4.10 and Corollary~4.11, Claim~4.12, 4.13, Lemma 4.14]{FSW} with some constants and parameters changed to cater to Paley graphs, and some arguments simplified because here we do not remove vertices from $H$.  For completeness, we include the short proofs.

 Fix distinct $x,x'\in V(H)$.  Summing (S1) in Proposition~\ref{prop:structural-package} over the $n^2$ choices of $z,z'\in V(\Gamma)$ shows that the total number of disloyal embeddings is at most $k^{-6}\E_k(n)$. 
For a fixed $\phi\in\Phi$, (S3) bounds the number of $\phi$-loyal but unregulated embeddings by $k^{-7}\E_k(n)$.  Since $g\le k^2$, the total number of unregulated embeddings is at most
\begin{equation}\label{eq:unregulated-bound}
 \bigl(k^{-6}+gk^{-7}\bigr)\E_k(n)
 \le(k^{-6}+k^{-5})\E_k(n)<\E_k(n).
\end{equation}
Together with \eqref{eq:abstract-lower-bound}, the total number of regulated embeddings is at least 
\begin{equation}\label{eq:many-regulated}
(g-1)\E_k(n).
\end{equation}

For $\phi\in\Phi$, let $N_\phi$ be the number of $\phi$-regulated embeddings.  Since $\phi$ is a permutation of $V(H)$ and the $W_j$ form a partition,
\begin{equation}\label{eq:Nphi-product-bound}
 N_\phi\le\prod_{x\in V(H)}|W_{\phi(x)}|
 =\prod_{j\in V(H)}|W_j|
 \le\E_k(n).
\end{equation}
Moreover, an embedding cannot be regulated with respect to two
different automorphisms: if $\phi(x)\ne\phi'(x)$ for some $x$, then
its image of $x$ would have to lie in two disjoint sets
$W_{\phi(x)}$ and $W_{\phi'(x)}$.  Thus the regulated embeddings are
partitioned by the sets counted by the numbers $N_\phi$.

\begin{lemma}
\label{lem:significant-automorphism}
There is an automorphism $\phi_*\in\Phi$ such that
\begin{equation}
\label{eq:dominant-automorphism}
N_{\phi_*}
\ge
\left(1-\frac1g\right)\E_k(n)
\ge
\frac12\E_k(n).
\end{equation}
\end{lemma}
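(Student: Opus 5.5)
The plan is a pigeonhole argument over $\Phi$ that combines the lower bound \eqref{eq:many-regulated} with the per-automorphism cap \eqref{eq:Nphi-product-bound}. The regulated embeddings are partitioned according to the unique automorphism that regulates them, as observed just before the statement. Hence
\[
\sum_{\phi\in\Phi}N_\phi\ \ge\ (g-1)\E_k(n),
\]
where the lower bound comes from \eqref{eq:abstract-lower-bound} together with \eqref{eq:unregulated-bound}: at least $g\E_k(n)$ embeddings exist in total, and fewer than $\E_k(n)$ of them are unregulated.

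The average of the $g$ numbers $N_\phi$ is then at least $(g-1)\E_k(n)/g=(1-1/g)\E_k(n)$. I will choose $\phi_*$ to maximize $N_\phi$, so that $N_{\phi_*}\ge(1-1/g)\E_k(n)$. The second inequality in \eqref{eq:dominant-automorphism} follows from $g\ge2$, which is \eqref{eq:abstract-group-size}.

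The only point needing care is that \eqref{eq:unregulated-bound} is strict and uses $g\le k^2$, so that $gk^{-7}\le k^{-5}$ and $k^{-6}+k^{-5}<1$ for large $k$. The bound \eqref{eq:many-regulated} therefore holds as stated. No step here is a real obstacle; the substantive work was already done in Proposition~\ref{prop:structural-package}. Note also that the upper bound \eqref{eq:Nphi-product-bound} is not needed for this lemma. It will matter later, for showing that the other $N_\phi$ are likewise close to $\E_k(n)$ and that $\prod_j|W_j|$ is nearly $\E_k(n)$.
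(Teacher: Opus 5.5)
Your proof is correct and is essentially the paper's argument: the regulated embeddings are partitioned by the numbers $N_\phi$, so \eqref{eq:many-regulated} gives $\sum_{\phi\in\Phi}N_\phi\ge(g-1)\E_k(n)$, and averaging over the $g$ automorphisms together with $g\ge2$ yields \eqref{eq:dominant-automorphism}. As you observe, the cap \eqref{eq:Nphi-product-bound} is not needed for this step, despite your opening sentence saying it is combined in.
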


\begin{proof}
By \eqref{eq:many-regulated},
$\sum_{\phi\in\Phi}N_\phi\ge(g-1)\E_k(n)$.
The result follows by averaging over the $g$ automorphisms in
$\Phi$ and the fact $|\Phi|\geq 2$.
\end{proof}
This lemma replaces
\cite[Claim~4.16]{FSW}. For our purpose we only need one such $\phi_*$, whereas \cite{FSW} needs many such $\phi_*$ because \cite{FSW} works with certain subgraphs of $H$.

Write $n_j:=|W_j|$.  Combining
\eqref{eq:dominant-automorphism} and
\eqref{eq:Nphi-product-bound}, we obtain
\begin{equation}
\label{eq:product-lower-bound}
\prod_{j\in V(H)}n_j
\ge
N_{\phi_*}
\ge
\frac12\E_k(n).
\end{equation}
In particular, every part is nonempty.

The next lemma is the full-graph analogue of
\cite[Lemma~4.14]{FSW}.  Its proof simplifies because every
automorphism uses every label.
\begin{lemma}\label{lem:part-size-bounds}
For every $j\in V(H)$,
\begin{equation}\label{eq:part-size-bounds}
 1\le |W_j|\le k^{-4/5}n.
\end{equation}
\end{lemma}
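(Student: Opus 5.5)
The lower bound is immediate from \eqref{eq:product-lower-bound}. Its right-hand side $\frac12\E_k(n)$ is positive because $n\ge k$, so no factor $n_j$ can vanish. The real content is the upper bound $n_j\le k^{-4/5}n$. I would prove it by showing that a single oversized part makes the product $\prod_j n_j$ too small, contradicting \eqref{eq:product-lower-bound}.

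Suppose that some part has $n_{j_0}>k^{-4/5}n$, and write $n_{j_0}=\lambda n$ with $\lambda>k^{-4/5}$. The remaining $k-1$ parts have total size $(1-\lambda)n$. By Lemma~\ref{lem:balanced-product-tools}(i), their product is at most $\E_{k-1}((1-\lambda)n)$. I would then estimate by AM--GM rather than through the integer-rounding function:
\[
\prod_j n_j\le \lambda n\left(\frac{(1-\lambda)n}{k-1}\right)^{k-1}.
\]
Maximizing the right side over $\lambda$, the expression $\lambda(1-\lambda)^{k-1}$ peaks at $\lambda=1/k$ and is decreasing for $\lambda>1/k$. Comparing the value at a general $\lambda$ with the value at $1/k$ gives a ratio of $k\lambda\,\bigl((1-\lambda)k/(k-1)\bigr)^{k-1}$. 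Writing $\lambda=u/k$ with $u>k^{1/5}$, this ratio is roughly $u\,e^{-(u-1)}$, which is at most $e^{-k^{1/5}/2}$ for large $k$.

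For the comparison with $\E_k(n)$, I would use $\E_k(n)\ge (n/k-1)^k$ when $n\ge k$, or more cleanly $\E_k(n)\ge e^{-O(k^2/n)}(n/k)^k$. The AM--GM value at $\lambda=1/k$ is exactly $(n/k)^k$.

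The main obstacle is this comparison, because $\E_k(n)$ and $(n/k)^k$ can differ badly when $n$ is only slightly larger than $k$. For instance, at $n=k$ one has $\E_k(k)=1=(n/k)^k$, but for $k<n<2k$ the rounding loss matters. To avoid it, I would compare integer products directly. Starting from the partition $(n_j)$ with the large part, I would repeatedly move one vertex from the large part to the smallest part. Each such move multiplies the product by $\frac{(n_{j_0}-1)(m+1)}{n_{j_0}m}$, where $m$ is the smallest part size. Since $n_{j_0}\gg m$, each move gains a factor of at least about $(1+1/m)(1-1/n_{j_0})$.

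As long as the large part exceeds, say, twice the average, one can perform about $n_{j_0}/2$ such moves. The smallest part has size $m\le n/k$ throughout, so each move gains a factor of at least $1+k/(2n)$. This ends at a product that is still at most $\E_k(n)$. Hence the original product is at most
\[
\E_k(n)\,\bigl(1+k/(2n)\bigr)^{-\lambda n/2}\le \E_k(n)\,e^{-\lambda k/8}\le \E_k(n)\,e^{-k^{1/5}/8}<\tfrac12\E_k(n),
\]
which contradicts \eqref{eq:product-lower-bound}.

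The step needing care is ensuring that the gain per move really is at least $1+k/(4n)$. This requires handling the case where the smallest part size $m$ is tiny, which only helps, and tracking that $n_{j_0}$ stays above $\frac12\lambda n\ge 2n/k$ during the moves, so the loss factor $1-1/n_{j_0}$ is dominated.
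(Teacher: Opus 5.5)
Your proof is correct, but it reaches the upper bound by a different route than the paper. The paper works one automorphism at a time. For each $\phi\in\Phi$ it bounds $N_\phi\le |W_j|\,\E_{k-1}(n-|W_j|)\le n\,\E_{k-1}(n-|W_j|)$. It then applies the cited balanced-product inequality Lemma~\ref{lem:balanced-product-tools}\textnormal{(iii)} with $\ell=k$, $\ell'=k-1$, $\mu=|W_j|/n>k^{-4/5}$, which gives $N_\phi\le e^3k\,e^{-k^{1/5}/2}\E_k(n)$. Summing over $\Phi$ contradicts \eqref{eq:many-regulated}. Its lower bound also comes from \eqref{eq:many-regulated}, although the remark after \eqref{eq:product-lower-bound} is exactly your argument.

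You instead observe that, once \eqref{eq:product-lower-bound} is available, the lemma is a purely numerical statement about integer partitions of $n$ whose product is at least $\frac12\E_k(n)$. You prove that statement by explicit smoothing, which makes the proof self-contained. The rounding problem you identified is exactly what the paper hands off to Lemma~\ref{lem:balanced-product-tools}\textnormal{(iii)}: for $k<n<2k$ the ratio $(n/k)^k/\E_k(n)$ can be $e^{\Theta(k)}$, so your AM--GM paragraph cannot be repaired and should simply be dropped. Your vertex-moving argument never compares with $(n/k)^k$ at all. The paper's version is shorter given the citation, and its extra factor $gke^3$ is absorbed by $e^{-k^{1/5}/2}$.

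When you write yours up, use the per-move bound you flagged at the end, not $1+k/(2n)$. The latter can fail, for instance when $m=n/k$ and $n_{j_0}=2n/k$. The per-move factor is exactly $1+\frac{n_{j_0}-m-1}{m\,n_{j_0}}$, and this is at least $1+\frac{1}{4m}\ge 1+\frac{k}{4n}$ whenever $3n_{j_0}\ge 4m+4$. That condition holds throughout $\lfloor\lambda n/2\rfloor\ge\lambda n/4$ moves, because before each move $n_{j_0}>\lambda n/2\ge 2n/k\ge 2m$ and $n_{j_0}>k^{1/5}/2\ge 4$. The total gain is then at least $(1+\frac{k}{4n})^{\lambda n/4}\ge e^{\lambda k/32}>e^{k^{1/5}/32}>2$ for large $k$, which contradicts \eqref{eq:product-lower-bound}.
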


\begin{proof}
If $W_j=\varnothing$, then $N_\phi=0$ for every $\phi\in\Phi$, since
each $\phi$ is a permutation of all $k$ labels.  This contradicts
\eqref{eq:many-regulated}.

Suppose now that $|W_j|>k^{-4/5}n$.  Fix $\phi\in\Phi$ and let
$x=\phi^{-1}(j)$.  Write  $m':=n-|W_j|$ and $\mu:=\frac{|W_j|}{n}>k^{-4/5}.$
The image of $x$ has at most $|W_j|\le n$ choices, while the other
$k-1$ images lie in pairwise disjoint subsets of the remaining
$m'$ vertices.  Hence
\[
N_\phi\le |W_j|\E_{k-1}(m')\le n\E_{k-1}(m').
\]
Applying
Lemma~\ref{lem:balanced-product-tools}\textnormal{(iii)} with $\ell=k, 
\ell'=k-1, m=n, m'=n-|W_j|$
gives $\E_{k-1}(m') \le e^{3-\mu k/2}\frac{k}{n}\E_k(n).$
Therefore
\begin{equation}\label{eq:large-part-product-bound}
N_\phi
\le
e^3k e^{-\mu k/2}\E_k(n)
<
e^3k\exp(-k^{1/5}/2)\E_k(n).
\end{equation}
Summing over $\phi\in\Phi$, the total number of regulated embeddings is less than  $g e^3k\exp(-k^{1/5}/2)\E_k(n).$ For sufficiently large $k$, the factor
$e^3k\exp(-k^{1/5}/2)$ is smaller than
$(g-1)/g$, which is at least $1/2$.  This contradicts
\eqref{eq:many-regulated}.
\end{proof}

Finally, \eqref{eq:unregulated-bound} and
\eqref{eq:Nphi-product-bound} give the preliminary estimate
\begin{equation}
\label{eq:F-upper-preliminary}
\emb(H,n)
\le
(g+1)\E_k(n)
\le
(k^2+1)\left(\frac nk\right)^k
\le
2k^{-k+2}n^k.
\end{equation}

\subsection{Correction of the interpart adjacencies}\label{subsec:correction}

Let $\Gamma^*$ be obtained from $\Gamma$ by leaving every graph
$\Gamma[W_j]$ unchanged and correcting the adjacency between
different parts: for $w\in W_j$ and $w'\in W_{j'}$, with
$j\ne j'$, set
\[
a_{\Gamma^*}(w,w'):=a_H(j,j').
\]
Let $M$ be the number of unordered pairs whose adjacency is changed.

We compare the embeddings destroyed by the correction with the new
embeddings it creates.  Extremality will then imply that no adjacency
can have been changed.

The correction argument follows the strategy of
\cite[Lemmas~4.17--4.18 and Corollary~4.19]{FSW}.  The loss estimate is unchanged.  On the gain side, our full-graph
setting gives a simpler and quantitatively stronger statement: one
dominant automorphism works simultaneously for every pair of parts.

We first bound the number of embeddings destroyed by this
correction.
\begin{lemma}\cite[Lemma~4.17]{FSW}
\label{lem:lost-under-correction}
At most
$Mk^{-3}\E_k(n)/n^2$
embeddings of $H$ in $\Gamma$ stop to be embeddings in $\Gamma^*$.
\end{lemma}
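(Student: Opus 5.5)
The plan is to charge every destroyed embedding to one of the $M$ changed pairs, and then split by loyalty. Any embedding $\vartheta$ of $H$ in $\Gamma$ that fails to be an embedding in $\Gamma^*$ must contain two vertices $w\in W_j$, $w'\in W_{j'}$ with $j\ne j'$ whose adjacency was changed. The correction leaves the graphs $\Gamma[W_j]$ untouched, and by (S2) the $W_j$ partition $V(\Gamma)$, so such a changed pair satisfies
\[
a_\Gamma(w,w')\ne a_H(j,j').
\]
Hence the number of destroyed embeddings is at most the sum, over the $M$ changed unordered pairs $\{w,w'\}$, of the number of embeddings of $H$ in $\Gamma$ whose image contains both $w$ and $w'$. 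It therefore suffices to show that each changed pair lies in at most $k^{-3}\E_k(n)/n^2$ embeddings.

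Fix a changed pair $\{w,w'\}$ and split the embeddings containing it into loyal and disloyal ones.

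For the loyal embeddings, property (S4) of Proposition~\ref{prop:structural-package} applies directly, since $w\in W_j$, $w'\in W_{j'}$, $j\ne j'$ and the adjacency is wrong. It gives at most $k^{-4}\E_k(n)/n^2$ such embeddings.

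For the disloyal embeddings, any such embedding determines an ordered pair of distinct labels $(x,x')\in V(H)^2$ with $\vartheta(x)=w$ and $\vartheta(x')=w'$. These labels are distinct because $w\ne w'$. For each of the fewer than $k^2$ ordered pairs $(x,x')$, property (S1) with $z=w$, $z'=w'$ bounds the count by $k^{-6}\E_k(n)/n^2$. Summing over $(x,x')$ gives at most $k^{-4}\E_k(n)/n^2$ disloyal embeddings containing both $w$ and $w'$.

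Adding the two contributions gives $2k^{-4}\E_k(n)/n^2\le k^{-3}\E_k(n)/n^2$ per changed pair for $k\ge2$. Multiplying by $M$ yields the lemma.

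The main point to check is that the charging is complete: every destroyed embedding really contains a changed pair, and that pair genuinely lies across two distinct parts. This is exactly where the fact that the intrapart graphs are unchanged and the partition property (S2) are used. The rest is bookkeeping with (S1) and (S4), and the $k^2$ loss from summing over label pairs is absorbed by the margin between $k^{-6}$ and $k^{-3}$.
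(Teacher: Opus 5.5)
Your proposal is correct and follows essentially the same argument as the paper: charge each destroyed embedding to a changed cross-part pair, bound the disloyal embeddings through that pair by summing (S1) over fewer than $k^2$ ordered label pairs and the loyal ones by (S4), obtaining $2k^{-4}\E_k(n)/n^2\le k^{-3}\E_k(n)/n^2$ per pair, and then sum over the $M$ changed pairs. Your explicit check that every destroyed embedding contains a changed pair lying across two distinct parts, using (S2) and the untouched $\Gamma[W_j]$, is a detail the paper leaves implicit.
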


\begin{proof}
Fix a changed pair $\{w,w'\}$, where
$w\in W_j$, $w'\in W_{j'}$, and $j\ne j'$.  Then
$a_\Gamma(w,w')\ne a_H(j,j')$.

For each ordered pair of distinct vertices $u,v\in V(H)$,
Proposition~\ref{prop:structural-package}\textnormal{(S1)} bounds the
disloyal embeddings satisfying
$\vartheta(u)=w$ and $\vartheta(v)=w'$ by
$k^{-6}\E_k(n)/n^2$.  Summing over fewer than $k^2$ ordered pairs
gives at most $k^{-4}\E_k(n)/n^2$ disloyal embeddings containing
$\{w,w'\}$.  Part~\textnormal{(S4)} gives the same bound for loyal
embeddings.  Thus at most
$2k^{-4}\E_k(n)/n^2\le k^{-3}\E_k(n)/n^2$
embeddings are charged to one changed pair.  Summing over the $M$
changed pairs proves the lemma.
\end{proof}

The gain side is the next lemma. 
The next lemma plays the role of \cite[Lemma~4.18]{FSW}, but the
full-graph setting gives both a simpler proof and a stronger bound.
In \cite{FSW}, one must find a significant symmetry whose image
contains two prescribed labels.  Here every automorphism uses every
label, and the dominant automorphism found above works for every pair
of parts.

\begin{lemma}
\label{lem:every-pair-many-embeddings}
Let $j\ne j'$, $w\in W_j$, and $w'\in W_{j'}$.  Then at least $\frac12\frac{\E_k(n)}{n^2}$ embeddings $H\hookrightarrow\Gamma^*$ contain both $w$ and $w'$.
\end{lemma}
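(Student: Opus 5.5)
In $\Gamma^*$ the interpart adjacencies agree exactly with $H$. So every choice of one vertex from each part, placed according to the dominant automorphism $\phi_*$ of Lemma~\ref{lem:significant-automorphism}, is an embedding. Concretely, I will count the transversal maps
\[
\vartheta(y)\in W_{\phi_*(y)}\quad\text{for every }y\in V(H),
\]
with $\vartheta(\phi_*^{-1}(j))=w$ and $\vartheta(\phi_*^{-1}(j'))=w'$. Put $x:=\phi_*^{-1}(j)$ and $x':=\phi_*^{-1}(j')$; these are distinct because $j\ne j'$. Any such $\vartheta$ is injective, since the $W_\ell$ are pairwise disjoint by Proposition~\ref{prop:structural-package}(S2). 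It is also induced: for distinct $y,y'$ the images lie in the distinct parts $W_{\phi_*(y)}$ and $W_{\phi_*(y')}$, so
\[
a_{\Gamma^*}(\vartheta(y),\vartheta(y'))=a_H(\phi_*(y),\phi_*(y'))=a_H(y,y'),
\]
using $\phi_*\in\Aut(H)$. Hence the number of embeddings of $H$ into $\Gamma^*$ containing $w$ and $w'$ is at least
\[
\prod_{\ell\ne j,j'}n_\ell=\frac{\prod_{\ell}n_\ell}{n_jn_{j'}}.
\]

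The remaining task is to bound this product from below. By \eqref{eq:product-lower-bound}, $\prod_\ell n_\ell\ge N_{\phi_*}\ge\frac12\E_k(n)$. By Lemma~\ref{lem:part-size-bounds}, $n_jn_{j'}\le n^2$, and in fact $n_jn_{j'}\le k^{-8/5}n^2$. Either bound yields
\[
\prod_{\ell\ne j,j'}n_\ell\ge\frac{\E_k(n)}{2n^2},
\]
which is the claim; the trivial bound $n_jn_{j'}\le n^2$ already suffices.

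There is no real obstacle. The only point needing care is that the count is taken in $\Gamma^*$, not $\Gamma$, so the edges inside the parts are irrelevant and every transversal is automatically an induced copy. Note also that we use the product of all part sizes, not $N_{\phi_*}$ itself, so no regulation properties of the old embeddings enter.
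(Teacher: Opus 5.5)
Your proof is correct and is essentially the paper's argument. Both fix $\vartheta(\phi_*^{-1}(j))=w$ and $\vartheta(\phi_*^{-1}(j'))=w'$, note that every $\phi_*$-transversal map is an induced embedding in $\Gamma^*$ because the interpart adjacencies have been corrected, and bound $\prod_\ell n_\ell/(n_jn_{j'})\ge N_{\phi_*}/n^2\ge\frac12\E_k(n)/n^2$; the sharper bound $n_jn_{j'}\le k^{-8/5}n^2$ you mention is valid but unnecessary, as you say.
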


\begin{proof} Let $\phi_*$ be the dominant automorphism guaranteed by
Lemma~\ref{lem:significant-automorphism}.
Let $u:=\phi_*^{-1}(j)$ and
$v:=\phi_*^{-1}(j')$.  Every map satisfying
$\vartheta(y)\in W_{\phi_*(y)}$ for all $y\in V(H)$ is an induced
embedding in $\Gamma^*$, because the corrected adjacency between
every two different parts agrees with $H$.

After fixing $\vartheta(u)=w$ and
$\vartheta(v)=w'$, the number of such maps is
\[
\prod_{y\in V(H)\setminus\{u,v\}}n_{\phi_*(y)}
=
\frac{\prod_{\ell\in V(H)}n_\ell}{n_jn_{j'}}
\ge
\frac{N_{\phi_*}}{n_jn_{j'}}
\ge
\frac12\frac{\E_k(n)}{n^2},
\]
where we used \eqref{eq:dominant-automorphism} and
$n_j,n_{j'}\le n$.
\end{proof}

\begin{lemma}\label{lem:Gammastar}
    $\Gamma = \Gamma^*$. In particular,  $\Gamma$ has the vertex partition $W_1 \cup \dots W_k$ where for each $1 \leq i < j \leq k$, it is complete between $W_i, W_j$ if $(i,j) \in E(H)$, and it is empty between $W_i, W_j$ if $(i, j) \notin E(H)$. 
\end{lemma}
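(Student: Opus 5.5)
We compare $\emb(H,\Gamma^*)$ with $\emb(H,\Gamma)$ and argue that $M=0$. Every embedding of $H$ into $\Gamma$ is either destroyed by the correction or survives it. By Lemma~\ref{lem:lost-under-correction}, at most $Mk^{-3}\E_k(n)/n^2$ embeddings are destroyed.

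For the gain, consider the changed pairs. For each changed unordered pair $\{w,w'\}$, with $w\in W_j$ and $w'\in W_{j'}$ and $j\ne j'$, we have $a_\Gamma(w,w')\ne a_H(j,j')=a_{\Gamma^*}(w,w')$. Now take any embedding $\vartheta:H\hookrightarrow\Gamma^*$ that contains both $w$ and $w'$ and follows $\phi_*$, i.e.\ $\vartheta(y)\in W_{\phi_*(y)}$ for every $y$. Such a $\vartheta$ sends $u=\phi_*^{-1}(j)$ to $w$ and $v=\phi_*^{-1}(j')$ to $w'$. So it requires the pair $\{w,w'\}$ to have the adjacency $a_H(u,v)=a_H(j,j')$, which $\Gamma$ does not have. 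Hence $\vartheta$ is not an embedding into $\Gamma$: it is newly created.

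By Lemma~\ref{lem:every-pair-many-embeddings}, each changed pair lies in at least $\tfrac12\E_k(n)/n^2$ such $\phi_*$-following embeddings of $\Gamma^*$. The proof of that lemma produces exactly these $\phi_*$-regulated maps, so we use those.

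A single new embedding may contain several changed pairs, so we must avoid overcounting. Charging each new embedding to the changed pairs it contains, each embedding is charged at most $\binom{k}{2}\le k^2$ times. Therefore the number of created embeddings is at least
\[
\frac{M}{k^2}\cdot\frac12\frac{\E_k(n)}{n^2}.
\]
This exceeds the loss $Mk^{-3}\E_k(n)/n^2$ as soon as $k>2$ and $M\ge1$.

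Putting the two counts together,
\[
\emb(H,\Gamma^*)-\emb(H,\Gamma)\ \ge\ M\left(\tfrac{1}{2k^2}-\tfrac1{k^3}\right)\frac{\E_k(n)}{n^2},
\]
which is strictly positive whenever $M\ge1$. This contradicts the extremality $\emb(H,\Gamma)=\emb(H,n)$, since $\Gamma^*$ also has $n$ vertices. Hence $M=0$ and $\Gamma=\Gamma^*$.

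The stated structure then follows directly. The sets $W_j$ partition $V(\Gamma)$ by Proposition~\ref{prop:structural-package}(S2) and are nonempty by Lemma~\ref{lem:part-size-bounds}. In $\Gamma^*$, every interpart adjacency equals $a_H(j,j')$ by construction, so $\Gamma$ is a blow-up of $H$ with parts $W_j$.

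The only delicate step is the overcounting in the gain. It is resolved by dividing by $k^2$, which is affordable because the loss bound carries the factor $k^{-3}$.
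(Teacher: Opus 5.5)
Your proof is correct and follows essentially the same route as the paper. Like the paper, you bound the loss by Lemma~\ref{lem:lost-under-correction}, obtain the gain from the $\phi_*$-regulated maps of Lemma~\ref{lem:every-pair-many-embeddings}, and double-count incidences between changed pairs and new embeddings; your cruder bound $\binom{k}{2}\le k^2$, in place of the paper's $\binom{k}{2}<k^2/2$, still beats the $k^{-3}$ loss for $k>2$.
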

\begin{proof}
The proof essentially follows the idea in \cite[Corollary 4.19]{FSW}. 
Every changed pair belongs to at least
$\frac12\E_k(n)/n^2$ embeddings of $\Gamma^*$ supplied by
Lemma~\ref{lem:every-pair-many-embeddings}.  None of these maps is an
embedding in $\Gamma$, because it contains a pair whose adjacency
was changed.

Counting incidences between changed pairs and new embeddings, and
using that an embedding contains at most $\binom{k}{2}<k^2/2$
unordered pairs, shows that the correction creates at least $M k^{-2}\E_k(n)/n^2$ new embeddings.  Lemma~\ref{lem:lost-under-correction} shows that it
destroys at most
$Mk^{-3}\E_k(n)/n^2$ embeddings.  If $M>0$, then the number of new
embeddings is strictly larger than the number destroyed, contrary to
the extremality of $\Gamma$.  Hence $M=0$.
\end{proof}

\subsection{The objective for a full-graph blow-up}
By Lemma~\ref{lem:embedding-classification}, every embedding either lies entirely in one part, or it uses all top-level parts
according to an automorphism. Every automorphism contributes the same
product, since it merely permutes the $k$ labels.

Once all interpart adjacencies are correct, the remaining problem is in some sense 
no longer structural but numerical. 

Let
\[
F(m):=\emb(H,m).
\]
Then
\begin{equation}
\label{eq:full-blowup-objective}
\emb(H,\Gamma)
=
g\prod_{j\in V(H)}n_j
+
\sum_{j\in V(H)}\emb(H,\Gamma[W_j]).
\end{equation}
Each graph $\Gamma[W_j]$ must itself be extremal on \(n_j\) vertices;
otherwise replacing it by a better graph of the same order would
increase the second term without changing the first.  Consequently,
\begin{equation}
\label{eq:extremal-recurrence}
F(n)
=
g\prod_{j\in V(H)}n_j
+
\sum_{j\in V(H)}F(n_j).
\end{equation}
The identity \eqref{eq:extremal-recurrence} is the point at which the
full-pattern problem becomes simpler than the deleted-pattern
problem treated in \cite{FSW}: the entire top-level contribution is one product.

\subsection{Balancing the parts}
\label{subsec:balancing}

We now prove that the top-level parts are balanced. 

We need one standard estimate for the discrete derivative of the
extremal function.  Its proof uses the Pippenger--Golumbic
inequalities and is deferred to Appendix~\ref{app:global-closing-estimates}.

\begin{lemma}
\label{lem:extremal-difference-estimate}
For integers $A,B$ with $A\ge B+2$ and $B\ge1$,
\begin{equation}
\label{eq:telescoped-difference}
\begin{aligned}
\bigl(F(A)-F(A-1)\bigr) - \bigl(F(B+1)-F(B)\bigr) 
\le 4(A-B)k^{-k+4}A^{k-2}.
\end{aligned}
\end{equation}
\end{lemma}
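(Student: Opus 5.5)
The plan is to combine two elementary Pippenger--Golumbic inequalities, one for deleting a vertex and one for cloning a vertex, with the crude upper bound \eqref{eq:F-upper-preliminary}. That bound holds for every order $m\ge k$, since $n$ was arbitrary in its derivation, and it holds trivially when $m<k$ because then $F(m)=0$. Write $F(m)=\emb(H,m)$ throughout.

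\emph{Step 1: the deletion inequality.} Let $G$ be an extremal graph on $A$ vertices. Each embedding uses $k$ vertices, so summing over vertices shows that some vertex lies in at most $kF(A)/A$ embeddings. Deleting that vertex gives
\[
F(A)-F(A-1)\le \frac{k}{A}F(A).
\]

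\emph{Step 2: the cloning inequality.} Let $G$ be extremal on $B$ vertices, and let $v$ be a vertex lying in the largest number of embeddings, which is at least $kF(B)/B$. Add a twin $v'$ of $v$, with the same neighbourhood and $v'$ not adjacent to $v$. Every embedding through $v$, with $v$ replaced by $v'$, is a new embedding. Hence
\[
F(B+1)-F(B)\ge \frac{k}{B}F(B).
\]

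\emph{Step 3: reduce to the growth of $F(m)/m$.} Combining Steps 1 and 2, the left side of \eqref{eq:telescoped-difference} is at most $k\bigl(F(A)/A-F(B)/B\bigr)$. I will telescope this over $m=B,\dots,A-1$. Using $F(m)\ge0$ and then Step 1 at order $m+1$,
\[
\frac{F(m+1)}{m+1}-\frac{F(m)}{m}\le\frac{F(m+1)-F(m)}{m+1}\le \frac{kF(m+1)}{(m+1)^2}.
\]

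\emph{Step 4: insert the upper bound.} By \eqref{eq:F-upper-preliminary}, $F(m+1)\le 2k^{-k+2}(m+1)^k$. Each step of the telescoping sum is therefore at most $2k^{-k+3}(m+1)^{k-2}\le 2k^{-k+3}A^{k-2}$. Summing the $A-B$ steps and multiplying by $k$ gives
\[
2(A-B)k^{-k+4}A^{k-2},
\]
which is within the claimed bound.

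\emph{Main obstacle.} The only delicate point is justifying the use of \eqref{eq:F-upper-preliminary} at every intermediate order $m+1\le A$. That estimate was derived for an arbitrary extremal host of order at least $k$, with the threshold on $k$ depending only on $q$ and $c_0$, so it applies uniformly in $m$. Orders below $k$ contribute zero. If that uniformity were in doubt, I would instead use the Pippenger--Golumbic monotonicity, namely that $F(m)/\binom{m}{k}$ is nonincreasing, together with the bound at a single large order.
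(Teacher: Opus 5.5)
Your argument is correct. It uses the same ingredients as the paper: the Pippenger--Golumbic deletion and cloning inequalities, together with the crude bound \eqref{eq:F-upper-preliminary}. What differs is how you organize them.

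The paper writes $\Delta_m:=F(m)-F(m-1)$ and argues as follows.
\begin{itemize}
\item It first uses both inequalities to show that $\Delta_m$ is nondecreasing.
\item It deduces $F(m)-F(m-2)\le 2\Delta_m\le \frac{2k}{m}F(m)$.
\item It then bounds each second difference: $\Delta_m-\Delta_{m-1}\le \frac{k}{m}\bigl(F(m)-F(m-2)\bigr)\le 4k^{-k+4}m^{k-2}$.
\item Finally it telescopes these over $\ell=B+2,\dots,A$.
\end{itemize}

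You apply the deletion inequality at $A$ and the cloning inequality at $B$ only once. This reduces the left side to $k\bigl(F(A)/A-F(B)/B\bigr)$. You then telescope $F(m)/m$, using only the deletion inequality at each intermediate order. Your route avoids the convexity step and gives constant $2$ instead of $4$. The paper's route gives a pointwise bound on the second differences of $F$ and the discrete convexity of $F$. That extra information is stronger than what is needed, but it is not used outside this lemma.

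Your stated ``main obstacle'' is not an obstacle. \eqref{eq:F-upper-preliminary} was proved for an arbitrary extremal host of any order $n\ge k$, with thresholds depending only on $q$ and $c_0$. The paper's proof applies it at every intermediate order in exactly the same way you do, so the fallback is unnecessary.

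The fallback is also phrased in the wrong direction. Monotonicity of $F(m)/\binom{m}{k}$ carries an upper bound from a smaller order to larger ones. It cannot carry a bound from a single large order down to smaller ones.
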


\begin{lemma}
\label{lem:balanced-top-level}
The part sizes $n_j$, $j\in V(H)$, differ by at most one.
\end{lemma}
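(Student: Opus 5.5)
Suppose, to the contrary, that two parts have sizes differing by at least two; say $A:=n_{j_1}\ge B+2$ where $B:=n_{j_2}$. We will show that moving one vertex from the larger part to the smaller part strictly increases $\emb(H,\cdot)$. This contradicts \eqref{eq:extremal-recurrence} together with the extremality of $\Gamma$.

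The competitor has top-level part sizes $n'_{j_1}=A-1$ and $n'_{j_2}=B+1$, all other sizes unchanged, correct interpart adjacencies, and an extremal graph inside each part. Its embedding count is therefore at least $g\prod_j n'_j+\sum_j F(n'_j)$. By \eqref{eq:extremal-recurrence}, the gain over $F(n)$ equals
\[
g\,P\bigl((A-1)(B+1)-AB\bigr)-\Bigl(\bigl(F(A)-F(A-1)\bigr)-\bigl(F(B+1)-F(B)\bigr)\Bigr),
\]
where $P:=\prod_{j\ne j_1,j_2}n_j$. The first term equals $gP(A-B-1)\ge gP(A-B)/2$ because $A-B\ge2$. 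Lemma~\ref{lem:extremal-difference-estimate} bounds the second term by $4(A-B)k^{-k+4}A^{k-2}$ (note $B\ge1$ by Lemma~\ref{lem:part-size-bounds}). Both terms carry the factor $A-B$, so it suffices to prove
\[
gP> 8k^{-k+4}A^{k-2}.
\]

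To bound $P$ from below we use \eqref{eq:product-lower-bound}: $P\,AB\ge\frac12\E_k(n)$. Since $A,B\le k^{-4/5}n$ by Lemma~\ref{lem:part-size-bounds}, this gives $P\ge \E_k(n)/(2k^{-8/5}n^2)$. Using $A\le k^{-4/5}n$ once more, the right-hand side is at most $8k^{-k+4}k^{-4(k-2)/5}n^{k-2}$. With $\E_k(n)\ge (n/k)^k e^{-O(1)}$, or more crudely $\E_k(n)\ge (n/(2k))^k$ since $n\ge k$, the required inequality becomes roughly
\[
k^{-k+8/5}2^{-k}n^{k-2}\gtrsim k^{-k+4-4(k-2)/5}n^{k-2}.
\]
This holds for large $k$, because the right-hand side carries the extra factor $k^{-4(k-2)/5}$, which beats $2^k k^{O(1)}$. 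The main obstacle is exactly this exponent bookkeeping. The crude bound $A\le n$ would fail, since $k^{-k+4}n^{k-2}$ versus $k^{-k}n^{k-2}$ loses a factor $k^4$. The $k^{-4/5}$ saving from Lemma~\ref{lem:part-size-bounds} is what rescues the comparison, and I would check carefully that the lower bound for $\E_k(n)$ used is valid for all $n\ge k$, for instance via $\E_k(n)\ge\lfloor n/k\rfloor^k\ge (n/(2k))^k$.

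Once every pair of parts differs by at most one, the statement follows.
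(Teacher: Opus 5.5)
Your argument is correct and essentially the paper's proof: you move one vertex from a larger part to a smaller one, bound the top-level gain below by $gP(A-B)/2$ using \eqref{eq:product-lower-bound} and Lemma~\ref{lem:part-size-bounds}, and bound the internal loss above by Lemma~\ref{lem:extremal-difference-estimate} with $A\le k^{-4/5}n$. The only difference is cosmetic: the paper takes the largest and smallest parts and uses $n_2\le n/k$, gaining $k^{9/5}$ instead of your $k^{8/5}$, but as you note the factor $k^{-4(k-2)/5}$ dominates in either case.
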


\begin{proof}
Suppose otherwise.  Relabel the parts so that
$n_1$ is largest, $n_2$ is smallest, and
$d:=n_1-n_2\ge2$.  We make a local adjustment. Construct an exact blow-up $\Gamma'$ of $H$ by moving one vertex from
the largest part to the smallest part.  Thus its part sizes are
$n_1-1, n_2+1, n_3,\ldots,n_k,$
and place an extremal graph of the appropriate order inside each
part.

The top-level term in \eqref{eq:extremal-recurrence} increases by $g\bigl((n_1-1)(n_2+1)-n_1n_2\bigr)
 \prod_{j\notin\{1,2\}}n_j
= g(d-1)\prod_{j\notin\{1,2\}}n_j.$
Since $d\ge2$, this is at least
$\frac{gd}{2}\prod_{j\notin\{1,2\}}n_j.$

Now $n_2\le n/k$, while
Lemma~\ref{lem:part-size-bounds} gives
$n_1\le k^{-4/5}n$.  Together with
\eqref{eq:product-lower-bound}, this yields
\[
\prod_{j\notin\{1,2\}}n_j
=
\frac{\prod_jn_j}{n_1n_2}
\ge
\frac12 k^{9/5}\frac{\E_k(n)}{n^2}.
\]
Thus the gain in the top-level term is at least
\begin{equation}
\label{eq:gain-lower-bound}
\frac14gd\,k^{9/5}\frac{\E_k(n)}{n^2}.
\end{equation}

The possible loss in the two internal terms is
\[
L_{\mathrm{int}}
:=
F(n_1)-F(n_1-1)-F(n_2+1)+F(n_2).
\]
By Lemma~\ref{lem:extremal-difference-estimate} and
Lemma~\ref{lem:part-size-bounds},
\begin{equation}
\label{eq:internal-loss}
L_{\mathrm{int}}
\le
4d\,k^{-k+4}n_1^{k-2}
\le
4d\,k^{-k+4}(k^{-4/5}n)^{k-2}.
\end{equation}

Since $n\ge k$, every factor in a balanced $k$-fold product is at
least $n/(2k)$, and hence
$\E_k(n)\ge(n/(2k))^k$.  Comparing
\eqref{eq:internal-loss} with
\eqref{eq:gain-lower-bound}, the ratio of the possible loss to the
guaranteed gain is at most $\frac{16}{g}\,2^k k^{19/5-4k/5},$ which tends to zero as $k\to\infty$.  For all sufficiently large
$k$, the gain is therefore larger than the loss.  By
\eqref{eq:extremal-recurrence}, the graph $\Gamma'$ then has more
embeddings than $\Gamma$, a contradiction.
\end{proof}

We have proved that $\Gamma$ is a balanced blow-up of $H$.  Each
graph $\Gamma[W_j]$ is extremal on $n_j<n$ vertices, since all
$k\ge2$ parts are nonempty.  Induction on $n$ shows that every
$\Gamma[W_j]$ is a balanced iterated blow-up of $H$.  This proves
Proposition~\ref{prop:full-fsw}.

\addtocontents{toc}{\protect\addvspace{0.5em}}
\appendix

\section{Verification of the parameter choice}
\label{app:parameter-choice}

\begin{proof}[Proof of Lemma~\ref{lem:parameters}]
We verify the assertions in order.

First, clearly
$ \frac{\epsilon_3}{\epsilon_2}
 =\frac{1}{10^4\log k},
 \frac{\epsilon_4}{\epsilon_3}
 =\frac q{100},
 \frac{\epsilon_5}{\epsilon_4}
 =\frac{100}{qa_4}\frac{(\log k)^3}{\sqrt k}.$
The first and third ratios tend to zero, while the second is less
than one.  Thus
$\epsilon_5<\epsilon_4<\epsilon_3\le\epsilon_2/2$
for all sufficiently large $k$.  Since $\epsilon_2\to0$ and
$\epsilon_1=q/3$, we also have
$\epsilon_2<\epsilon_1<q/2$.  This proves
\textnormal{(P1)}, including the stated upper bounds by $1/100$.

For \textnormal{(P2)}, we have
\[
 \epsilon_2\log(1/\epsilon_2)
 =
 \frac{a_2}{\sqrt k}
 \left(\frac12\log k+\log(1/a_2)\right)
 \to 0.
\]
Since $(\log2)\epsilon_1/8$ is a fixed positive constant, this gives
the first inequality.  Moreover,
\[
 \frac{\epsilon_3\log(1/\epsilon_3)}{\epsilon_2}
 =
 \frac{a_3}{a_2\log k}
 \left(
   \frac12\log k+\log\log k+\log(1/a_3)
 \right)
 \to
 \frac{1}{2\cdot10^4},
\]
which is smaller than $1/100$.  This proves the second inequality.

Since $a_2\le c/4$, we have
$\epsilon_2\le c/(4\sqrt k)<\delta$ whenever
$\delta\ge c/\sqrt k$, proving \textnormal{(P3)}.
Also,
$\epsilon_4/\epsilon_3=q/100<q/20$, proving
\textnormal{(P4)}.

For \textnormal{(P5)}, the definitions give
$ \epsilon_2k=a_2\sqrt k,
 \epsilon_3k=\frac{a_3\sqrt k}{\log k}, 
 \epsilon_4k=\frac{a_4\sqrt k}{\log k}.$
Each of these quantities dominates $(\log k)^2$ as $k\to\infty$.
Also,
$\epsilon_5k=(100/q)(\log k)^2$, and
$\epsilon_5<q/10^4$ for all sufficiently large $k$. Also, $\frac{\epsilon_1k}{(\log k)^2} = \frac q3\frac{k}{(\log k)^2}
\to\infty.$
The remaining inequalities in \textnormal{(P5)} now follow directly.

For \textnormal{(P6)}, direct calculation gives
$ \frac{2e^4\gamma_1}{\beta_1}=
 \frac{144e^4}{qa_3}
 \frac{(\log k)^3}{\sqrt k}, 
 \frac{\beta_1k}{2} =
 \frac{a_3\sqrt k}{6\log k},
 \frac{2e^4\gamma_2}{\beta_2} =
 \frac{400e^4}{qa_4}
 \frac{(\log k)^3}{\sqrt k},
 \frac{\beta_2k}{2} =
 \frac{a_4\sqrt k}{4\log k}.$
Thus both bases tend to zero, while both exponents have order
$\sqrt k/\log k$.

We use the following elementary estimate.  For fixed positive
constants $B,b,C,M$,
\[
 \exp\bigl(C(\log k)^2\bigr)
 \left(
   B\frac{(\log k)^3}{\sqrt k}
 \right)^{b\sqrt k/\log k}
 \le k^{-M}
\]
for all sufficiently large $k$.  Indeed,
$ \log\left(
 B\frac{(\log k)^3}{\sqrt k}
 \right)
 \le-\frac14\log k$ for all sufficiently large $k$.  The logarithm of the left-hand side
is therefore at most
$C(\log k)^2-b\sqrt k/4$, which is at most
$-M\log k$ for sufficiently large $k$.
Applying this estimate to the two pairs above proves
\eqref{eq:first-tail} and \eqref{eq:second-tail}.

The same formulas show that each $\beta_i k$ is a positive constant,
depending only on $q,c$, times $\sqrt k/\log k$.  Since
$\sqrt k/(\log k)^2\to\infty$, we have
$\beta_i k\ge(10/q)\log k+2$ for $i=1,2$ and all sufficiently
large $k$.  This completes the proof of \textnormal{(P6)}.

Finally, the first and third inequalities in
\textnormal{(P7)} hold for all sufficiently large $k$, and the
second follows from \textnormal{(P5)}.  Moreover, $\frac{9e^4\epsilon_5}{q} = \frac{900e^4(\log k)^2}{q^2k}
 \le k^{-1/2}$ for all sufficiently large $k$.  Hence $ \left(\frac{9e^4\epsilon_5}{q}\right)^{qk/3}
 \le
 \exp\left(-\frac{qk\log k}{6}\right)
 \le k^{-9}.$
This proves \textnormal{(P7)}.
\end{proof}

\section{Technical details for the local stability argument}
\label{app:inherited-local-stability}
This appendix records, in the notation of the present paper, the
deterministic lemmas from \cite{FSW} used in the local stability
argument.  We omit the proofs of results quoted directly from
\cite{FSW}; brief explanations of their main ideas are given at the
corresponding points in the main text.  When a statement is
reformulated as a full-graph specialization, we indicate why the
proof in \cite{FSW} continues to apply.  All hypotheses needed in
the present setting are verified where the result is used.

All notation in this appendix is local to the statement in which it
appears.  We use $F$ and $G$ for the pattern and host graphs, $m$ and
$N$ for their orders, $\rho$ for the separation parameter,
$\eta_i$ for the error parameters, $T$ for a distinguished set,
$C_i'$ and $C_i$ for candidate and cleaned classes, and $\Psi$ for a
symmetry family.  Signatures, super-signatures, and distinguishing
sets are understood with $F$ in place of the standing graph $H$; in
particular, $\Delta_F(u,v)$ denotes the vertices of $F$ adjacent to
exactly one of $u,v$.  Each application in the main text explicitly
identifies these local variables with the standing objects.

\subsection{Cleaning lemma}
\label{app:cleaning-package}

The next statement is essentially
\cite[Lemmas~6.1--6.4]{FSW}.

\begin{lemma}\cite[Lemmas~6.1--6.4]{FSW}
\label{lem:deterministic-cleaning}
Let $F$ be a graph on $m$ vertices, let $0<\rho<1$, and let
$T\subseteq V(F)$ be a signature of size
$t\le(5/\rho)\log m$.  Let $G$ be an $N$-vertex graph with
$N\ge m$, and fix a map $\xi:T\to V(G)$.  For each
$i\in V(F)\setminus T$, let $C_i'$ be the set of possible images of
$i$ among embeddings $F\hookrightarrow G$ extending $\xi$.
Assume the following.
\begin{enumerate}[label=\textnormal{(C\arabic*)},leftmargin=3em]
\item The sets $C_i'$, $i\in V(F)\setminus T$, are pairwise disjoint.
\item At least $2\E_m(N)/N^t$ embeddings extend $\xi$.
\item There is $0<\eta_5<1$ such that
$(3+\log m)(t+1)\le(12/\rho)(\log m)^2\le\eta_5m/2$ and
$(3+\log m)t/\log2\le(20/\rho)(\log m)^2$.
\end{enumerate}
Apply Definition~\ref{def:bad-pairs} to $F,G,T,(C_i')$ with
threshold $\eta_5$, and denote the resulting cleaned classes and
exceptional set by $C_i$ and $C_{\mathrm{rest}}$.  Then:
\begin{enumerate}[label=\textnormal{(\roman*)},leftmargin=2.5em]
\item If $i\in V(F)\setminus T$ and $u\in C_i'\setminus C_i$, then
at most $\E_m(N)/N^{t+1}$ embeddings extend $\xi$ and map $i$ to
$u$.
\item At least $\E_m(N)/N^t$ embeddings extending $\xi$ map every
$i\in V(F)\setminus T$ into $C_i$.  In particular, every $C_i$ is
nonempty and
\[
 \prod_{i\in V(F)\setminus T}|C_i|\ge\frac{\E_m(N)}{N^t}.
\]
\item For every $i\in V(F)\setminus T$,
$|C_i|+|C_{\mathrm{rest}}|\le(24/\rho)(\log m)^2N/m$.
\item If $i_1,\ldots,i_r,j_1,\ldots,j_r$ are distinct elements of
$V(F)\setminus T$ and
$m_G(C_{i_\ell},C_{j_\ell})\ne a_F(i_\ell,j_\ell)$ for every
$1\le\ell\le r$, then $r\le(20/\rho)(\log m)^2$.
\end{enumerate}
\end{lemma}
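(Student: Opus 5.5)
We would prove the four conclusions in the order (i), (ii), (iii), (iv), since each later one uses the earlier ones. Throughout, $\Xi$ denotes the family of embeddings $F\hookrightarrow G$ extending $\xi$.

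\emph{Step (i).} By definition, a deleted vertex $u\in C_i'\setminus C_i$ has at least $\eta_5N$ bad partners $v\in C_j'$. Fix $\vartheta(i)=u$ and reveal the images of the other labels in $V(F)\setminus T$. Since the $C_j'$ are disjoint by (C1), the bad partners of $u$ are spread over the classes $C_j'$. Write $b_j$ for the number of bad partners of $u$ in $C_j'$. Each such bad partner cannot be the image of $j$, because $a_G(u,v)\ne a_F(i,j)$ would contradict inducedness. Hence the admissible images of label $j$ lie in $C_j'$ minus its $b_j$ bad vertices. These shrunken sets are disjoint and have total size at most $N-\eta_5N-1$. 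Lemma~\ref{lem:balanced-product-tools}(iii), with $\ell=m$, $\ell'=m-t-1$ and $\mu=\eta_5$, then bounds the count by
\[
e^{3(t+1)-\eta_5m/2}(m/N)^{t+1}\E_m(N).
\]
By (C3), $3(t+1)+(t+1)\log m\le\eta_5m/2$, so this is at most $\E_m(N)/N^{t+1}$.

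\emph{Step (ii).} Summing (i) over all labels $i$ and all deleted vertices $u$ (at most $N$ choices of $u$ per label) shows that at most $m\E_m(N)/N^{t+1}\le\E_m(N)/N^t$ embeddings in $\Xi$ use a deleted vertex, using $N\ge m$. Subtracting this from the count in (C2) gives (ii). The product bound follows from Lemma~\ref{lem:fixed-signature-extension}, or directly from disjointness.

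\emph{Step (iii).} First combine (ii) with Lemma~\ref{lem:balanced-product-tools}. Suppose some $C_i$ were large, or $C_{\mathrm{rest}}$ were large. The remaining classes would then have to fit into the leftover vertices, and Lemma~\ref{lem:balanced-product-tools}(iii) gives a product smaller than $\E_m(N)/N^t$, a contradiction. The specific issue is how $|C_i|$ alone interacts with the product. The product bound is $\prod|C_j|\le|C_i|\cdot\E_{m-t-1}(N-|C_i|-|C_{\mathrm{rest}}|)$. Take $\mu=(|C_i|+|C_{\mathrm{rest}}|)/N$ and use $|C_i|\le N$. This yields
\[
\E_m(N)/N^t\le N e^{3(t+1)-\mu m/2}(m/N)^{t+1}\E_m(N),
\]
so $\mu m/2\le(3+\log m)(t+1)$. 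By (C3) this is at most $(12/\rho)(\log m)^2$, giving exactly the claimed $(24/\rho)(\log m)^2N/m$.

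\emph{Step (iv).} This is where we expect the main obstacle, the entropy argument. Take $r$ disjoint pairs with the wrong majority. Each pair of classes $(C_{i_\ell},C_{j_\ell})$ has at least half of its pairs of the wrong adjacency for the pattern. An embedding counted in (ii) must pick its images of the pair from the correct half, so it uses at most $\tfrac12|C_{i_\ell}||C_{j_\ell}|$ choices for that pair. Independence across the disjoint pairs gives
\[
\E_m(N)/N^t\le 2^{-r}\prod|C_j|.
\]
Comparing with $\prod|C_j|\le\E_{m-t}(N)\le e^{3t}(m/N)^t\E_m(N)$ from Lemma~\ref{lem:balanced-product-tools}(iii) gives $r\log 2\le(3+\log m)t$. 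By (C3) this yields $r\le(20/\rho)(\log m)^2$. The subtle point is that the independence claim needs the embeddings counted in (ii) to range over the full product of the cleaned classes, not merely to be bounded by it. We would therefore bound the number of embeddings directly by $\prod_\ell\tfrac12|C_{i_\ell}||C_{j_\ell}|\cdot\prod_{\text{others}}|C_j|$. This is valid because the images are forced into the disjoint classes $C_j$, with the pairwise adjacency constraint applied pair by pair.
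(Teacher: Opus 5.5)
Your steps (i), (iii) and (iv) are correct and follow the route the paper indicates. In (i), the admissible images of the other labels lie in the disjoint sets obtained from the $C_j'$ by removing the bad partners of $u$, and Lemma~\ref{lem:balanced-product-tools}(iii) with $\mu=\eta_5$, together with (C3), gives $\E_m(N)/N^{t+1}$. In (iii) you compare the lower bound $\prod_i|C_i|\ge\E_m(N)/N^t$ from (ii) with $|C_i|\,\E_{m-t-1}(N-|C_i|-|C_{\mathrm{rest}}|)$, and in (iv) with $2^{-r}\E_{m-t}(N)$; in both cases the arithmetic checks out against (C3). In (iv), your worry that the embeddings must ``range over the full product'' is unfounded. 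The direct upper bound by the restricted product, which you then give, is all that is needed.

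The gap is the union bound in step (ii). Part (i) bounds the embeddings through a single pair $(i,u)$ with $u\in C_i'\setminus C_i$. You sum over at most $m$ labels and at most $N$ deleted vertices per label, which gives $mN\cdot\E_m(N)/N^{t+1}=m\,\E_m(N)/N^{t}$, not $m\,\E_m(N)/N^{t+1}$. This exceeds by a factor of $m$ the margin $\E_m(N)/N^t$ that (C2) leaves. So the subtraction proves nothing, and (iii) and (iv), which both rest on (ii), are left unsupported.

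The missing ingredient is (C1). Because the $C_i'$ are pairwise disjoint, so are the deleted sets $C_i'\setminus C_i$. Hence the total number of pairs $(i,u)$ with $u\in C_i'\setminus C_i$ is at most $N$. The union bound then gives at most $N\cdot\E_m(N)/N^{t+1}=\E_m(N)/N^t$ embeddings extending $\xi$ that send some label to a deleted vertex. Subtracting from (C2) yields exactly $\E_m(N)/N^t$, and no use of $N\ge m$ is needed. This is why the paper says (ii) uses the disjointness in (C1), and why (C2) carries the factor $2$. With this correction your argument is complete.

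A minor point: the product bound in (ii) needs neither Lemma~\ref{lem:fixed-signature-extension} nor disjointness. An embedding extending $\xi$ is determined by its values on $V(F)\setminus T$, which lie in $\prod_i C_i$.
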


\begin{proof}
This is by
\cite[Lemmas~6.1--6.4]{FSW}.  Part~\textnormal{(i)} uses only that $T\cup\{i\}$ is a signature,
Lemma~\ref{lem:fixed-signature-extension},
Lemma~\ref{lem:balanced-product-tools}\textnormal{(iii)}, and
condition~\textnormal{(C3)}.
Part~\textnormal{(ii)} uses condition~\textnormal{(C2)},
part~\textnormal{(i)}, and the disjointness in
condition~\textnormal{(C1)}.  The proofs of
parts~\textnormal{(iii)} and~\textnormal{(iv)} then use only
part~\textnormal{(ii)}, the disjointness of the cleaned classes,
majority adjacency, Lemma \ref{lem:balanced-product-tools}.  Thus the
proofs in \cite{FSW} apply under
\textnormal{(C1)}--\textnormal{(C3)}.
\end{proof}

\subsection{A deterministic fixed-symmetry estimate}
\label{app:fixed-automorphism-defect}

The following is the full-graph form of \cite[Lemma~5.2]{FSW}.  In
the cited proof, the fixed rotation or reflection is used only as an
adjacency-preserving bijection and may therefore be replaced by an
arbitrary fixed automorphism.

\begin{lemma}\cite[Lemma~5.2]{FSW}
\label{lem:deterministic-fixed-defect}
Fix $0<\rho<1$. 
Let $F$ be a graph on $m$ vertices satisfying
$|\Delta_F(u,v)|\ge\rho m$ for all distinct $u,v\in V(F)$.
Let $T\subseteq V(F)$ have size $t$, let $G$ be an $N$-vertex graph
with $N\ge m$, and let $(C_i)_{i\in V(F)\setminus T}$ be nonempty
pairwise disjoint subsets of $V(G)$.  Fix
$0<\eta_1,\eta_2<1$.  For $j\in V(F)$, let $D_j$ be the set of
vertices $w\in V(G)$ for which at least $(1-\eta_1)m$ indices
$i\in V(F)\setminus(T\cup\{j\})$ satisfy
$w\notin C_i$ and $m_G(w,C_i)=a_F(j,i)$.

Let $\sigma\in\Aut(F)$.  Loyalty is understood relative to
$T,(C_i)$, and error $\eta_2$ in the sense of
Definition~\ref{def:relative-loyalty}.  Suppose that
$\eta_2\le\min\{\rho/2,e^{-4}\}$, $\eta_2m\ge16$, and
$2\eta_2\log(1/\eta_2)<(\log2)\eta_1/4$.  Suppose also that
\begin{equation}
\label{eq:fixed-defect-intersection-assumption}
 (1-\eta_2)m-2+\eta_1m-t-2-m\ge\frac{\eta_1m}{2}
\end{equation}
and $e^6m^2\exp(- (\log2)\eta_1m/4)\le m^{-8}$.
If $a,a'\in V(F)$ are distinct and $b,b'\in V(G)$ satisfy
$b\notin D_{\sigma(a)}$, then at most
$m^{-8}\E_m(N)/N^2$ $\sigma$-loyal embeddings
$\theta:F\hookrightarrow G$ satisfy
$\theta(a)=b$ and $\theta(a')=b'$.
\end{lemma}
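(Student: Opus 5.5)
The plan is a direct entropy--energy count: fix which vertices behave loyally, show that a linear number of them are forced onto the minority side of their class, and use the signature property so that all images are confined to disjoint sets.

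\emph{Step 1: many forced vertices.} Fix $\sigma,a,a',b,b'$ with $b\notin D_{\sigma(a)}$. Then fewer than $(1-\eta_1)m$ indices $i\in V(F)\setminus(T\cup\{\sigma(a)\})$ satisfy both $b\notin C_i$ and $m_G(b,C_i)=a_F(\sigma(a),i)$. Since the $C_i$ are disjoint, $b$ lies in at most one $C_i$. Hence the set $I_{\rm bad}$ of indices $i\notin T\cup\{\sigma(a)\}$ with $b\notin C_i$ and $m_G(b,C_i)\ne a_F(\sigma(a),i)$ has size larger than $\eta_1m-t-2$.

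Now take a $\sigma$-loyal embedding $\theta$ with $\theta(a)=b$ and $\theta(a')=b'$. Let $Y$ be its loyal set, that is, the vertices $x$ with $\sigma(x)\notin T$ and $\theta(x)\in C_{\sigma(x)}$. Loyalty gives $|Y|\ge(1-\eta2)m$. Let $Z\subseteq Y\setminus\{a,a'\}$ consist of those $x$ with $\sigma(x)\in I_{\rm bad}$. By inclusion--exclusion and \eqref{eq:fixed-defect-intersection-assumption},
\[
|Z|\ge(1-\eta_2)m-2+|I_{\rm bad}|-m\ge\eta_1m/2 .
\]
For $x\in Z$, inducedness gives $a_G(b,\theta(x))=a_F(a,x)$. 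Since $\sigma\in\Aut(F)$, also $a_F(a,x)=a_F(\sigma(a),\sigma(x))\ne m_G(b,C_{\sigma(x)})$. So $\theta(x)$ lies on the minority side of $C_{\sigma(x)}$ with respect to $b$. Because $b\notin C_{\sigma(x)}$, the majority rule says this minority side has at most $|C_{\sigma(x)}|/2$ elements.

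\emph{Step 2: counting for fixed $Y$ and $Z$.} The set $Y$ determines $Z$ (given $b$). Since $|Y|\ge(1-\eta_2)m\ge(1-\rho/2)m$, $Y$ is large enough to serve as a signature by the argument of Lemma~\ref{lem:large-signature}. I would fix the images of $Y\setminus\{a,a'\}$ first:
\begin{itemize}
\item each $x\in Y\setminus Z$ has at most $|C_{\sigma(x)}|$ choices;
\item each $x\in Z$ has at most $|C_{\sigma(x)}|/2$ choices.
\end{itemize}
Once these are fixed, Lemma~\ref{lem:candidate-disjointness} shows that the remaining vertices of $V(F)\setminus(Y\cup\{a,a'\})$ have pairwise disjoint candidate sets. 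These candidate sets are also disjoint from the classes $C_{\sigma(x)}$ with $x\in Y$, because those classes receive a loyal image. So, apart from the fixed $b,b'$, all images lie in pairwise disjoint subsets of $V(G)$, and the count is at most $2^{-|Z|}$ times a product of $m-2$ disjoint set sizes with total at most $N$. Lemma~\ref{lem:balanced-product-tools}(i) and (iii) with $\ell'=m-2$ and $\mu=0$ bound this product by $e^6(m/N)^2\E_m(N)$. Thus each fixed $Y$ contributes at most
\[
2^{-\eta_1m/2}\,e^6m^2\,\E_m(N)/N^2 .
\]

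\emph{Step 3: summing over $Y$.} There are at most $\sum_{j\le\eta_2m}\binom{m}{j}\le\exp\bigl(\eta_2m\log(e/\eta_2)\bigr)$ choices of $Y$. Since $\eta_2\le e^{-4}$ and $\eta_2m\ge16$, this is at most $\exp(2\eta_2m\log(1/\eta_2))$. The hypothesis $2\eta_2\log(1/\eta_2)<(\log2)\eta_1/4$ turns this into at most $2^{\eta_1m/4}$, which absorbs half of the saving $2^{-\eta_1m/2}$. What remains is $e^6m^2\exp(-(\log2)\eta_1m/4)\,\E_m(N)/N^2$, and the final hypothesis bounds this by $m^{-8}\E_m(N)/N^2$.

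\emph{Main obstacle.} The delicate step is Step 2. One has to make sure the non-loyal vertices cost nothing beyond the balanced product, rather than a naive factor $N$ each, which would lose $m^{\eta_2m}$. The signature property of the large set $Y$ is exactly what handles this. I also need the disjointness of the candidate sets from the used classes, so that the $2^{-|Z|}$ saving is not swallowed when everything is merged into one balanced product.
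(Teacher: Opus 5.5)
Your proposal is correct and follows essentially the same route as the paper's sketch. The failure of $b\in D_{\sigma(a)}$ gives more than $\eta_1m-t-2$ wrong-majority classes, and the intersection assumption leaves at least $\eta_1m/2$ loyal coordinates, each costing a factor $1/2$; the large loyal set is a signature, which together with the one-vertex-per-class condition of loyalty merges all images into one product bounded by $\E_{m-2}(N)\le e^6(m/N)^2\E_m(N)$, and the $\eta_2$ hypotheses pay for the choice of loyal set.
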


\begin{proof}
The proof is the argument of \cite[Lemma~5.2]{FSW}, with the fixed
rotation or reflection replaced by $\sigma$.  The proof uses the
symmetry only as an adjacency-preserving bijection.

The failure of $b\in D_{\sigma(a)}$ gives at least
$\eta_1m-t-2$ candidate classes on which $b$ has the wrong majority
adjacency.  Loyalty supplies at least $(1-\eta_2)m-2$ coordinates
following $\sigma$, and
\eqref{eq:fixed-defect-intersection-assumption} leaves at least
$\eta_1m/2$ coordinates in the intersection.  Each such coordinate
contributes a factor at most $1/2$.  The assumptions on $\eta_2$
control the number of possible loyal coordinate sets and ensure that
the fixed coordinates form a signature.  Lemma  \ref{lem:balanced-product-tools} then give the stated bound.
\end{proof}

\subsection{Disloyal embeddings}
\label{app:disloyal-embeddings}
Throughout this subsection, let $F$ be a graph on $m$ vertices, let
$G$ be a host graph, let $T\subseteq V(F)$, and let
$\mathcal C=(C_i)_{i\in V(F)\setminus T}$ be a family of candidate
classes in $G$.  Additional assumptions on these objects will be
stated in the individual lemmas.

This subsection records the definitions and inherited estimates used
in Proposition~\ref{prop:few-disloyal}.  The robustification and
decomposition statements are the full-graph forms of
\cite[Lemmas~7.4--7.5]{FSW}; the collision estimate is the full-graph
form of \cite[Lemma~6.6]{FSW}; and the final entropy--energy estimate
comes from \cite[Lemmas~7.8--7.9]{FSW}.

\begin{definition}[Aligned map]
\label{def:aligned}
Let $A\subseteq V(F)$ and let $h:A\to V(F)\setminus T$ be injective.
An embedding $\theta:F\hookrightarrow G$ is \emph{$h$-aligned with
respect to $\mathcal C=(C_i)_{i\in V(F)\setminus T}$} if
$\theta(v)\in C_{h(v)}$ for every $v\in A$ and its image contains at
most one vertex from each $C_i$.
\end{definition}

\begin{definition}[Consistent map]
\label{def:consistent}
Let $A\subseteq V(F)$ and let $h:A\to V(F)\setminus T$ be injective.
A pair $\{u,v\}\subseteq A$ is \emph{$h$-consistent} if
$a_F(h(u),h(v))=a_F(u,v)$ and is \emph{$h$-inconsistent} otherwise.
Given $\eta_4>0$, the map $h$ is \emph{locally mostly consistent} if
every $u\in A$ belongs to at most $\eta_4m$ $h$-inconsistent pairs.
\end{definition}

\begin{definition}[Fancy maps]
\label{def:fancy}
Fix $\rho,\eta_3>0$.  Let $B\subseteq V(F)$ satisfy
$|B|\ge(1-\eta_3)m$, and let
$h:B\to V(F)\setminus T$ be injective.  We call $h$ \emph{fancy} if
there is a $(\rho/4)$-super-signature $R\subseteq B$ of $F$, with
$|R|\le(33/\rho)\log m$, such that every $v\in B\setminus R$
belongs to at most $(\rho/10)|R|$ $h$-inconsistent pairs
$\{v,r\}$ with $r\in R$.
\end{definition}

The following lemma will be used in the proof of
Lemma~\ref{lem:embedding-decomposition}.  After
alternatives~\textnormal{(I)}--\textnormal{(III)} have failed, it
converts the resulting locally mostly consistent candidate-class map
into a fancy map on a large subset.  It isolates the deterministic
pruning argument of \cite[Lemma~7.5]{FSW}.

\begin{lemma}\cite[Lemma~7.5]{FSW}
\label{lem:robustification}
Let $F$ be a graph on $m$ vertices, let $T\subseteq V(F)$, and let
$0<\rho,\eta_3,\eta_4<1$ satisfy
\[
\eta_4\le\frac{\rho\eta_3}{20}.
\]
Let $A\subseteq V(F)$ satisfy
$|A|\ge(1-\eta_3/2)m$, and let
$h:A\to V(F)\setminus T$ be injective and locally mostly consistent
with parameter $\eta_4$.  Suppose that $A$ contains a
$(\rho/4)$-super-signature $R$ of $F$ with
$|R|\le(33/\rho)\log m$.  Then there is a set $B\subseteq A$ with
$|B|\ge(1-\eta_3)m$ such that $h|_B$ is fancy.
\end{lemma}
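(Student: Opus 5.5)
The plan is a greedy pruning argument: remove the few vertices of $A$ that have too many inconsistencies with $R$, and use the local consistency bound plus double counting to show that only a small number of vertices are removed. Set $B:=A\setminus X$, where
\[
X:=\bigl\{v\in A\setminus R:\ v\text{ belongs to more than }(\rho/10)|R|\ h\text{-inconsistent pairs }\{v,r\}\text{ with }r\in R\bigr\}.
\]
Then $R\subseteq B$, since we never delete elements of $R$. Also $h|_B$ is injective, and every $v\in B\setminus R$ has at most $(\rho/10)|R|$ inconsistent partners in $R$ by construction. The super-signature property of $R$ depends only on $F$, so it is unaffected by restricting to $B$. Hence $h|_B$ is fancy as soon as $|B|\ge(1-\eta_3)m$, and it suffices to show $|X|\le\eta_3 m/2$, because $|A|\ge(1-\eta_3/2)m$.

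To bound $|X|$, we double count the $h$-inconsistent pairs $\{v,r\}$ with $v\in X$ and $r\in R$. On the one hand, each $v\in X$ contributes more than $(\rho/10)|R|$ such pairs, so the total exceeds $|X|(\rho/10)|R|$. On the other hand, $h$ is locally mostly consistent, so each $r\in R$ lies in at most $\eta_4 m$ inconsistent pairs, and the total is at most $|R|\eta_4 m$. Comparing the two,
\[
|X|<\frac{10\eta_4 m}{\rho}\le\frac{10}{\rho}\cdot\frac{\rho\eta_3}{20}\,m=\frac{\eta_3 m}{2},
\]
using the hypothesis $\eta_4\le\rho\eta_3/20$. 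Therefore $|B|\ge|A|-|X|\ge(1-\eta_3/2)m-\eta_3 m/2=(1-\eta_3)m$.

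Two details need care. First, the domain of $h|_B$ must still lie inside $V(F)$ and its codomain inside $V(F)\setminus T$, and $R$ must remain a subset of $B$; both hold by construction. Second, $R$ might be empty, in which case the definition of a super-signature fails; however, a super-signature is nonempty by Definition~\ref{def:super-signature}, so this cannot happen. There is no genuine obstacle here. The only point to check is that the constant $1/10$ in Definition~\ref{def:fancy} matches the hypothesis $\eta_4\le\rho\eta_3/20$, which the computation above confirms with the factor $2$ to spare that is needed for the $\eta_3 m/2$ budget.
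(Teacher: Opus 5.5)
Your proposal is correct and is the paper's argument: delete the vertices of $A\setminus R$ having more than $(\rho/10)|R|$ inconsistent partners in $R$. Local consistency bounds the inconsistent pairs meeting $R$ by $\eta_4 m|R|$, so at most $10\eta_4 m/\rho\le\eta_3 m/2$ vertices are removed, and the remaining set $B\supseteq R$ makes $h|_B$ fancy.
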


We omit the proof, which is the pruning argument of
\cite[Lemma~7.5]{FSW}. The main idea is: local consistency bounds the
number of $h$-inconsistent pairs joining $R$ to $A\setminus R$ by
$\eta_4m|R|$.  Consequently, at most
$\frac{\eta_4m|R|}{(\rho/10)|R|}
= \frac{10\eta_4}{\rho}m
\le
\frac{\eta_3m}{2}$ vertices of $A\setminus R$ have more than
$(\rho/10)|R|$ inconsistent partners in $R$.  Deleting these vertices
leaves a set $B$ of the required size, still containing $R$, and
$h|_B$ is fancy.

The following is essentially \cite[Lemma~7.4]{FSW}.  The
symmetry family enters only in the final extension step, isolated as
condition~\textnormal{(D2)}.

\begin{lemma}\cite[Lemma~7.4]{FSW}
\label{lem:embedding-decomposition}
Let $F$ be a graph on $m$ vertices, let $G$ be a graph, let
$T\subseteq V(F)$, and let
$\mathcal C=(C_i)_{i\in V(F)\setminus T}$ be a family of pairwise
disjoint subsets of $V(G)$.  Let
$C_{\mathrm{rest}}:=V(G)\setminus\bigcup_{i\in V(F)\setminus T}C_i$.
Let $\Psi\subseteq\Aut(F)$, and let
$0<\rho,\eta_2,\eta_3,\eta_4<1$ satisfy
$\eta_3\le\eta_2/2$ and $\eta_4\le(\rho/20)\eta_3$.
Assume the following.
\begin{enumerate}[label=\textnormal{(D\arabic*)},leftmargin=3em]
\item Every $X\subseteq V(F)$ with
$|X|\ge(1-\eta_3/2)m$ contains a $(\rho/4)$-super-signature of
size at most $(33/\rho)\log m$.
\item Whenever $Y\subseteq V(F)$ has
$|Y|\ge(1-\eta_2)m$ and $h:Y\to V(F)$ is injective with
$a_F(h(u),h(v))=a_F(u,v)$ for all distinct $u,v\in Y$, there is
$\sigma\in\Psi$ such that $h=\sigma|_Y$.
\end{enumerate}
Loyalty is understood relative to $T,\mathcal C,\eta_2$, and $\Psi$
in the sense of Definition~\ref{def:relative-loyalty}.  Then every
embedding $\theta:F\hookrightarrow G$ has at least one of the
following five properties.
\begin{enumerate}[label=\textnormal{(\Roman*)},leftmargin=3em]
\item At least $\eta_3m/2$ vertices are mapped into
$C_{\mathrm{rest}}$.
\item The image of $\theta$ contains two vertices from the same class
$C_i$.
\item There is an $A\subseteq V(F)$ and an injective map
$h:A\to V(F)\setminus T$ such that $\theta$ is $h$-aligned and $h$
is not locally mostly consistent with parameter $\eta_4$.
\item There is a $B\subseteq V(F)$ with
$|B|\ge(1-\eta_3)m$ and a fancy injective map
$h:B\to V(F)\setminus T$ such that $\theta$ is $h$-aligned and $B$
contains at least $\eta_2m/4$ mutually vertex-disjoint
$h$-inconsistent pairs.
\item The embedding $\theta$ is loyal.
\end{enumerate}
\end{lemma}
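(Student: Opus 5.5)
Fix an embedding $\theta:F\hookrightarrow G$ and suppose alternatives (I), (II), (III) and (IV) all fail; the plan is to show that $\theta$ is loyal. The argument is a chain: each failed alternative feeds the next step.

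\emph{Step 1: define the class map.} Since (I) fails, fewer than $\eta_3m/2$ vertices go into $C_{\mathrm{rest}}$. Let
\[
A:=\{v\in V(F):\theta(v)\notin C_{\mathrm{rest}}\}.
\]
Then $|A|\ge(1-\eta_3/2)m$. For $v\in A$, let $h(v)$ be the unique index $i\in V(F)\setminus T$ with $\theta(v)\in C_i$; uniqueness holds because the classes are pairwise disjoint. Since (II) fails, the image of $\theta$ meets each $C_i$ at most once. Hence $h$ is injective and $\theta$ is $h$-aligned. Since (III) fails for this particular pair $(A,h)$, the map $h$ is locally mostly consistent with parameter $\eta_4$.

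\emph{Step 2: make the map fancy.} By (D1), the set $A$ contains a $(\rho/4)$-super-signature $R$ with $|R|\le(33/\rho)\log m$. The hypothesis $\eta_4\le\rho\eta_3/20$ is given, so Lemma~\ref{lem:robustification} applies. It yields $B\subseteq A$ with $|B|\ge(1-\eta_3)m$ such that $h|_B$ is fancy. Moreover $\theta$ is still $h|_B$-aligned. Since (IV) fails, every collection of mutually vertex-disjoint $h|_B$-inconsistent pairs in $B$ has size less than $\eta_2m/4$.

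\emph{Step 3: remove a maximal matching.} Take a maximal matching of inconsistent pairs in $B$, and let $Y\subseteq B$ be $B$ with both endpoints of every matching edge deleted. Then
\[
|Y|\ge(1-\eta_3)m-\eta_2m/2\ge(1-\eta_2)m,
\]
using $\eta_3\le\eta_2/2$. By maximality, no pair inside $Y$ is inconsistent. So $h|_Y$ is an injective map satisfying $a_F(h(u),h(v))=a_F(u,v)$ for all distinct $u,v\in Y$. Condition (D2) then gives $\sigma\in\Psi$ with $h|_Y=\sigma|_Y$.

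\emph{Step 4: check loyalty with respect to $\sigma$.}
\begin{itemize}
\item The image of $\theta$ meets each $C_i$ at most once, because (II) fails.
\item For every $x\in Y$ we have $\theta(x)\in C_{h(x)}=C_{\sigma(x)}$.
\item Every $x\in Y$ satisfies $\sigma(x)=h(x)\notin T$.
\end{itemize}
Thus at least $|Y|\ge(1-\eta_2)m$ vertices $x$ satisfy both $\sigma(x)\notin T$ and $\theta(x)\in C_{\sigma(x)}$. This is exactly the count Definition~\ref{def:relative-loyalty} requires, so $\theta$ is $\sigma$-loyal, and hence loyal, with error $\eta_2$. This is alternative (V).

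The only step needing care is Step 3, specifically the size accounting. The deleted matching vertices number fewer than $\eta_2m/2$, and the earlier losses total at most $\eta_3m\le\eta_2m/2$. These two bounds must add to no more than the $\eta_2m$ allowed by (D2), and they do: $\eta_2m/2+\eta_2m/2=\eta_2m$. It remains to double-check that Definition~\ref{def:relative-loyalty} counts vertices with $\sigma(x)\notin T$ in the way I have used it. The rest is bookkeeping.
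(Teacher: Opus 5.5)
Your proposal is correct and follows the paper's argument essentially step for step. The failure of (I) and (II) gives the injective class-label map on the vertices outside $C_{\mathrm{rest}}$. Once (III) fails, (D1) and Lemma~\ref{lem:robustification} make it fancy. Once (IV) fails, you delete a maximal matching of inconsistent pairs, and then (D2) yields $\sigma$ and $\sigma$-loyalty. Your size bound $|Y|\ge(1-\eta_3)m-\eta_2m/2\ge(1-\eta_2)m$ and your reading of Definition~\ref{def:relative-loyalty} match the paper exactly.
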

The proof is the argument of \cite[Lemma~7.4]{FSW}.  If
\textnormal{(I)} and~\textnormal{(II)} fail, the candidate-class
labels of the vertices outside $C_{\mathrm{rest}}$ define an
injective map $h_1$ on a set $A$ of size at least
$(1-\eta_3/2)m$, and $\theta$ is $h_1$-aligned.  If
\textnormal{(III)} also fails, condition~\textnormal{(D1)} and
Lemma~\ref{lem:robustification} give a set
$B\subseteq A$ with $|B|\ge(1-\eta_3)m$ on which
$h_1$ restricts to a fancy map $h_2$.
If \textnormal{(IV)} fails, a maximal collection of mutually
vertex-disjoint $h_2$-inconsistent pairs has fewer than
$\eta_2m/4$ elements.  Deleting their endpoints leaves a set $Y$
with
\[
|Y|
\ge
(1-\eta_3)m-\frac{\eta_2m}{2}
\ge
(1-\eta_2)m,
\]
where the last inequality uses $\eta_3\le\eta_2/2$.  By maximality,
$h_2|_Y$ preserves both adjacency and nonadjacency.  Condition
\textnormal{(D2)} therefore gives $\sigma\in\Psi$ with
$h_2|_Y=\sigma|_Y$.  Since \textnormal{(II)} fails, the image of
$\theta$ meets each $C_i$ in at most one vertex, and for every
$y\in Y$ we have
$\sigma(y)\notin T$ and $\theta(y)\in C_{\sigma(y)}$.  Thus
$\theta$ is $\sigma$-loyal, giving \textnormal{(V)}.

\

The following is 
\cite[Lemma~6.6]{FSW}, with all standing hypotheses used in its
proof stated explicitly. It will be used to bound Alternative (II). 

\begin{lemma}[Alternative (II) bound]\cite[Lemma~6.6]{FSW}
\label{lem:candidate-collision}
Fix $0<\rho<1/2$.  For all sufficiently large $m=m(\rho)$, let
$F$ be a graph on $m$ vertices satisfying
$|\Delta_F(u,v)|\ge\rho m$ for all distinct $u,v\in V(F)$, and let
$G$ be an $N$-vertex graph with $N\ge m$.  Let
$T\subseteq V(F)$ be a signature and fix $\xi:T\to V(G)$.  For each
$i\in V(F)\setminus T$, let $C_i'$ be the set of possible images of
$i$ among embeddings $F\hookrightarrow G$ extending $\xi$, and
assume that the classes $C_i'$ are pairwise disjoint.  Fix
$0<\eta_5<1$, and apply Definition~\ref{def:bad-pairs} to
$F,G,T,(C_i')$ with threshold $\eta_5$, obtaining cleaned classes
$C_i$ and an exceptional set $C_{\mathrm{rest}}$.

Suppose that $|C_i|+|C_{\mathrm{rest}}|\le\eta_5N$ for every
$i\in V(F)\setminus T$.  Suppose also that
$ \rho m\ge3, 3\eta_5<\frac\rho3,
 \frac{\rho m}{3}\ge\frac{20}{\rho}(\log m)^2, 
 \left(\frac{9e^4\eta_5}{\rho}\right)^{\rho m/3}\le m^{-9}.$

Then, for any distinct $a,a'\in V(F)$ and any $b,b'\in V(G)$, at
most $m^{-7}\E_m(N)/N^2$ embeddings $\theta:F\hookrightarrow G$
satisfy $\theta(a)=b$, $\theta(a')=b'$, and
$|\theta(V(F))\cap C_i|\ge2$ for some $i\in V(F)\setminus T$.
\end{lemma}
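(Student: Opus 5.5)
We prove Lemma~\ref{lem:candidate-collision} with the same mechanism as Lemma~\ref{lem:weighted-hitting}, applied to a pair of pattern vertices that is fixed in advance.

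\textbf{Step 1: fix the collision.} First fix the colliding data. These are an index $i$, two distinct pattern vertices $u,v\in V(F)$, and their images $\theta(u)=w$, $\theta(v)=w'$, both lying in $C_i$.

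There are at most $m^3$ choices of $(i,u,v)$. We must also account for the two images $w,w'$. When $u,v\notin\{a,a'\}$, these cost at most $N^2$ in total. This cost is affordable, since the final bound we aim for has the form $\E_m(N)/N^4$ times a tiny factor, per fixed $(u,v,w,w')$ together with the prescribed $a,a'$. When $\{u,v\}$ meets $\{a,a'\}$, some of the images are already prescribed and the bookkeeping is lighter.

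\textbf{Step 2: locate the distinguishing vertices.} Let $D:=\Delta_F(u,v)\setminus\{a,a'\}$. Pair separation gives $|D|\ge\rho m-2$. Every $z\in D$ must be mapped to a vertex adjacent to exactly one of $w,w'$.

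We claim all such host vertices lie in a small set $U$ consisting of
\begin{itemize}
\item $C_i\cup C_{\mathrm{rest}}$,
\item the bad partners of $w$, and
\item the bad partners of $w'$.
\end{itemize}
To see this, take a host vertex $y$ in some $C_j$ with $j\ne i$. The labels $i$ and $j$ are distinct, and the pairs $(w,y),(w',y)$ lie in $C_i'\times C_j'$. Adjacency to exactly one of $w,w'$ means that exactly one of these pairs disagrees with $a_F(i,j)$, so $y$ is a bad partner of $w$ or of $w'$. Since $w,w'$ survived cleaning, each has fewer than $\eta_5N$ bad partners. Together with the hypothesis $|C_i|+|C_{\mathrm{rest}}|\le\eta_5N$, this gives $|U|\le3\eta_5N$.

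\textbf{Step 3: count embeddings with $\rho m/3$ vertices forced into $U$.} Now at least $\rho m-2\ge\rho m/3$ vertices of $F':=F\setminus\{a,a',u,v\}$ are forced into $U$.

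\begin{itemize}
\item Take a small signature $T'\subseteq V(F')$ via Lemma~\ref{lem:small-signature}, of size at most $(5/\rho)\log m$.
\item Fix the image of $T'$, costing a factor $N^{|T'|}$.
\item The candidate sets of the remaining vertices are then pairwise disjoint by Lemma~\ref{lem:candidate-disjointness}.
\item Choose a fixed set $J$ of $\lceil\rho m/3\rceil-|T'|$ vertices of $D\setminus T'$, which costs at most $2^m$ choices or fewer. Better, use the Maclaurin argument, bounding $\sum_J\prod_{v\in J}a_v$ with $\sum a_v\le3\eta_5N$.
\item The rest of the vertices contribute $\E$ via Lemma~\ref{lem:balanced-product-tools}(iii).
\end{itemize}

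Because $D$ is a fixed set of size linear in $m$, even the crude bound of choosing $J$ works. It yields a factor of the shape $(3e^{4}\eta_5 m/(\rho m/3))^{\rho m/3}=(9e^4\eta_5/\rho)^{\rho m/3}$. The overheads are $e^{O(d_0)}m^{d_0}$ with $d_0=O((\log m)^2/\rho)$, and the hypothesis $\rho m/3\ge(20/\rho)(\log m)^2$ lets the main factor absorb them.

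\textbf{Step 4: assemble.} The hypothesis $(9e^4\eta_5/\rho)^{\rho m/3}\le m^{-9}$ then gives $m^{-9}\cdot\E_m(N)/N^{4}$ per fixed $(i,u,v,w,w')$. Summing over $w,w'$ and the $m^2$ pairs $(u,v)$, with the class $i$ determined by $w$, leaves $m^{-7}\E_m(N)/N^2$.

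\textbf{Main obstacle.} The delicate point is the bookkeeping of the signature vertices. Some vertices of $D$ may lie in $T'$, which halves the effective exponent. We must check that the stated hypothesis, with exponent $\rho m/3$ rather than $\rho m$, still leaves enough room to absorb the $N^{|T'|}$ and $m^{d_0}$ factors together with the $m^3$ union bound.
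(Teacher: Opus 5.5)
Your architecture is the right one and matches the argument the paper attributes to \cite[Lemma~6.6]{FSW}. Steps~1, 2 and~4 are correct; in particular, the bad-partner argument giving $|U|\le 3\eta_5N$ is exactly the intended mechanism.

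The gap is in Step~3, at the point you flag yourself: the bookkeeping does not close. You let the factor $(9e^4\eta_5/\rho)^{\rho m/3}$ absorb the overhead $e^{3d_0}m^{d_0}=\exp\bigl(\Theta((\log m)^2/\rho)\bigr)$. Here $d_0=m-|V(F')|+|T'|\le 4+(5/\rho)\log m$, not $O((\log m)^2/\rho)$. Then in Step~4 you spend the same factor again to get $m^{-9}$. The hypothesis bounds that factor by $m^{-9}$ and nothing more, and $\rho m/3\ge(20/\rho)(\log m)^2$ does not by itself make it smaller.

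Both of your suggestions for handling $J$ make this worse:
\begin{itemize}
\item Paying $2^m$ for the choice of $J$ is unaffordable against $m^{-9}$.
\item The Maclaurin bound puts the binomial factor $e$ into the base. You are then left with $e^{3d_0}m^{d_0}$ times $(3e^4\eta_5m/r)^r$, where $r=\lceil\rho m/3\rceil-|T'|$, and the hypotheses only bound the latter by roughly $m^{-9}$.
\end{itemize}
Taking $|J|=\lceil\rho m/3\rceil-|T'|$ also does not give the exponent $\rho m/3$ you claim.

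The missing observation is that no union over $J$ is needed, because $D$ is determined by $(u,v,a,a')$. The repair goes as follows.
\begin{itemize}
\item Fix $J\subseteq D$ with $|J|=\lceil\rho m/3\rceil$ first. Only then take a signature $T'\subseteq V(F')\setminus J$ from Lemma~\ref{lem:small-signature}; this is possible because $|V(F')\setminus J|\ge(1-\rho/2)m$ for large $m$.
\item With $\theta|_{T'}$ fixed, AM--GM on the disjoint candidate sets gives $\prod_{v\in J}|C_v\cap U|\le(3\eta_5N/|J|)^{|J|}$, with no binomial loss.
\item The remaining vertices contribute $\E_{m-d_0-|J|}(N)$. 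Lemma~\ref{lem:balanced-product-tools}\textnormal{(iii)} then gives the total bound $e^{3d_0}m^{d_0}(3e^3\eta_5m/|J|)^{|J|}\,\E_m(N)/N^{m-|V(F')|}$.
\item Since $9e^4\eta_5<\rho$, the map $x\mapsto(3e^3\eta_5m/x)^x$ is decreasing for $x\ge\rho m/3$. So the bound is at most $e^{3d_0}m^{d_0}\,e^{-\rho m/3}\,(9e^4\eta_5/\rho)^{\rho m/3}$ times the $\E$-term.
\item The spare factor $e^{-\rho m/3}\le\exp(-(20/\rho)(\log m)^2)$ swallows $e^{3d_0}m^{d_0}\le\exp\bigl((4+(5/\rho)\log m)(3+\log m)\bigr)$ for large $m$. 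What remains is at most $m^{-9}$ by hypothesis.
\end{itemize}
This is exactly why the hypothesis carries $e^4$ rather than $e^3$, and why $\rho m/3\ge(20/\rho)(\log m)^2$ is assumed. Alternatively, a fixed $J\subseteq D\setminus T'$ of size close to $\rho m$ also works, since it leaves a spare factor exponentially small in $\rho m$. With either repair, your Step~4 summation over $w,w'$ and the ordered pairs $(u,v)$ gives $m^{-7}\E_m(N)/N^2$ as claimed.
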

In the statement above, the map $\xi$ is used only to
construct the candidate classes $C_i'$ and their cleaned subsets
$C_i$.  The embeddings counted by the estimate are not required to
extend $\xi$, nor are their remaining vertices required to map into
correspondingly indexed classes.

\subsubsection{The entropy--energy estimate for alternative
\textnormal{(IV)}}

The first estimate bounds the number of structured part maps.

\begin{lemma}\cite[Lemma~7.8]{FSW}
\label{lem:number-fancy-maps}
Let $m\ge2$, let $F$ be an $m$-vertex graph, let
$T\subseteq V(F)$, and let
$0<\rho<1$ and $0<\eta_3\le10^{-20}$.  Assume $\frac{67}{\rho}(\log m)^2\le\eta_3m.$
  Then the number of pairs $(B,h)$
such that $B\subseteq V(F)$, $|B|\ge(1-\eta_3)m$, and
$h:B\to V(F)\setminus T$ is a fancy injective map is at most
$\exp(4\eta_3m\log(1/\eta_3))$.
\end{lemma}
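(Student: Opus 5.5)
The plan is an encoding argument: each admissible pair $(B,h)$ is reconstructed from a small amount of data, and the data are counted. Write $\varepsilon:=\eta_3$.

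Step 1: describe the domain. Since $|V(F)\setminus B|\le\varepsilon m$, the set $B$ is fixed by its complement. The number of choices is at most
\[
\sum_{j\le\varepsilon m}\binom{m}{j}\le\exp\bigl(\varepsilon m\log(e/\varepsilon)\bigr).
\]

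Step 2: record the super-signature. Fix a witness $R\subseteq B$ as in Definition~\ref{def:fancy}, with $|R|\le(33/\rho)\log m$. Record $R$ and the values $h|_R$. This costs at most $m^{2|R|}=\exp\bigl((66/\rho)(\log m)^2\bigr)$ choices. By the hypothesis $(67/\rho)(\log m)^2\le\varepsilon m$, this is at most $\exp(\varepsilon m)$.

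Step 3: recover $h$ on $B\setminus R$ from few labels. Fix $v\in B\setminus R$. Knowing $h|_R$, the value $h(v)$ must be a vertex $w$ whose adjacency vector to $h(R)$ agrees with the vector $\bigl(a_F(v,r)\bigr)_{r\in R}$ in all but at most $(\rho/10)|R|$ coordinates. Take two such candidates $w\ne w'$. Both lie outside $h(R)$, because $h$ is injective. Since $h(R)$ is the image of $R$, it is natural to use the super-signature property in this form: for distinct $w,w'$ outside $h(R)$, the set $h(R)$ separates them in at least $(\rho/4)|R|$ coordinates.

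The difficulty is that $R$, not $h(R)$, is known to be a super-signature. The fix is to note that $h$ agrees with the adjacency of $F$ on most pairs inside $R$. There is a cleaner route, however. Consider the map sending $w$ to its adjacency vector on $h(R)$. Candidates for $h(v)$ lie within Hamming distance $(\rho/10)|R|$ of a fixed vector. So the number of candidates is at most the number of vertices $w$ whose vectors lie in a Hamming ball of that radius.

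The step I expect to be the main obstacle is bounding this candidate count. My first attempt, if I cannot transfer the super-signature property to $h(R)$, is to abandon uniqueness. Instead I would encode $h(v)$ by its rank in a list of candidates of size at most $m^{O(1)}$ (trivially at most $m$), and charge a cost only to the few vertices where the decoding is ambiguous. Two distinct vertices $w,w'$ whose vectors on $h(R)$ agree up to $2(\rho/10)|R|<(\rho/4)|R|$ errors cannot both be valid. This is exactly where the constant $\rho/10<\rho/8$ matters. So $h(v)$ is the unique vertex closest to the recorded vector, provided $h(R)$ separates in $(\rho/4)|R|$ coordinates. That holds after applying $h^{-1}$, since consistency on $R$ is inherited.

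Thus $h|_{B\setminus R}$ is determined by $(B,R,h|_R)$, up to the image constraint. Multiplying the counts from Steps 1 and 2 gives
\[
\exp\bigl(\varepsilon m\log(e/\varepsilon)+\varepsilon m\bigr)\le\exp\bigl(4\varepsilon m\log(1/\varepsilon)\bigr),
\]
using $\varepsilon\le10^{-20}$.
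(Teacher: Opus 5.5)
There is a genuine gap in Step 3. You conclude that $h|_{B\setminus R}$ is determined by $(B,R,h|_R)$, so your final count multiplies only the factors from Steps 1 and 2. But uniqueness of $h(v)$ would require $h(R)$ to separate any two candidates $w\neq w'$ in more than $2(\rho/10)|R|$ coordinates, and nothing provides this.

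\begin{itemize}
\item The super-signature hypothesis concerns $R$, not $h(R)$.
\item A competing candidate $w$ need not lie in $h(B)$, so there is nothing to pull back by $h^{-1}$.
\item Where $h^{-1}$ is defined, fanciness preserves adjacency to $R$ only up to $(\rho/10)|R|$ errors per vertex. This would degrade the separation $(\rho/4)|R|$ to $(\rho/20)|R|$. Fancy maps also impose no consistency inside $R$ at all.
\end{itemize}

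Uniqueness can in fact fail. Suppose $R$ separates two vertices $v,v'\notin R$ exactly on a set $R_0$. Suppose each $r\in R_0$ has a near-copy $r^*$ with the same adjacencies as $r$, except that its adjacency to $v'$ is flipped. Put $v'$ and all $r^*$ outside $B$, and let $h(r)=r^*$ on $R_0$ and $h$ be the identity elsewhere. Then $v'$ has on $h(R)$ exactly the profile that $v$ has on $R$. So setting $h(v)=v$ or $h(v)=v'$ gives two distinct fancy pairs with the same $(B,R,h|_R)$.

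The missing idea is to use the super-signature property in the domain, not the range. For $v\in B\setminus R$, let $P_v$ be the set of $w$ whose adjacency vector to $h(R)$ differs from that of $v$ to $R$ in at most $(\rho/10)|R|$ coordinates. Fanciness says exactly that $h(v)\in P_v$. If $w\in P_v\cap P_{v'}$ for distinct $v,v'\in B\setminus R$, then $v$ and $v'$ are distinguished by at most $(\rho/5)|R|<(\rho/4)|R|$ vertices of $R$. This contradicts that $R$ is a $(\rho/4)$-super-signature; this is where the comparison of $\rho/10$ with $\rho/4$ genuinely enters. Hence the sets $P_v$ are pairwise disjoint.

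Let $x:=|B\setminus R|\ge(1-\eta_3)m-|R|$. Then the number of injective extensions of $h|_R$ is at most $\prod_v|P_v|\le(m/x)^x\le e^{m-x}\le\exp(\eta_3m+|R|)\le\exp(2\eta_3m)$. The last step uses $|R|\le(33/\rho)\log m\le\eta_3m$, which follows from the hypothesis. This balanced-product step is the one the paper (following \cite{FSW}) uses. With it, your total becomes $\exp\bigl(\eta_3m(\log(1/\eta_3)+4)\bigr)\le\exp\bigl(4\eta_3m\log(1/\eta_3)\bigr)$.

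Your fallback of recording a rank of size up to $m$ at each ambiguous vertex does not replace this. Without disjointness you have no bound on the number of ambiguous vertices. Even with the bound $\eta_3m+|R|$ that disjointness gives, a factor $m$ per such vertex yields only $m^{O(\eta_3m)}$, which is too weak when $\log m\gg\log(1/\eta_3)$. You must charge $|P_v|$ rather than $m$ at each vertex and use AM--GM.
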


This is the  counting argument of
\cite[Lemma~7.8]{FSW}, with the standing assumptions used there
stated explicitly.    Its main idea is as follows. 
There are few
choices for the large domain $B$ and for the small super-signature
$R\subseteq B$.  Once $B$, $R$, and $h|_R$ are fixed, the
super-signature property and the consistency of $h$ with $R$ imply
that different vertices of $B\setminus R$ have disjoint sets of
possible images.  A balanced-product estimate therefore bounds the
number of extensions of $h|_R$.  Summing over the choices of $B$,
$R$, and $h|_R$ gives the stated estimate.

The second estimate bounds the
aligned embeddings with many inconsistencies. 

\begin{lemma}\cite[Lemma~7.9]{FSW}
\label{lem:many-inconsistent-aligned}
Let $F$ be an $m$-vertex graph, let $G$ be an $N$-vertex graph, let
$T\subseteq V(F)$, and let
$\mathcal C=(C_i)_{i\in V(F)\setminus T}$ be a family of nonempty
pairwise disjoint subsets of $V(G)$.  Fix
$0<\rho,\eta_2,\eta_3<1$.  
Assume
$\eta_2m\ge32$
and 
$\frac{20}{\rho}(\log m)^2\le\frac{\eta_2m}{16}.$
Assume also that $N\ge m$, that $\eta_3\le\rho$, and that
$2^{-\eta_2m/8}e^6m^2 \le \exp(-\eta_2m/20).$
 Assume also that every subset
of $V(F)$ of size at least $(1-\rho)m$ is a signature, and that
for every $r\ge1$ and every choice of $2r$ distinct
indices
\[
i_1,\ldots,i_r,j_1,\ldots,j_r\in V(F)\setminus T
\]
such that 
$m_G(C_{i_\ell},C_{j_\ell})\ne a_F(i_\ell,j_\ell)$ for every
$\ell$, one has $r\le(20/\rho)(\log m)^2$.

Let $B\subseteq V(F)$ have $|B|\ge(1-\eta_3)m$, and let
$h:B\to V(F)\setminus T$ be injective.  Suppose that $B$ contains
at least $\eta_2m/4$ mutually vertex-disjoint $h$-inconsistent
pairs.  Then, for distinct $a,a'\in V(F)$ and arbitrary
$b,b'\in V(G)$, the number of $h$-aligned embeddings
$\theta:F\hookrightarrow G$ satisfying
$\theta(a)=b$ and $\theta(a')=b'$ is at most
\[
 \exp(-\eta_2m/20)\frac{\E_m(N)}{N^2}.
\]
\end{lemma}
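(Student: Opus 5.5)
The plan is to bound, for a fixed pair $(B,h)$, the number of $h$-aligned embeddings by a single balanced product $\E_m(N)$. Each of the many disjoint $h$-inconsistent pairs then contributes a factor $1/2$, and fixing the images of $a,a'$ contributes the factor $N^{-2}$ via Lemma~\ref{lem:balanced-product-tools}\textnormal{(iii)}. The hypothesis $2^{-\eta_2m/8}e^6m^2\le\exp(-\eta_2m/20)$ suggests exactly the target bound $2^{-\eta_2m/8}e^6m^2\,\E_m(N)/N^2$.

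\emph{Step 1: selecting good pairs.} Let $\{u_1,v_1\},\dots,\{u_R,v_R\}$ with $R\ge\eta_2m/4$ be the given vertex-disjoint $h$-inconsistent pairs in $B$. Since $h$ is injective and the pairs are disjoint, the indices $h(u_\ell),h(v_\ell)$ are $2R$ distinct elements of $V(F)\setminus T$. Call a pair \emph{majority-wrong} if $m_G(C_{h(u_\ell)},C_{h(v_\ell)})\ne a_F(h(u_\ell),h(v_\ell))$. By the assumed bound on vertex-disjoint majority disagreements, at most $(20/\rho)(\log m)^2\le\eta_2m/16$ pairs are majority-wrong. Also discard the at most two pairs meeting $\{a,a'\}$. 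Since $\eta_2m\ge32$, at least
\[
\eta_2m/4-\eta_2m/16-2\ge\eta_2m/8
\]
good pairs remain. For a good pair, inducedness gives $a_G(\theta(u),\theta(v))=a_F(u,v)\ne a_F(h(u),h(v))=m_G(C_{h(u)},C_{h(v)})$. Hence $(\theta(u),\theta(v))$ lies in the minority part of $C_{h(u)}\times C_{h(v)}$, which has size at most $\tfrac12|C_{h(u)}||C_{h(v)}|$ by the definition of majority adjacency.

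\emph{Step 2: counting with one balanced product.} Set $B^\circ:=B\setminus\{a,a'\}$. First choose $\theta|_{B^\circ}$. Vertices of $B^\circ$ outside good pairs have at most $|C_{h(v)}|$ choices each, and each good pair has at most $\tfrac12|C_{h(u)}||C_{h(v)}|$ joint choices. Next, $B^\circ\cup\{a,a'\}\supseteq B$ has size at least $(1-\eta_3)m\ge(1-\rho)m$, so it is a signature. With $\theta$ fixed on it, Lemma~\ref{lem:candidate-disjointness} shows that the remaining vertices of $V(F)\setminus B$ have pairwise disjoint candidate sets. The key observation is that these candidate sets avoid $\bigcup_{v\in B}C_{h(v)}$: alignment allows at most one image vertex per class, and $\theta(v)$ already occupies $C_{h(v)}$. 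Therefore the classes $C_{h(v)}$ ($v\in B^\circ$) and the candidate sets of $V(F)\setminus B$ are all pairwise disjoint, with total size at most $N$. Lemma~\ref{lem:balanced-product-tools}\textnormal{(i)} then bounds the product of all these $m-2$ sizes by $\E_{m-2}(N)$, giving
\[
\#\{\theta\}\le 2^{-\eta_2m/8}\,\E_{m-2}(N).
\]

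\emph{Step 3: conclusion.} Lemma~\ref{lem:balanced-product-tools}\textnormal{(iii)} with $\ell=m$, $\ell'=m-2$, $m'=N$ and $\mu=0$ gives $\E_{m-2}(N)\le e^6m^2\E_m(N)/N^2$. Combining with Step~2 and the numerical hypothesis yields the claimed bound $\exp(-\eta_2m/20)\,\E_m(N)/N^2$.

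The main obstacle is Step~2: we must avoid the lossy estimate $\E_{|B|}(N)\,\E_{m-|B|}(N)$, which would cost a factor of about $\exp(\eta_3m\log(1/\eta_3))$ that the stated bound cannot absorb. The disjointness of the outside candidate sets from the occupied classes resolves this. I would check carefully that $a,a'$ are treated consistently here, whether or not they lie in $B$. The case $a\notin B$ is harmless, since its image is fixed and it is simply omitted from the product.
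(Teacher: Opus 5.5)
Your proposal is correct and follows essentially the same route as the paper's sketch of this inherited lemma. You discard the at most two pairs meeting $\{a,a'\}$ and the at most $\eta_2m/16$ majority-wrong pairs, which leaves $\eta_2m/8$ good pairs, each costing a factor $1/2$. You then use that $B\cup\{a,a'\}$ is a signature whose remaining vertices must avoid the occupied classes. Finally you apply the balanced-product inequalities and Lemma~\ref{lem:balanced-product-tools}\textnormal{(iii)}. The only difference is packaging: the paper phrases your Step~2 as the fixed-signature extension estimate (Lemma~\ref{lem:fixed-signature-extension}, with $U$ the complement of the occupied classes) combined with Lemma~\ref{lem:balanced-product-tools}\textnormal{(ii)}. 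That formulation also makes explicit that your bound on the extensions is uniform over the choice of $\theta|_{B^\circ}$.
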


This is a special case of \cite[Lemma~7.9]{FSW}. We omit the proof and recall its main idea.  
Alternative~\textnormal{(IV)} supplies at
least $\eta_2m/4$ mutually vertex-disjoint $h$-inconsistent pairs. Removing the two prescribed vertices $a,a'$ destroys at most two of these pairs.  Hence at least $\frac{\eta_2m}{4}-2 \ge \frac{3\eta_2m}{16}$ pairs remain, where the inequality uses $\eta_2m\ge32$.
By the assumed bound on
vertex-disjoint majority-adjacency disagreements, at most
$\eta_2m/16$ of these pairs have the wrong majority relation between
their candidate classes.  Thus at least $\eta_2m/8$ pairs remain for
which the majority adjacency between the two candidate classes agrees
with the labels under $h$, and hence is opposite to the adjacency
required by the original pair.
For an $h$-aligned embedding, each such pair must therefore be chosen
from at most half of the corresponding product of candidate classes.
The pairs use mutually distinct classes, so these restrictions are
independent and give a factor $2^{-\eta_2m/8}$.  The vertices already
assigned to candidate classes form a signature, while all remaining
vertices must avoid those classes.  The fixed-signature extension
estimate and the balanced-product inequalities then give the stated
bound.

\begin{lemma}[Bound for alternative \textnormal{(IV)}]
\label{lem:fancy-inconsistency}
Assume that the data
$F,G,T,\mathcal C,\rho,\eta_2,\eta_3$ satisfy the hypotheses of
Lemmas~\ref{lem:number-fancy-maps} and
\ref{lem:many-inconsistent-aligned}.  Fix distinct
$a,a'\in V(F)$ and $b,b'\in V(G)$.  Suppose in addition that
$4\eta_3\log(1/\eta_3)<\eta_2/25$ and
$\exp(-\eta_2m/100)\le m^{-7}$.  Then at most
$m^{-7}\E_m(N)/N^2$ embeddings $\theta:F\hookrightarrow G$
satisfying $\theta(a)=b$ and $\theta(a')=b'$ have
alternative~\textnormal{(IV)} of
Lemma~\ref{lem:embedding-decomposition}.
\end{lemma}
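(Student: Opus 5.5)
The proof is an entropy--energy union bound: we enumerate the possible fancy maps, bound the aligned embeddings for each map using Lemma~\ref{lem:many-inconsistent-aligned}, and sum. Fix $a,a',b,b'$. By definition, every embedding $\theta$ with alternative~\textnormal{(IV)} comes with at least one witness: a pair $(B,h)$ where $|B|\ge(1-\eta_3)m$, $h:B\to V(F)\setminus T$ is a fancy injective map, $\theta$ is $h$-aligned, and $B$ contains at least $\eta_2m/4$ mutually vertex-disjoint $h$-inconsistent pairs. Charge each such $\theta$ to one witness. Then the count we want is at most the sum, over admissible pairs $(B,h)$, of the number of $h$-aligned embeddings with $\theta(a)=b$ and $\theta(a')=b'$. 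Overcounting an embedding with several witnesses is harmless for an upper bound.

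\textbf{Step 1 (fixed witness).} For each admissible $(B,h)$, the hypotheses of Lemma~\ref{lem:many-inconsistent-aligned} are assumed. In particular, $B$ contains the required $\eta_2m/4$ vertex-disjoint inconsistent pairs. That lemma therefore bounds the contribution of $(B,h)$ by $\exp(-\eta_2m/20)\,\E_m(N)/N^2$.

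\textbf{Step 2 (number of witnesses).} Lemma~\ref{lem:number-fancy-maps}, whose hypotheses are also assumed, bounds the number of pairs $(B,h)$ by $\exp(4\eta_3m\log(1/\eta_3))$. We may simply count all fancy pairs, ignoring the inconsistency condition.

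\textbf{Step 3 (combine).} Multiplying the two bounds, the total is at most
\[
\exp\bigl(4\eta_3m\log(1/\eta_3)-\eta_2m/20\bigr)\frac{\E_m(N)}{N^2}.
\]
The extra assumption $4\eta_3\log(1/\eta_3)<\eta_2/25$ gives $4\eta_3m\log(1/\eta_3)<\eta_2m/25$. Since $\tfrac1{20}-\tfrac1{25}=\tfrac1{100}$, the exponent is at most $-\eta_2m/100$. The assumption $\exp(-\eta_2m/100)\le m^{-7}$ then yields the bound $m^{-7}\E_m(N)/N^2$.

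The only delicate point is making sure the witness in alternative~\textnormal{(IV)} matches the exact objects counted in the two lemmas. It must use the same notion of fancy (with $\rho,\eta_3$), the same codomain $V(F)\setminus T$, and the same alignment definition relative to $\mathcal C$. Once that identification is checked, the arithmetic in Step~3 is immediate, so this bookkeeping is the main obstacle, and it is a mild one.
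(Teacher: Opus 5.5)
Your proposal is correct and follows the paper's proof: a union bound over the at most $\exp(4\eta_3m\log(1/\eta_3))$ fancy pairs $(B,h)$ from Lemma~\ref{lem:number-fancy-maps}, the per-pair bound $\exp(-\eta_2m/20)\E_m(N)/N^2$ from Lemma~\ref{lem:many-inconsistent-aligned}, and the arithmetic $\tfrac1{20}-\tfrac1{25}=\tfrac1{100}$ followed by $\exp(-\eta_2m/100)\le m^{-7}$. The only adjustment is in attribution: the $\eta_2m/4$ vertex-disjoint inconsistent pairs are supplied by the alternative~\textnormal{(IV)} witness itself rather than by the standing hypotheses, which is in fact how your charging argument uses them.
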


\begin{proof}
Lemma~\ref{lem:number-fancy-maps} gives at most
$\exp(4\eta_3m\log(1/\eta_3))$ possible pairs $(B,h)$, and
Lemma~\ref{lem:many-inconsistent-aligned} gives at most
$\exp(-\eta_2m/20)\E_m(N)/N^2$ aligned embeddings for each one.
Hence the total number is at most
\[
\exp\left(
4\eta_3m\log(1/\eta_3)-\frac{\eta_2m}{20}
\right)\frac{\E_m(N)}{N^2}.
\]
Since
$4\eta_3\log(1/\eta_3)<\eta_2/25$, the exponent is less than
$-\eta_2m/100$.  The assumption
$\exp(-\eta_2m/100)\le m^{-7}$ now gives the result.
\end{proof}

\section{Auxiliary estimates for the global argument}
\label{app:global-closing-estimates}

\subsection{Difference estimates}

\begin{proof}[Proof of
Lemma~\ref{lem:extremal-difference-estimate}]
The Pippenger--Golumbic inequality states that,
for every integer $m\ge2$,
\begin{equation}
\label{eq:PG-differences}
\frac{k}{m-1}F(m-1)
\le
F(m)-F(m-1)
\le
\frac{k}{m}F(m).
\end{equation}
See \cite{PG} and also \cite[Lemma~8.2]{FSW}.

Write $\Delta_m:=F(m)-F(m-1)$.  For $m\ge3$, the upper bound in
\eqref{eq:PG-differences} at $m-1$ and the lower bound at $m$ give
$\Delta_{m-1}\le\Delta_m$.  Hence we have 
$F(m)-F(m-2) = \Delta_m+\Delta_{m-1}
\le 2\Delta_m \le \frac{2k}{m}F(m).$
Also, using the upper bound at $m$ and the lower bound at $m-1$,
$\Delta_m-\Delta_{m-1} \le
\frac{k}{m}F(m) - \frac{k}{m-2}F(m-2) \le \frac{k}{m}\bigl(F(m)-F(m-2)\bigr).$ For $m\ge k$, \eqref{eq:F-upper-preliminary} gives
$F(m)\le2k^{-k+2}m^k$, while for $m<k$ we have $F(m)=0$.
Thus the same upper bound holds for every positive integer $m$.
Finally, if $A-B\ge2$, then
$\Delta_A-\Delta_{B+1} =
\sum_{\ell=B+2}^{A}
(\Delta_\ell-\Delta_{\ell-1}) \le
4(A-B)k^{-k+4}A^{k-2},$ 
which is \eqref{eq:telescoped-difference}.
\end{proof}

\paragraph{AI acknowledgment.}
GPT~5.5 was used to locate the completion theorem of Sz\H{o}nyi~\cite{Szonyi} and Sziklai~\cite{Szi}, stated as
Theorem~\ref{thm:szonyi-sziklai-completion}. The authors also used AI
to polish the wording of the paper.


\begin{thebibliography}{99}
\bibitem{AIM25}
J.~Balogh, D.~Dong, B.~Lidick\'y, and A.~Raymond (organizers),
\emph{Flag algebras and extremal combinatorics: Workshop summary},
American Institute of Mathematics, October 13--17, 2025.
\url{https://aimath.org/pastworkshops/flagextremalrep.pdf}.


\bibitem{blumenthal2021inducibility}
A.~Blumenthal and M.~Phillips,
``Inducibility of the net graph,''
\emph{arXiv preprint arXiv:2103.06350}, 2021.

\bibitem{bodnar2025some}
L.~Bodn{\'a}r and O.~Pikhurko,
``Some exact inducibility-type results for graphs via flag algebras,''
\emph{arXiv preprint arXiv:2507.01596}, 2025.

\bibitem{brown1994inducibility}
J.~I.~Brown and A.~Sidorenko,
``The inducibility of complete bipartite graphs,''
\emph{Journal of Graph Theory},
vol.~18, no.~6, pp.~629--645, 1994.

\bibitem{Carlitz}
L.~Carlitz,
``A theorem on permutations in a finite field,''
\emph{Proc. Amer. Math. Soc.} \textbf{11} (1960), 456--459;
errata, ibid., 999--1000.

\bibitem{CL26}
W.~Chen and X.~Liu,
``Exact extremal constructions for the inducibility of blowup graphs,''
\emph{arXiv preprint arXiv:2606.06202}, 2026.

\bibitem{FST}
Fancsali, S.L., Sziklai, P. \& Tak\'ats, M. The number of directions determined by less than $q$ points. J Algebr Comb 37, 27–37 (2013)

\bibitem{FHL}
J.~Fox, H.~Huang, and C.~Lee,
\emph{A solution to the inducibility problem for almost all graphs},
unpublished manuscript, 2017.

\bibitem{FSW}
J.~Fox, L.~Sauermann, and F.~Wei,
\emph{On the inducibility problem for random Cayley graphs of abelian groups with a few deleted vertices},
Random Structures \& Algorithms \textbf{59} (2021), 554--615.

\bibitem{hatami2014inducibility}
H.~Hatami, J.~Hirst, and S.~Norine,
``The inducibility of blow-up graphs,''
\emph{Journal of Combinatorial Theory, Series B},
vol.~109, pp.~196--212, 2014.

\bibitem{hirst2014inducibility}
J.~Hirst,
``The inducibility of graphs on four vertices,''
\emph{Journal of Graph Theory},
vol.~75, no.~3, pp.~231--243, 2014.

\bibitem{jain2026binomial}
V.~Jain, M.~Michelen, and F.~Wei,
``The binomial random graph is a bad inducer,''
\emph{Random Structures \& Algorithms},
vol.~68, no.~3, Art.~no.~e70067, 2026.

\bibitem{lidicky2023c}
B.~Lidick{\'y}, C.~Mattes, and F.~Pfender,
``$C_5$ is almost a fractalizer,''
\emph{Journal of Graph Theory},
vol.~104, no.~1, pp.~220--244, 2023.

\bibitem{liu2023feasible}
X.~Liu, D.~Mubayi, and C.~Reiher,
``The feasible region of induced graphs,''
\emph{Journal of Combinatorial Theory, Series B},
vol.~158, pp.~105--135, 2023.

\bibitem{PG}
N. Pippenger and M. C. Golumbic, The inducibility of graphs, J. Combin. Theory Ser. B 19 (1975), 189–203.

\bibitem{Redei}
R\'edei, L.: L\"uckenhafte Polynome \"uber endlichen K\"orpern. Lehrb\"ucher und
Monographien aus dem Gebiete der Exakten Wissenschaften. Mathematische
Reihe, Band 42. Birkh\"auser, Basel-Stuttgart (1970) (English Translation: Lacunary
Polynomials Over Finite Fields. North Holland, Amsterdam (1973))

\bibitem{schelp1998remark}
R.~H.~Schelp and A.~Thomason,
``A remark on the number of complete and empty subgraphs,''
\emph{Combinatorics, Probability and Computing},
vol.~7, no.~2, pp.~217--219, 1998.


\bibitem{Somlai}
Somlai, G. A new proof of R\'edei’s theorem on the number of directions. Arch. Math. 122, 575–580 (2024).

\bibitem{Szi}
 Sziklai, P.: On subsets of $GF(q^2)$ with $d$th power differences. Discrete Math. 208/209, 547–555 (1999)

 \bibitem{Szonyi}
T.~Sz\H{o}nyi,
``On the number of directions determined by a set of points in an
affine Galois plane,''
\emph{J. Combin. Theory Ser. A} \textbf{74} (1996), 141--146.




\bibitem{yuster2019exact}
R.~Yuster,
``On the exact maximum induced density of almost all graphs and their inducibility,''
\emph{Journal of Combinatorial Theory, Series B},
vol.~136, pp.~81--109, 2019.

\end{thebibliography}
\end{document}